\documentclass[11pt]{amsart}

\usepackage{amsmath}
\usepackage{amssymb}
\usepackage{amsthm}
\usepackage{mathtools}
\usepackage[dvipsnames]{xcolor}
\usepackage[pdfusetitle,colorlinks=true,linkcolor=MidnightBlue,citecolor=ForestGreen,urlcolor=BrickRed]{hyperref}

\allowdisplaybreaks
\theoremstyle{plain}
\newtheorem{thm}{Theorem}[section]
\newtheorem{lem}[thm]{Lemma}
\newtheorem{prop}[thm]{Proposition}
\newtheorem{cor}[thm]{Corollary}

\theoremstyle{definition}

\theoremstyle{remark}

\newcommand{\C}{\mathbb{C}}
\newcommand{\R}{\mathbb{R}}

\newcommand{\E}{\mathbb{E}}
\newcommand{\Prob}{\mathbb{P}}
\newcommand{\ind}{\mathbf{1}}
\newcommand{\dd}{\,\mathrm{d}}

\newcommand{\ee}{\mathrm{e}}
\newcommand{\abs}[1]{\lvert #1\rvert}
\newcommand{\norm}[1]{\lVert #1\rVert}

\newcommand{\op}{\mathrm{op}}
\newcommand{\HS}{\mathrm{HS}}

\DeclareMathOperator{\Tr}{Tr}
\DeclareMathOperator{\rank}{rank}
\DeclareMathOperator{\diag}{diag}

\DeclareMathOperator{\Ai}{Ai}

\begin{document}

\title[One-Sided Product-Scale Upper Bounds]
{One-Sided Product-Scale Upper Bounds for Nested Complex Wishart Extremes}
\author{Xiufan Yang}
\address{School of Science, Nanjing University of Posts and Telecommunications, Nanjing 210023, China}
\email{B25100020@njupt.edu.cn}

\hypersetup{
  pdftitle={One-Sided Product-Scale Upper Bounds for Nested Complex Wishart Extremes},
  pdfauthor={Xiufan Yang},
  pdfsubject={Finite-size soft-edge decorrelation for nested complex Wishart matrices},
  pdfkeywords={complex Wishart matrices, largest eigenvalue, determinantal kernels, finite-size decorrelation, Gaussian extremes}
}

\subjclass[2020]{Primary 60B20; Secondary 60F15, 60G55}
\keywords{complex Wishart matrices, largest eigenvalue, determinantal kernels, finite-size decorrelation, Gaussian extremes}
\date{}

\begin{abstract}
I study the largest eigenvalue \(\Lambda_j\) of \(X^{(j)}(X^{(j)})^*\), where \(X^{(j)}\) is the \(M_j\times j\) northwest rectangle of one infinite complex Gaussian array. For two upper-tail events and, separately, for two lower-tail events at the largest-eigenvalue soft edge, I prove finite-\(N\) one-sided product-scale upper bounds for their joint probabilities. The logarithmic walls have polynomially small marginals, so an additive \(o(1)\) covariance estimate may be much larger than the vanishing product that must be controlled. Uniformly for levels in a short macroscopic window, bounded deterministic shifts, and separations at least \(N^{2/3+\epsilon}\), each joint probability is at most \((1+o(1))\) times the product of its marginals. The classical \(N^{2/3}\) correlation window corresponds to order-one extended-Airy time; the theorem works at supercritical separation. Exact Laguerre operators along admissible row--column paths provide the finite-dimensional input. The upper-tail proof uses occupancy counts and cross-block trace estimates, whereas the lower-tail proof uses gap determinants, diagonal resolvents, and a Schur complement. I transfer the result between the half-integer Laguerre and physical rectangular normalizations and deduce intrinsic first- and second-moment estimates for separated sparse grids under the rarity, count-growth, spacing, and macroscopic-window budget.
\end{abstract}

\maketitle

\section{Introduction and main results}\label{sec:gaussian-intro}

Most soft-edge universality results concern one random matrix at one level. Questions about repeated or almost-sure extremes along a nested family are different: they depend on joint rare-event probabilities across many strongly coupled levels. In the Wishart path considered here, consecutive matrices share almost all of their raw entries. Fixed-level Tracy--Widom convergence therefore gives no control of the required two-level probabilities, and an additive \(o(1)\) covariance estimate may be much larger than the product of the polynomially small marginals. The purpose of this paper is to obtain a two-level estimate at that product scale in the complex Gaussian model.

\subsection*{The model and the question}
Let \((\xi_{rs})_{r,s\geq1}\) be one infinite array of independent centered circular complex Gaussian variables with \(\E\abs{\xi_{rs}}^2=1\). For a nondecreasing integer sequence \((M_j)\), the common-array nested rectangular process is
\[
X^{(j)}=(\xi_{rs})_{1\leq r\leq M_j,\ 1\leq s\leq j},
\qquad
\Lambda_j=\lambda_{\max}\!\left(X^{(j)}(X^{(j)})^*\right).
\]
Thus the matrices at different levels share all entries in the smaller northwest rectangle. The observable \(\Lambda_j\) is the largest squared singular value of \(X^{(j)}\). Both physical dimensions can grow, and the identity of the smaller side can change between two levels. I call that endpoint configuration orientation reversing when both dimensions increase while the smaller physical side at the earlier endpoint becomes the larger physical side at the later endpoint. It is more specific than merely passing through a square rectangle.

The principal question is whether two logarithmically rare upper-tail events, and separately two logarithmically rare lower-tail events, for this shared-array soft edge satisfy a one-sided joint-probability upper bound on the scale of their marginal product. I assume
\[
\frac{M_j}{j}\longrightarrow\gamma>0,
\qquad
0\leq M_{j+1}-M_j\leq K,
\]
so the path has a limiting rectangular aspect ratio and a bounded number of row additions per column addition. Theorem~\ref{thm:nested-wishart-decorrelation} treats pairs of levels in a short macroscopic window \([N,(1+\eta_0)N]\), with bounded deterministic shifts of the logarithmic thresholds. The short window keeps all endpoint dimensions uniformly comparable to \(N\), while still allowing a polynomial number of separated tests.

\subsection*{Probability scale and correlation length}
The relevant marginals vanish polynomially. The one-time Laguerre tail asymptotics used here convert the upper threshold \(a(\log j)^{2/3}\) and the lower threshold \(-b(\log j)^{1/3}\) into the rarity exponents
\[
\kappa_+=\frac43a^{3/2},
\qquad
\kappa_-=\frac{b^3}{12};
\]
more precisely, throughout the short window the two marginal probabilities are \(N^{-\kappa_++o(1)}\) and \(N^{-\kappa_-+o(1)}\), uniformly under bounded deterministic shifts. These constants come from the complex Laguerre moderate-deviation tails in \cite{BBBK24}. The shift uniformity matters because later counting arguments compare many nearby deterministic walls, but an \(O(1)\) shift does not change the leading power of \(N\).

For a sign \(\mathfrak s\in\{+,-\}\), write \(H_j^+=E_j^+\), \(H_j^-=G_j^-\), and \(u_j^{\mathfrak s}=\Prob(H_j^{\mathfrak s})\). Since
\[
u_n^{\mathfrak s}u_m^{\mathfrak s}
=
N^{-2\kappa_{\mathfrak s}+o(1)}\longrightarrow0,
\]
an additive estimate of the form
\(\Prob(H_n^{\mathfrak s}\cap H_m^{\mathfrak s})-u_n^{\mathfrak s}u_m^{\mathfrak s}=o(1)\)
can be too coarse: its error may dominate the quantity being estimated. The required precision is instead the one-sided product-scale upper bound
\[
\Prob(H_n^{\mathfrak s}\cap H_m^{\mathfrak s})
\leq
(1+o(1))u_n^{\mathfrak s}u_m^{\mathfrak s}.
\]
Only this upper direction is asserted. It is the direction needed to control the off-diagonal contribution to sparse-grid second moments.

The separation scale is dictated by the classical soft-edge correlation window. For sorted endpoint dimensions \((p,q)\) and \((P,Q)\), put \(A=P-p\), \(B=Q-q\), and
\[
\delta_{p,q}
=
\frac{(\sqrt p+\sqrt q)^{2/3}}{(pq)^{1/3}},
\qquad
\tau_{p,q;A,B}
=
\frac{A/p+B/q}{2\delta_{p,q}}.
\]
Here \(\tau\) is the extended-Airy time associated with a discrete row--column displacement in the normalization of this paper. When all dimensions are comparable to \(N\), one has \(\delta_{p,q}\asymp N^{-1/3}\) and \(\tau\asymp(A+B)/N^{2/3}\). Consequently, displacement of order \(N^{2/3}\) corresponds to order-one Airy time, whereas the theorem's separation \(m-n\geq N^{2/3+\epsilon}\) forces \(\tau\) to grow at least as a positive power of \(N\). The \(N^{2/3}\) window and the associated correlation transition are classical in Laguerre and related soft-edge processes; see \cite[Proposition~5]{ForresterNagao2011} and, for a recent Wigner-minor transition theorem, \cite[Theorems~2.2--2.4]{BaoCipolloniErdosHenheikKolupaiev2025Transition}.

Fixed-orientation Wishart/Laguerre determinantal structure and the general coupled-chain mechanism are prior. Rank-one LUE growth and consecutive Wishart-minor kernels are treated in \cite[Sections~3.2--3.3]{ForresterNagao2011} and \cite[Theorems~1 and~3]{AdlerVanMoerbekeWang2013}; related time--level and space-like path models appear in \cite[Theorem~1.3]{FerrariFrings2010} and \cite[Theorem~1.3]{BenigniHungWu2026}. The Eynard--Mehta, L-ensemble, and Schur-process frameworks provide the general determinant-chain mechanism \cite{EynardMehta1998,BorodinRains2005,OkounkovReshetikhin2003}. The closest product-scale probability comparator is the recent two-event estimate for logarithmic upper- and lower-tail events of nested Wigner minors in \cite[Proposition~2.2]{BaoCipolloniErdosHenheikKolupaiev2025LFL}.

\subsection*{Relation to previous work and proof strategy}
Against this background, the headline output is the Wishart-specific one-sided product-scale upper estimate in Theorem~\ref{thm:nested-wishart-decorrelation}, with its common-array row--column geometry, short-window endpoint range, supercritical separation, and bounded-shift uniformity. The determinantal law along an admissible monotone row--column path and the endpoint formula for orientation reversal provide supporting finite-dimensional structure. The fixed-orientation cases are classical in substance; direct priority is not asserted for the reversing formula. I do not use or claim a distinction between the full admissible-path law and possible continuous Schur-process specializations after reparametrization or determinantal gauge changes. The Airy-time coefficient is used only as a model- and coordinate-specific conversion to the classical extended-Airy framework and its \(N^{2/3}\) correlation window.

The proof uses two soft-edge coordinates with different roles. The half-integer Laguerre normalization
\(\overline\chi_j=(\Lambda_j-\mu_j)/\sigma_j\)
uses \(p_j+\tfrac12\) and \(q_j+\tfrac12\) in its center and scale; it aligns the finite Laguerre functions with the Airy approximation and the exact two-level operators. The project normalization
\[
\chi_j^{\mathrm{proj}}
=
\frac{\Lambda_j-(\sqrt{M_j}+\sqrt j)^2}
{(\sqrt{M_j}+\sqrt j)(M_j^{-1/2}+j^{-1/2})^{1/3}}
\]
uses the physical rectangle dimensions and is the coordinate in the separated-grid corollary. On the macroscopic window, the two centers differ by \(O(1)\) and the two scales have ratio \(1+O(N^{-1})\). Hence the logarithmic thresholds differ by \(o(1)\) after standardization, and the theorem transfers its marginal and pair-probability conclusions to \(\chi_j^{\mathrm{proj}}\). The lower-wall regularity clause remains stated in the half-integer coordinate.

The upper and lower events require different determinantal reductions. For the upper event, let \(Z_j^+\) count particles above the upper threshold and let \(T_{10}^+,T_{01}^+\) be the restricted cross blocks. The exact mixed-count identity is
\[
\E[Z_n^+Z_m^+]
=
\E Z_n^+\,\E Z_m^+
-
\Tr(T_{01}^+T_{10}^+).
\]
The cross-block trace estimates carry the upper rarity exponent, so the trace correction is negligible compared with \(u_n^+u_m^+\); the inequality \(\ind_{E_n^+}\ind_{E_m^+}\leq Z_n^+Z_m^+\) then gives the required upper bound. The lower event is instead a gap event. After writing the diagonal gap blocks as \(A_n^-=I-\widehat K_{00}\) and \(D_m^-=I-\widehat K_{11}\), the central interaction is controlled by
\[
\norm{(D_m^-)^{-1}T_{10}^-(A_n^-)^{-1}T_{01}^-}_1
\leq
N^{-2\epsilon+o(1)}.
\]
The resolvent bounds and this Schur-complement estimate compare the two-level gap determinant with the product of its one-level gap determinants. This distinction explains why upper occupancy estimates alone do not treat the lower tail.

\subsection*{Main result}
The next theorem is the main one-sided probability result. Its opening block states the marginal powers and the product-scale upper estimates for upper/upper pairs and, separately, lower/lower pairs. The remaining clauses record the two-level Laguerre operators, their norm decay, the lower-gap resolvent control, and the relative stability of lower walls used in the proof and in later counting arguments.

\begin{thm}[Finite-\(N\) product-scale decorrelation]\label{thm:nested-wishart-decorrelation}
Let \(\gamma>0\). Let \((M_j)_{j\geq1}\) be a nondecreasing sequence of positive integers such that
\[
\frac{M_j}{j}\longrightarrow\gamma,
\qquad
0\leq M_{j+1}-M_j\leq K
\]
for a fixed finite \(K\). Let \(X^{(j)}\) be the \(M_j\times j\) northwest rectangle of one infinite array of independent centered circular complex Gaussian variables of second absolute moment one. Put
\[
\Lambda_j=\lambda_{\max}(X^{(j)}(X^{(j)})^*),
\qquad
p_j=\min(M_j,j),
\qquad
q_j=\max(M_j,j),
\]
\[
\mu_j
=
\left(
\sqrt{p_j+\frac12}
+
\sqrt{q_j+\frac12}
\right)^2,
\]
\[
\sigma_j
=
\left(
\sqrt{p_j+\frac12}
+
\sqrt{q_j+\frac12}
\right)
\left[
\left(p_j+\frac12\right)^{-1/2}
+
\left(q_j+\frac12\right)^{-1/2}
\right]^{1/3},
\]
and
\[
\overline\chi_j=\frac{\Lambda_j-\mu_j}{\sigma_j}.
\]
Fix \(a,b>0\), put
\[
\kappa_+=\frac43a^{3/2},
\qquad
\kappa_-=\frac{b^3}{12},
\]
and assume
\[
\max(\kappa_+,\kappa_-)<\frac13.
\]
Fix
\[
0<\epsilon<
\frac13-\max(\kappa_+,\kappa_-)
\]
and \(H_{\mathrm{shift}}<\infty\). There are constants \(\eta_0>0\) and \(c>0\), depending only on \(\gamma\) and \(K\), such that the following assertions hold as \(N\to\infty\), uniformly over integers \(n,m\) satisfying
\[
N\leq n<m\leq(1+\eta_0)N,
\qquad
m-n\geq N^{2/3+\epsilon},
\]
and deterministic shifts
\[
h_{n,+},h_{m,+},h_{n,-},h_{m,-}
\]
with absolute values at most \(H_{\mathrm{shift}}\). Define
\[
E_j^+
=
\left\{
\overline\chi_j
\geq
a(\log j)^{2/3}+h_{j,+}
\right\},
\]
\[
G_j^-
=
\left\{
\overline\chi_j
\leq
-b(\log j)^{1/3}+h_{j,-}
\right\},
\]
and
\[
u_j^+=\Prob(E_j^+),
\qquad
u_j^-=\Prob(G_j^-).
\]
Then
\[
u_j^+=N^{-\kappa_++o(1)},
\qquad
u_j^-=N^{-\kappa_-+o(1)}
\]
uniformly for \(j\) in the displayed window, and
\[
\Prob(E_n^+\cap E_m^+)
\leq
(1+o(1))u_n^+u_m^+,
\]
\[
\Prob(G_n^-\cap G_m^-)
\leq
(1+o(1))u_n^-u_m^-.
\]
All deterministic \(o(1)\) terms are uniform over the displayed endpoint and shift choices. The preceding marginal and pair-probability conclusions also hold after replacing \(\overline\chi_j\) by
\[
\chi_j^{\mathrm{proj}}
=
\frac{
\Lambda_j-(\sqrt{M_j}+\sqrt j)^2
}{
(\sqrt{M_j}+\sqrt j)
(M_j^{-1/2}+j^{-1/2})^{1/3}
}.
\]

The exact two-time geometry is as follows. Put
\[
(p,q)=(p_n,q_n),
\qquad
(P,Q)=(p_m,q_m),
\qquad
A=P-p,
\qquad
B=Q-q,
\]
\[
H=A+B,
\qquad
\rho=\frac12\left(\frac Ap+\frac Bq\right).
\]
Then
\[
H=(m-n)+(M_m-M_n),
\qquad
H\geq N^{2/3+\epsilon},
\qquad
\rho\geq cN^{-1/3+\epsilon}.
\]
After a levelwise determinantal gauge, the equal-level kernels are the Laguerre orthogonal projections, and the cross blocks are
\[
\widehat K_{10}(x,y)
=
\sum_{\ell=1}^p
d_\ell
\phi_{P-\ell,Q-\ell}(x)
\phi_{p-\ell,q-\ell}(y),
\]
\[
\widehat K_{01}(x,y)
=
-\sum_{v=0}^{\infty}
d_{-v}^{-1}
\phi_{p+v,q+v}(x)
\phi_{P+v,Q+v}(y),
\]
where
\[
\phi_{r,s}(x)
=
\sqrt{\frac{r!}{s!}}\,
\exp(-x/2)x^{(s-r)/2}L_r^{(s-r)}(x)
\]
and
\[
d_\ell
=
\sqrt{
\frac{
(P-\ell)!(Q-\ell)!p!q!
}{
(p-\ell)!(q-\ell)!P!Q!
}
}.
\]
These formulas cover fixed orientation, orientation reversal between the two rectangles, square endpoints, and simultaneous row-and-column growth.

Let \(I_j^+\) be the upper half-line defining \(E_j^+\), and let
\(T_{10}^+,T_{01}^+\) be the cross blocks restricted to
\(I_m^+\times I_n^+\). Then
\[
\max\left(
\norm{T_{10}^+}_{\op},
\norm{T_{10}^+}_{\HS},
\norm{T_{10}^+}_1,
\norm{T_{01}^+}_{\op},
\norm{T_{01}^+}_{\HS},
\norm{T_{01}^+}_1
\right)
\leq
N^{-\kappa_+-\epsilon+o(1)}.
\]
Let \(B_j^-\) be the forbidden upper half-line whose gap event is \(G_j^-\), and let
\(T_{10}^-,T_{01}^-\) be the correspondingly restricted cross blocks. Then
\[
\max\left(
\norm{T_{10}^-}_{\op},
\norm{T_{10}^-}_{\HS},
\norm{T_{10}^-}_1,
\norm{T_{01}^-}_{\op},
\norm{T_{01}^-}_{\HS},
\norm{T_{01}^-}_1
\right)
\leq
N^{-\epsilon+o(1)}.
\]
Writing
\[
A_n^-=I-\widehat K_{00}\big|_{B_n^-},
\qquad
D_m^-=I-\widehat K_{11}\big|_{B_m^-},
\]
one also has
\[
\norm{(A_n^-)^{-1}}_{\op}
+
\norm{(D_m^-)^{-1}}_{\op}
\leq
N^{o(1)}
\]
and
\[
\norm{
(D_m^-)^{-1}T_{10}^-
(A_n^-)^{-1}T_{01}^-
}_1
\leq
N^{-2\epsilon+o(1)}.
\]

Finally, fix \(D_{\mathrm{wall}}<\infty\). There is \(R_*>1\) such that the following relative lower-wall estimate holds uniformly for every \(j\) in the displayed macroscopic window. Let
\[
F_j(t)=\Prob(\overline\chi_j\leq t).
\]
If \(0<\ell_{\mathrm{wall}}\leq1\) and
\[
[s,s+\ell_{\mathrm{wall}}]
\subseteq
[-D_{\mathrm{wall}}(\log N)^{1/3},-R_*],
\]
then
\[
F_j(s+\ell_{\mathrm{wall}})-F_j(s)
\leq
\ell_{\mathrm{wall}}N^{o(1)}
F_j(s+\ell_{\mathrm{wall}}).
\]
Consequently, if
\[
\ell_{\mathrm{wall}}\leq N^{-\alpha_{\mathrm{wall}}}
\]
for a fixed \(\alpha_{\mathrm{wall}}>0\), then
\[
1
\leq
\frac{F_j(s+\ell_{\mathrm{wall}})}{F_j(s)}
\leq
1+N^{-\alpha_{\mathrm{wall}}+o(1)}.
\]
Moreover, for every fixed \(C_{\mathrm{lad}}<\infty\), both estimates hold simultaneously and uniformly over any deterministic family of at most
\(C_{\mathrm{lad}}\log N\) wall intervals satisfying the displayed assumptions.
\end{thm}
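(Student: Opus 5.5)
The proof has four stages: build the exact two-level determinantal structure, bound the cross blocks using Laguerre asymptotics, run separate upper- and lower-tail arguments, and transfer to the physical normalization.

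\emph{Stage 1: two-level structure.} Along an admissible monotone row--column path from \((M_n,n)\) to \((M_m,m)\), each step adds one row or one column. Such a step is a rank-one LUE update, so it gives an interlacing Markov transition whose kernel factorizes in the Laguerre functions \(\phi_{r,s}\). I would compose these transitions with the Eynard--Mehta theorem to get a two-level determinantal process. A levelwise gauge then turns the equal-level kernels into the Laguerre orthogonal projections, with cross blocks \(\widehat K_{10}\) and \(\widehat K_{01}\).

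The coefficients \(d_\ell\) come from the step-by-step transfer of \(\phi_{r-\ell,s-\ell}\). Each single-step factor is \(\sqrt{(r-\ell)(s-\ell)/(rs)}\) or its row/column analogue, and these telescope to the displayed product. Orientation reversal only relabels \(p\) and \(q\) through the symmetry \(\phi_{r,s}\leftrightarrow\phi_{s,r}\) at the square point, which is why I would check the square crossing separately. The identities \(H=(m-n)+(M_m-M_n)\) and \(\rho\ge cN^{-1/3+\epsilon}\) follow from \(H\ge m-n\) and from \(p,q\asymp N\) on a window with \(\eta_0\) small.

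\emph{Stage 2: cross-block bounds.} This is the main obstacle. I would use the uniform Plancherel--Rotach (Airy) asymptotics of \(\phi_{p-\ell,q-\ell}\) near \(\mu\), in the half-integer scaling. In that scaling the coefficient is \(d_\ell\approx \exp(-\ell\rho+O(\ell^2/N))\), and \(\phi_{p-\ell,q-\ell}\) behaves like \(\sigma^{-1/2}\Ai(\xi+\ell\delta)\). So \(\widehat K_{10}\) is a discretized extended-Airy kernel at time \(\tau=\rho/\delta\gtrsim N^{\epsilon}\). The indices \(\ell\lesssim N^{1/3}\) dominate.

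On \(I^+\), which starts at \(a(\log N)^{2/3}\), the Airy decay \(e^{-\frac23 x^{3/2}}\) in each variable contributes \(N^{-\kappa_+/2}\) per factor. For the product of two functions, the Airy estimate alone is too weak, so I would combine \(\exp(-\tau\cdot)\) with the shift. The heuristic for the extended kernel is
\[
\int_0^\infty \ee^{-\tau\lambda}\Ai(x+\lambda)\Ai(y+\lambda)\,\dd\lambda .
\]
The interplay between the time factor and the Airy factors yields a bound of order \(\tau^{-1}\sqrt{u_nu_m}\)-type, i.e.\ \(N^{-\kappa_+-\epsilon}\) after \(\tau\gtrsim N^\epsilon\), up to polynomial prefactors in \(\log N\). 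The trace norm then follows from the rank-one expansion by summing \(\|\phi\|\|\phi\|\).

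For \(\widehat K_{01}\), the terms \(d_{-v}^{-1}\) decay like \(\ee^{-v\rho}\) and the functions \(\phi_{p+v,q+v}\) extend beyond the edge, so the analogous Airy representation gives the same bound. For the lower blocks the restriction is to half-lines starting near \(-b(\log N)^{1/3}\), where there is no Airy decay. Only the time decay survives, which gives \(N^{-\epsilon+o(1)}\), with the \(\log\)-length of the interval absorbed into \(N^{o(1)}\). Uniform error control for \(\ell\) up to \(p\) (the bulk indices) needs separate crude bounds, namely exponential smallness of \(\phi\) beyond the edge, to be summable.

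\emph{Stage 3: the two tails.} For the marginals I would invoke the cited moderate-deviation tails \cite{BBBK24}; an \(O(1)\) shift changes the threshold by a relative \(o(1)\) and so does not change the exponent.

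For the upper tail, the determinantal correlation functions give
\[
\E Z_n^+Z_m^+=\E Z_n^+\,\E Z_m^+-\Tr(T_{01}^+T_{10}^+).
\]
Combining \(\E Z_j^+=(1+o(1))u_j^+\), which follows from two-point negativity and one-level tails, with \(|\Tr(T_{01}^+T_{10}^+)|\le\|T_{01}^+\|_{\HS}\|T_{10}^+\|_{\HS}\le N^{-2\kappa_+-2\epsilon+o(1)}\) and \(\ind_{E_n^+}\ind_{E_m^+}\le Z_n^+Z_m^+\) gives the product bound.

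For the lower tail, \(\Prob(G_n^-\cap G_m^-)\) is the Fredholm determinant of the \(2\times2\) block operator. By the Schur complement it equals \(\det A_n^-\cdot\det D_m^-\cdot\det(I-(D_m^-)^{-1}T_{10}^-(A_n^-)^{-1}T_{01}^-)\), up to the sign convention on \(T\). The last factor is \(1+O(N^{-2\epsilon+o(1)})\) once the resolvent bounds hold.

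Those resolvent bounds \(\|(A_n^-)^{-1}\|\le N^{o(1)}\) come from the spectral gap of \(\widehat K_{00}\) restricted to \(B_n^-\). Its top eigenvalue \(\lambda_1\) satisfies \(1-\lambda_1\ge\) the probability that the particle in the top eigenfunction is absent. I would lower-bound this by \(\det(I-K)/\det(I-K)\) restricted to the complement of the top mode, using the known log-determinant tail \(\log F(s)\approx-|s|^3/12\) and its derivative regularity, which gives \(N^{-o(1)}\). The trace-norm bound on the product then follows from \(\|T\|_{\HS}^2\) and the resolvents.

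The wall regularity is obtained the same way: \(\frac{\dd}{\dd s}\log F_j(s)=\Tr((I-K_s)^{-1}K_s(s,s)\text{-type})\) is bounded by \(N^{o(1)}\) for \(s\le-R_*\) using the resolvent bound and \(K(s,s)\lesssim|s|^{1/2}\). Integrating this over a wall of length \(\ell_{\mathrm{wall}}\) gives both displayed estimates, and a union over \(C_{\mathrm{lad}}\log N\) walls costs nothing since the bounds are deterministic.

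\emph{Stage 4: transfer.} The two centers differ by \(O(1)\) and the two scales have ratio \(1+O(N^{-1})\). Standardized thresholds therefore move by \(o(1)\), which is absorbed into the shifts, and this transfers all conclusions to \(\chi^{\mathrm{proj}}\).
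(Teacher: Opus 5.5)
Most of your outline follows the paper's architecture: trace-norm bounds on the cross blocks from restricted mode norms times damped coefficients, the occupancy identity with $\ind_{E_n^+}\ind_{E_m^+}\le Z_n^+Z_m^+$, the Schur-complement factorization of the block gap determinant, and Jacobi's formula for the wall clause. The genuine gap is the resolvent bound $\norm{(A_n^-)^{-1}}_{\op}+\norm{(D_m^-)^{-1}}_{\op}\le N^{o(1)}$. Both the lower joint bound and the wall clause rest on it, and your argument for it does not work.

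Let $\lambda_1\ge\lambda_2\ge\cdots$ be the eigenvalues of $\widehat K_{00}$ compressed to $B_n^-$. The identity $1-\lambda_1=\det(A_n^-)/\prod_{i\ge2}(1-\lambda_i)$ by itself gives only $1-\lambda_1\ge\det(A_n^-)=u_n^-=N^{-\kappa_-+o(1)}$. That yields $\norm{(A_n^-)^{-1}}_{\op}\le N^{\kappa_-+o(1)}$, so the Schur correction is bounded only by $\norm{R}_1\le N^{2\kappa_--2\epsilon+o(1)}$. This is useless when $\epsilon\le\kappa_-$, which is forced whenever $\kappa_->1/6$. The wall clause likewise degrades to a relative error $N^{-\alpha_{\mathrm{wall}}+D_{\mathrm{wall}}^3/12+o(1)}$.

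Your proposed improvement does not close this. The one-time asymptotic $\log F(s)=-\abs{s}^3/12\,(1+o(1))$ constrains only the sum $\sum_i\log(1-\lambda_i)$. It is equally consistent with the entire deficit sitting in $\lambda_1$. The denominator $\prod_{i\ge2}(1-\lambda_i)$ is not the gap probability of the process at any wall, so no tail estimate controls it.

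Derivative regularity does not help either. Here $\frac{\dd}{\dd s}\log F(s)$ is the resolvent diagonal $\sum_i\frac{\lambda_i}{1-\lambda_i}\abs{\psi_i(s)}^2$ at the wall. It stays polynomial in $\abs s$ (about $s^2/4$ in the Airy limit), while $(1-\lambda_1)^{-1}$ grows like $\exp(\tfrac{2\sqrt2}{3}\abs{s}^{3/2})$. So the top eigenfunction is exponentially small at the wall, and a bound on this diagonal cannot bound $(1-\lambda_1)^{-1}$. Moreover, your own wall-regularity argument bounds that derivative via the resolvent bound, so this direction is circular.

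What is missing is a quantitative spectral gap for the compressed finite-$N$ projection. The paper obtains it in Step~3 of its proof from two ingredients. The first is an operator-norm comparison $\norm{\mathcal L_{r,s,R}-K_{\Ai,-R}}_{\op}\le N^{-1/3+o(1)}$ on $L^2((-R,\infty))$. It is proved via the Christoffel--Darboux divided-difference representation, the $C^1$ mixed-window Laguerre--Airy estimate of Proposition~\ref{prop:mixed-window-c1} on $[-R,L\log N]$, and a tail-trace truncation. The second is the Airy top-eigenvalue asymptotic of Proposition~\ref{prop:airy-kernel-gap}. It gives $1-\lambda_0(-R)\ge\exp(-CR^{3/2})=N^{-o(1)}$ for $R=O((\log N)^{1/3})$, which survives the $N^{-1/3+o(1)}$ perturbation.

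You need this, or an equivalent statement that every function in the range of the Laguerre projection keeps at least an $N^{-o(1)}$ fraction of its $L^2$ mass below the wall. With that input, the rest of your plan is essentially the paper's route.
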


The event inequalities above are uniform only in the displayed short macroscopic window, at the displayed supercritical separations, and under the displayed bounded shifts. The phrase finite-\(N\) refers to estimates formulated for sufficiently large finite matrix sizes with uniform deterministic errors; no complementary lower pair bound is asserted. The theorem treats the upper and lower tails of the largest-eigenvalue soft edge. It does not concern the smallest eigenvalue.

An intrinsic consequence is a separated-grid moment estimate. For a sign \(\mathfrak s\), the rarity exponent \(\kappa_{\mathfrak s}\) is the marginal cost: one event has probability \(N^{-\kappa_{\mathfrak s}+o(1)}\). Choosing the explicit arithmetic grid with
\[
\abs{\mathcal T_N^{\mathfrak s}}
=
\left\lfloor N^{\kappa_{\mathfrak s}+\theta_{\mathfrak s}}\right\rfloor
\]
points offsets that rarity and targets a first moment of order \(N^{\theta_{\mathfrak s}}\). The step
\(\lceil N^{2/3+\epsilon_{\mathfrak s}}\rceil\)
pays the cost of placing every distinct pair beyond the classical correlation window. The resulting arithmetic grid has span at most
\[
N^{\kappa_{\mathfrak s}+\theta_{\mathfrak s}+2/3+\epsilon_{\mathfrak s}+o(1)},
\]
so it fits inside a short macroscopic window exactly under
\(\kappa_{\mathfrak s}+\theta_{\mathfrak s}+\epsilon_{\mathfrak s}<1/3\).
In the second moment, the diagonal terms reproduce the first moment, while the theorem's one-sided pair estimate controls the off-diagonal sum uniformly under bounded deterministic shifts. Corollary~\ref{cor:gaussian-grid-moments}, stated after the proof of Theorem~\ref{thm:nested-wishart-decorrelation}, makes this intrinsic count statement precise. The companion paper \cite{YangLFL} later uses its conditional form as a Gaussian input.

Section~\ref{sec:gaussian-growth} derives the one-step transitions, the determinantal law along admissible monotone paths, the conditional kernel, and the fixed- and reversing-orientation two-level formulas. Section~\ref{sec:airy-tails} identifies the Airy-time conversion, proves the uniform Laguerre-to-Airy estimates and one-time logarithmic tails, and then proves Theorem~\ref{thm:nested-wishart-decorrelation} and Corollary~\ref{cor:gaussian-grid-moments}. Corollary~\ref{cor:gaussian-supercritical-margins} records separate one-time supercritical margins. Appendix~\ref{app:gaussian-finite-particle-algebra} contains the finite-particle determinant calculations, and Appendix~\ref{app:gaussian-airy-envelopes} records the Airy envelope estimates.

\section{Gaussian row-column growth and determinantal kernels}\label{sec:gaussian-growth}

\emph{Goal.} This section derives the exact finite-$N$ spectral law of the common Gaussian row--column growth and the two-level Laguerre blocks used later in the probability estimates. \emph{Obstacle.} Along a rectangular path the number of positive squared singular values can change, and the identity of the smaller physical side can switch between endpoints. \emph{Mechanism.} I combine the Gaussian rank-one update law with fixed-particle Eynard--Mehta after padding each level by deterministic virtual states, then evaluate the resulting finite-rank term in Laguerre coordinates. \emph{Output.} The section supplies unconditional and forward conditional kernels on the admissible finite elementary up-right paths in the statements, together with fixed- and reversing-orientation two-level formulas that feed the subsequent Airy-time and decorrelation analysis.

\subsection{One-step Gaussian growth and shell commutation}

For an $m_t$-by-$n_t$ rectangle, $p_t=\min(m_t,n_t)$ is the number of positive squared singular values; any remaining eigenvalues of the larger Gram matrix are zero. An increment of a physical side that is already weakly larger leaves $p_t$ unchanged and is a same-size step, whereas an increment of the strictly smaller side increases $p_t$ by one and is a birth step. The fixed-particle determinant mechanism requires one cardinality at every level, so the later construction pads the $p_t$ real particles to the terminal cardinality by deterministic virtual states. These labels are neither eigenvalues nor random particles.

The determinant-chain mechanism used below is the Eynard--Mehta construction \cite[pp.~4451--4455]{EynardMehta1998}; a theorem-level finite-chain formulation is given in \cite[Theorem~1.4]{BorodinRains2005}, and broader interlacing-chain geometry appears in \cite[Theorem~1]{OkounkovReshetikhin2003}. These precedents supply the general mechanism. The work here is its finite-dimensional specialization to the common northwest Gaussian row--column field and the normalization required by the later Laguerre analysis.

\begin{lem}[Fixed-particle Eynard--Mehta lemma]\label{lem:fixed-particle-em}
Let $T$ be a nonnegative integer and $P$ a positive integer. For each $t\in\{0,\ldots,T\}$, let $(E_t,\mu_t)$ be a sigma-finite measure space. Let
\[
 \phi_i\colon E_0\to\C,\qquad
 \psi_j\colon E_T\to\C,\qquad 1\leq i,j\leq P,
\]
and let $W_t\colon E_t\times E_{t+1}\to\C$ for $0\leq t<T$. For $s<t$, define $W_{s,t}$ by convolution of $W_s,\ldots,W_{t-1}$. Define
\[
 F_i^{(t)}=\phi_i*W_{0,t},
 \qquad
 G_j^{(t)}=W_{t,T}*\psi_j,
\]
with $F_i^{(0)}=\phi_i$ and $G_j^{(T)}=\psi_j$.

Suppose that all integrals below converge absolutely and that
\[
 \det[\phi_i(x_j^{(0)})]
 \prod_{t=0}^{T-1}\det[W_t(x_i^{(t)},x_j^{(t+1)})]
 \det[\psi_i(x_j^{(T)})]
\]
is nonnegative almost everywhere and has positive finite integral $Z$ with respect to
\[
 \prod_{t=0}^T\prod_{a=1}^P\mu_t(\dd x_a^{(t)}).
\]
Define
\[
 M_{ij}
 =
 \int_{E_t}F_i^{(t)}(x)G_j^{(t)}(x)\,\mu_t(\dd x).
\]
This value is independent of $t$. Then $M$ is invertible,
\[
 Z=(P!)^{T+1}\det M,
\]
and the point process on the disjoint union of $E_0,\ldots,E_T$ formed by the $P$ coordinates at every time is determinantal relative to the direct-sum measure, with correlation kernel
\[
 K(s,x;t,y)
 =
 -\ind_{\{s<t\}}W_{s,t}(x,y)
 +
 \sum_{i,j=1}^P
 G_j^{(s)}(x)(M^{-1})_{ji}F_i^{(t)}(y).
\]
\end{lem}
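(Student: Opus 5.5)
The plan is to follow the classical Eynard--Mehta route in its finite-chain form \cite[Theorem~1.4]{BorodinRains2005}, written out at the level of generality of Lemma~\ref{lem:fixed-particle-em}. \emph{Independence of $M$ in $t$.} The first step is Fubini together with the associativity of the convolutions. For $s<t$ one has $F_i^{(t)}=F_i^{(s)}*W_{s,t}$ and $G_j^{(s)}=W_{s,t}*G_j^{(t)}$. Hence
\[
\int F_i^{(s)}G_j^{(s)}\,\dd\mu_s=\int\!\!\int F_i^{(s)}(x)W_{s,t}(x,y)G_j^{(t)}(y)\,\mu_s(\dd x)\mu_t(\dd y)=\int F_i^{(t)}G_j^{(t)}\,\dd\mu_t ,
\]
and absolute convergence justifies the interchange.

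\emph{Partition function.} Next I integrate the product of determinants one time slice at a time. The tool is the Andréief (Cauchy--Binet) identity
\[
\int\det[f_i(x_a)]\det[g_j(x_a)]\prod_a\mu(\dd x_a)=P!\,\det\Big[\int f_ig_j\,\dd\mu\Big].
\]
Applying it $T+1$ times collapses the chain. One integrates $x^{(0)}$ against $\det[\phi_i(x^{(0)}_j)]\det[W_0]$, which produces $P!\det[F_i^{(1)}(x^{(1)}_j)]$, and so on. This gives $Z=(P!)^{T+1}\det M$. Since $Z>0$ by hypothesis, $M$ is invertible.

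\emph{Correlation kernel.} To obtain the kernel I will use the generating-functional method. For bounded measurable $g_t$ with small support norm, consider
\[
\E\prod_{t,a}\bigl(1+g_t(x_a^{(t)})\bigr).
\]
Inserting this factor and repeating the Andréief collapse yields
\[
\frac{\det\bigl[\phi*(1+g_0)W_0(1+g_1)\cdots W_{T-1}(1+g_T)*\psi\bigr]}{\det M}.
\]
I then write the inner matrix as $M$ plus a perturbation and factor it as $\det(I+M^{-1}\Delta)$. Expanding $\Delta$ in the $g_t$ and using Sylvester's identity $\det(I+AB)=\det(I+BA)$ converts this into a Fredholm determinant $\det(I+Kg)$ on $L^2$ of the disjoint union. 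The operator there is $K=-W_{\mathrm{strict}}+G M^{-1}F$. The standard linear-algebra step is the identity
\[
(I-W g)^{-1}\text{-type telescoping}\Longrightarrow K(s,x;t,y)=-\ind_{s<t}W_{s,t}(x,y)+\sum_{i,j}G_j^{(s)}(x)(M^{-1})_{ji}F_i^{(t)}(y),
\]
where $W$ is the block upper-triangular one-step operator. This is exactly the computation in \cite[pp.~4451--4455]{EynardMehta1998}. Matching the coefficients of $\prod g_{t_k}(y_k)$ then identifies the correlation functions as $\det[K(t_k,y_k;t_l,y_l)]$.

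\emph{Main obstacle.} I expect the delicate step to be the algebraic identity behind the kernel. One must show that $\det$ of the perturbed $P\times P$ matrix equals the Fredholm determinant with the nilpotent, strictly time-ordered part $-\ind_{s<t}W_{s,t}$. I will first take finitely many nonzero $g_t$ of small sup norm so that the expansions converge absolutely. I will then write $\prod_t(1+g_t)$ interleaved with the $W_t$ as $(I-\mathcal W\mathcal G)^{-1}$ restricted appropriately, with $\mathcal W$ nilpotent of order $T+1$, so that the Neumann series is finite.

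Two further justifications are needed. Positivity and integrability are used only to interpret the normalized density as a probability measure. Measure-theoretic issues are handled by sigma-finiteness and Fubini under the stated absolute convergence.
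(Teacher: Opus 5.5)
Your outline has the same structure as the paper's proof. Andr\'eief applied $T+1$ times gives $Z=(P!)^{T+1}\det M$, and Fubini gives the $t$-independence of $M$. Inserting $\prod(1+g_t)$ gives $\det M(g)/\det M$ with $M(g)=M+Fg(I-\mathcal Wg)^{-1}G$. The strictly time-ordered transfer has a finite Neumann series, $\det(I+AB)=\det(I+BA)$ moves the $P\times P$ determinant to the function side, and coefficients are compared.

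The step that does not work as written is landing on an \emph{operator} Fredholm determinant $\det(I+Kg)$ on $L^2$ of the disjoint union. You would do this via $\det(I+Kg)=\det(I-\mathcal Wg)\det\bigl(I+(I-\mathcal Wg)^{-1}GM^{-1}Fg\bigr)$ and $\det(I-\mathcal Wg)=1$. Since $Kg$ differs from $-\mathcal Wg$ by a finite-rank operator, both $\det(I+Kg)$ and $\det(I-\mathcal Wg)$ are defined only if $\mathcal Wg$ is trace class. Under the lemma's hypotheses it generally is not. In the paper's own application the one-step transfers are Volterra-type kernels $\ind_{\{y>x\}}w(y)$. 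These are not trace class even after compression to a compact interval, because their singular values decay like $1/k$. Without a cutoff they need not even be bounded on $L^2(0,\infty)$.

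Taking $g$ of small sup norm does not address this. Convergence was never the issue: $\det M(g)$ is a polynomial in the insertions, since there are only $P(T+1)$ points. The problem is that the determinant you want to land on does not exist.

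What identifying $\rho_n$ actually needs is the identity for the explicit Fredholm series $\sum_n\frac1{n!}\int\det[K(q_a,q_b)]\prod_a g(q_a)$, degree by degree. This is how the paper closes the step, explicitly without any operator determinant. It proves the identity $\det[I+(-\mathcal W+GM^{-1}F)g]=\det[I_P+M^{-1}Fg(I-\mathcal Wg)^{-1}G]$ for finite state spaces. There $\det(I-\mathcal Wg)=1$ by block triangularity, and Sylvester's identity is plain matrix algebra. It then transfers each fixed-degree coefficient to the measure spaces by matching the permutation and time-chain expansions term by term, using the assumed absolute convergence.

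With that replacement your argument is complete once you add two final steps. First, polarize with $g=\sum_r z_rf_r$ and compare the coefficient of $z_1\cdots z_n$. Second, use a monotone-class argument to pass from the integrated identities to $\rho_n=\det[K(q_a,q_b)]$ almost everywhere.
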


\begin{proof}
The proof is recorded in Appendix~\ref{app:proof-fixed-particle-em}.
\end{proof}

\begin{prop}[Rank-one Markov description of complex Gaussian growth]\label{prop:rank-one-growth}
Let $(\xi_{ij})_{i,j\geq1}$ be independent centered circular complex Gaussian variables satisfying $\E\abs{\xi_{ij}}^2=1$. Let $((m_t,n_t))_{t\geq0}$ be a deterministic sequence of positive integer pairs such that exactly one coordinate increases by one at each step. Put
\[
 A_t=(\xi_{ij})_{\substack{1\leq i\leq m_t\\1\leq j\leq n_t}},
\]
let $\Lambda_t$ be the decreasing list of positive squared singular values of $A_t$, and let $\mathcal E_t$ be the sigma-algebra generated by the entries of $A_t$.

For $d\geq1$ and a decreasing positive list
\[
 \ell=(\ell_1,\ldots,\ell_l),\qquad l\leq d,
\]
define $R_d(\ell)$ as the law of the decreasing list of positive eigenvalues of
\[
 \diag(\ell_1,\ldots,\ell_l,0,\ldots,0)+gg^*,
\]
where $g$ is a standard complex Gaussian vector in $\C^d$. Then $(\Lambda_t)_{t\geq0}$ is a time-inhomogeneous Markov chain relative to $(\mathcal E_t)_{t\geq0}$. Conditionally on $\mathcal E_t$, a column addition applies $R_{m_t}$ and a row addition applies $R_{n_t}$.

If $b_1,\ldots,b_d$ are the diagonal entries $\ell_1,\ldots,\ell_l,0,\ldots,0$ and $q\notin\{b_1,\ldots,b_d\}$, then
\[
 \begin{aligned}
 &\det(qI_d-\diag(b_1,\ldots,b_d)-gg^*)\\
 &\qquad=\det(qI_d-\diag(b_1,\ldots,b_d))
 \left(1-\sum_{i=1}^d\frac{W_i}{q-b_i}\right),
 \end{aligned}
\]
where $W_i=\abs{g_i}^2$ are independent exponential random variables with density $\ee^{-w}$ on $w>0$. The eigenvalues of
\[
 \diag(b_1,\ldots,b_d)+gg^*
\]
interlace those of $\diag(b_1,\ldots,b_d)$.

Consequently, let $(M_N)_{N\geq1}$ be nondecreasing and define
\[
 X^{(N)}=(\xi_{ij})_{\substack{1\leq i\leq M_N\\1\leq j\leq N}},
 \qquad
 r_N=M_{N+1}-M_N.
\]
Conditionally on $X^{(N)}$, the law of $\Lambda_{N+1}$ is obtained from $\Lambda_N$ by applying $R_{M_N}$ once and then $R_{N+1}$ exactly $r_N$ times. If $r_N\leq K$ for all $N$, at most $K+1$ rank-one kernels occur in each transition.
\end{prop}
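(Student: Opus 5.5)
The plan is to reduce each growth step to a rank-one additive perturbation of a diagonal Gram matrix by a fresh Gaussian vector, and to read off the Markov property from unitary invariance.

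\emph{Column addition.} Consider a step $(m_t,n_t)\to(m_t,n_t+1)$. Then $A_{t+1}=[A_t\mid g]$, where $g=(\xi_{i,n_t+1})_{i\le m_t}$ is a standard complex Gaussian vector in $\C^{m_t}$ independent of $\mathcal E_t$. Hence
\[
A_{t+1}A_{t+1}^*=A_tA_t^*+gg^*.
\]
Write $A_tA_t^*=U\diag(b_1,\ldots,b_{m_t})U^*$ with an $\mathcal E_t$-measurable unitary $U$. The diagonal entries $b_i$ are the entries of $\Lambda_t$ padded by $m_t-p_t$ zeros. By circular invariance, $U^*g$ is again standard Gaussian and independent of $\mathcal E_t$. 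The positive eigenvalues of $A_{t+1}A_{t+1}^*$ are the positive squared singular values of $A_{t+1}$. Therefore, conditionally on $\mathcal E_t$, the list $\Lambda_{t+1}$ has law $R_{m_t}(\Lambda_t)$.

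\emph{Row addition.} For a step $(m_t,n_t)\to(m_t+1,n_t)$, apply the same argument to $A^*A$, which is $n_t\times n_t$. We get $A_{t+1}^*A_{t+1}=A_t^*A_t+hh^*$, where $h\in\C^{n_t}$ is the new row (conjugated). The positive eigenvalues of $A^*A$ and $AA^*$ coincide, so the conditional law is $R_{n_t}(\Lambda_t)$.

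\emph{Markov property.} In both cases the conditional law given $\mathcal E_t$ depends only on $\Lambda_t$. This gives the Markov property relative to $(\mathcal E_t)$.

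\emph{Determinant identity.} The matrix determinant lemma gives
\[
\det(qI-D-gg^*)=\det(qI-D)\bigl(1-g^*(qI-D)^{-1}g\bigr),
\]
and $g^*(qI-D)^{-1}g=\sum_i|g_i|^2/(q-b_i)$. The $|g_i|^2$ are independent $\mathrm{Exp}(1)$ variables because the $g_i$ are independent standard circular Gaussians.

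\emph{Interlacing.} This is Weyl/Cauchy interlacing for a positive semidefinite rank-one perturbation. Concretely, the secular function $1-\sum_i W_i/(q-b_i)$ is monotone between consecutive distinct poles. When some $W_i$ vanish or the $b_i$ repeat, use a continuity argument or the standard Weyl inequalities $\lambda_k(D)\le\lambda_k(D+gg^*)\le\lambda_{k-1}(D)$.

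\emph{Nested consequence.} Pass from $X^{(N)}$ to $X^{(N+1)}$ along a monotone path: first add column $N+1$ with $M_N$ rows, then add the $r_N$ new rows one at a time, each of length $N+1$. The first step applies $R_{M_N}$ and each row step applies $R_{N+1}$. Since the filtration along the path is increasing and each step's kernel depends only on the current spectrum, iterating the Markov step gives the composite conditional law given $X^{(N)}$. Its count is $1+r_N\le K+1$ kernels.

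The only mildly delicate point is the measurability of the diagonalizing unitary. A measurable selection, for example via a Borel eigen-decomposition, suffices, together with independence of $U^*g$ from $\mathcal E_t$ by conditioning on $U$. There is no real obstacle beyond this.
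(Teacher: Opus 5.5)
Your proposal is correct and follows essentially the same route as the paper: a rank-one update of the appropriate Gram matrix by an independent Gaussian column or row, unitary invariance to reduce to the zero-padded diagonal spectrum, the matrix determinant lemma, interlacing, and the column-then-rows exposure order for the nested path. The differences are cosmetic. The paper avoids your measurable choice of a diagonalizing unitary by observing that $\int f(\operatorname{Spec}_+(AA^*+xx^*))\,\mu_m(\dd x)$, with $\mu_m$ the standard complex Gaussian law on $\C^m$, is invariant under unitary conjugation of $AA^*$. It also proves the determinant identity and the interlacing inequalities directly, by multilinearity and a subspace-dimension argument, rather than citing them.
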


\begin{proof}
For a positive-semidefinite Hermitian matrix $B$, write $\operatorname{Spec}_+(B)$ for its positive eigenvalues, listed in decreasing order with multiplicity. Let $\mu_d$ denote the standard complex Gaussian probability law on $\C^d$.

\noindent\textbf{Step 1.} We prove the conditional transition for a column addition. Suppose
\[
 (m_{t+1},n_{t+1})=(m_t,n_t+1),
\]
and abbreviate $m=m_t$, $n=n_t$, and $A=A_t$. The newly revealed entries form a vector $x\in\C^m$ with law $\mu_m$, independent of $\mathcal E_t$. Direct multiplication gives
\[
 A_{t+1}A_{t+1}^*=AA^*+xx^*.
\]
For every bounded Borel function $f$ of finite positive eigenvalue lists,
\begin{equation}\label{eq:column-transition}
 \E[f(\Lambda_{t+1})\mid\mathcal E_t]
 =
 \int f(\operatorname{Spec}_+(AA^*+xx^*))\,\mu_m(\dd x).
\end{equation}
The right side depends on $AA^*$ only through its eigenvalues. If $V$ is unitary, then $V^*x$ has law $\mu_m$, and
\[
 \operatorname{Spec}_+(V^*AA^*V+xx^*)
 =
 \operatorname{Spec}_+(AA^*+(Vx)(Vx)^*).
\]
Changing variables proves unitary conjugation invariance. The eigenvalues of $AA^*$ are the entries of $\Lambda_t$ followed by zeros. Hence
\begin{equation}\label{eq:column-kernel}
 \E[f(\Lambda_{t+1})\mid\mathcal E_t]
 =
 \int f(y)\,R_{m_t}(\Lambda_t,\dd y).
\end{equation}
No choice of eigenvectors is required.

\noindent\textbf{Step 2.} We prove the row transition. Suppose
\[
 (m_{t+1},n_{t+1})=(m_t+1,n_t).
\]
The fresh row is $x^*$, where $x\in\C^{n_t}$ has law $\mu_{n_t}$ and is independent of $\mathcal E_t$. Then
\[
 A_{t+1}^*A_{t+1}=A_t^*A_t+xx^*.
\]
The positive eigenvalues of $A_t^*A_t$ and $A_tA_t^*$ agree with multiplicity. If $A_t^*A_tv=\lambda v$ with $\lambda>0$, then $A_tv\neq0$ and
\[
 A_tA_t^*(A_tv)=\lambda A_tv.
\]
The converse follows by applying $A_t^*$, and the two maps are inverse up to multiplication by $\lambda$ on the corresponding eigenspaces. Repeating the invariant integral argument gives
\begin{equation}\label{eq:row-kernel}
 \E[f(\Lambda_{t+1})\mid\mathcal E_t]
 =
 \int f(y)\,R_{n_t}(\Lambda_t,\dd y).
\end{equation}
Equations \eqref{eq:column-kernel} and \eqref{eq:row-kernel} prove the Markov property.

\noindent\textbf{Step 3.} We establish the determinant formula. Put
\[
 D=\diag(b_1,\ldots,b_d),
 \qquad
 A=qI_d-D.
\]
The assumption on $q$ makes $A$ invertible. Factoring $A$ gives
\begin{equation}\label{eq:rank-one-factor}
 \det(A-gg^*)=\det(A)\det(I_d-A^{-1}gg^*).
\end{equation}
For column vectors $u,v$, multilinearity of the determinant gives
\begin{equation}\label{eq:matrix-det-lemma}
 \det(I_d-uv^*)=1-v^*u.
\end{equation}
Indeed, the term with no replaced column is one, every term with at least two replaced columns vanishes because those columns are proportional to $u$, and the term with only column $j$ replaced is $-\overline{v_j}u_j$. Applying \eqref{eq:matrix-det-lemma} to \eqref{eq:rank-one-factor} yields
\[
 \det(qI_d-D-gg^*)
 =
 \det(qI_d-D)
 \left(1-g^*(qI_d-D)^{-1}g\right),
\]
which is the asserted formula.

The coordinates $g_i$ are independent. Circularity and $\E\abs{g_i}^2=1$ imply that their real and imaginary parts are independent $N(0,1/2)$ variables. Polar coordinates therefore give
\[
 \Prob(\abs{g_i}^2\in\dd w)=\ee^{-w}\dd w,\qquad w>0.
\]

\noindent\textbf{Step 4.} We prove interlacing from spectral subspaces. Order the eigenvalues of $D$ as
\[
 b_1\geq\cdots\geq b_d
\]
and those of $C=D+gg^*$ as
\[
 c_1\geq\cdots\geq c_d.
\]
We use the dimension inequality
\begin{equation}\label{eq:subspace-dimension}
 \dim(U\cap V)\geq\dim U+\dim V-d.
\end{equation}
It follows from
\[
 \dim(U+V)=\dim U+\dim V-\dim(U\cap V)
\]
and $\dim(U+V)\leq d$.

Fix $1\leq i\leq d$. Let $U$ be the span of eigenvectors of $D$ corresponding to $b_1,\ldots,b_i$, and let $V$ be the span of eigenvectors of $C$ corresponding to $c_i,\ldots,c_d$. By \eqref{eq:subspace-dimension}, $U\cap V$ contains a unit vector $x$. Spectral expansion gives
\[
 x^*Dx\geq b_i,
 \qquad
 x^*Cx\leq c_i.
\]
Since
\[
 x^*(C-D)x=\abs{g^*x}^2\geq0,
\]
we obtain
\begin{equation}\label{eq:interlace-upper}
 c_i\geq x^*Cx\geq x^*Dx\geq b_i.
\end{equation}

Now fix $1\leq i<d$. Let $U$ be the span of the eigenvectors of $D$ corresponding to $b_i,\ldots,b_d$, let $V$ be the span of the eigenvectors of $C$ corresponding to $c_1,\ldots,c_{i+1}$, and let $H=g^\perp$. Applying \eqref{eq:subspace-dimension} twice gives
\[
 \begin{split}
 \dim(U\cap V\cap H)
 &\geq\dim U+\dim V+\dim H-2d\\
 &\geq(d-i+1)+(i+1)+(d-1)-2d=1.
 \end{split}
\]
Choose a unit vector $y\in U\cap V\cap H$. Since $g^*y=0$,
\[
 y^*Cy=y^*Dy.
\]
Spectral expansion gives
\[
 y^*Dy\leq b_i,
 \qquad
 y^*Cy\geq c_{i+1}.
\]
Thus
\begin{equation}\label{eq:interlace-lower}
 c_{i+1}\leq y^*Cy=y^*Dy\leq b_i.
\end{equation}
Equations \eqref{eq:interlace-upper} and \eqref{eq:interlace-lower} give
\[
 c_i\geq b_i\geq c_{i+1},
 \qquad 1\leq i<d,
\]
together with $c_d\geq b_d$.

\noindent\textbf{Step 5.} Fix $N$. Starting from $X^{(N)}$, first reveal the entries
\[
 \xi_{i,N+1},\qquad 1\leq i\leq M_N.
\]
This produces one column transition $R_{M_N}$. Next, for $s=1,\ldots,r_N$, reveal row $M_N+s$ through columns $1,\ldots,N+1$. Each row is independent of the preceding matrix and applies $R_{N+1}$. Iterating the conditional identities and using the tower property gives
\[
 R_{M_N}R_{N+1}^{\circ r_N}.
\]
If $r_N\leq K$, this composition contains at most $K+1$ kernels.
\end{proof}

A column addition updates $X_tX_t^*$ by an isotropic rank-one term in ambient dimension $m_t$, while a row addition updates $X_t^*X_t$ in ambient dimension $n_t$; the two Gram matrices have the same positive spectrum. The probability kernel $R_d$ records this rank-one spectral update after the necessary zero eigenvalues have been appended. The next proposition writes its normalized spectral densities as $s_p$ for a same-size step and $b_{p,r}$ for a birth, where $r$ is the dimension of the zero eigenspace before the birth. By contrast, the later kernels $B_t$ and $W_t$ are one-particle factors in a determinant chain: $B_t$ records real interlacing and virtual-state transfer, while $W_t$ also carries the next-level one-body weight. They are not normalized spectral transition densities.

\begin{prop}[Exact interlacing transition densities]\label{prop:gaussian-transition-densities}
Let $(z_{ij})_{i,j\geq1}$ be independent centered circular complex Gaussian variables with $\E\abs{z_{ij}}^2=1$. Let $((m_t,n_t))_{t=0}^H$ be an elementary up-right path of positive integer pairs. Put
\[
 \begin{aligned}
 X_t&=(z_{ij})_{\substack{1\leq i\leq m_t\\1\leq j\leq n_t}},
 &p_t&=\min(m_t,n_t),\\
 q_t&=\max(m_t,n_t),
 &r_t&=q_t-p_t.
 \end{aligned}
\]
Almost surely, $X_t$ has $p_t$ distinct positive squared singular values
\[
 \lambda_1^{(t)}>\cdots>\lambda_{p_t}^{(t)}>0.
\]
For $x_1>\cdots>x_k>0$, put
\[
 \Delta_k(x)=\prod_{1\leq i<j\leq k}(x_i-x_j).
\]

For $x_1>\cdots>x_p>0$ and $y_1>\cdots>y_p>0$, define
\[
 s_p(x,y)
 =
 \ee^{-\sum_{j=1}^py_j+\sum_{i=1}^px_i}
 \frac{\Delta_p(y)}{\Delta_p(x)}
 \ind_{\{y_1>x_1>y_2>x_2>\cdots>y_p>x_p\}}.
\]
For $r\geq1$, $x_1>\cdots>x_p>0$, and $y_1>\cdots>y_{p+1}>0$, define
\[
 \begin{split}
 b_{p,r}(x,y)
 &=
 \frac1{(r-1)!}
 \ee^{-\sum_{j=1}^{p+1}y_j+\sum_{i=1}^px_i}
 \frac{\Delta_{p+1}(y)}{\Delta_p(x)}\\
 &\quad\times
 \frac{\prod_{j=1}^{p+1}y_j^{r-1}}
      {\prod_{i=1}^px_i^r}
 \ind_{\{y_1>x_1>\cdots>y_p>x_p>y_{p+1}>0\}}.
 \end{split}
\]
Conditionally on $\lambda^{(t)}=x$, the density of $\lambda^{(t+1)}$ is $s_{p_t}(x,y)$ when $p_{t+1}=p_t$, and it is $b_{p_t,r_t}(x,y)$ when $p_{t+1}=p_t+1$. In the second case $r_t\geq1$. Both functions integrate to one over their decreasing chambers. Consequently, conditionally on $\lambda^{(0)}$, the multi-time density of
\[
 (\lambda^{(1)},\ldots,\lambda^{(H)})
\]
is the product of these transition densities.

If $(M_N)$ is nondecreasing and
\[
 \sup_N(M_{N+1}-M_N)\leq K,
\]
the path $(M_N,N)$ has the same exact product representation after refining each transition into one column addition followed by $M_{N+1}-M_N$ row additions. At most $K+1$ elementary kernels are used per sample-size transition.
\end{prop}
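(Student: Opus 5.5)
The plan is to reduce everything to the rank-one update of Proposition~\ref{prop:rank-one-growth}. That result says that, conditionally on $\lambda^{(t)}=x$, the next positive spectrum is the positive spectrum of $\diag(b)+gg^*$. Here $b=(x_1,\dots,x_p,0,\dots,0)\in\R^d$, the ambient dimension is $d=m_t$ for a column step or $d=n_t$ for a row step, and the number of zeros is $d-p$. I will first record which case occurs. A same-size step increments a weakly larger side, so $d=p_t$ and there are no zeros. A birth step increments the strictly smaller side, so $d=q_t=p_t+r_t$ with $r_t\geq1$. Almost-sure distinctness and positivity of the $p_t$ squared singular values are standard for the complex Wishart density, or follow from the absolute continuity shown below.

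\textbf{Same-size step.} Set $d=p$ with all $x_i$ distinct and positive. By the determinant identity of Proposition~\ref{prop:rank-one-growth}, the new eigenvalues $y_j$ are the zeros of $1-\sum_i W_i/(q-x_i)$, with $W_i$ i.i.d.\ $\mathrm{Exp}(1)$. Equivalently, evaluating $\prod_j(q-y_j)=\prod_i(q-x_i)-\sum_i W_i\prod_{k\neq i}(q-x_k)$ at $q=x_i$ gives
\[
W_i=-\frac{\prod_j(x_i-y_j)}{\prod_{k\neq i}(x_i-x_k)} .
\]
The map $W\mapsto y$ is a bijection from $(0,\infty)^p$ onto the interlacing chamber $y_1>x_1>\cdots>y_p>x_p$. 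The sign conditions $W_i>0$ are exactly strict interlacing, which matches Step~4 of Proposition~\ref{prop:rank-one-growth}. Its Jacobian is the Cauchy determinant $\det[1/(x_i-y_j)]$ up to diagonal factors, which gives $|\partial W/\partial y|=\Delta_p(y)/\Delta_p(x)$. The trace identity $\sum_i W_i=\sum_j y_j-\sum_i x_i$ turns the density $\ee^{-\sum W_i}$ into the factor $\ee^{-\sum y+\sum x}$, and this yields $s_p$.

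\textbf{Birth step.} Now $d=p+r$. The $r$ zero coordinates contribute only through $S=\sum_{i>p}W_i$, which is $\mathrm{Gamma}(r)$ with density $s^{r-1}\ee^{-s}/(r-1)!$. The secular equation becomes $1-\sum_{i\leq p}W_i/(q-x_i)-S/q$. After multiplying by $q\prod_i(q-x_i)$ it is a polynomial of degree $p+1$ whose roots are $y_1,\dots,y_{p+1}$, together with $r-1$ zeros that are discarded. Residues give $W_i=-\prod_j(x_i-y_j)/\bigl(x_i\prod_{k\neq i}(x_i-x_k)\bigr)$ and $S=\prod_j y_j/\prod_i x_i$. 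The same Cauchy-type computation with the extra node $0$ gives the Jacobian $\Delta_{p+1}(y)/\bigl(\Delta_p(x)\prod_i x_i\bigr)$. Substituting $S^{r-1}$ yields the factors $\prod_j y_j^{r-1}/\prod_i x_i^{r-1}$, so the total power of $\prod_i x_i$ in the denominator is $r$. The trace identity again produces the exponential factor. This recovers $b_{p,r}$ on the chamber $y_1>x_1>\cdots>x_p>y_{p+1}>0$.

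Both densities integrate to one because they are pushforwards of probability laws under bijections onto the stated chambers. The multi-time product formula then follows from the Markov property in Proposition~\ref{prop:rank-one-growth} and the tower property. The final clause about $(M_N,N)$ is Step~5 of the same proposition: each sample-size transition is refined into one column step followed by $M_{N+1}-M_N\leq K$ row steps, so at most $K+1$ kernels occur. The main obstacle is computing the Jacobian cleanly, especially in the birth case with the node at $0$. I would handle it by treating $(W_1,\dots,W_p,S)$ as $p+1$ weights at the nodes $(x_1,\dots,x_p,0)$. This reduces the birth case to the same-size Cauchy determinant computation in dimension $p+1$, with one node fixed at zero.
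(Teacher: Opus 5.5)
Your proposal is correct and follows the paper's proof essentially step for step. Both arguments reduce to the rank-one update with ambient dimension $p_t$ (same-size step) or $q_t$ (birth step), recover the weights from residues, convert $\ee^{-\sum W}$ using the trace identity, obtain the Jacobian $\Delta_p(y)/\Delta_p(x)$ from the Cauchy determinant, and treat the birth case by regarding $(W_1,\ldots,W_p,S)$ as weights at the $p+1$ nodes $(x_1,\ldots,x_p,0)$ with $S$ Gamma-distributed.

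The one real difference is almost-sure simplicity. The paper proves it directly: the Gram determinant and the discriminant of the characteristic polynomial are nonzero real polynomials in the entries, so their zero sets are null. Your induction through the derived densities also works, but you must start it from $(1,1)$ by prepending a path, because those densities are only derived for simple positive $x$.
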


\begin{proof}
We first prove almost-sure full rank and simplicity. Fix $t$, abbreviate
\[
 m=m_t,\qquad n=n_t,\qquad p=p_t,\qquad X=X_t,
\]
and form the smaller Gram matrix
\[
 G=
 \begin{cases}
 XX^*,&m\leq n,\\
 X^*X,&n<m.
 \end{cases}
\]
Its eigenvalues are exactly the $p$ squared singular values of $X$.

Express every complex entry of $X$ through its real and imaginary parts. Every entry of $G$ is a polynomial in these real coordinates, so $\det G$ is a real polynomial. Moreover,
\[
 \rank X<p
 \quad\Longleftrightarrow\quad
 \det G=0.
\]
This polynomial is not identically zero: the rectangular matrix with diagonal entries $1,2,\ldots,p$ and all other entries zero has smaller Gram matrix
\[
 \diag(1^2,2^2,\ldots,p^2).
\]

Let $\chi_G(u)=\det(uI_p-G)$. Its coefficients are real polynomials in the coordinates of $X$. If $p\geq2$, the discriminant of $\chi_G$ is a polynomial in those coefficients and equals
\[
 \prod_{1\leq i<j\leq p}(\rho_i-\rho_j)^2,
\]
where $\rho_1,\ldots,\rho_p$ are the eigenvalues of $G$. For $p=1$, take the discriminant to be one. The discriminant is not identically zero because it is positive at the rectangular diagonal matrix above.

The zero set of a nonzero real polynomial in finitely many variables has Lebesgue measure zero. This follows by induction on the number of variables. In one variable, a nonzero polynomial has finitely many roots. For the induction step, write
\[
 P(x_1,\ldots,x_d)
 =
 \sum_{k=0}^sa_k(x_1,\ldots,x_{d-1})x_d^k
\]
and choose a coefficient which is not identically zero. By induction, the set of first $d-1$ coordinates at which every coefficient vanishes is null. At every other point, the resulting one-variable polynomial has finitely many roots. Fubini's theorem completes the induction.

The joint law of the real and imaginary parts of the entries of $X_t$ is absolutely continuous. Thus rank deficiency and repeated squared singular values each have probability zero. Taking a union over $t=0,\ldots,H$ proves the simultaneous assertion.

By Proposition~\ref{prop:rank-one-growth}, every elementary transition is an isotropic complex-Gaussian rank-one update. We compute its density on the strict-spectrum event.

First let
\[
 D=\diag(x_1,\ldots,x_p),
 \qquad
 x_1>\cdots>x_p>0,
\]
and let $g$ be a standard complex Gaussian vector in $\C^p$. Put $w_i=\abs{g_i}^2$. The $w_i$ are independent mean-one exponentials. If $y_1>\cdots>y_p$ are the eigenvalues of $D+gg^*$, the determinant identity in Proposition~\ref{prop:rank-one-growth} gives
\begin{equation}\label{eq:same-size-secular}
 1-\sum_{i=1}^p\frac{w_i}{z-x_i}
 =
 \frac{\prod_{j=1}^p(z-y_j)}
      {\prod_{i=1}^p(z-x_i)}.
\end{equation}
The left side is strictly increasing between consecutive poles. Its limits at the poles and at infinity give
\[
 y_1>x_1>y_2>x_2>\cdots>y_p>x_p.
\]
Taking residues at $z=x_i$ gives
\begin{equation}\label{eq:same-size-residue}
 w_i
 =
 -\frac{\prod_{j=1}^p(x_i-y_j)}
        {\prod_{k\neq i}(x_i-x_k)}.
\end{equation}
Comparison of coefficients at infinity gives
\begin{equation}\label{eq:same-size-trace}
 \sum_iw_i=\sum_jy_j-\sum_ix_i.
\end{equation}
Conversely, every $y$ in the displayed chamber gives positive $w_i$ through \eqref{eq:same-size-residue} and recovers the same rational function. Thus this is a smooth bijection with $(0,\infty)^p$.

Differentiating \eqref{eq:same-size-residue} gives
\[
 \frac{\partial w_i}{\partial y_j}
 =
 \frac{w_i}{y_j-x_i}.
\]
Therefore the absolute Jacobian is
\[
 \prod_iw_i
 \left|\det\left[\frac1{y_j-x_i}\right]_{i,j=1}^p\right|.
\]
The Cauchy determinant formula and \eqref{eq:same-size-residue} reduce this expression to
\[
 \frac{\Delta_p(y)}{\Delta_p(x)}.
\]
Indeed, the cross-differences cancel, while the two copies of $\Delta_p(x)$ arising from
\[
 \prod_i\prod_{k\neq i}\abs{x_i-x_k}
\]
leave one $\Delta_p(x)$ in the denominator. Multiplying by the density
\[
 \ee^{-\sum_iw_i}
\]
and using \eqref{eq:same-size-trace} gives $s_p(x,y)$. Since the change of variables is a bijection from a probability density, $s_p$ integrates to one.

For the birth transition, begin with the diagonal matrix having entries
\[
 x_1,\ldots,x_p,\underbrace{0,\ldots,0}_{r\text{ times}},
 \qquad r\geq1,
\]
and add $gg^*$ with $g\in\C^{p+r}$ standard complex Gaussian. Let $w_i$ be the squared projection onto the $x_i$ eigendirection, and let $w_0$ be the squared norm of the projection onto the $r$-dimensional zero eigenspace. Then $w_1,\ldots,w_p$ are independent mean-one exponentials, while $w_0$ is independent with density
\[
 \frac{\ee^{-w_0}w_0^{r-1}}{(r-1)!},
 \qquad w_0>0.
\]
The updated matrix has $r-1$ zero eigenvalues and $p+1$ positive eigenvalues $y_1>\cdots>y_{p+1}$. After canceling the common zero factors, the secular equation is
\begin{equation}\label{eq:birth-secular}
 1-\sum_{i=1}^p\frac{w_i}{z-x_i}-\frac{w_0}{z}
 =
 \frac{\prod_{j=1}^{p+1}(z-y_j)}
      {z\prod_{i=1}^p(z-x_i)}.
\end{equation}
It gives
\[
 y_1>x_1>\cdots>y_p>x_p>y_{p+1}>0
\]
and
\[
 \sum_iw_i+w_0=\sum_jy_j-\sum_ix_i.
\]
Taking the residue at zero yields
\[
 w_0=\frac{\prod_{j=1}^{p+1}y_j}{\prod_{i=1}^px_i}.
\]

Apply the preceding Jacobian computation to the $p+1$ old spectral points
\[
 (x_1,\ldots,x_p,0)
\]
and the weights $(w_1,\ldots,w_p,w_0)$. Since
\[
 \Delta_{p+1}(x_1,\ldots,x_p,0)
 =
 \Delta_p(x)\prod_{i=1}^px_i,
\]
the absolute Jacobian from $y$ to $(w_1,\ldots,w_p,w_0)$ is
\[
 \frac{\Delta_{p+1}(y)}
      {\Delta_p(x)\prod_ix_i}.
\]
Multiplying by
\[
 \ee^{-\sum_iw_i-w_0}\frac{w_0^{r-1}}{(r-1)!}
\]
and substituting the trace and residue identities gives $b_{p,r}(x,y)$. The bijection argument proves that this density integrates to one.

For a rectangular matrix, use the full-rank $p$-by-$p$ Gram matrix when the smaller dimension does not change. When the smaller dimension grows from $p$ to $p+1$, the larger dimension $q$ remains fixed, so $r=q-p\geq1$ and the $q$-by-$q$ Gram matrix has $p$ positive eigenvalues and $r$ zero eigenvalues. The second calculation applies. Proposition~\ref{prop:rank-one-growth} then yields the product formula. The final assertion follows from the refined exposure in that proposition.
\end{proof}

Thus $s_p$ is the normalized rank-one interlacing law on the strict alternating chamber when the positive-particle count is fixed. In the birth density, the additional particle below the previous spectrum and the factor $(r-1)!^{-1}$ arise from the gamma-distributed squared projection onto the $r$-dimensional zero eigenspace. These two densities are the elementary factors of the exact spectral path law.

\begin{prop}[Endpoint-shell path independence]\label{prop:shell-path-independence}
Fix positive integers $m_0\leq m_1$ and $n_0\leq n_1$. Given a deterministic matrix $A\in\C^{m_0\times n_0}$, form its random shell completion
\[
 \mathcal C(A)\in\C^{m_1\times n_1}
\]
by retaining $A$ as the upper-left block and filling every other entry with independent centered circular complex Gaussian variables of second absolute moment one. Let $\Lambda(B)$ denote the decreasing list of positive eigenvalues of $BB^*$. Then the law of $\Lambda(\mathcal C(A))$ depends on $A$ only through $\Lambda(A)$.

Consequently, the composition of the one-step kernels from Proposition~\ref{prop:rank-one-growth} along any elementary up-right path from $(m_0,n_0)$ to $(m_1,n_1)$ is independent of the path. If
\[
 a=m_1-m_0,\qquad b=n_1-n_0,
\]
then
\begin{equation}\label{eq:kernel-shell-commutation}
 R_{n_0}^{\circ a}R_{m_1}^{\circ b}
 =
 R_{m_0}^{\circ b}R_{n_1}^{\circ a}.
\end{equation}
In particular,
\[
 R_nR_{m+1}=R_mR_{n+1}
\]
on every admissible initial positive spectral list.
\end{prop}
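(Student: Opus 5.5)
The plan is to prove the first assertion by a unitary-invariance argument, and then deduce the kernel identity \eqref{eq:kernel-shell-commutation} from it by interpreting both sides as conditional laws.

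\textbf{Unitary invariance.} Take a singular value decomposition $A=U\Sigma V^*$ with $U\in\C^{m_0\times m_0}$ and $V\in\C^{n_0\times n_0}$ unitary, and $\Sigma$ the $m_0\times n_0$ rectangular diagonal matrix carrying the square roots of $\Lambda(A)$. Embed these as block-unitaries
\[
\widetilde U=\diag(U,I_{m_1-m_0}),\qquad \widetilde V=\diag(V,I_{n_1-n_0}).
\]
The matrix $\widetilde U^*\mathcal C(A)\widetilde V$ has upper-left block $\Sigma$. Its remaining three blocks are:
\begin{itemize}
\item the upper-right block $U^*Y_{12}$;
\item the lower-left block $Y_{21}V$;
\item the lower-right block $Y_{22}$, left unchanged.
\end{itemize}
The standard circular complex Gaussian law on $\C^{k\times l}$ is invariant under left and right multiplication by fixed unitaries. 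The three blocks are independent, and each transformation involves a single block, so the shell of $\widetilde U^*\mathcal C(A)\widetilde V$ again consists of independent standard entries. Hence $\widetilde U^*\mathcal C(A)\widetilde V$ has the law of $\mathcal C(\Sigma)$. Conjugating $BB^*$ by a unitary does not change its spectrum, so $\Lambda(\mathcal C(A))$ has the law of $\Lambda(\mathcal C(\Sigma))$. This law depends only on $\Lambda(A)$, with ordering and zero padding fixed by convention.

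\textbf{Path independence.} Fix an elementary up-right path from $(m_0,n_0)$ to $(m_1,n_1)$. Apply Proposition~\ref{prop:rank-one-growth} to the filtration that reveals the shell entries in the order dictated by the path, starting from the deterministic block $A$. Iterating the conditional transitions with the tower property shows that the composed kernel, applied to $\Lambda(A)$, equals the law of $\Lambda(\mathcal C(A))$. The final matrix is the same completion whatever the path, since only the exposure order changes. Therefore every path yields the same measure, namely the law of $\Lambda(\mathcal C(A))$.

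\textbf{Every admissible list occurs as a starting point.} Any decreasing positive list $\ell$ of length at most $\min(m_0,n_0)$ arises as $\Lambda(A)$ for the rectangular diagonal matrix $A$ with entries $\sqrt{\ell_i}$. The kernels therefore agree on every admissible initial list.

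\textbf{The identity \eqref{eq:kernel-shell-commutation}.} The left side adds $a$ rows while there are $n_0$ columns, giving the kernel $R_{n_0}$ each time, and then adds $b$ columns while there are $m_1$ rows, giving $R_{m_1}$ each time. The right side adds $b$ columns first, giving $R_{m_0}$, and then $a$ rows, giving $R_{n_1}$. The special case $a=b=1$ at $(m,n)$ gives $R_nR_{m+1}=R_mR_{n+1}$. Kernel composition should be read left to right, matching the convention in Proposition~\ref{prop:rank-one-growth}.

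\textbf{Main obstacle.} The only delicate point is bookkeeping. One has to check that $R_d$ is applied with the correct ambient dimension at each step: a column addition uses the current row count and a row addition uses the current column count. One must also check that the zero padding in $R_d$ is consistent with the positive-spectrum convention, so that both compositions really describe the conditional law of the same completed matrix. Measurability of the conditional laws causes no difficulty, because every kernel is an integral against a fixed Gaussian law.
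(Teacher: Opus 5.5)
Your proposal is correct and follows essentially the paper's route: unitary invariance of the three Gaussian shell blocks under the block-diagonal unitaries, then exposing the shell entries along an arbitrary up-right path so that every composition of kernels equals the law of $\Lambda(\mathcal C(A))$; the paper compares two matrices $A,A'$ with equal spectra directly rather than reducing to $\Sigma$, which is an immaterial difference. The one point the paper makes explicit that you only cite is that the one-step transitions $R_m$ and $R_n$ hold for an arbitrary deterministic current matrix (Proposition~\ref{prop:rank-one-growth} is stated for the fully Gaussian array); this follows from the same unitary-conjugation argument and is what gives the identity on every, rather than almost every, admissible initial list.
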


\begin{proof}
Let $A,A'\in\C^{m_0\times n_0}$ satisfy $\Lambda(A)=\Lambda(A')$. The singular value decomposition gives unitary matrices $U\in\C^{m_0\times m_0}$ and $V\in\C^{n_0\times n_0}$ such that
\[
 A'=UAV^*.
\]
Write
\[
 \mathcal C(A)
 =
 \begin{bmatrix}
  A&G\\
  H&J
 \end{bmatrix},
\]
where the three added blocks are mutually independent standard complex Gaussian matrices. Let
\[
 U_1=\diag(U,I_{m_1-m_0}),
 \qquad
 V_1=\diag(V,I_{n_1-n_0}).
\]
Then
\[
 U_1\mathcal C(A)V_1^*
 =
 \begin{bmatrix}
  UAV^*&UG\\
  HV^*&J
 \end{bmatrix}
 =
 \begin{bmatrix}
  A'&UG\\
  HV^*&J
 \end{bmatrix}.
\]
Unitary invariance of complex Gaussian matrices implies
\[
 (UG,HV^*,J)\overset{\mathrm{law}}=(G,H,J).
\]
Left and right unitary multiplication preserve singular values. Hence
\[
 \Lambda(\mathcal C(A))
 \overset{\mathrm{law}}=
 \Lambda(\mathcal C(A')).
\]

We next verify the one-step transition for an arbitrary current matrix. Let $B$ be a deterministic $m$-by-$n$ complex matrix and let $\ell=\Lambda(B)$. If an independent standard complex Gaussian column $g\in\C^m$ is adjoined, then
\[
 [B,g][B,g]^*=BB^*+gg^*.
\]
Choose a unitary matrix $S$ such that
\[
 S^*BB^*S=\diag(\ell_1,\ldots,\ell_l,0,\ldots,0).
\]
Conjugation and unitary invariance of $S^*g$ show that the new spectrum has law $R_m(\ell)$. If instead an independent standard complex Gaussian row $h$ is adjoined, then
\[
 \begin{bmatrix}B\\h\end{bmatrix}^{\!*}
 \begin{bmatrix}B\\h\end{bmatrix}
 =
 B^*B+h^*h.
\]
Diagonalizing $B^*B$ proves that the new positive squared singular values have law $R_n(\ell)$. These statements remain valid conditionally when $B$ is random and the new row or column is independent of it.

Expose the shell entries in the order prescribed by any elementary up-right path. Every step applies the corresponding $R_d$, and the final matrix is always $\mathcal C(A)$. Since its spectral law depends on the initial matrix only through $\ell=\Lambda(A)$, all pathwise compositions agree.

If all $a$ rows are exposed first, the kernel is
\[
 R_{n_0}^{\circ a}R_{m_1}^{\circ b}.
\]
If all $b$ columns are exposed first, it is
\[
 R_{m_0}^{\circ b}R_{n_1}^{\circ a}.
\]
Their equality proves \eqref{eq:kernel-shell-commutation}. Taking $a=b=1$, $m_0=m$, and $n_0=n$ gives the last assertion.
\end{proof}

Proposition~\ref{prop:shell-path-independence} permits the entries of a completed Gaussian shell to be exposed in a canonical order when only the endpoint spectral kernel or endpoint law is being evaluated. It does not assert samplewise equality of different exposure paths, equality of their intermediate processes, or equality of every determinantal representative after a gauge change.

\subsection{Determinantal kernels along monotone paths}

The next theorem treats precisely the admissible finite elementary up-right paths in its statement. The terminal cardinality $P$ supplies a common number of determinant coordinates, and deterministic virtual states occupy the missing coordinates before their birth times. The exponents $\kappa_t$ and weights $w_t$ encode the one-body powers left after the exponential and Vandermonde factors telescope. The transfers $B_t$ implement interlacing and virtual-state propagation, while $W_t$ incorporates the level weight used by Eynard--Mehta. In the resulting kernel, the term involving $W_{s,t}$ is the free transfer and the inverse-moment-matrix sum is the finite-rank correction. General interlacing-chain determinantal geometry is prior \cite[Theorem~1]{OkounkovReshetikhin2003}; the theorem gives the exact common-array Gaussian specialization in the present normalization, without a priority claim beyond that specialization.

\begin{thm}[Extended determinantal kernel for arbitrary complex Gaussian growth]\label{thm:arbitrary-path-kernel}
Let $(z_{ij})_{i,j\geq1}$ be independent centered circular complex Gaussian variables with $\E\abs{z_{ij}}^2=1$. Let $((m_t,n_t))_{t=0}^T$ be an elementary up-right path with $T\geq1$ and $(m_0,n_0)=(1,1)$. Put
\[
 \begin{aligned}
 X_t&=(z_{ij})_{\substack{1\leq i\leq m_t\\1\leq j\leq n_t}},
 &p_t&=\min(m_t,n_t),\\
 q_t&=\max(m_t,n_t),
 &r_t&=q_t-p_t,
 &\epsilon_t&=p_{t+1}-p_t,
 \end{aligned}
\]
and $P=p_T$. Let
\[
 \lambda_1^{(t)}>\cdots>\lambda_{p_t}^{(t)}>0
\]
be the positive squared singular values of $X_t$. Define
\[
 \kappa_0=0,
 \qquad
 \kappa_t=\epsilon_{t-1}(r_{t-1}-1)-\epsilon_tr_t
 \quad(1\leq t<T),
\]
and
\[
 \kappa_T=\epsilon_{T-1}(r_{T-1}-1).
\]
Put
\[
 w_t(x)=x^{\kappa_t}\quad(t<T),
 \qquad
 w_T(x)=\ee^{-x}x^{\kappa_T}.
\]

Let $v_1,\ldots,v_P$ be distinct formal virtual states and define
\[
 E_t=(0,\infty)\sqcup\{v_{p_t+1},\ldots,v_P\},
\]
with Lebesgue measure on $(0,\infty)$ and counting measure on the virtual states. Define
\[
 \phi_1(x)=1\quad(x>0),\qquad
 \phi_i(v_i)=1\quad(2\leq i\leq P),
\]
with all other values zero, and define
\[
 \psi_i(y)=y^{P-i},\qquad y>0.
\]

For $0\leq t<T$, define $B_t\colon E_t\times E_{t+1}\to\R$ as follows. If $\epsilon_t=0$, let
\[
 B_t(x,y)=\ind_{\{y>x\}}
\]
for real $x,y$, let $B_t(v_k,v_l)=\ind_{\{k=l\}}$, and set all real-virtual cross terms to zero. If $\epsilon_t=1$, use the same real-real entry, set
\[
 B_t(v_{p_t+1},y)=1\quad(y>0),
 \qquad
 B_t(v_k,v_l)=\ind_{\{k=l\}}\quad(k,l\geq p_t+2),
\]
and set all remaining entries to zero. Define
\[
 W_t(z,y)=
 \begin{cases}
  B_t(z,y)w_{t+1}(y),&y>0,\\
  B_t(z,v_k),&y=v_k.
 \end{cases}
\]
Let $W_{s,t}$, $F_i^{(t)}$, $G_j^{(t)}$, and $M$ be as in Lemma~\ref{lem:fixed-particle-em}.

At level $t$, augment the positive squared singular values by the deterministic points
\[
 v_{p_t+1},\ldots,v_P.
\]
The resulting point process is determinantal. The matrix $M$ is finite and invertible, and an exact correlation kernel is
\[
 K(s,z;t,y)
 =
 -\ind_{\{s<t\}}W_{s,t}(z,y)
 +
 \sum_{i,j=1}^P
 G_j^{(s)}(z)(M^{-1})_{ji}F_i^{(t)}(y).
\]
Restricting this kernel to real arguments gives the complete multi-time process of positive squared singular values. The same conclusion holds for a finite path starting from any positive rectangle: prepend a path from $(1,1)$ and restrict the resulting determinantal process to the prescribed levels.
\end{thm}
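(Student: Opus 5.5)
The plan is to write the joint density of all levels, with virtual states appended, as exactly the Eynard--Mehta product in Lemma~\ref{lem:fixed-particle-em}, and then read off the kernel from that lemma.

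\textbf{Step 1: the exact path density.} At $(m_0,n_0)=(1,1)$ the single particle $\lambda^{(0)}=\abs{z_{11}}^2$ has density $\ee^{-x}$. By Proposition~\ref{prop:gaussian-transition-densities}, conditionally on $\lambda^{(0)}$ the law of $(\lambda^{(1)},\ldots,\lambda^{(T)})$ is the product of the factors $s_{p_t}$ or $b_{p_t,r_t}$. So the full density is $\ee^{-x^{(0)}}$ times this product. I then telescope its pieces:
\begin{itemize}
\item The exponentials $\ee^{-\sum y+\sum x}$ cancel level by level against $\ee^{-x^{(0)}}$, leaving only $\ee^{-\sum_a x^{(T)}_a}$.
\item The Vandermonde ratios telescope to $\Delta_P(x^{(T)})/\Delta_1=\Delta_P(x^{(T)})$.
\item Each birth step contributes $\prod y_j^{r_t-1}$ at level $t+1$ and $\prod x_i^{-r_t}$ at level $t$. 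Collecting the powers at level $t$ gives $\prod_a (x^{(t)}_a)^{\kappa_t}$, with $\kappa_t=\epsilon_{t-1}(r_{t-1}-1)-\epsilon_tr_t$. This is exactly the weight $w_t$, and $w_T$ absorbs the surviving exponential.
\item The constants $1/(r_t-1)!$ go into the normalization.
\end{itemize}

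\textbf{Step 2: interlacing indicators as determinants.} For ordered configurations I will check two identities.
\begin{itemize}
\item Same-size step: the indicator of $y_1>x_1>\cdots>y_p>x_p$ equals $\det[\ind_{\{y_j>x_i\}}]$. The matrix is triangular under interlacing, and a standard argument shows the determinant vanishes otherwise.
\item Birth step: the indicator of $y_1>x_1>\cdots>x_p>y_{p+1}>0$ equals the $(p+1)\times(p+1)$ determinant in which the $p$ real rows are $\ind_{\{y_j>x_i\}}$ and an extra row of ones comes from $B_t(v_{p_t+1},y)=1$.
\end{itemize}
The virtual states not yet born contribute identity blocks via $B_t(v_k,v_l)=\ind_{\{k=l\}}$. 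With $W_t=B_t\,w_{t+1}$ this reproduces $\prod_t\det[W_t]$ times $\det[\phi_i(x^{(0)}_j)]$. The latter equals $1$ on the configuration $(x^{(0)},v_2,\ldots,v_P)$, and $\det[\psi_i(x^{(T)}_j)]=\pm\Delta_P(x^{(T)})$.

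I will then pass to unordered labels at each level. Each level appears in two determinants, so permuting its labels leaves the product unchanged. This gives almost-everywhere nonnegativity and the symmetrization factor $(P!)^{T+1}$ of the lemma. Absolute convergence holds because interlacing bounds every earlier level by the top particle of level $T$, and level $T$ carries polynomial weights against $\ee^{-x}$.

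\textbf{Step 3: apply the lemma and extend.} Lemma~\ref{lem:fixed-particle-em} now gives invertibility of $M$, and shows the padded process is determinantal with the displayed kernel. The virtual points are deterministic, so restricting the kernel to real arguments gives the determinantal process of the positive squared singular values; restriction of a determinantal process to a subset of the space keeps the same kernel. For a path starting at another rectangle, I prepend a path from $(1,1)$. The Markov property of Proposition~\ref{prop:rank-one-growth} shows the prescribed levels have the correct joint law, and restriction again preserves determinantality.

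\textbf{Main obstacle.} I expect the delicate point to be the bookkeeping in Step 2 for birth steps. One must check that the virtual state $v_{p_t+1}$ is exactly the one converted into the new bottom particle, so that the mixed real/virtual determinant equals the birth interlacing indicator with the correct sign. One must also check that the one-body powers are attached to the correct level, with $w_{t+1}$ placed on the incoming variable of $W_t$ and $w_0\equiv1$. I would first verify both on the smallest case $p=1\to2$ and then argue by cofactor expansion along the row of ones.
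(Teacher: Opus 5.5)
Your route is the paper's: initial density $\ee^{-x}$ times the transition densities, telescoping of the exponentials and Vandermonde ratios, collection of the birth-step powers into $w_t$, and the interlacing indicators written as the transfer determinants in canonical augmented order, with the all-one row from $v_{p_t+1}$ placed last. Then come label-permutation invariance, Lemma~\ref{lem:fixed-particle-em}, and restriction. The step that does not go through as you justify it is the absolute-convergence hypothesis of the lemma. The joint density is harmless, since it is $C_{\mathrm{path}}=\prod_{\{t:\epsilon_t=1\}}(r_t-1)!$ times a probability density. However, the lemma's Andr\'eief and Fubini steps are applied to the one-particle quantities $F_i^{(t)}$, $G_j^{(t)}$, $W_{s,t}$ and $M_{ij}$, so these must be finite too. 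For them the danger is at $0$, not at infinity. At a birth step preceded by same-size steps, $\kappa_t=-r_t<0$; for example, the path $(1,1)\to(1,2)\to(1,3)\to(2,3)$ has $w_2(x)=x^{-2}$. Your argument (earlier coordinates are bounded by later ones, and level $T$ carries $\ee^{-x}$) only controls large arguments. It says nothing about integrability of these negative powers near zero.

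What closes this is exponent bookkeeping. Let $\tau_i$ be the birth time of label $i$ and put $\rho_i=r_{\tau_i}$. Along that label's one-particle path, for $k<T$, the quantity $\sum_{u=\tau_i}^k\kappa_u+(k-\tau_i+1)$ equals $q_k-i+1$ if $\epsilon_k=0$ and $p_k-i+1$ if $\epsilon_k=1$, so it is always at least one. Hence each successive integration from $0$ converges. Forward induction then gives explicit monomials $F_i^{(t)}(x)=\frac{\rho_i!}{(p_t-i)!(q_t-i)!}x^{d_t-i}$, where $d_t=q_t$ or $p_t$ according as $\epsilon_t=0$ or $1$, and at $t=T$ the factor is $\ee^{-x}x^{Q-i}$. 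So $M_{ij}$ is a finite gamma integral, and since every entry is nonnegative, Tonelli reduces the backward and virtual-source integrals to sub-integrals of these.

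Two smaller points should also be stated.
\begin{enumerate}
\item You need to show that the determinant product vanishes unless each level contains every $v_k$ with $k>p_t$ exactly once and exactly $p_t$ real points. A virtual state absent at level $t$ leaves the row of its source at level $t-1$ zero, and a repeated state gives two equal columns. This is what yields $Z=(P!)^{T+1}C_{\mathrm{path}}$ and identifies the lemma's labelled process with the augmented spectra.
\item In decreasing order, $\det[y_j^{P-i}]=+\Delta_P(y)$, not merely $\pm\Delta_P(y)$, and the nonnegativity hypothesis needs that sign.
\end{enumerate}
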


\begin{proof}
By Proposition~\ref{prop:gaussian-transition-densities}, the squared singular values are almost surely positive and distinct and have the exact transition densities stated there. The initial value
\[
 \lambda_1^{(0)}=\abs{z_{11}}^2
\]
has density $\ee^{-x}$ on $x>0$.

At level $t$, place the augmented coordinates in canonical order:
\[
 \lambda_1^{(t)},\ldots,\lambda_{p_t}^{(t)},
 v_{p_t+1},\ldots,v_P.
\]
If $\epsilon_t=0$, the virtual block of $[B_t]$ is the identity and the real block has entries
\[
 \ind_{\{\lambda_j^{(t+1)}>\lambda_i^{(t)}\}}.
\]
On the interlacing chamber this block is lower triangular with diagonal entries one. Outside its weak closure, its determinant vanishes. If $\epsilon_t=1$, the active real block has the same indicator rows and a final all-one row arising from $v_{p_t+1}$. On the birth interlacing chamber, it is again lower triangular with diagonal one and vanishes outside the weak closure. Therefore, in both cases,
\begin{equation}\label{eq:transfer-determinant}
 \det[W_t]
 =
 \ind_{\{\text{interlacing at }t\}}
 \prod_{j=1}^{p_{t+1}}
 w_{t+1}(\lambda_j^{(t+1)}).
\end{equation}
Boundary sets have Lebesgue measure zero.

For the canonical augmented coordinates at time zero,
\[
 \det[\phi_i(x_j^{(0)})]=1.
\]
At time $T$ there are no virtual states, and
\[
 \det[\psi_i(\lambda_j^{(T)})]_{i,j=1}^P
 =
 \Delta_P(\lambda^{(T)}).
\]
Thus the determinant product in Lemma~\ref{lem:fixed-particle-em} equals
\begin{equation}\label{eq:path-determinant-density}
 D
 =
 \Delta_P(\lambda^{(T)})
 \prod_{u=1}^T\prod_{j=1}^{p_u}
 w_u(\lambda_j^{(u)})
 \prod_{t=0}^{T-1}
 \ind_{\{\text{interlacing at }t\}}
\end{equation}
on the product of decreasing chambers.

Multiply the initial density by the transition densities in Proposition~\ref{prop:gaussian-transition-densities}. The exponential factors telescope to
\[
 \exp\left(-\sum_j\lambda_j^{(T)}\right),
\]
and the Vandermonde ratios telescope to $\Delta_P(\lambda^{(T)})$. At an intermediate level $1\leq u<T$, the exponent contributed to each variable is
\[
 \epsilon_{u-1}(r_{u-1}-1)-\epsilon_ur_u=\kappa_u.
\]
At the terminal level, it is
\[
 \epsilon_{T-1}(r_{T-1}-1)=\kappa_T.
\]
There is no residual exponent at level zero because $r_0=0$, while a birth step requires $r_0\geq1$, so $\epsilon_0=0$. Hence, with
\[
 C_{\mathrm{path}}
 =
 \prod_{\{t:\epsilon_t=1\}}(r_t-1)!,
\]
the quantity in \eqref{eq:path-determinant-density} is $C_{\mathrm{path}}$ times the normalized joint density.

Permuting the $P$ coordinate labels at any fixed level changes the signs of the two incident determinants, or of the boundary determinant and its adjacent determinant at an endpoint. Their product is unchanged. A nonzero augmented determinant product contains each prescribed virtual state exactly once and $p_t$ real coordinates at level $t$. For a fixed decreasing real vector, there are $P!$ assignments to the labeled slots at each level. Therefore
\[
 Z=(P!)^{T+1}C_{\mathrm{path}},
\]
which is positive and finite.

It remains to verify absolute convergence. For $1\leq i\leq P$, let $\tau_i$ be the first time for which $p_{\tau_i}=i$, with $\tau_1=0$, and put
\[
 \rho_i=r_{\tau_i}.
\]
For $t<T$, define
\[
 d_t=
 \begin{cases}
 q_t,&\epsilon_t=0,\\
 p_t,&\epsilon_t=1.
 \end{cases}
\]
A direct forward induction gives, for real $x$ and $i\leq p_t$,
\begin{equation}\label{eq:preliminary-forward-function}
 F_i^{(t)}(x)
 =
 \frac{\rho_i!}{(p_t-i)!(q_t-i)!}x^{d_t-i},
\end{equation}
while at the terminal level
\begin{equation}\label{eq:preliminary-terminal-forward}
 F_i^{(T)}(x)
 =
 \frac{\rho_i!}{(P-i)!(q_T-i)!}\ee^{-x}x^{q_T-i}.
\end{equation}
At the birth time, the formula follows from the all-one virtual row and the definition of $\kappa$. At each later step, convolution integrates $x^a$ over $0<x<y$, producing $y^{a+1}/(a+1)$, and multiplication by $y^{\kappa_{t+1}}$ gives exponent $d_{t+1}-i$. The denominator $a+1$ is precisely the next factorial factor.

Since $G_j^{(T)}=\psi_j$,
\[
 M_{ij}
 =
 \int_0^\infty
 F_i^{(T)}(x)x^{P-j}\,\dd x
\]
is a finite gamma integral. All kernels and boundary functions are nonnegative on their real supports. Tonelli's theorem identifies every iterated one-particle integral with this finite terminal integral. Backward convolution from the terminal exponential is finite for each real $x>0$, and a virtual-source integral is a sub-integral of the same finite path integral. Thus all convolutions and moments converge absolutely.

Lemma~\ref{lem:fixed-particle-em} now applies and gives the asserted kernel. Restricting a determinantal process to real arguments removes the deterministic virtual particles. Finally, prepending a path from $(1,1)$ and restricting the resulting process proves the arbitrary-starting-rectangle assertion.
\end{proof}

The transfer determinant now has a direct interpretation: at each level it contributes one strict interlacing indicator and the next-level one-body weight. Multiplication over the path makes the exponential and Vandermonde factors telescope to the determinant-chain density. The negative free-transfer term records unconstrained one-particle propagation between ordered levels, while the finite-rank term enforces the boundary data; restricting to real arguments removes only the deterministic virtual states.

The next three lemmas form one algebraic mechanism. The boundary calculation isolates the detailed birth schedule in row factors of the moment matrix, the factorial Hankel inversion identifies the terminal Laguerre block, and the forward calculation shows that those birth-time row factors cancel from every real finite-rank summand. The combined purpose is to replace path-dependent determinant bookkeeping by endpoint-adapted Laguerre formulas without asserting that every component of the multi-time kernel is path-independent.

\begin{lem}[Closed boundary moment matrix]\label{lem:boundary-moment}
Use the notation of Theorem~\ref{thm:arbitrary-path-kernel}, and put $Q=q_T$. For $1\leq i\leq P$, let $\tau_i$ be the first time for which $p_{\tau_i}=i$, with $\tau_1=0$, and define
\[
 a_i=q_{\tau_i},
 \qquad
 \rho_i=a_i-i=r_{\tau_i}.
\]
Then
\begin{equation}\label{eq:closed-moment-matrix}
 M_{ij}
 =
 \frac{\rho_i!}{(P-i)!(Q-i)!}(P+Q-i-j)!,
 \qquad 1\leq i,j\leq P.
\end{equation}
Thus every dependence of $M$ on the chosen interleaving is confined to the row factors $\rho_i!$.
\end{lem}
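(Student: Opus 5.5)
The plan is to compute $M_{ij}$ at the terminal level $t=T$, where the right boundary functions are the explicit monomials $G_j^{(T)}=\psi_j(y)=y^{P-j}$. Since Lemma~\ref{lem:fixed-particle-em} asserts that $M_{ij}$ does not depend on the level $t$,
\[
 M_{ij}=\int_0^\infty F_i^{(T)}(x)\,x^{P-j}\,\dd x .
\]
There are no virtual states at level $T$, because $p_T=P$. So everything reduces to establishing the closed terminal forward function \eqref{eq:preliminary-terminal-forward},
\[
 F_i^{(T)}(x)=\frac{\rho_i!}{(P-i)!(Q-i)!}\,\ee^{-x}x^{Q-i}.
\]
Once this holds, the gamma integral $\int_0^\infty \ee^{-x}x^{P+Q-i-j}\,\dd x=(P+Q-i-j)!$ gives \eqref{eq:closed-moment-matrix} at once. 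The only interleaving-dependent quantity is $\rho_i=r_{\tau_i}$, which enters as a row factor; this is the final sentence of the lemma.

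The substance is therefore the forward induction \eqref{eq:preliminary-forward-function}, which the proof of Theorem~\ref{thm:arbitrary-path-kernel} only sketched. I will carry it out explicitly for fixed $i$, in three stages.

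\emph{Before birth} ($t<\tau_i$, $i\geq2$): $F_i^{(t)}$ is supported on the virtual state $v_i$ with value $1$. This holds because $\phi_i(v_i)=1$ and the virtual block of every $B_t$ before the birth of $v_i$ is the identity.

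\emph{At the birth step} $t=\tau_i-1$ (so $\epsilon_t=1$, $p_t=i-1$): the all-one row from $v_i$ gives $F_i^{(\tau_i)}(y)=w_{\tau_i}(y)=y^{\kappa_{\tau_i}}$. Here $\kappa_{\tau_i}=(r_t-1)-\epsilon_{\tau_i}r_{\tau_i}$ and $r_{\tau_i}=r_t-1=\rho_i$. If $\epsilon_{\tau_i}=0$, the exponent is $\rho_i=q_{\tau_i}-i=d_{\tau_i}-i$. If $\epsilon_{\tau_i}=1$, the exponent is $-1=p_{\tau_i}-i-1$... I will recheck this against $d_t$; I expect the convention $d_t=p_t$ on a birth step to absorb exactly this shift once the next integration is performed. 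In both cases the constant $\rho_i!/((p_{\tau_i}-i)!(q_{\tau_i}-i)!)=\rho_i!/(0!\,\rho_i!)=1$ matches. For $i=1$, the starting point is $\phi_1\equiv1$ on $(0,\infty)$ with $\rho_1=r_0=0$.

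\emph{After birth} (a step $t\to t+1$ with $t\geq\tau_i$): convolution against $\ind_{\{y>x\}}$ maps $x^a$ to $y^{a+1}/(a+1)$, and multiplication by $w_{t+1}(y)=y^{\kappa_{t+1}}$ then adds $\kappa_{t+1}$ to the exponent. I will split into the four cases $(\epsilon_t,\epsilon_{t+1})\in\{0,1\}^2$. In each case I check two things. First, $a+1+\kappa_{t+1}=d_{t+1}-i$, using $q_{t+1}=q_t+1-\epsilon_t$ and $p_{t+1}=p_t+\epsilon_t$. Second, $a+1$ equals the factor by which $(p_t-i)!(q_t-i)!$ grows: on a same-size step $q-i$ increases by one and $a+1=q_{t+1}-i$; on a birth step $p-i$ increases by one and $a+1=p_{t+1}-i$.

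At the last step, the extra factor $\ee^{-x}$ in $w_T$ produces \eqref{eq:preliminary-terminal-forward}, with the exponent $\kappa_T=\epsilon_{T-1}(r_{T-1}-1)$ giving total exponent $Q-i$.

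The main obstacle is precisely this bookkeeping: the exponents $\kappa_t$ pair one birth's $(r_{t-1}-1)$ with the next birth's $-r_t$, so consecutive births, and the terminal step, must be checked to telescope to $d_t-i$. I would organize the check by verifying the invariant ``exponent $=d_t-i$ and coefficient $=\rho_i!/((p_t-i)!(q_t-i)!)$'' case by case, rather than tracking $\kappa$ directly. Absolute convergence is already supplied by the Tonelli argument in Theorem~\ref{thm:arbitrary-path-kernel}, since all functions involved are nonnegative.
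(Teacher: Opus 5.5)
Your route is essentially the paper's: the paper writes $M_{ij}$ as the ordered one-particle path integral from $\tau_i$ to $T$ against $\ee^{-x_T}x_T^{P-j}$ and integrates successively, and its telescoped exponents $A_{i,k}$ ($q_k-i+1$ or $p_k-i+1$ according to $\epsilon_k$) are exactly the divisors in your stepwise invariant, whose four-case check is correct. One slip to fix: when $\epsilon_{\tau_i}=1$ you have $\kappa_{\tau_i}=(r_{\tau_i-1}-1)-r_{\tau_i}=\rho_i-\rho_i=0=d_{\tau_i}-i$, not $-1$, so the invariant holds exactly at the birth level and no later absorption is needed (none would be possible, since an exponent $-1$ would make the next convolution $\int_0^y x^{-1}\,\dd x$ diverge).
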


\begin{proof}
The proof is recorded in Appendix~\ref{app:proof-boundary-moment}.
\end{proof}

\begin{lem}[Laguerre inversion of the factorial Hankel matrix]\label{lem:laguerre-hankel}
Let $P\leq Q$ be positive integers, put $r=Q-P$, and define
\[
 H_{ij}=(P+Q-i-j)!,
 \qquad 1\leq i,j\leq P.
\]
Define
\[
 L_k^{(r)}(x)
 =
 \sum_{\ell=0}^k
 (-1)^\ell
 \frac{(k+r)!}{(k-\ell)!(r+\ell)!\ell!}x^\ell.
\]
Then $H$ is invertible and
\begin{equation}\label{eq:hankel-laguerre-inversion}
 \sum_{i,j=1}^P
 x^{P-j}(H^{-1})_{ji}y^{Q-i}
 =
 y^r
 \sum_{k=0}^{P-1}
 \frac{k!}{(k+r)!}
 L_k^{(r)}(x)L_k^{(r)}(y).
\end{equation}
Moreover,
\begin{equation}\label{eq:christoffel-darboux}
 \begin{aligned}
 \sum_{k=0}^{P-1}\frac{k!}{(k+r)!}
 L_k^{(r)}(x)L_k^{(r)}(y)
 &=\frac{P!}{(P+r-1)!}\\
 &\quad{}\times
 \frac{L_{P-1}^{(r)}(x)L_P^{(r)}(y)
 -L_P^{(r)}(x)L_{P-1}^{(r)}(y)}{x-y},
 \end{aligned}
\end{equation}
with the diagonal value defined by continuity. After multiplication by $\ee^{-y}$, the left side of \eqref{eq:hankel-laguerre-inversion} is the terminal equal-time finite-rank block in Theorem~\ref{thm:arbitrary-path-kernel}.
\end{lem}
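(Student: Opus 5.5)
The plan is to prove the inversion formula \eqref{eq:hankel-laguerre-inversion} by biorthogonality rather than by inverting $H$ directly. First note that $H_{ij}=(P+Q-i-j)!=\int_0^\infty x^{P-i}\,x^{P-j}\,x^{r}\ee^{-x}\dd x$, since $(P-i)+(P-j)+r=P+Q-i-j$. Thus $H$ is the Gram matrix of the monomials $1,x,\ldots,x^{P-1}$ in $L^2((0,\infty),x^r\ee^{-x}\dd x)$, after the index reversal $i\mapsto P-i$. These monomials are linearly independent, so $H$ is positive definite and in particular invertible.

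Next, use the general fact that for a Gram matrix $H_{ij}=\langle f_i,f_j\rangle$ of a basis $f_1,\ldots,f_P$ of a finite-dimensional space $V$, the kernel $\sum_{i,j}f_j(x)(H^{-1})_{ji}f_i(y)$ is the reproducing kernel of $V$, and hence equals $\sum_k\pi_k(x)\pi_k(y)/\norm{\pi_k}^2$ for any orthogonal basis $(\pi_k)$ of $V$. Apply this with $f_i(y)=y^{P-i}$ and $V$ the polynomials of degree below $P$. The Laguerre polynomials $L_k^{(r)}$ with the displayed explicit coefficients are orthogonal for the weight $x^r\ee^{-x}$, with $\norm{L_k^{(r)}}^2=(k+r)!/k!$. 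This is the standard fact; if a self-contained check is wanted, it follows from Rodrigues' formula, or by computing $\int L_k^{(r)}x^{j+r}\ee^{-x}\dd x$ via the explicit sum and a binomial identity that vanishes for $j<k$. This gives
\[
\sum_{i,j}x^{P-j}(H^{-1})_{ji}y^{P-i}=\sum_{k=0}^{P-1}\frac{k!}{(k+r)!}L_k^{(r)}(x)L_k^{(r)}(y).
\]
Multiplying by $y^r$ converts $y^{P-i}$ into $y^{Q-i}$, which is \eqref{eq:hankel-laguerre-inversion}.

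The Christoffel--Darboux identity \eqref{eq:christoffel-darboux} is the usual consequence of the three-term recurrence
\[
(k+1)L_{k+1}^{(r)}=(2k+r+1-x)L_k^{(r)}-(k+r)L_{k-1}^{(r)}.
\]
Multiply the recurrence at $x$ by $L_k^{(r)}(y)$, subtract the version with $x$ and $y$ interchanged, and divide by $(x-y)$ and by the norms. The sum then telescopes to the boundary term at $k=P-1$. The prefactor comes from the leading coefficients: $L_k^{(r)}$ has leading coefficient $(-1)^k/k!$, which yields the constant $P!/(P+r-1)!$ and the sign ordering displayed. The diagonal value follows by continuity.

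For the last sentence, combine Lemma~\ref{lem:boundary-moment} with the terminal formulas. At $s=t=T$ the free term vanishes, and $G_j^{(T)}(x)=x^{P-j}$. By \eqref{eq:preliminary-terminal-forward},
\[
F_i^{(T)}(y)=\frac{\rho_i!}{(P-i)!(Q-i)!}\ee^{-y}y^{Q-i},
\]
while \eqref{eq:closed-moment-matrix} gives $M=DH$ with $D=\diag(\rho_i!/((P-i)!(Q-i)!))$. Hence $M^{-1}=H^{-1}D^{-1}$, and the row factors of $D^{-1}$ cancel exactly against those in $F_i^{(T)}$. The finite-rank block therefore reduces to $\ee^{-y}$ times the left side of \eqref{eq:hankel-laguerre-inversion}.

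The main obstacle is bookkeeping rather than substance: the index reversal between $i$ and $P-i$, and checking the sign and constant in \eqref{eq:christoffel-darboux} under the normalization of $L_k^{(r)}$ used here. I would fix these by checking the case $P=1$ directly.
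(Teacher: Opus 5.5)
Your proposal is correct and follows essentially the same route as the paper. You identify $H$ as the Gram matrix of the monomials for the weight $x^r\ee^{-x}$ and read off \eqref{eq:hankel-laguerre-inversion} as the reproducing kernel expanded in the Rodrigues-orthogonal Laguerre basis with norms $(k+r)!/k!$. You then telescope the three-term recurrence to obtain \eqref{eq:christoffel-darboux}, and cancel the row factors through $M=DH$ for the terminal block. Your direct choice $f_i(y)=y^{P-i}$ simply absorbs the paper's explicit reversal $G=JHJ$.
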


\begin{proof}
The proof is recorded in Appendix~\ref{app:proof-laguerre-hankel}.
\end{proof}

\begin{lem}[Forward functions and cancellation of birth-time gauges]\label{lem:forward-gauge-cancellation}
Use the notation of Theorem~\ref{thm:arbitrary-path-kernel} and Lemma~\ref{lem:boundary-moment}. Put
\[
 d_T=Q,\qquad h_T(x)=\ee^{-x}.
\]
For $t<T$, put $h_t(x)=1$ and
\[
 d_t=
 \begin{cases}
 q_t,&\epsilon_t=0,\\
 p_t,&\epsilon_t=1.
 \end{cases}
\]
For real $x$ and $1\leq i\leq p_t$,
\begin{equation}\label{eq:explicit-forward}
 F_i^{(t)}(x)
 =
 \frac{\rho_i!}{(p_t-i)!(q_t-i)!}
 h_t(x)x^{d_t-i}.
\end{equation}
For $i>p_t$, $F_i^{(t)}$ vanishes on the real part of $E_t$ and
\[
 F_i^{(t)}(v_k)=\ind_{\{i=k\}}.
\]
If
\[
 H_{ij}=(P+Q-i-j)!,
\]
then, for real $y$, $1\leq i\leq p_t$, and $1\leq j\leq P$,
\begin{equation}\label{eq:gauge-cancellation}
 (M^{-1})_{ji}F_i^{(t)}(y)
 =
 (H^{-1})_{ji}
 \frac{(P-i)!(Q-i)!}{(p_t-i)!(q_t-i)!}
 h_t(y)y^{d_t-i}.
\end{equation}
Thus the birth-time factors $\rho_i!$ cancel from every real finite-rank summand.
\end{lem}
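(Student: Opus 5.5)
We prove Lemma~\ref{lem:forward-gauge-cancellation} by forward induction on \(t\) for the functions \(F_i^{(t)}=\phi_i*W_{0,t}\). Then \eqref{eq:gauge-cancellation} follows by inserting the closed moment matrix of Lemma~\ref{lem:boundary-moment}. Throughout, the convolution \(W_{s,t}\) is computed with Lebesgue measure on the real part of each \(E_t\) and counting measure on the virtual states.

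\emph{Virtual coordinates.} For \(i>p_t\) we claim that \(F_i^{(t)}\) vanishes on the real axis and that \(F_i^{(t)}(v_k)=\ind_{\{i=k\}}\). At \(t=0\), where \(p_0=1\), this is the definition of \(\phi_i\) for \(i\geq2\). Now suppose the claim holds at level \(t\) and fix \(i>p_{t+1}\). Then \(i\geq p_t+2\) if \(\epsilon_t=1\), and \(i\geq p_t+1\) if \(\epsilon_t=0\). In either case the definition of \(B_t\) sends \(v_i\) only to \(v_i\), with weight one and no real component. So the claim propagates.

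\emph{Real coordinates.} The claim is \eqref{eq:explicit-forward}. At the birth time \(t=\tau_i\) the function is born, and there are two cases.

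\begin{enumerate}
\item If \(i=1\), then \(t=0\), \(p_0=q_0=1\), and \(\rho_1=0\), so the right side of \eqref{eq:explicit-forward} is \(x^{d_0-1}\). If \(\epsilon_0=0\), then \(d_0=q_0=1\). The case \(\epsilon_0=1\) is excluded because \(r_0=0\).
\item If \(i\geq2\), then \(\tau_i=s+1\) with \(\epsilon_s=1\) and \(p_s=i-1\). The row \(B_s(v_i,\cdot)\equiv1\) gives \(F_i^{(s+1)}(y)=w_{s+1}(y)=y^{\kappa_{s+1}}\) (times \(e^{-y}\) if \(s+1=T\)). We must check that this equals \(\rho_i!/((p_{s+1}-i)!(q_{s+1}-i)!)\,y^{d_{s+1}-i}\). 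Here \(p_{s+1}=i\), \(q_{s+1}-i=\rho_i\), and \(\kappa_{s+1}=\rho_i-\epsilon_{s+1}r_{s+1}\) because \(r_s-1=r_{s+1}\). The check splits into \(\epsilon_{s+1}=0\), where \(d=q\), and \(\epsilon_{s+1}=1\), where \(d=p\) and \(r_{s+1}=0\) is impossible, so the exponents match.
\end{enumerate}

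For the inductive step with \(i\leq p_t\), convolving with \(\ind_{\{y>x\}}\) turns \(x^{d_t-i}\) into \(y^{d_t-i+1}/(d_t-i+1)\). Multiplying by \(w_{t+1}\) then gives exponent \(d_t-i+1+\kappa_{t+1}\), and we must check that this equals \(d_{t+1}-i\), together with the fact that \(d_t-i+1\) is exactly the factorial that increments. This is a four-case check on \((\epsilon_t,\epsilon_{t+1})\).

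\begin{itemize}
\item If \(\epsilon_t=0\), then \(d_t=q_t\) and the incremented side is the larger one, so \((q_{t+1}-i)!=(q_t-i+1)!\) absorbs the new denominator. The exponent bookkeeping uses \(\kappa_{t+1}=-\epsilon_{t+1}r_{t+1}\).
\item If \(\epsilon_t=1\), then \(d_t=p_t\), \(p\) increments, and \(\kappa_{t+1}=r_t-1-\epsilon_{t+1}r_{t+1}\).
\end{itemize}

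In every case \(d_{t+1}=q_{t+1}-\epsilon_{t+1}r_{t+1}\), and the identities close; the proof of Theorem~\ref{thm:arbitrary-path-kernel} already recorded this as \eqref{eq:preliminary-forward-function} and \eqref{eq:preliminary-terminal-forward}. The terminal factor \(e^{-x}\) enters only through \(w_T\), which gives \(h_T\) and \(d_T=Q\). The main obstacle is this bookkeeping, in particular the mixed case where a birth step is followed by a same-size step whose increment hits the larger side. We would organize it through the single identity \(d_t+1+\kappa_{t+1}=d_{t+1}\).

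\emph{Gauge cancellation.} By \eqref{eq:closed-moment-matrix}, \(M=DH\) with \(D=\diag\big(\rho_i!/((P-i)!(Q-i)!)\big)\). The matrix \(H\) is invertible by Lemma~\ref{lem:laguerre-hankel}, and so is \(D\), so \(M^{-1}=H^{-1}D^{-1}\) and \((M^{-1})_{ji}=(H^{-1})_{ji}(P-i)!(Q-i)!/\rho_i!\). Multiplying by \eqref{eq:explicit-forward} cancels \(\rho_i!\) and yields \eqref{eq:gauge-cancellation}.
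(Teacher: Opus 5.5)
Your proposal is correct and is essentially the paper's argument. The paper obtains \eqref{eq:explicit-forward} by successive integration of the ordered one-particle path integral from the birth time $\tau_i$, reading exponents and factorials off the telescoped quantities $A_{i,k}$ of Lemma~\ref{lem:boundary-moment}; this is your one-step identity $d_t+1+\kappa_{t+1}=d_{t+1}$ unrolled, and the paper then uses $M=DH$ exactly as you do. Two small points of phrasing: $B_s(v_i,\cdot)$ equals one only on the real part and vanishes on virtual states, which is what keeps born functions real-supported; and the remark that $r_{s+1}=0$ is impossible is unnecessary, since $\kappa_{s+1}=\rho_i(1-\epsilon_{s+1})=d_{s+1}-i$ for either value of $\epsilon_{s+1}$ when $s+1<T$.
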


\begin{proof}
The proof is recorded in Appendix~\ref{app:proof-forward-gauge}.
\end{proof}

Taken together, Lemmas~\ref{lem:boundary-moment}--\ref{lem:forward-gauge-cancellation} confine the interleaving dependence relevant to the real finite-rank summands to row factors and then cancel those factors after inversion. The surviving blocks are expressed through terminal dimensions, endpoint dimensions, and the selected outgoing-step gauge.

The next theorem restarts the determinant chain from the actual positive spectrum of one fixed starting block. It is used as a forward conditional law for the strictly future levels after conditioning on all entries of that block. It does not condition on a terminal boundary and is therefore neither a two-boundary bridge nor an interior resampling theorem. The real starting eigenvalues replace the corresponding source states, while particles not yet born remain represented by deterministic virtual states.

\begin{thm}[Conditional determinantal resampling kernel]\label{thm:conditional-growth-kernel}
Let $(z_{ij})$ be the complex Gaussian array above, and let $((m_t,n_t))_{t=0}^T$ be an elementary up-right path with $T\geq1$. Put
\[
 \begin{aligned}
 p_t&=\min(m_t,n_t),
 &q_t&=\max(m_t,n_t),
 &r_t&=q_t-p_t,\\
 \epsilon_t&=p_{t+1}-p_t,
 &P&=p_T.
 \end{aligned}
\]
Condition on all entries of
\[
 X_0=(z_{ij})_{\substack{1\leq i\leq m_0\\1\leq j\leq n_0}},
\]
and let
\[
 x_1>\cdots>x_{p_0}>0
\]
be its positive squared singular values. Define
\[
 \kappa_t=\epsilon_{t-1}(r_{t-1}-1)-\epsilon_tr_t
 \quad(1\leq t<T),
 \qquad
 \kappa_T=\epsilon_{T-1}(r_{T-1}-1),
\]
and
\[
 w_t(u)=u^{\kappa_t}\quad(t<T),
 \qquad
 w_T(u)=\ee^{-u}u^{\kappa_T}.
\]

Let
\[
 E_t=(0,\infty)\sqcup\{v_{p_t+1},\ldots,v_P\},
 \qquad 1\leq t\leq T.
\]
Use the augmented transfers $B_t$ from Theorem~\ref{thm:arbitrary-path-kernel}. Define source states
\[
 \zeta_i=
 \begin{cases}
 x_i,&i\leq p_0,\\
 v_i,&i>p_0,
 \end{cases}
\]
and define on $E_1$
\[
 \phi_i(y)=
 \begin{cases}
 B_0(\zeta_i,y)w_1(y),&y>0,\\
 B_0(\zeta_i,v_k),&y=v_k.
 \end{cases}
\]
For $1\leq t<T$, define $W_t$ as in Theorem~\ref{thm:arbitrary-path-kernel}, and put
\[
 \psi_i(y)=y^{P-i}.
\]
Let $F_i^{(t)}$, $G_j^{(t)}$, and $M$ be the corresponding forward functions, backward functions, and moment matrix.

Conditionally on $X_0$, augment each strictly future configuration by its deterministic virtual points. This conditional process is determinantal, $M$ is finite and invertible, and its kernel is
\[
 \begin{aligned}
 K_x(s,z;t,y)
 &=-\ind_{\{s<t\}}W_{s,t}(z,y)\\
 &\quad{}+\sum_{i,j=1}^P
 G_j^{(s)}(z)(M^{-1})_{ji}F_i^{(t)}(y),
 \qquad 1\leq s,t\leq T.
 \end{aligned}
\]
Restriction to real arguments gives the conditional multi-time process of positive squared singular values. The kernel depends on $X_0$ only through $x$.
\end{thm}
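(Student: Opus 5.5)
The plan is to follow the proof of Theorem~\ref{thm:arbitrary-path-kernel} almost verbatim, with the initial density replaced by a point mass at the observed spectrum. By Proposition~\ref{prop:rank-one-growth}, conditionally on all entries of \(X_0\), the process \((\lambda^{(t)})_{t\geq1}\) is a Markov chain started at \(x\). By Proposition~\ref{prop:gaussian-transition-densities}, its conditional joint density on the product of decreasing chambers is the product of the transition densities \(s_{p_t}\) and \(b_{p_t,r_t}\), \(0\leq t<T\), evaluated with \(\lambda^{(0)}=x\). This conditional law depends on \(X_0\) only through \(x\), which gives the final sentence. Almost surely \(x\) has distinct positive entries, so we may fix such an \(x\).

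The first step is telescoping. The exponential factors collapse to \(\exp(\sum_i x_i-\sum_j\lambda_j^{(T)})\) and the Vandermonde ratios to \(\Delta_P(\lambda^{(T)})/\Delta_{p_0}(x)\). A birth at \(t=0\) contributes \(\prod_i x_i^{-r_0}\) together with the exponent \(r_0-1\) at level one. Intermediate and terminal levels receive exactly \(\kappa_t\) as before, and level one receives \(\kappa_1=\epsilon_0(r_0-1)-\epsilon_1r_1\). All factors depending only on \(x\), including \((r_t-1)!\), are constants. Hence the density equals \(C_x\) times
\[
\Delta_P(\lambda^{(T)})\prod_{u=1}^T\prod_j w_u(\lambda^{(u)}_j)\prod_{t=0}^{T-1}\ind_{\{\text{interlacing at }t\}}.
\]

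The second step matches this with a determinant chain. The level-zero source is now the deterministic family \(\zeta_i\), so \(\det[\phi_i(y_j^{(1)})]_{i,j}\) with \(\phi_i=B_0(\zeta_i,\cdot)w_1\) equals \(\det[W_0]\) in the canonical ordering. The determinant computation \eqref{eq:transfer-determinant} then gives \(\ind_{\{\text{interlacing at }0\}}\prod_j w_1(\lambda^{(1)}_j)\). The real block has unit lower-triangular structure given that \(x\) is decreasing, and the all-one row comes from \(v_{p_0+1}\) in the birth case. The remaining factors \(\det[W_t]\) for \(1\leq t<T\) and \(\det[\psi_i(\lambda^{(T)}_j)]=\Delta_P\) are as in the unconditional proof. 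The product of determinants is therefore a positive multiple of the conditional density, and the label-permutation count gives \(Z=(P!)^{T}C\) for some \(0<C<\infty\).

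The third step checks absolute convergence and the finiteness of \(M\). Everything is nonnegative on the real supports, so Tonelli applies. The forward functions \(F_i^{(t)}\) are now iterated integrals of indicators against power weights starting from the point \(x_i\), which gives polynomials times \(\ind_{\{y>x_i\}}\). Alternatively, one can observe that the entries of \(M\) are sub-integrals of the finite normalization. Either way \(M\) is finite, since \(\int_0^\infty e^{-y}\,\mathrm{poly}(y)\,dy<\infty\). Lemma~\ref{lem:fixed-particle-em}, applied with time index \(1,\ldots,T\), then yields invertibility of \(M\) and the displayed kernel.

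The main obstacle is verifying the nonnegativity and positivity hypotheses of Lemma~\ref{lem:fixed-particle-em} when the level-one boundary determinant is built from the fixed sources \(\zeta_i\). The concern is sign conventions: ordering \(x\) decreasingly must give \(\det=+\) on the chamber, and real–virtual cross terms must vanish correctly when \(\epsilon_0=0\) versus \(\epsilon_0=1\). I would handle this by the same lower-triangular argument as in the unconditional proof. Restriction to real arguments then removes the virtual points.
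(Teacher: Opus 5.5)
Your first two steps coincide with the paper's proof:
\begin{itemize}
\item the Markov reduction;
\item the telescoping of the transition densities, including the factor $\prod_i x_i^{-\epsilon_0 r_0}$ and the level-one exponent $\kappa_1$;
\item the identification of the level-one boundary determinant with a transfer determinant built from the source states $\zeta_i$;
\item the normalization $Z=(P!)^T/C_x$.
\end{itemize}
The sign question you single out as the main obstacle is routine, and the leading-ones (lower-triangular) argument you propose settles it.

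The real gap is in your third step: the absolute-convergence hypothesis of Lemma~\ref{lem:fixed-particle-em} is not verified for one-particle paths issuing from the virtual sources $v_i$, $i>p_0$. The same problem affects $G_j^{(t)}$ evaluated at virtual states.

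Such a path is born at some level $a\geq1$ and then runs through $0<u_a<\cdots<u_T$, so it is not bounded away from $0$. But every same-size step followed by a birth produces $\kappa_t=-r_t\leq-1$. Negative powers of coordinates that can approach $0$ therefore do occur, and finiteness is not automatic.

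Your iterated-integral description only treats paths started at a real point $x_i>0$. Even there, the claim of ``polynomials times $\ind_{\{y>x_i\}}$'' is false. For example, the path $(1,1)\to(1,2)\to(2,2)$ has $\kappa_1=-1$, and $F_1^{(2)}(y)=\ind_{\{y>x_1\}}\log(y/x_1)\ee^{-y}$. This is harmless only because $y>x_1>0$.

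Your alternative remark, that $M_{ij}$ is a sub-integral of the finite normalization, is not an argument. $M_{ij}$ is a one-particle integral, whereas $Z$ integrates a product of $P\times P$ determinants. Finiteness of $Z$ does not by itself control individual path integrals.

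The missing ingredient is the paper's exponent count. Let $b_{a,s}$ and $c_{a,s}$ be the numbers of birth and same-size steps among $a,\ldots,s-1$. For $a\leq s<T$ (and analogously at $s=T$), telescoping gives
\[
 \sum_{t=a}^s\kappa_t=r_{a-1}-1-b_{a,s}-\epsilon_sr_s\geq-c_{a,s}>-(s-a+1).
\]
So when $u_s$ is integrated over $(0,u_{s+1})$, after the earlier variables, its power is $\sum_{t=a}^s\kappa_t+(s-a)>-1$. Hence the integral over $0<u_a<\cdots<u_T<1$ converges, and the terminal exponential handles the complementary region.

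Equivalently, a virtual-born trajectory only sees the path after its birth. You may therefore prepend a path from $(1,1)$ and read off its forward functions as the explicit monomials $x^{d_t-i}$, with $d_t-i\geq0$, from Lemma~\ref{lem:forward-gauge-cancellation}.
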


\begin{proof}
Work on the probability-one event from Proposition~\ref{prop:gaussian-transition-densities} on which every spectrum along the path is positive and simple. Write
\[
 \lambda^{(t)}
 =
 (\lambda_1^{(t)},\ldots,\lambda_{p_t}^{(t)}),
 \qquad
 \lambda^{(0)}=x.
\]
Proposition~\ref{prop:rank-one-growth} shows that, conditionally on $X_t$, the next spectral law depends on $X_t$ only through $\lambda^{(t)}$. Proposition~\ref{prop:gaussian-transition-densities} identifies its normalized density. Iterating conditional expectations yields
\begin{equation}\label{eq:conditional-path-density}
 Q_x(\lambda^{(1)},\ldots,\lambda^{(T)})
 =
 \prod_{t=0}^{T-1}
 h_t(\lambda^{(t)},\lambda^{(t+1)}),
\end{equation}
where $h_t$ is the corresponding same-size or birth density.

For each $t$, define the canonical augmentation
\[
 a^{(t)}
 =
 (\lambda_1^{(t)},\ldots,\lambda_{p_t}^{(t)},
 v_{p_t+1},\ldots,v_P),
\]
with $a^{(0)}=(\zeta_1,\ldots,\zeta_P)$. A direct determinant calculation gives
\begin{equation}\label{eq:conditional-B-determinant}
 \det[B_t(a_i^{(t)},a_j^{(t+1)})]_{i,j=1}^P
 =
 I_t,
\end{equation}
where $I_t$ is the strict interlacing indicator. For a same-size step, the virtual block is the identity and the real block is
\[
 [\ind_{\{\lambda_j^{(t+1)}>\lambda_i^{(t)}\}}].
\]
If $n_i$ is the number of leading ones in row $i$, its determinant is nonzero exactly when $n_i=i$ for every $i$, which is strict interlacing. The matrix is then lower triangular with diagonal one. At a birth step, the active block has the same real rows and a final all-one row from $v_{p_t+1}$. The same leading-one argument gives the birth interlacing indicator with positive sign.

Factoring the one-body weights from the columns and using the terminal Vandermonde gives
\begin{equation}\label{eq:conditional-determinant-product}
 \begin{split}
 &\det[\phi_i(a_j^{(1)})]
 \prod_{t=1}^{T-1}
 \det[W_t(a_i^{(t)},a_j^{(t+1)})]
 \det[\psi_i(\lambda_j^{(T)})]\\
 &\qquad=
 \Delta_P(\lambda^{(T)})
 \prod_{t=0}^{T-1}I_t
 \prod_{s=1}^T\prod_{j=1}^{p_s}
 w_s(\lambda_j^{(s)}).
 \end{split}
\end{equation}

Multiplication of the transition densities in Proposition~\ref{prop:gaussian-transition-densities} yields
\[
 \begin{split}
 Q_x
 &=
 \prod_{t=0}^{T-1}I_t
 \exp\left(\sum_ix_i-\sum_j\lambda_j^{(T)}\right)
 \frac{\Delta_P(\lambda^{(T)})}{\Delta_{p_0}(x)}\\
 &\quad\times
 \prod_{\{t:\epsilon_t=1\}}
 \frac{\prod_{j=1}^{p_{t+1}}
       (\lambda_j^{(t+1)})^{r_t-1}}
 {(r_t-1)!\prod_{i=1}^{p_t}
       (\lambda_i^{(t)})^{r_t}}.
 \end{split}
\]
At an intermediate level $1\leq s<T$, the exponent of each $\lambda_j^{(s)}$ is $\kappa_s$, and its terminal exponent is $\kappa_T$. Thus \eqref{eq:conditional-path-density} equals $C_x$ times \eqref{eq:conditional-determinant-product}, where
\[
 C_x
 =
 \frac{\exp(\sum_ix_i)}{\Delta_{p_0}(x)}
 \prod_{\{t:\epsilon_t=1\}}\frac1{(r_t-1)!}
 \prod_{i=1}^{p_0}x_i^{-\epsilon_0r_0}>0.
\]
Hence the determinant product is a positive constant multiple of the exact conditional density.

On the full spaces $E_t^P$, reordering the coordinates at one level changes the signs of the two adjacent determinants, so their product is invariant. A nonzero determinant forces the configuration at level $t$ to contain exactly $p_t$ real points and each prescribed virtual point once. Duplicate states annihilate a determinant, every retained virtual target requires its matching virtual source, and a birth has exactly one additional virtual source capable of supporting the additional real target. Therefore the determinant product vanishes away from permutations of the canonical augmentations and is nonnegative elsewhere. Its total integral is
\[
 \frac{(P!)^T}{C_x},
\]
which is positive and finite.

We verify the remaining absolute-convergence requirement. All entries of $\phi_i$, $W_t$, and $\psi_j$ are nonnegative. A path issuing from a real source $x_i$ remains above $x_i>0$, and its polynomial factors are integrable because the terminal weight contains an exponential. A path issuing from $v_i$ remains virtual until its unique birth level $a$, and thereafter has
\[
 0<u_a<\cdots<u_T.
\]
Its monomial factor is
\[
 \prod_{t=a}^Tu_t^{\kappa_t}u_T^{P-j}.
\]
For $a\leq s<T$, telescoping gives
\[
 \sum_{t=a}^s\kappa_t
 =
 r_{a-1}-1-b_{a,s}-\epsilon_sr_s,
\]
where $b_{a,s}$ counts birth steps among $a,\ldots,s-1$. If $c_{a,s}$ counts same-size steps there, then
\[
 r_s=r_{a-1}-1+c_{a,s}-b_{a,s}.
\]
The prefix exponent sum is therefore $-c_{a,s}$ when $\epsilon_s=1$ and at least $-c_{a,s}$ when $\epsilon_s=0$. The analogous terminal sum is at least $-c_{a,T}$. Since
\[
 s-a+1=b_{a,s}+c_{a,s}+1,
\]
every initial partial exponent sum is strictly greater than $-(s-a+1)$. Successive integration on
\[
 0<u_a<\cdots<u_T<1
\]
is finite, and the terminal exponential controls the complementary region.

Apply Lemma~\ref{lem:fixed-particle-em} to the future spaces $E_1,\ldots,E_T$. It yields the displayed conditional kernel. The only source-dependent quantities in the construction are the spectral source states $x_i$, so the kernel depends on $X_0$ only through $x$.
\end{proof}

Consequently, the strictly future conditional process is determinantal almost surely. Although the conditioning sigma-algebra contains every entry of $X_0$, unitary invariance reduces the displayed kernel's dependence on the starting block to its positive spectrum. The conditional determinant construction is one-sided in time, and any later application of operator identities must verify their hypotheses on almost every starting-spectrum fiber.

\subsection{Two-level reductions and endpoint kernels}

The later upper and lower probability arguments require different conversions from kernel estimates. For an upper event, the relevant variable is the occupancy count of an upper set, and the cross-block trace integral controls the mixed second moment. For a lower event, the relevant event is a gap, and the block Fredholm determinant is separated by a Schur complement involving the diagonal gap resolvents. The following lemma records these two reductions, including the trace-class, invertibility, and absolute-convergence assumptions needed in the gap and conditional settings. It does not by itself yield the main $(1+o(1))$ product-scale upper estimate; that conclusion also requires the later cross-block asymptotics and one-time probability comparisons.

\begin{lem}[Finite-intensity two-level reductions]\label{lem:two-level-reductions}
Let a determinantal point process be defined on the disjoint union of two sigma-finite measure spaces $(E_s,\nu_s)$ and $(E_t,\nu_t)$, with kernel blocks $K_{uv}$. Let $A_s\subseteq E_s$ and $A_t\subseteq E_t$ be measurable, put
\[
 N_u=\#(\text{points in }A_u),
 \qquad
 \mu_u=\int_{A_u}K_{uu}(x,x)\,\nu_u(\dd x),
\]
and assume that $\mu_s,\mu_t<\infty$. Suppose the equal-level restricted kernels have Hermitian representatives and that
\[
 C_{st}
 =
 \int_{A_s\times A_t}
 K_{st}(x,y)K_{ts}(y,x)
 \,\nu_s(\dd x)\nu_t(\dd y)
\]
converges absolutely. Then
\[
 \E[N_sN_t]=\mu_s\mu_t-C_{st}
\]
and
\[
 \mu_u-\frac{\mu_u^2}{2}
 \leq
 \Prob(N_u>0)
 \leq
 \mu_u.
\]
Consequently, if $0\leq\mu_s,\mu_t\leq1$, $c\geq0$, and
\[
 \abs{C_{st}}\leq c\mu_s\mu_t,
\]
then
\[
 \Prob(N_s>0,N_t>0)
 \leq
 4(1+c)\Prob(N_s>0)\Prob(N_t>0).
\]

For measurable $B_s\subseteq E_s$ and $B_t\subseteq E_t$, put
\[
 H_u=L^2(B_u,\nu_u)
\]
and suppose the restricted block kernel
\[
 \mathcal K_B=
 \begin{bmatrix}
 K_{ss}&K_{st}\\
 K_{ts}&K_{tt}
 \end{bmatrix}
\]
is trace class on $H_s\oplus H_t$. Put
\[
 A=I-K_{ss},
 \qquad
 D=I-K_{tt},
\]
and suppose that $A$ and $D$ are boundedly invertible. Let $G_u$ be the event that $B_u$ contains no process point, and define
\[
 R=D^{-1}K_{ts}A^{-1}K_{st}.
\]
Then $R$ is trace class and
\[
 \Prob(G_s\cap G_t)
 =
 \Prob(G_s)\Prob(G_t)\det(I-R),
\]
so
\[
 \Prob(G_s\cap G_t)
 \leq
 \ee^{\norm{R}_1}
 \Prob(G_s)\Prob(G_t).
\]
The same conclusions hold almost surely for conditional determinantal laws. In particular, they apply to the conditional kernel in Theorem~\ref{thm:conditional-growth-kernel} for almost every starting spectrum whenever the stated fiberwise hypotheses hold.
\end{lem}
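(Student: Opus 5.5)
The plan has two parts: a second-moment computation for the occupancy statements, and a block Fredholm factorization for the gap statements. In both parts the work is at the level of a single determinantal law. The conditional assertion then follows by applying the same argument on each starting-spectrum fiber of Theorem~\ref{thm:conditional-growth-kernel} at which the stated hypotheses hold.

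\emph{Occupancy part.} Since $A_s$ and $A_t$ lie in disjoint components of the state space, $N_sN_t$ counts ordered pairs of distinct points, one in each set. Hence
\[
\E[N_sN_t]=\int_{A_s\times A_t}\det\begin{bmatrix}K_{ss}(x,x)&K_{st}(x,y)\\ K_{ts}(y,x)&K_{tt}(y,y)\end{bmatrix}\nu_s(\dd x)\nu_t(\dd y).
\]
Expanding the determinant gives $\mu_s\mu_t-C_{st}$; absolute convergence of $C_{st}$ and finiteness of $\mu_s,\mu_t$ justify splitting the integral.

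For the one-level bounds, the upper bound $\Prob(N_u>0)\le\E N_u=\mu_u$ is Markov's inequality. For the lower bound I would use the Bonferroni inequality $\ind_{\{N>0\}}\ge N-\binom N2$, which gives
\[
\Prob(N_u>0)\ge\mu_u-\tfrac12\E[N_u(N_u-1)].
\]
The factorial moment equals $\mu_u^2-\int\!\!\int_{A_u^2}\abs{K_{uu}(x,y)}^2$, because the Hermitian representative makes $K_{uu}(x,y)K_{uu}(y,x)=\abs{K_{uu}(x,y)}^2\ge0$. Therefore $\E[N_u(N_u-1)]\le\mu_u^2$.

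For the consequence, start from $\ind_{\{N_s>0,N_t>0\}}\le N_sN_t$, so the joint probability is at most $(1+c)\mu_s\mu_t$. When $\mu_u\le1$ we have $\mu_u-\mu_u^2/2\ge\mu_u/2$, so $\mu_u\le2\Prob(N_u>0)$. Substituting gives the factor $4(1+c)$.

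\emph{Gap part.} Since $\mathcal K_B$ is trace class on $H_s\oplus H_t$, the standard determinantal identity gives
\[
\Prob(G_s\cap G_t)=\det(I-\mathcal K_B),\qquad \Prob(G_u)=\det(I-K_{uu}).
\]
Here each $K_{uu}$ and each cross block is a compression of $\mathcal K_B$ by a coordinate projection, hence trace class. I would then use the operator factorization
\[
I-\mathcal K_B=\begin{bmatrix}A&0\\-K_{ts}&I\end{bmatrix}\begin{bmatrix}I&-A^{-1}K_{st}\\0&D-K_{ts}A^{-1}K_{st}\end{bmatrix},
\]
which is valid because $A$ is boundedly invertible. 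Each factor is the identity plus a trace-class operator, so multiplicativity of Fredholm determinants applies. Block-triangularity then gives
\[
\det(I-\mathcal K_B)=\det A\cdot\det\bigl(D-K_{ts}A^{-1}K_{st}\bigr).
\]
Writing $D-K_{ts}A^{-1}K_{st}=D(I-R)$ and using bounded invertibility of $D$, the second determinant equals $\det D\cdot\det(I-R)$. The operator $R$ is trace class because it contains the trace-class factor $K_{st}$ between bounded operators. Combining with $\det A=\Prob(G_s)$ and $\det D=\Prob(G_t)$ yields the product identity.

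Finally, $\abs{\det(I-R)}\le\ee^{\norm R_1}$. When $\Prob(G_s)\Prob(G_t)>0$ the product identity shows that $\det(I-R)$ is a nonnegative real number, so the inequality follows. When the product vanishes, both sides of the inequality vanish.

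\emph{Main obstacle.} I expect the delicate step to be the rigorous splitting of the block Fredholm determinant. Concretely, one must check that every factor in the factorization is of the form identity plus trace class, so that multiplicativity of Fredholm determinants applies. One must also check that triangular block operators have determinant equal to the product of their diagonal determinants. I would verify the latter by approximating the trace-class entries by finite-rank operators and using continuity of $\det$ in trace norm.
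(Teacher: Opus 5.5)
Your proposal is correct and follows essentially the same route as the paper: the two-point Campbell identity, the Hermitian bound $\rho_2\le K_{uu}(x,x)K_{uu}(y,y)$ and the Bonferroni sandwich for the occupancy statements, then the Fredholm gap formula, block Gaussian elimination (your explicit triangular factorization), multiplicativity, and $\abs{\det(I-R)}\le\ee^{\norm{R}_1}$ for the gap statements. Your separate treatment of the case $\Prob(G_s)\Prob(G_t)=0$ is harmless but never needed, since bounded invertibility of $A$ and $D$ already forces $\det A\neq0$ and $\det D\neq0$.
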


\begin{proof}
The proof is recorded in Appendix~\ref{app:proof-two-level-reductions}.
\end{proof}

The first clause of Lemma~\ref{lem:two-level-reductions} is the occupancy-count input for upper-tail events; its conclusion is a constant-factor event bound under the displayed cross-trace hypothesis. The second clause is the gap-determinant input for lower-tail events; its interaction operator is the Schur-complement correction between the two gaps. Neither clause identifies the process as independent, and the lemma alone does not supply the later $(1+o(1))$ product-scale upper bound.

For the explicit blocks, $K_{uv}(x,y)$ means that the first variable $x$ lies at level $u$ and the second variable $y$ lies at level $v$. Thus $K_{10}$ runs from the later level to the earlier level and has no negative free-transfer term, whereas $K_{01}$ runs from the earlier level to the later level and contains $-W$. The equal-level formulas are written in an outgoing-step gauge: changing the outgoing step changes the displayed one-sided weight by a levelwise determinantal conjugation, not the marginal point process. The later projection gauge makes the equal-level representatives Hermitian Laguerre projections. In the following algebra, $Q$ continues to denote the terminal larger dimension $q_T$.

\begin{prop}[Terminal-to-earlier differential blocks]\label{prop:terminal-earlier-differential}
Use the notation of Theorem~\ref{thm:arbitrary-path-kernel} and put
\[
 r=Q-P,
 \qquad
 R_P(x,y)
 =
 \sum_{k=0}^{P-1}
 \frac{k!}{(k+r)!}
 L_k^{(r)}(x)L_k^{(r)}(y).
\]
For $h\geq0$ and an operator $C$, define
\[
 C^{\mathrm{fall}\,0}=I,
 \qquad
 C^{\mathrm{fall}\,h}
 =
 C(C-I)\cdots(C-(h-1)I)
 \quad(h\geq1).
\]
Let $E_y=y\,\frac{\dd}{\dd y}$. Fix $t<T$ and put
\[
 A=P-p_t,
 \qquad
 B=Q-q_t.
\]
If $\epsilon_t=1$, then
\begin{equation}\label{eq:differential-birth}
 K(T,x;t,y)
 =
 \left(\frac{\dd}{\dd y}\right)^A
 (rI+E_y)^{\mathrm{fall}\,B}R_P(x,y).
\end{equation}
If $\epsilon_t=0$, then
\begin{equation}\label{eq:differential-same}
 K(T,x;t,y)
 =
 \left(\frac{\dd}{\dd y}\right)^B
 (E_y-rI)^{\mathrm{fall}\,A}
 [y^rR_P(x,y)].
\end{equation}
At the terminal level,
\begin{equation}\label{eq:terminal-laguerre-block}
 K(T,x;T,y)=\ee^{-y}y^rR_P(x,y).
\end{equation}
Apart from the outgoing-step gauge, these blocks depend only on $(p_t,q_t)$ and $(P,Q)$.
\end{prop}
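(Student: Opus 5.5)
The plan is to compute the terminal-to-earlier block $K(T,x;t,y)$ directly from the kernel formula of Theorem~\ref{thm:arbitrary-path-kernel}. Since $s=T>t$, the free-transfer term $-\ind_{\{s<t\}}W_{s,t}$ is absent. The block is therefore the finite-rank sum
\[
\sum_{i,j}G_j^{(T)}(x)(M^{-1})_{ji}F_i^{(t)}(y)=\sum_{i,j}x^{P-j}(M^{-1})_{ji}F_i^{(t)}(y),
\]
restricted to real $y$. I would first restrict to $i\le p_t$: by Lemma~\ref{lem:forward-gauge-cancellation}, the functions $F_i^{(t)}$ with $i>p_t$ vanish on the real part of $E_t$. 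For the remaining indices, \eqref{eq:gauge-cancellation} replaces $M^{-1}$ by $H^{-1}$ and removes every birth-time factor $\rho_i!$. This gives
\[
K(T,x;t,y)=\sum_{j=1}^P\sum_{i=1}^{p_t}x^{P-j}(H^{-1})_{ji}\frac{(P-i)!(Q-i)!}{(p_t-i)!(q_t-i)!}y^{d_t-i}.
\]
The terminal case \eqref{eq:terminal-laguerre-block} is then immediate from Lemma~\ref{lem:laguerre-hankel}. There $p_T=P$, $q_T=Q$, $d_T=Q$ and $h_T=\ee^{-y}$, so the ratio of factorials is one, and \eqref{eq:hankel-laguerre-inversion} yields $\ee^{-y}y^rR_P(x,y)$.

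For $t<T$, I would realise the factorial ratios as differential operators acting on monomials. Take the terminal bilinear form
\[
\Phi(x,y)=\sum_{i,j}x^{P-j}(H^{-1})_{ji}y^{Q-i}=y^rR_P(x,y).
\]
Note that $(E_y-rI)^{\mathrm{fall}\,A}$ applied to $y^{Q-i}$ produces $(Q-i-r)^{\mathrm{fall}\,A}y^{Q-i}=\frac{(P-i)!}{(P-i-A)!}y^{Q-i}$, because $Q-i-r=P-i$. Here $P-i-A=p_t-i$, so indices $i>p_t$ are annihilated, which matches the vanishing of those $F_i^{(t)}$. Next, $(\dd/\dd y)^B$ maps $y^{Q-i}$ to $\frac{(Q-i)!}{(Q-i-B)!}y^{Q-B-i}=\frac{(Q-i)!}{(q_t-i)!}y^{q_t-i}$. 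In the same-size case $\epsilon_t=0$ we have $d_t=q_t$, so these two factors reproduce exactly the coefficient above, which proves \eqref{eq:differential-same}.

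In the birth case $\epsilon_t=1$, $d_t=p_t$. Here I would apply the operators to $R_P=y^{-r}\Phi$, whose monomials are $y^{P-i}$. The operator $(rI+E_y)^{\mathrm{fall}\,B}$ multiplies $y^{P-i}$ by $(P-i+r)^{\mathrm{fall}\,B}=\frac{(Q-i)!}{(Q-i-B)!}=\frac{(Q-i)!}{(q_t-i)!}$. Then $(\dd/\dd y)^A$ gives $\frac{(P-i)!}{(p_t-i)!}y^{p_t-i}$, again annihilating $i>p_t$. This proves \eqref{eq:differential-birth}. One point needs checking: the falling factorials must vanish exactly, not merely become negative, for $i>p_t$. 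In the birth case, $q_t-i$ could be negative only if $i>q_t\ge p_t$, and then the derivative factor already kills the term. A similar consistency check is needed for $Q-i-r\ge0$.

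The final assertion follows because, after the cancellation lemma, only $(p_t,q_t,P,Q)$ and $\epsilon_t$ (the outgoing-step gauge through $d_t$) enter. I expect the main obstacle to be only the careful index bookkeeping of which factorial ratios vanish. The two forms are linked by the identity $y^{q_t-i}=y^{r_t}y^{p_t-i}$, which relates them by a levelwise conjugation; that conjugation is the gauge remark rather than part of the proof.
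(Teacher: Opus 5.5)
Your proposal is correct and follows essentially the same route as the paper's proof. You drop the free-transfer term, use Lemma~\ref{lem:forward-gauge-cancellation} to pass from $M^{-1}$ to $H^{-1}$ with the factorial ratio $\frac{(P-i)!(Q-i)!}{(p_t-i)!(q_t-i)!}$, read that ratio off as falling-Euler and derivative multipliers on monomials in the stated order, and conclude with Lemma~\ref{lem:laguerre-hankel}; the only cosmetic difference is that the paper reindexes by $a=P-i$ and extends the sum using vanishing of integer falling factorials, while you work directly with $i$.
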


\begin{proof}
Since $T\geq t$, the negative free-transfer term in Theorem~\ref{thm:arbitrary-path-kernel} is absent. At time $T$,
\[
 G_j^{(T)}(x)=x^{P-j}.
\]
Lemma~\ref{lem:forward-gauge-cancellation} gives, for $t<T$,
\begin{equation}\label{eq:differential-inverse-sum}
 K(T,x;t,y)
 =
 \sum_{j=1}^P\sum_{i=1}^{p_t}
 x^{P-j}(H^{-1})_{ji}
 \frac{(P-i)!(Q-i)!}{(p_t-i)!(q_t-i)!}
 y^{d_t-i}.
\end{equation}

Set $a=P-i$. Then $i\leq p_t$ is equivalent to $a\geq A$, and
\[
 \frac{(P-i)!}{(p_t-i)!}
 =
 a^{\mathrm{fall}\,A},
 \qquad
 \frac{(Q-i)!}{(q_t-i)!}
 =
 (r+a)^{\mathrm{fall}\,B}.
\]
An integer falling factorial vanishes when its order exceeds its argument, so the sum may be extended over $a=0,\ldots,P-1$.

If $\epsilon_t=1$, then $d_t=p_t=P-A$ and
\[
 y^{d_t-i}=y^{a-A}.
\]
On $y^a$, the operator $(rI+E_y)^{\mathrm{fall}\,B}$ multiplies by $(r+a)^{\mathrm{fall}\,B}$, while $\left(\frac{\dd}{\dd y}\right)^A$ multiplies by $a^{\mathrm{fall}\,A}$ and changes $y^a$ to $y^{a-A}$. Thus each coefficient and monomial in \eqref{eq:differential-inverse-sum} equals
\[
 \left(\frac{\dd}{\dd y}\right)^A
 (rI+E_y)^{\mathrm{fall}\,B}y^a.
\]
Lemma~\ref{lem:laguerre-hankel} then proves \eqref{eq:differential-birth}.

If $\epsilon_t=0$, then $d_t=q_t=Q-B$ and
\[
 y^{d_t-i}=y^{r+a-B}.
\]
Starting from $y^{r+a}$, the operator $(E_y-rI)^{\mathrm{fall}\,A}$ multiplies by $a^{\mathrm{fall}\,A}$, and the $B$th derivative multiplies by $(r+a)^{\mathrm{fall}\,B}$ and changes the exponent to $r+a-B$. Lemma~\ref{lem:laguerre-hankel} gives \eqref{eq:differential-same}.

At $t=T$, Lemma~\ref{lem:forward-gauge-cancellation} has $A=B=0$, $d_T=Q$, and $h_T(y)=\ee^{-y}$. Hence
\[
 K(T,x;T,y)
 =
 \ee^{-y}
 \sum_{i,j=1}^P
 x^{P-j}(H^{-1})_{ji}y^{Q-i},
\]
and \eqref{eq:terminal-laguerre-block} follows from Lemma~\ref{lem:laguerre-hankel}.
\end{proof}

Proposition~\ref{prop:terminal-earlier-differential} converts the factorial mode weights in the inverse moment sum into falling Euler operators and ordinary derivatives. The outgoing-step distinction changes the Laguerre gauge and exchanges the roles of $A$ and $B$ between the derivative order and the falling-Euler order; in both formulas the falling-Euler operator acts before the outer derivative, while the later-to-earlier direction remains free of the negative transfer term.

For fixed orientation, $p$ and $q$ are the earlier smaller and larger dimensions, and $P$ and $Q$ are their later counterparts on the same physical sides. Hence $A=P-p$ counts increments of the smaller side, which are birth steps in the canonical order, while $B=Q-q$ counts increments of the larger side, which are same-size steps; $r_0=q-p$ is the initial rectangularity. Shell path independence permits the canonical exposure of the $B$ larger-side increments followed by the $A$ smaller-side increments. The cases $A=0$, $B=0$, and $A,B\geq1$ therefore represent respectively a pure same-size segment, a pure birth segment, and a path with one turn. Here $W$ is the Eynard--Mehta free transfer rather than a normalized spectral transition density. Fixed-orientation Wishart/Laguerre minor structure is prior in substance \cite{AdlerVanMoerbekeWang2013}; the theorem records the formula in the normalization and gauge used in this paper.

\begin{thm}[Two-level kernel for fixed-orientation nested rectangles]\label{thm:fixed-orientation-kernel}
Let $(m_0,n_0)$ and $(m_1,n_1)$ satisfy
\[
 m_0\leq m_1,\qquad n_0\leq n_1,
\]
and assume the same physical side is weakly larger at both endpoints. If
\[
 m_0\geq n_0,\qquad m_1\geq n_1,
\]
put
\[
 (p,q,P,Q)=(n_0,m_0,n_1,m_1).
\]
If
\[
 n_0\geq m_0,\qquad n_1\geq m_1,
\]
put
\[
 (p,q,P,Q)=(m_0,n_0,m_1,n_1).
\]
Set
\[
 A=P-p,\quad B=Q-q,\quad r_0=q-p,\quad r=Q-P,
\]
and assume $A+B\geq1$. Put
\[
 \mathcal L_{d,\alpha}(x,y)
 =
 \sum_{k=0}^{d-1}
 \frac{k!}{(k+\alpha)!}
 L_k^{(\alpha)}(x)L_k^{(\alpha)}(y).
\]

Define $W(x,z)=0$ unless $z>x$. If $A=0$, set
\begin{equation}\label{eq:fixed-W-A-zero}
 W(x,z)=\ee^{-z}\frac{(z-x)^{B-1}}{(B-1)!}.
\end{equation}
If $B=0$, set
\begin{equation}\label{eq:fixed-W-B-zero}
 W(x,z)=
 \ee^{-z}z^r\frac{(z-x)^{A-1}}{(A-1)!}.
\end{equation}
If $A,B\geq1$, set
\begin{equation}\label{eq:fixed-W-positive}
 W(x,z)
 =
 \ee^{-z}z^r
 \int_x^z
 \frac{(u-x)^{B-1}u^{-(r+A)}(z-u)^{A-1}}
 {(B-1)!(A-1)!}\,\dd u.
\end{equation}

The two-level positive squared-singular-value process is determinantal. With the earlier level indexed by zero and the later level by one, one exact kernel has blocks
\begin{equation}\label{eq:fixed-K11}
 K_{11}(x,y)
 =
 \ee^{-y}y^r\mathcal L_{P,r}(x,y),
\end{equation}
\begin{equation}\label{eq:fixed-K00-same}
 K_{00}(x,y)
 =
 \ee^{-x}y^{r_0}\mathcal L_{p,r_0}(x,y)
 \qquad(B\geq1),
\end{equation}
\begin{equation}\label{eq:fixed-K00-birth}
 K_{00}(x,y)
 =
 \ee^{-x}x^{r_0}\mathcal L_{p,r_0}(x,y)
 \qquad(B=0),
\end{equation}
\begin{equation}\label{eq:fixed-K10-same}
 K_{10}(x,y)
 =
 \left(\frac{\dd}{\dd y}\right)^B
 (E_y-rI)^{\mathrm{fall}\,A}
 [y^r\mathcal L_{P,r}(x,y)]
 \qquad(B\geq1),
\end{equation}
\begin{equation}\label{eq:fixed-K10-birth}
 K_{10}(x,y)
 =
 \left(\frac{\dd}{\dd y}\right)^A
 \mathcal L_{P,r}(x,y)
 \qquad(B=0),
\end{equation}
and
\begin{equation}\label{eq:fixed-K01}
 K_{01}(x,y)
 =
 -W(x,y)
 +
 \int_0^\infty W(x,z)K_{11}(z,y)\,\dd z.
\end{equation}
\end{thm}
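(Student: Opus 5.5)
The plan is to realize the two endpoints as the first and last levels of one admissible elementary up-right path. By Proposition~\ref{prop:shell-path-independence}, I may expose the shell in canonical order: first the $B$ increments of the larger side, which are same-size steps, then the $A$ increments of the smaller side, which are birth steps. Prepending a path from $(1,1)$ to $(m_0,n_0)$ and restricting, Theorem~\ref{thm:arbitrary-path-kernel} then gives a determinantal multi-time process. Its two-level restriction is determinantal with the restricted kernel. The terminal block \eqref{eq:fixed-K11} is \eqref{eq:terminal-laguerre-block} of Proposition~\ref{prop:terminal-earlier-differential}, applied with the terminal level equal to the later endpoint. This is legitimate because only the real restriction at levels $\leq T$ is used.

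The later-to-earlier block comes next. I apply Proposition~\ref{prop:terminal-earlier-differential} with $t$ the earlier level. If $B\geq1$, the outgoing step at $t$ in the canonical order is a same-size step, so $\epsilon_t=0$ and \eqref{eq:differential-same} gives \eqref{eq:fixed-K10-same}. If $B=0$, the outgoing step is a birth, so $\epsilon_t=1$ and \eqref{eq:differential-birth} applies with falling order $B=0$, which gives \eqref{eq:fixed-K10-birth}.

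For the equal-time earlier block $K_{00}$, I would not rerun the chain. Instead I regard level $t$ as itself a terminal level. By Lemma~\ref{lem:forward-gauge-cancellation}, the real finite-rank part at $(t,t)$ equals $\sum G_j^{(t)}(x)(H^{-1})_{ji}\cdots y^{d_t-i}$. I would compute $G_j^{(t)}$ by backward convolution of $y^{P-j}$ with the weights $w_u$. Then I would check, using Lemma~\ref{lem:laguerre-hankel} at dimensions $(p,q)$, that the result is the Laguerre projection $\mathcal L_{p,r_0}$. The Gaussian factor lands on $x$, and the monomial $y^{r_0}$ (case $\epsilon_t=0$) or $x^{r_0}$ (case $\epsilon_t=1$) records the outgoing-step gauge. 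A cleaner route is to invoke uniqueness up to conjugation. The one-level marginal is LUE with parameters $(p,r_0)$, so its kernel is the Laguerre projection up to a factor $g(x)/g(y)$. The explicit gauge is then fixed by comparing a single coefficient, for example the leading monomial in $y$.

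For the earlier-to-later block $K_{01}$, I use the Eynard--Mehta form $-W_{t,T}+\sum G^{(t)}M^{-1}F^{(T)}$. Two identities are needed. First, $W_{t,T}$ is the composed free transfer, which I evaluate by repeated convolution of $\ind_{\{z>x\}}$ against the weights $w_u$. Along the $B$ same-size steps these weights are $u^{0}$ (so $\kappa=0$); at the turn they contribute $u^{-(r+A)}$, after accounting for the exponents $\kappa_u=-r_u$ just before births; along the $A$ births they give $u^{-1}$. The last of these is absorbed into $u^{-(r+A)}$ as the rectangularity decreases. The terminal weight is $\ee^{-z}z^{r}$. Iterated integrals of $\ind_{\{x<u_1<\cdots\}}$ give the Beta-type kernels $(u-x)^{B-1}/(B-1)!$ and $(z-u)^{A-1}/(A-1)!$. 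This yields \eqref{eq:fixed-W-A-zero}, \eqref{eq:fixed-W-B-zero} and \eqref{eq:fixed-W-positive}.

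Second, the finite-rank part equals $\int W(x,z)K_{11}(z,y)\,\dd z$. This holds because $G_j^{(t)}=W_{t,T}*\psi_j$, and the terminal finite-rank sum $\sum\psi_j(z)(M^{-1})_{ji}F_i^{(T)}(y)$ is exactly $K_{11}(z,y)$ by Lemma~\ref{lem:laguerre-hankel}. Fubini is justified by the absolute convergence already established in Theorem~\ref{thm:arbitrary-path-kernel}. This gives \eqref{eq:fixed-K01}.

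The main obstacle is the exponent bookkeeping in the free transfer at the turn. Getting the power $u^{-(r+A)}$ exactly requires tracking $\kappa_u$ across the switch from same-size to birth steps, with the $r_u$ decreasing. A second difficulty is matching the gauges consistently: the $K_{00}$ representative must be in the same gauge as the $K_{01}$ and $K_{10}$ blocks actually produced by the chain, rather than a freely chosen conjugate. I would check the latter by verifying the reproducing identity $\int K_{10}K_{01}$ against the marginals in a small case. Finally, the case where both orientations are written with rows as the larger side reduces to the other case by transposition, since $X X^*$ and $X^*X$ share their positive spectrum.
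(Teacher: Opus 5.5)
Your overall route is the paper's. You use canonical exposure of the $B$ same-size steps followed by the $A$ births via Proposition~\ref{prop:shell-path-independence}, then Theorem~\ref{thm:arbitrary-path-kernel} after prepending a path from $(1,1)$. Proposition~\ref{prop:terminal-earlier-differential} gives $K_{10}$ and $K_{11}$, and backward propagation $G_j^{(t)}=W_{t,T}*\psi_j$ gives $K_{01}$. However, two steps need correction.

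\textbf{The exponent bookkeeping for $W$ is wrong as stated.} With $\kappa_u=\epsilon_{u-1}(r_{u-1}-1)-\epsilon_ur_u$, an interior level of the birth segment has $\epsilon_{u-1}=\epsilon_u=1$ and $r_u=r_{u-1}-1$, so $\kappa_u=0$. The same holds inside the same-size segment. So there are no factors $u^{-1}$ along the births, and nothing is absorbed. The only nonzero interior exponent is at the turn, where $\epsilon_{u-1}=0$ gives $\kappa=-r_{\mathrm{turn}}=-(q+B-p)=-(r+A)$. The terminal exponent is $\kappa_T=r_{T-1}-1=r$ when $A\geq1$, and zero when $A=0$. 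If you inserted $u^{-1}$ at each interior birth level, the simplex volume $(z-u)^{A-1}/(A-1)!$ would become $(\log(z/u))^{A-1}/(A-1)!$. Then \eqref{eq:fixed-W-B-zero} and \eqref{eq:fixed-W-positive} would not follow.

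\textbf{The more substantive gap is $K_{00}$.} Uniqueness of a determinantal kernel up to conjugation $g(x)^{-1}K(x,y)g(y)$ is not a general principle you can invoke: a kernel and its transpose already define the same process. Checking $\int K_{10}K_{01}$ in a small case does not show that your representative is in the gauge of the chain's $K_{01}$ and $K_{10}$. Your first route also does not reduce to Lemma~\ref{lem:laguerre-hankel} at $(p,q)$, for two reasons:
\begin{itemize}
\item the sum carries the $P\times P$ matrix $H^{-1}$, not a $p\times p$ one;
\item when $A,B\geq1$ the individual $G_j^{(t)}$ are not polynomial multiples of $\ee^{-x}$. For $(p,q,P,Q)=(1,1,2,2)$ one finds $G_1=\ee^{-x}+\mathcal E(x)$ and $G_2=\mathcal E(x)$, with $\mathcal E(x)=\int_x^\infty u^{-1}\ee^{-u}\,\dd u$; these cancel only in the combination.
\end{itemize}

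The missing observation, which the paper's brief argument relies on, is that no gauge freedom arises. The real part of the chain block is $K_{00}(x,y)=\sum_{i\leq p}\Phi_i(x)F_i^{(t)}(y)$ with $\Phi_i=\sum_jG_j^{(t)}(M^{-1})_{ji}$.
\begin{itemize}
\item By Lemma~\ref{lem:forward-gauge-cancellation}, $F_i^{(t)}(y)\propto y^{d_t-i}$.
\item $\int_0^\infty\Phi_iF_k^{(t)}=\delta_{ik}$ for $i,k\leq p$, because $F_k^{(t)}$ vanishes on the virtual states.
\item Level $t$ carries exactly $p$ real points, so $\det[\Phi_i(x_a)]\det[F_i^{(t)}(x_b)]$ is the $p$-point correlation function. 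It is therefore proportional to $\Delta_p(x)^2\prod_a\ee^{-x_a}x_a^{r_0}$.
\item This forces $\det[\Phi_i(x_a)]\propto\Delta_p(x)\prod_a\ee^{-x_a}x_a^{r_0-(d_t-p)}$.
\item A cofactor expansion in one variable, using linear independence of the $\Phi_i$, then gives $\operatorname{span}\{\Phi_i\}=\ee^{-x}\cdot\{\text{polynomials of degree}<p\}$ when $d_t=q$, and $\ee^{-x}x^{r_0}\cdot\{\text{polynomials of degree}<p\}$ when $d_t=p$.
\end{itemize}
Hence $K_{00}$ is the reproducing kernel of this biorthogonal pair of spans. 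Lemma~\ref{lem:laguerre-hankel} with $(P,Q)=(p,q)$ then yields \eqref{eq:fixed-K00-same} and \eqref{eq:fixed-K00-birth} directly in the chain's own gauge, so no consistency check against the other blocks is needed.
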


\begin{proof}
Choose the physical side which is weakly larger at both endpoints. There are $B$ increments of that side and $A$ increments of the other side. Since
\[
 q+B=Q\geq P=p+A,
\]
the path that first performs all $B$ larger-side increments and then all $A$ smaller-side increments never crosses the diagonal. Proposition~\ref{prop:shell-path-independence} permits this ordering, and Theorem~\ref{thm:arbitrary-path-kernel} supplies the extended kernel.

We first compute the real one-particle transfer. If $A=0$, every step is same-size, every intermediate power is zero, and the terminal weight is $\ee^{-z}$. The $B-1$ ordered intermediate variables contribute the simplex volume
\[
 \frac{(z-x)^{B-1}}{(B-1)!},
\]
which gives \eqref{eq:fixed-W-A-zero}.

If $B=0$, every step is a birth. All interior powers vanish, while the terminal power is $r$. The $A-1$ intermediate variables contribute
\[
 \frac{(z-x)^{A-1}}{(A-1)!},
\]
giving \eqref{eq:fixed-W-B-zero}.

Suppose $A,B\geq1$. Inside each constant-type segment all powers vanish. At the unique turn, after the $B$ same-size steps and before the first birth step, the rectangularity is
\[
 r_0+B=Q-p=r+A.
\]
The corresponding $\kappa$ exponent is $-(r+A)$, while the terminal exponent is $r$. If $u$ is the coordinate at the turn, the variables between $x$ and $u$ contribute
\[
 \frac{(u-x)^{B-1}}{(B-1)!},
\]
and those between $u$ and $z$ contribute
\[
 \frac{(z-u)^{A-1}}{(A-1)!}.
\]
Multiplication by $\ee^{-z}z^ru^{-(r+A)}$ and integration over $x<u<z$ proves \eqref{eq:fixed-W-positive}.

Lemma~\ref{lem:laguerre-hankel} identifies the terminal block as \eqref{eq:fixed-K11}. At the earlier endpoint, the marginal squared-singular-value law is the complex Laguerre law for a $p$-by-$q$ rectangle. In the outgoing-step gauge of Theorem~\ref{thm:arbitrary-path-kernel}, the forward functions have common factor $y^{r_0}$ when the outgoing step is same-size and common factor one when it is a birth. The dual functions have respectively the common factors $\ee^{-x}$ and $\ee^{-x}x^{r_0}$. Convolution with $\ind_{\{y>x\}}$ raises a monomial degree by one, and multiplication by the level weight gives degree $q-i$ in the same-size case and $p-i$ in the birth case. Lemma~\ref{lem:laguerre-hankel}, applied with $(P,Q)=(p,q)$, then gives \eqref{eq:fixed-K00-same} and \eqref{eq:fixed-K00-birth}.

Proposition~\ref{prop:terminal-earlier-differential} gives \eqref{eq:fixed-K10-same} when $B\geq1$. When $B=0$, it gives
\[
 K_{10}(x,y)
 =
 \left(\frac{\dd}{\dd y}\right)^A
 (rI+E_y)^{\mathrm{fall}\,0}
 \mathcal L_{P,r}(x,y),
\]
which is \eqref{eq:fixed-K10-birth}.

Finally, write $C_{ab}$ for the finite-rank term in the Eynard--Mehta kernel. Backward propagation gives
\[
 C_{01}(x,y)
 =
 \int_0^\infty W(x,z)C_{11}(z,y)\,\dd z.
\]
At equal terminal time, $C_{11}=K_{11}$. From the earlier level to the later level, the full kernel subtracts the free transfer. This proves \eqref{eq:fixed-K01}.
\end{proof}

The three displayed transfers in Theorem~\ref{thm:fixed-orientation-kernel} encode the zero-turn and one-turn geometries separately. In every case $K_{10}$ is the later-to-earlier finite-rank block, while $K_{01}$ contains the negative free transfer and its propagated finite-rank correction. The two displayed $K_{00}$ forms are outgoing-step gauges of the same earlier Laguerre marginal law.

For orientation reversal, $p<q$ are the earlier smaller and larger dimensions and $P<Q$ are the later smaller and larger dimensions, but the physical side of size $p$ grows to $Q$ while the other side has size $q$ earlier and $P$ later. Thus $r_0=q-p$ counts births needed to reach the square, $B=Q-q$ counts the subsequent same-size increments of the first side, and $C=P-q$ counts the final births of the other side; $A=P-p=r_0+C$ is the ordered smaller-dimension displacement entering the differential block. The canonical exposure order is therefore $r_0$ births, then $B$ same-size steps, then $C$ births. The case $C=0$ has no second turn, while $C\geq1$ produces the second turning integral. Passing through a square alone is not the defining geometry: the physical side that is smaller at the earlier endpoint becomes the larger side at the later endpoint. I use \emph{strict orientation reversal} for $C\geq1$, when both physical dimensions grow; the theorem also includes the boundary case $C=0$, in which only the side that was initially smaller grows past the other. Abstract chain composition is prior \cite[Theorem~1.4]{BorodinRains2005} and \cite[Theorem~1]{OkounkovReshetikhin2003}; the theorem gives an explicit orientation-reversing formula in the present normalization, with direct priority unresolved after transposition, composition, and gauge comparison.

\begin{thm}[Two-level kernel for orientation-reversing nested rectangles]\label{thm:orientation-reversing-kernel}
Suppose one physical side has size $p$ at the earlier rectangle and $Q$ at the later rectangle, while the other has size $q$ earlier and $P$ later, where
\[
 p<q\leq P<Q.
\]
Put
\[
 \begin{aligned}
 r_0&=q-p,
 &C&=P-q,
 &B&=Q-q,\\
 A&=P-p=r_0+C,
 &r&=Q-P=B-C.
 \end{aligned}
\]
Define $\mathcal L_{d,\alpha}$ as in Theorem~\ref{thm:fixed-orientation-kernel}.

Set $W(x,z)=0$ for $z\leq x$. If $C=0$, put
\begin{equation}\label{eq:reverse-W-C-zero}
 W(x,z)
 =
 \ee^{-z}\frac{(z-x)^{r_0+B-1}}{(r_0+B-1)!}.
\end{equation}
If $C\geq1$, put
\begin{equation}\label{eq:reverse-W-positive}
 W(x,z)
 =
 \ee^{-z}z^r
 \int_x^z
 \frac{(u-x)^{r_0+B-1}u^{-B}(z-u)^{C-1}}
 {(r_0+B-1)!(C-1)!}\,\dd u.
\end{equation}

The two-level positive squared-singular-value process is determinantal, with one exact kernel
\begin{align}
 K_{00}(x,y)
 &=
 \ee^{-x}x^{r_0}\mathcal L_{p,r_0}(x,y),
 \label{eq:reverse-K00}\\
 K_{11}(x,y)
 &=
 \ee^{-y}y^r\mathcal L_{P,r}(x,y),
 \label{eq:reverse-K11}\\
 K_{10}(x,y)
 &=
 \left(\frac{\dd}{\dd y}\right)^A
 (rI+E_y)^{\mathrm{fall}\,B}
 \mathcal L_{P,r}(x,y),
 \label{eq:reverse-K10}\\
 K_{01}(x,y)
 &=
 -W(x,y)+
 \int_0^\infty W(x,z)K_{11}(z,y)\,\dd z.
 \label{eq:reverse-K01}
\end{align}
The same statement holds with rows and columns interchanged.
\end{thm}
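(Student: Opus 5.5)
The plan follows the proof of Theorem~\ref{thm:fixed-orientation-kernel}: fix a canonical exposure order, read off the blocks from Theorem~\ref{thm:arbitrary-path-kernel} and Proposition~\ref{prop:terminal-earlier-differential}, and compute the free transfer by integrating over the intermediate levels.

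\emph{Path choice.} Label the physical sides so that side one has sizes $p\to Q$ and side two has sizes $q\to P$. Proposition~\ref{prop:shell-path-independence} lets us expose the shell in a chosen order. We use three stages.
\begin{enumerate}
\item Grow side one from $p$ to $q$. This gives $r_0$ birth steps, since side one is the strictly smaller side.
\item Grow side one from $q$ to $Q$. This gives $B$ same-size steps, since side one is now weakly larger.
\item Grow side two from $q$ to $P$. This gives $C$ birth steps, since $P<Q$ keeps side two strictly smaller.
\end{enumerate}
Along this path $p_t$ increases from $p$ to $P$ and the terminal data are $(P,Q)$ with $r=Q-P$. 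Theorem~\ref{thm:arbitrary-path-kernel}, after prepending a path from $(1,1)$, then gives a determinantal two-level process. Its real restriction has the Eynard--Mehta form with free transfer $W_{0,T}$.

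\emph{Equal-level and later-to-earlier blocks.} The terminal block \eqref{eq:reverse-K11} is \eqref{eq:terminal-laguerre-block}, equivalently Lemma~\ref{lem:laguerre-hankel}. The outgoing step at the earlier level is a birth, so $\epsilon_0=1$, or a same-size step when $r_0=0$, which is excluded because $p<q$. Hence \eqref{eq:differential-birth} with $A=P-p$ and $B=Q-q$ gives \eqref{eq:reverse-K10}. For \eqref{eq:reverse-K00} we argue as in the fixed-orientation proof: in the birth outgoing gauge the forward functions at the earlier level carry no prefactor and the dual functions carry $\ee^{-x}x^{r_0}$. Lemma~\ref{lem:laguerre-hankel} with $(P,Q)$ replaced by $(p,q)$ then identifies the Laguerre projection.

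\emph{Free transfer.} For $W$ we convolve the one-particle transfers with the weights $\kappa_t$ of Theorem~\ref{thm:arbitrary-path-kernel}. Inside each constant-type segment the interior exponents vanish. At a step $t$ after a birth with $r_{t-1}=1$, the exponent $\epsilon_{t-1}(r_{t-1}-1)$ is zero. Within the first birth stage, consecutive births give $\kappa=(r_{t-1}-1)-r_t=0$, because $r$ drops by one per birth.

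\begin{itemize}
\item The transition from births to same-size steps happens at the square, where $r=0$. The entering birth has $r_{t-1}=1$, so $\kappa=0$, and the same-size segment keeps $\kappa=0$. The first $r_0+B$ steps therefore contribute the simplex volume $(u-x)^{r_0+B-1}/(r_0+B-1)!$.
\item At the second turn, from same-size to birth, the rectangularity is $B$. This gives $\kappa=-B$, hence the factor $u^{-B}$.
\item The final $C$ births contribute $(z-u)^{C-1}/(C-1)!$ and the terminal weight $\ee^{-z}z^{\kappa_T}$ with $\kappa_T=r_{T-1}-1=r$.
\end{itemize}

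Integrating over $x<u<z$ gives \eqref{eq:reverse-W-positive}. When $C=0$ the last step is same-size, so $\kappa_T=0$. All $r_0+B$ steps then collapse into one simplex with weight $\ee^{-z}$, which is \eqref{eq:reverse-W-C-zero}. The finite-rank part of $K_{01}$ is backward propagation of the terminal finite-rank term by $W$, as in the last step of the fixed-orientation proof, which yields \eqref{eq:reverse-K01}. Transposing rows and columns gives the final sentence of the theorem.

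\emph{Main obstacle.} The hardest part is the exponent bookkeeping at the two turns, in particular confirming that nothing is left over at the square-crossing. I would verify this by tracking $(\epsilon_t,r_t)$ step by step and checking the telescoped exponent against the closed moment matrix of Lemma~\ref{lem:boundary-moment}. The edge cases $B=0$ or $C=0$, which merge segments, need to be checked separately.
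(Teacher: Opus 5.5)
Your proposal is correct and follows the paper's proof essentially step for step. It uses the same canonical exposure order ($r_0$ births, then $B$ same-size steps, then $C$ births) via Proposition~\ref{prop:shell-path-independence}, and the same exponent bookkeeping (zero at the square turn, $u^{-B}$ at the second turn, terminal exponent $r$ or $0$), together with the same appeals to Lemma~\ref{lem:laguerre-hankel}, Proposition~\ref{prop:terminal-earlier-differential} in the birth gauge, backward propagation, and transposition; your concern about $B=0$ is vacuous, since $Q>P\geq q$ forces $B\geq1$, so only the $C=0$ case needs separate treatment.
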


\begin{proof}
Consider the physical side whose size changes from $p$ to $Q$. Reorder the shell increments by first increasing this side $r_0=q-p$ times until the rectangle is square, then increasing it $B=Q-q$ more times, and finally increasing the other side $C=P-q$ times. The step sequence is
\[
 r_0\text{ births},\qquad
 B\text{ same-size steps},\qquad
 C\text{ births}.
\]
Proposition~\ref{prop:shell-path-independence} permits this reordering.

Consecutive steps of the same type have zero interior $\kappa$ exponent. At the first turn, the rectangle is square. The incoming birth has preceding rectangularity one and contributes zero, while the outgoing same-size step contributes no negative power. Thus the first turning exponent is zero.

If $C=0$, there is no second turn, the terminal exponent is zero, and every intermediate weight is one. There are $r_0+B-1$ ordered intermediate coordinates, which gives \eqref{eq:reverse-W-C-zero}.

Suppose $C\geq1$. Immediately before the second turn, the rectangularity is $B$. The unique nonzero interior exponent is therefore $-B$, and the terminal exponent is $r=B-C$. There are $r_0+B-1$ ordered coordinates between $x$ and the turning coordinate $u$, and $C-1$ between $u$ and $z$. Multiplying the two simplex volumes by $u^{-B}\ee^{-z}z^r$ gives \eqref{eq:reverse-W-positive}.

At the earlier level, the first canonical step is a birth. The forward functions therefore span the polynomials of degree less than $p$, while their duals span $\ee^{-x}x^{r_0}$ times those polynomials. Lemma~\ref{lem:laguerre-hankel} gives \eqref{eq:reverse-K00}. The terminal formula in that lemma gives \eqref{eq:reverse-K11}. Proposition~\ref{prop:terminal-earlier-differential}, applied in the outgoing birth gauge, gives \eqref{eq:reverse-K10}. Backward propagation of the finite-rank term and subtraction of the free transfer give \eqref{eq:reverse-K01}. Transposition preserves squared singular values, proving the row-column symmetric case.
\end{proof}

In the reversing geometry, the two formulas for $W$ distinguish the absence or presence of the final birth segment. The block direction remains unchanged: $K_{01}$ contains $-W$, and $K_{10}$ is obtained from the terminal Laguerre block by the displayed differential operators. Transposition covers the row--column symmetric endpoint configuration without merging the separate $C=0$ and $C\geq1$ cases.

The last algebraic step aligns the two endpoint Laguerre bases. The terminal degree is shifted by $A$ so that the surviving terminal modes match the earlier degree range, and the displayed factorial ratio is the finite-dimensional coefficient carried by each matched mode. The proposition keeps the cases with and without the earlier power separate.

\begin{prop}[Shifted-Laguerre terminal-to-earlier block]\label{prop:shifted-laguerre-block}
Use the notation of Theorems~\ref{thm:fixed-orientation-kernel} and~\ref{thm:orientation-reversing-kernel}. In the fixed-orientation case, if $B\geq1$, then
\begin{equation}\label{eq:shifted-laguerre-with-power}
 K_{10}(x,y)
 =
 (-1)^Ay^{r_0}
 \sum_{j=0}^{p-1}
 \frac{(j+A)!}{(j+r_0)!}
 L_{j+A}^{(r)}(x)L_j^{(r_0)}(y).
\end{equation}
If $B=0$, then
\begin{equation}\label{eq:shifted-laguerre-without-power}
 K_{10}(x,y)
 =
 (-1)^A
 \sum_{j=0}^{p-1}
 \frac{(j+A)!}{(j+r_0)!}
 L_{j+A}^{(r)}(x)L_j^{(r_0)}(y).
\end{equation}
In the orientation-reversing case, $K_{10}$ is also given by \eqref{eq:shifted-laguerre-without-power}. All Laguerre parameters in these formulas are nonnegative integers.
\end{prop}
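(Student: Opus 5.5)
The plan is to start from the differential representations already proved: \eqref{eq:fixed-K10-same} and \eqref{eq:fixed-K10-birth} in the fixed-orientation case, and \eqref{eq:reverse-K10} in the reversing case. In each, I expand
\[
\mathcal L_{P,r}(x,y)=\sum_{k=0}^{P-1}\frac{k!}{(k+r)!}L_k^{(r)}(x)L_k^{(r)}(y)
\]
and let the $y$-operators act mode by mode on $L_k^{(r)}(y)$. The $x$-factor $L_k^{(r)}(x)$ is never touched, which explains why the parameter $r$ survives in $x$. I then reindex by $j=k-A$. The range $k\le P-1$ becomes $j\le P-1-A=p-1$, and the modes with $k<A$ must be shown to vanish. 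The tools are three classical Laguerre identities, each checked on the explicit finite sum defining $L_k^{(\alpha)}$ in Lemma~\ref{lem:laguerre-hankel}:
\[
\tfrac{\dd}{\dd y}L_k^{(\alpha)}=-L_{k-1}^{(\alpha+1)},
\]
\[
\left(\tfrac{\dd}{\dd y}\right)^{B}\!\left[y^{\alpha}L_n^{(\alpha)}(y)\right]=\frac{(n+\alpha)!}{(n+\alpha-B)!}\,y^{\alpha-B}L_n^{(\alpha-B)}(y),
\]
and the Euler intertwining $E_y(y^r f)=y^r(rI+E_y)f$, together with $E_y^{\mathrm{fall}\,h}=y^h(\tfrac{\dd}{\dd y})^h$.

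\emph{Fixed orientation, $B=0$.} Here $q=Q$, so $r+A=r_0$. Iterating the derivative identity gives $(\tfrac{\dd}{\dd y})^AL_k^{(r)}=(-1)^AL_{k-A}^{(r_0)}$, and this vanishes for $k<A$. The coefficient is $k!/(k+r)!=(j+A)!/(j+r_0)!$, which gives \eqref{eq:shifted-laguerre-without-power}.

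\emph{Fixed orientation, $B\ge1$.} The intertwining gives
\[
(E_y-rI)^{\mathrm{fall}\,A}[y^rf]=y^rE_y^{\mathrm{fall}\,A}f=y^{r+A}f^{(A)}.
\]
Hence each mode becomes $(-1)^A(\tfrac{\dd}{\dd y})^B[y^{r+A}L_{j}^{(r+A)}(y)]$. Since $r+A-B=Q-p-B=r_0$, the second identity yields
\[
(-1)^A\frac{(j+r+A)!}{(j+r_0)!}\,y^{r_0}L_j^{(r_0)}(y).
\]
Multiplying by $k!/(k+r)!$ with $k=j+A$ cancels $(j+A+r)!$ and leaves $(j+A)!/(j+r_0)!$. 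This is \eqref{eq:shifted-laguerre-with-power}.

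\emph{Orientation reversal.} Write $(rI+E_y)^{\mathrm{fall}\,B}f=y^{-r}E_y^{\mathrm{fall}\,B}(y^rf)=y^{B-r}(\tfrac{\dd}{\dd y})^B(y^rf)$. I expect this case to be the main obstacle, because $r-B=-C\le0$ and the second identity then produces the negative-parameter polynomial $L_k^{(-C)}$. I would handle it directly. For $k<C$, the prefactor $(k+r)!/(k+r-B)!$ is an integer falling factorial of order exceeding its argument and vanishes. For $k\ge C$, the reflection identity $L_k^{(-C)}(y)=(-y)^C\frac{(k-C)!}{k!}L_{k-C}^{(C)}(y)$ applies; I would check it coefficientwise from the finite sum, since the terms with $\ell<C$ vanish. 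Since $y^{B-r}y^{-C}=1$, each mode becomes
\[
\frac{k!}{(k+r)!}\cdot\frac{(k+r)!}{(k-C)!}\cdot(-1)^C\frac{(k-C)!}{k!}\,y^CL_{k-C}^{(C)}(y)=(-1)^Cy^CL_{n}^{(C)}(y),
\qquad n=k-C.
\]
I then apply $(\tfrac{\dd}{\dd y})^A=(\tfrac{\dd}{\dd y})^{r_0}(\tfrac{\dd}{\dd y})^C$. The $C$ derivatives give $\frac{(n+C)!}{n!}L_n^{(0)}$ by the second identity. The remaining $r_0$ derivatives give $(-1)^{r_0}L_{n-r_0}^{(r_0)}$ with $n-r_0=k-A=j$, which vanishes for $k<A$. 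The total coefficient is $(-1)^A\,k!/(k-C)!$. Since $k-C=j+r_0$, this equals $(-1)^A(j+A)!/(j+r_0)!$, which is \eqref{eq:shifted-laguerre-without-power}.

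\emph{Parameters.} Finally, $r,r_0\ge0$ hold by the definitions of the sorted dimensions, so all Laguerre parameters in the stated formulas are nonnegative integers.
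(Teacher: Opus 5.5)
Your proof is correct. In the two fixed-orientation cases it is the paper's argument. The conjugation $y^{-r}(E_y-rI)y^r=E_y$ together with $E_y^{\mathrm{fall}\,A}=y^A(\tfrac{\dd}{\dd y})^A$ is exactly a derivation of the paper's identity \eqref{eq:laguerre-euler-shift}. After that, the use of \eqref{eq:laguerre-weighted-derivative} with $\alpha=r+A$, $h=B$ and the reindexing $k=j+A$ coincide with the paper.

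The orientation-reversing case follows a different route.

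\emph{Paper's route.} It stays at the level of monomials. The operator $(rI+E_y)^{\mathrm{fall}\,B}$ multiplies $y^\ell$ by $(r+\ell)^{\mathrm{fall}\,B}$. This factor vanishes for $\ell<C$ and equals $(r+\ell)!/(\ell-C)!$ otherwise. The $A$th derivative and the shift $\ell=A+t$ then give $(-1)^A\frac{(k+r)!}{(k-C)!}L_{k-A}^{(r_0)}(y)$ in one step.

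\emph{Your route.} You rewrite the operator as $y^{C}(\tfrac{\dd}{\dd y})^B y^r$. You then pass through the negative-parameter polynomial $L_k^{(-C)}$ and reflect it with the identity the paper records later as \eqref{eq:negative-laguerre-parameter}. Finally you split $(\tfrac{\dd}{\dd y})^A=(\tfrac{\dd}{\dd y})^{r_0}(\tfrac{\dd}{\dd y})^{C}$.

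\emph{What each buys.} Your version makes the mechanism structural: the births through the square produce a negative parameter, and reflection converts it back. It also parallels the free-propagator computation for $\widehat K_{01}$ in Theorem~\ref{thm:discrete-extended-airy}, which uses the same reflection step. The cost is that you apply \eqref{eq:laguerre-weighted-derivative} outside the range $\alpha\geq h$ where the paper states it. You must therefore fix the convention $1/m!=0$ for negative integers $m$ in the finite series defining $L_k^{(-C)}$ (for $k\geq C$). With that convention, verify both the extended derivative identity and the reflection formula coefficientwise, as you indicate. The paper's direct coefficient computation needs neither the extension nor the convention.
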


\begin{proof}
Write $E_y=y\,\frac{\dd}{\dd y}$. We first record three identities. Termwise application of $E_y$ to the finite Laguerre series gives
\begin{equation}\label{eq:laguerre-euler-shift}
 (E_y-\alpha I)^{\mathrm{fall}\,h}
 [y^\alpha L_k^{(\alpha)}(y)]
 =
 (-1)^hy^{\alpha+h}L_{k-h}^{(\alpha+h)}(y)
\end{equation}
for $k\geq h$, with zero left side for $k<h$. Termwise differentiation gives
\begin{equation}\label{eq:laguerre-weighted-derivative}
 \left(\frac{\dd}{\dd y}\right)^h
 [y^\alpha L_k^{(\alpha)}(y)]
 =
 \frac{(k+\alpha)!}{(k+\alpha-h)!}
 y^{\alpha-h}L_k^{(\alpha-h)}(y)
\end{equation}
when $\alpha\geq h$. The ordinary derivative identity is
\begin{equation}\label{eq:laguerre-derivative}
 \left(\frac{\dd}{\dd y}\right)^hL_k^{(\alpha)}(y)
 =
 (-1)^hL_{k-h}^{(\alpha+h)}(y)
\end{equation}
for $k\geq h$, with zero for $k<h$.

Suppose first that the orientation is fixed and $B\geq1$. By Theorem~\ref{thm:fixed-orientation-kernel},
\[
 K_{10}(x,y)
 =
 \left(\frac{\dd}{\dd y}\right)^B
 (E_y-rI)^{\mathrm{fall}\,A}
 [y^r\mathcal L_{P,r}(x,y)].
\]
Apply \eqref{eq:laguerre-euler-shift} with $\alpha=r$ and $h=A$, followed by \eqref{eq:laguerre-weighted-derivative} with $\alpha=r+A$ and $h=B$. Terms with $k<A$ vanish, while the term with $k\geq A$ becomes
\[
 (-1)^A
 \frac{k!}{(k+r-B)!}
 y^{r+A-B}
 L_k^{(r)}(x)L_{k-A}^{(r+A-B)}(y).
\]
The dimension identities give
\[
 r+A-B=r_0.
\]
Set $k=j+A$. Since $P-A=p$, the range becomes $0\leq j\leq p-1$, proving \eqref{eq:shifted-laguerre-with-power}.

If $B=0$, Theorem~\ref{thm:fixed-orientation-kernel} gives
\[
 K_{10}=
 \left(\frac{\dd}{\dd y}\right)^A\mathcal L_{P,r}.
\]
Apply \eqref{eq:laguerre-derivative}, discard $k<A$, use $r+A=r_0$, and set $k=j+A$. This gives \eqref{eq:shifted-laguerre-without-power}.

For orientation reversal,
\[
 K_{10}(x,y)
 =
 \left(\frac{\dd}{\dd y}\right)^A
 (rI+E_y)^{\mathrm{fall}\,B}
 \mathcal L_{P,r}(x,y).
\]
Here
\[
 r=B-C,\qquad A=r_0+C.
\]
We work coefficientwise. On $y^\ell$, the falling operator contributes
\[
 (r+\ell)^{\mathrm{fall}\,B}.
\]
If $\ell<C$, this product contains zero. If $\ell\geq C$, it equals
\[
 \frac{(r+\ell)!}{(\ell-C)!}.
\]
Therefore
\[
 (rI+E_y)^{\mathrm{fall}\,B}L_k^{(r)}(y)
 =
 \sum_{\ell=C}^k
 (-1)^\ell
 \frac{(k+r)!}{(k-\ell)!(\ell-C)!\ell!}y^\ell.
\]
Applying the $A$th derivative annihilates the sum when $k<A$. For $k\geq A$, set $\ell=A+t$ to obtain
\[
 \begin{split}
 &\left(\frac{\dd}{\dd y}\right)^A
 (rI+E_y)^{\mathrm{fall}\,B}L_k^{(r)}(y)\\
 &\qquad=
 (-1)^A
 \sum_{t=0}^{k-A}
 (-1)^t
 \frac{(k+r)!}
 {(k-A-t)!(r_0+t)!t!}y^t\\
 &\qquad=
 (-1)^A
 \frac{(k+r)!}{(k-C)!}
 L_{k-A}^{(r_0)}(y).
 \end{split}
\]
Multiplication by $k!/(k+r)!$ and the substitution $k=j+A$ give \eqref{eq:shifted-laguerre-without-power}. The dimension assumptions imply $r,r_0\geq0$.
\end{proof}

The shifted factorial weights in Proposition~\ref{prop:shifted-laguerre-block} become, after the levelwise normalization in the following Airy-time result, the normalized mode-survival coefficient. That result analyzes the coefficient within the extended-Airy scaling framework; the present section supplies only the exact finite-dimensional Laguerre input and does not introduce a different Airy object or correlation scale. Separately, Lemma~\ref{lem:two-level-reductions} later converts the resulting cross-block estimates into upper-occupancy bounds and lower-gap Schur-complement bounds. These are the two handoffs from the exact-kernel module to the proof of the one-sided product-scale probability theorem.

The exact terminal-to-earlier Laguerre block from Proposition~\ref{prop:shifted-laguerre-block} leaves one coefficient to be placed in soft-edge coordinates. Passing from the polynomial basis to the orthonormal Laguerre wavefunctions contributes one square-root factorial normalization at each level. Division by the levelwise constant $c_0$ then exposes the quotient $c_\ell/c_0$, later denoted $d_\ell$: this is the normalized mode-survival factor joining the earlier mode indexed by $p-\ell$ to the terminal mode indexed by $P-\ell$. Its product form records the row and column increments separately. The purpose of the next proposition is therefore a coordinate conversion of the exact finite-dimensional coefficient, not the introduction of a different Airy object or correlation scale.

\begin{prop}[Airy-time coefficient and the row-column correlation window]\label{prop:airy-time-coefficient}
Let $p,q,P,Q$ be positive integers satisfying
\[
 p\leq q,\qquad P\leq Q,\qquad p\leq P,\qquad q\leq Q.
\]
Put
\[
 A=P-p,\qquad B=Q-q,\qquad r_0=q-p,\qquad r=Q-P.
\]
For $0\leq n\leq m$, define
\[
 \varphi_{n,m}(x)
 =
 \sqrt{\frac{n!}{m!}}\,
 \ee^{-x/2}x^{(m-n)/2}L_n^{(m-n)}(x).
\]
For $1\leq\ell\leq p$, define
\[
 c_\ell
 =
 \sqrt{
 \frac{(P-\ell)!(Q-\ell)!}
 {(p-\ell)!(q-\ell)!}
 }.
\]
Then
\begin{equation}\label{eq:normalized-shifted-sum}
 \begin{split}
 &\sum_{\ell=1}^p
 \frac{(P-\ell)!}{(q-\ell)!}
 L_{P-\ell}^{(r)}(x)L_{p-\ell}^{(r_0)}(y)\\
 &\quad=
 \ee^{(x+y)/2}x^{-r/2}y^{-r_0/2}
 \sum_{\ell=1}^p
 c_\ell
 \varphi_{P-\ell,Q-\ell}(x)
 \varphi_{p-\ell,q-\ell}(y).
 \end{split}
\end{equation}
Moreover, with
\[
 c_0=\sqrt{\frac{P!Q!}{p!q!}},
\]
one has
\begin{equation}\label{eq:coefficient-product}
 \frac{c_\ell}{c_0}
 =
 \left\{
 \prod_{h=0}^{\ell-1}
 \frac{(p-h)(q-h)}{(P-h)(Q-h)}
 \right\}^{1/2}.
\end{equation}

Define
\[
 \delta_{p,q}
 =
 \frac{(\sqrt p+\sqrt q)^{2/3}}{(pq)^{1/3}},
 \qquad
 \tau_{p,q;A,B}
 =
 \frac{A/p+B/q}{2\delta_{p,q}}.
\]
Fix $c_{\mathrm{box}}>0$ and $C_{\mathrm{box}}<\infty$. There is a constant $C$ depending only on them such that, whenever
\[
 c_{\mathrm{box}}N\leq p,q\leq C_{\mathrm{box}}N,
 \qquad
 A+B\leq\frac{c_{\mathrm{box}}N}{4},
 \qquad
 1\leq\ell\leq\frac{c_{\mathrm{box}}N}{2},
\]
one has
\begin{equation}\label{eq:coefficient-log-error}
 \left|
 \log\frac{c_\ell}{c_0}
 +
 \tau_{p,q;A,B}\delta_{p,q}\ell
 \right|
 \leq
 C\frac{(A+B)\ell^2+(A^2+B^2)\ell}{N^2}.
\end{equation}
If $D_N>0$ satisfies
\[
 \frac{\log D_N}{\log N}\to0,
 \qquad
 A+B\leq N^{2/3}D_N,
 \qquad
 \ell\leq N^{1/3}D_N,
\]
then uniformly over these $\ell$,
\begin{equation}\label{eq:coefficient-asymptotic}
 \frac{c_\ell}{c_0}
 =
 \exp(-\tau_{p,q;A,B}\delta_{p,q}\ell)(1+o(1)).
\end{equation}
If
\[
 \frac pN\to a>0,\qquad
 \frac qN\to b>0,\qquad
 \frac A{N^{2/3}}\to\alpha,\qquad
 \frac B{N^{2/3}}\to\beta,
\]
then
\begin{equation}\label{eq:airy-time-limit}
 \tau_{p,q;A,B}
 \longrightarrow
 \frac{\alpha/a+\beta/b}{2d_{a,b}},
 \qquad
 d_{a,b}
 =
 \frac{(\sqrt a+\sqrt b)^{2/3}}{(ab)^{1/3}}.
\end{equation}
Thus $\tau$ tends to zero when $A+B=o(N^{2/3})$, remains of order one when $A+B$ is of order $N^{2/3}$, and tends to infinity when
\[
 \frac{A+B}{N^{2/3}}\to\infty,
 \qquad
 A+B=o(N).
\]

For the nested rectangles $(M_N,N)$, assume
\[
 \frac{M_N}{N}\to\gamma\in(0,\infty),
 \qquad
 0\leq M_{N+1}-M_N\leq K.
\]
Compare $(M_N,N)$ with $(M_{N+H},N+H)$, where $1\leq H=o(N)$, and sort their dimensions into $(p,q)$ and $(P,Q)$. Then there are constants $0<c<C<\infty$, depending only on $\gamma$ and $K$, such that
\begin{equation}\label{eq:project-correlation-window}
 c\frac{H}{N^{2/3}}
 \leq
 \tau_{p,q;P-p,Q-q}
 \leq
 C\frac{H}{N^{2/3}}
\end{equation}
for all sufficiently large $N$.
\end{prop}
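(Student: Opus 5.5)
The proof has four parts, taken in order: the exact identity \eqref{eq:normalized-shifted-sum} with the product formula \eqref{eq:coefficient-product}, the logarithmic expansion \eqref{eq:coefficient-log-error} and its consequence \eqref{eq:coefficient-asymptotic}, the limit \eqref{eq:airy-time-limit}, and finally the two-sided window bound \eqref{eq:project-correlation-window}.

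For \eqref{eq:normalized-shifted-sum}, I will substitute the definition of $\varphi$ into the right side. We have
\[
\varphi_{P-\ell,Q-\ell}(x)=\sqrt{(P-\ell)!/(Q-\ell)!}\,\ee^{-x/2}x^{r/2}L^{(r)}_{P-\ell}(x),
\]
since $(Q-\ell)-(P-\ell)=r$, and similarly
\[
\varphi_{p-\ell,q-\ell}(y)=\sqrt{(p-\ell)!/(q-\ell)!}\,\ee^{-y/2}y^{r_0/2}L^{(r_0)}_{p-\ell}(y).
\]
After the exponential and power prefactors cancel, the coefficient of the $\ell$th term is
\[
c_\ell\sqrt{\frac{(P-\ell)!(p-\ell)!}{(Q-\ell)!(q-\ell)!}}.
\]
Inserting the definition of $c_\ell$, this collapses to $(P-\ell)!/(q-\ell)!$, which is the identity. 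For \eqref{eq:coefficient-product}, I will write $c_\ell^2/c_0^2$ as a ratio of four factorial quotients. Each quotient telescopes, for instance $(P-\ell)!/P!=\prod_{h<\ell}(P-h)^{-1}$, and the product form follows.

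For \eqref{eq:coefficient-log-error}, I will take logarithms of \eqref{eq:coefficient-product}:
\[
\log\frac{c_\ell}{c_0}=\frac12\sum_{h<\ell}\Bigl[\log\Bigl(1-\frac{A}{P-h}\Bigr)+\log\Bigl(1-\frac{B}{Q-h}\Bigr)\Bigr].
\]
This uses $p-h=(P-h)-A$ and $q-h=(Q-h)-B$. The box hypotheses give $A/(P-h)\le 1/2$, since $P-h\ge p-\ell\ge c_{\mathrm{box}}N/2$ and $A\le c_{\mathrm{box}}N/4$. Hence
\[
\log\Bigl(1-\frac{A}{P-h}\Bigr)=-\frac{A}{P-h}+O\!\left(\frac{A^2}{N^2}\right).
\]
Next I will replace $1/(P-h)$ by $1/p$. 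The error is $|P-h-p|/N^2\le (A+h)/N^2$, which contributes $O\bigl((A\ell^2+A^2\ell)/N^2\bigr)$ after summing over $h<\ell$. The $Q$-terms are handled identically. The main term is $-\tfrac{\ell}{2}(A/p+B/q)$, which equals $-\tau\delta\ell$ by the definition of $\tau$. This gives \eqref{eq:coefficient-log-error} with $C$ depending only on $c_{\mathrm{box}},C_{\mathrm{box}}$.

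Then \eqref{eq:coefficient-asymptotic} follows by plugging the ranges into the error. The first term is $(A+B)\ell^2/N^2\le N^{2/3}N^{2/3}D_N^3/N^2=N^{-2/3}D_N^3$. The second is $(A^2+B^2)\ell/N^2\le N^{-1/3}D_N^3$. Both are $o(1)$ because $D_N=N^{o(1)}$.

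For \eqref{eq:airy-time-limit}, I will rescale: $N^{1/3}\delta_{p,q}=\delta_{p/N,q/N}\to d_{a,b}$ by homogeneity of degree $-1/3$, and $N^{1/3}(A/p+B/q)=(A/N^{2/3})/(p/N)+(B/N^{2/3})/(q/N)\to\alpha/a+\beta/b$. Taking the quotient gives the limit. The trichotomy follows from the same scaling, because $\tau\asymp (A+B)/N^{2/3}$ uniformly in a box. The condition $A+B=o(N)$ is recorded only to keep the coefficient expansion meaningful.

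The main obstacle is the bookkeeping for \eqref{eq:project-correlation-window}, where the orientation may reverse. The plan is to bound $A$ and $B$ separately after sorting the endpoint dimensions. Since $A+B=(P+Q)-(p+q)$ equals the total increment $H+(M_{N+H}-M_N)$, we get $H\le A+B\le (K+1)H$. Each of $A$ and $B$ is nonnegative: $\min$ and $\max$ are monotone in each coordinate and both coordinates are nondecreasing. Since $p,q\asymp N$ with constants depending on $\gamma$, both $A/p+B/q$ and $\delta_{p,q}^{-1}N^{-1/3}$ are comparable to $(A+B)/N$ and to $1$ respectively. Combining these comparisons yields \eqref{eq:project-correlation-window} for all large $N$, with $c$ and $C$ depending only on $\gamma$ and $K$.
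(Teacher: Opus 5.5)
Your proposal is correct and follows essentially the same route as the paper: direct substitution for the exact identity and the product formula, a Taylor expansion of the logarithm of the product with the $h$-dependent denominators replaced by $p$ and $q$ (summing to the $(A\ell^2+A^2\ell)/N^2$ error), and the comparisons $H\le A+B\le (K+1)H$, $\delta_{p,q}\asymp N^{-1/3}$, $A/p+B/q\asymp (A+B)/N$ for the window bound. The only difference is cosmetic: you expand $\log(1-A/(P-h))$ instead of the paper's equivalent $\log(1+A/(p-h))$, and you check $A/(P-h)\le 1/2$ explicitly where the paper only notes that these ratios lie in a fixed compact interval.
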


\begin{proof}
By definition,
\[
 \begin{split}
 &\varphi_{P-\ell,Q-\ell}(x)
 \varphi_{p-\ell,q-\ell}(y)\\
 &\quad=
 \sqrt{
 \frac{(P-\ell)!(p-\ell)!}
 {(Q-\ell)!(q-\ell)!}}
 \ee^{-(x+y)/2}x^{r/2}y^{r_0/2}
 L_{P-\ell}^{(r)}(x)L_{p-\ell}^{(r_0)}(y).
 \end{split}
\]
Multiplication by $c_\ell$ reduces the coefficient to $(P-\ell)!/(q-\ell)!$, proving \eqref{eq:normalized-shifted-sum}. Dividing $c_\ell$ by $c_0$ and expanding the factorial quotients gives \eqref{eq:coefficient-product}.

Taking logarithms in \eqref{eq:coefficient-product} yields
\begin{equation}\label{eq:coefficient-log-sum}
 \log\frac{c_\ell}{c_0}
 =
 -\frac12
 \sum_{h=0}^{\ell-1}
 \left[
 \log\left(1+\frac{A}{p-h}\right)
 +
 \log\left(1+\frac{B}{q-h}\right)
 \right].
\end{equation}
Under the macroscopic hypotheses, $p-h$ and $q-h$ are bounded below by a fixed positive multiple of $N$, while $A/(p-h)$ and $B/(q-h)$ lie in a fixed compact interval. The Taylor bound
\[
 \abs{\log(1+z)-z}\leq Cz^2
\]
on this interval and
\[
 \left|
 \frac{A}{p-h}-\frac Ap
 \right|
 =
 \frac{Ah}{p(p-h)}
 \leq
 C\frac{Ah}{N^2}
\]
give
\[
 \left|
 \log\left(1+\frac{A}{p-h}\right)-\frac Ap
 \right|
 \leq
 C\frac{A^2+Ah}{N^2}.
\]
The analogous estimate holds for $B$ and $q$. Summing over $h$ and using
\[
 \sum_{h=0}^{\ell-1}h\leq\frac{\ell^2}{2},
\]
together with
\[
 \tau_{p,q;A,B}\delta_{p,q}
 =
 \frac12\left(\frac Ap+\frac Bq\right),
\]
proves \eqref{eq:coefficient-log-error}.

Under the hypotheses for \eqref{eq:coefficient-asymptotic}, the right side of \eqref{eq:coefficient-log-error} is at most
\[
 C\left(N^{-2/3}D_N^3+N^{-1/3}D_N^3\right)=o(1),
\]
because $D_N=N^{o(1)}$. Exponentiating proves \eqref{eq:coefficient-asymptotic}.

For \eqref{eq:airy-time-limit}, direct substitution gives
\[
 N^{1/3}\delta_{p,q}\to d_{a,b}
\]
and
\[
 N^{1/3}\left(\frac Ap+\frac Bq\right)
 \to
 \frac\alpha a+\frac\beta b.
\]
The claimed limit follows. More generally, if $p,q$ are comparable to $N$ and $A,B=o(N)$, then $\tau$ is bounded above and below by positive constants times $(A+B)/N^{2/3}$. This proves the three scale assertions.

For the nested path, all four sorted dimensions are bounded between positive constant multiples of $N$ for sufficiently large $N$. Sorting preserves the sum of the physical dimensions, so
\[
 A+B
 =
 (P+Q)-(p+q)
 =
 H+M_{N+H}-M_N.
\]
Monotonicity and the increment bound give
\[
 H\leq A+B\leq(K+1)H.
\]
Moreover, $\delta_{p,q}$ is comparable to $N^{-1/3}$ and $A/p+B/q$ is comparable to $(A+B)/N$. Substitution proves \eqref{eq:project-correlation-window}. The argument does not depend on which physical side is larger, so it also covers orientation reversal.
\end{proof}

On compact aspect-ratio sets, $\delta_{p,q}$ is comparable to $N^{-1/3}$, whereas $A/p+B/q$ is comparable to $(A+B)/N$. Hence the Airy-time coordinate is comparable to $(A+B)/N^{2/3}$. A displacement of order $N^{2/3}$ produces order-one Airy time, while a supercritical displacement with $(A+B)/N^{2/3}\to\infty$ and $A+B=o(N)$ sends that time to infinity. In the mode range for which $\delta_{p,q}\ell$ is order one, the asymptotic coefficient is correspondingly damped by an exponential with diverging time parameter. The exponent $2/3$ is the classical soft-edge correlation scale; the proposition calibrates its row--column conversion coefficient in the present normalization.

The extended-Airy framework and its $N^{2/3}$ correlation window are classical. Fixed-orientation Laguerre/Wishart-minor structure is prior in substance \cite{AdlerVanMoerbekeWang2013}, as is the one-time complex-Wishart soft-edge normalization \cite{EK06}. I use that framework here to put the two endpoint geometries admitted below into one orthonormal spectral gauge. The paper-specific output at this stage is the exact finite-$N$ coefficient conversion and the uniform error analysis in the present row--column coordinates. Direct priority for the algebraic form of the Airy-time coefficient remains unresolved.

\begin{thm}[Discrete extended-Airy spectral form]\label{thm:discrete-extended-airy}
Consider either the fixed-orientation geometry of Theorem~\ref{thm:fixed-orientation-kernel} or the orientation-reversing geometry of Theorem~\ref{thm:orientation-reversing-kernel}. Use the resulting integers
\[
 \begin{aligned}
 p&\leq q,
 &P&\leq Q,
 &A&=P-p,\\
 B&=Q-q,
 &r_0&=q-p,
 &r&=Q-P,
 \end{aligned}
\]
and assume $A+B\geq2$. Define
\[
 \varphi_{n,m}(x)
 =
 \sqrt{\frac{n!}{m!}}
 \ee^{-x/2}x^{(m-n)/2}L_n^{(m-n)}(x),
\]
and, for every integer $\ell\leq p$,
\[
 c_\ell
 =
 \sqrt{
 \frac{(P-\ell)!(Q-\ell)!}
 {(p-\ell)!(q-\ell)!}},
 \qquad
 c_0=\sqrt{\frac{P!Q!}{p!q!}},
 \qquad
 d_\ell=\frac{c_\ell}{c_0}.
\]
The two-level process admits a correlation kernel whose blocks are
\begin{align}
 \widehat K_{00}(x,y)
 &=
 \sum_{\ell=1}^p
 \varphi_{p-\ell,q-\ell}(x)
 \varphi_{p-\ell,q-\ell}(y),
 \label{eq:airy-discrete-00}\\
 \widehat K_{11}(x,y)
 &=
 \sum_{\ell=1}^P
 \varphi_{P-\ell,Q-\ell}(x)
 \varphi_{P-\ell,Q-\ell}(y),
 \label{eq:airy-discrete-11}\\
 \widehat K_{10}(x,y)
 &=
 \sum_{\ell=1}^p
 d_\ell
 \varphi_{P-\ell,Q-\ell}(x)
 \varphi_{p-\ell,q-\ell}(y),
 \label{eq:airy-discrete-10}\\
 \widehat K_{01}(x,y)
 &=
 -\sum_{\ell\leq0}
 d_\ell^{-1}
 \varphi_{p-\ell,q-\ell}(x)
 \varphi_{P-\ell,Q-\ell}(y).
 \label{eq:airy-discrete-01}
\end{align}
The series in \eqref{eq:airy-discrete-01} converges in
\[
 L^2((0,\infty)^2)
\]
and defines a Hilbert--Schmidt operator.
\end{thm}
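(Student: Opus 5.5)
The plan is to start from the exact kernels of Theorems~\ref{thm:fixed-orientation-kernel} and~\ref{thm:orientation-reversing-kernel} and conjugate them by a levelwise gauge. A gauge of the form $K_{uv}(x,y)\mapsto g_u(x)K_{uv}(x,y)g_v(y)^{-1}$ leaves all correlation determinants unchanged. At level $0$ I would take $g_0(x)=c\,\ee^{x/2}x^{-r_0/2}$ in the birth gauge (with the extra factor $y^{r_0}$ absorbed appropriately in the same-size gauge $B\geq1$), and at level $1$ I would take $g_1(y)=c'\,\ee^{y/2}y^{-r/2}$, with constants $c,c'$ fixed below.

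For the diagonal blocks, the identity $\frac{k!}{(k+\alpha)!}L_k^{(\alpha)}(x)L_k^{(\alpha)}(y)\ee^{-(x+y)/2}(xy)^{\alpha/2}=\varphi_{k,k+\alpha}(x)\varphi_{k,k+\alpha}(y)$ immediately turns \eqref{eq:fixed-K11}, \eqref{eq:fixed-K00-same}/\eqref{eq:fixed-K00-birth} and \eqref{eq:reverse-K00} into the projections \eqref{eq:airy-discrete-00}--\eqref{eq:airy-discrete-11}. Reindexing $k=p-\ell$ gives the stated form.

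For $\widehat K_{10}$, I would use Proposition~\ref{prop:shifted-laguerre-block} with $j=p-\ell$. This gives $(-1)^A\sum_\ell\frac{(P-\ell)!}{(q-\ell)!}L_{P-\ell}^{(r)}(x)L_{p-\ell}^{(r_0)}(y)$, possibly times $y^{r_0}$, and \eqref{eq:normalized-shifted-sum} converts it into $\sum_\ell c_\ell\varphi\varphi$. I then choose the constant ratio in the gauge as $(-1)^A c_0^{-1}$ so that the coefficients become $d_\ell$. The same gauge constant must be used in the $01$ block, so there it contributes $(-1)^Ac_0$.

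The main obstacle is $\widehat K_{01}=-W+WK_{11}$. My first approach is the spectral one. In the $\varphi$ basis, the later-to-earlier map acts on the matched modes $\varphi_{P-\ell,Q-\ell}\mapsto d_\ell\varphi_{p-\ell,q-\ell}$ for $\ell\geq1$. The free transfer $W$, conjugated by the gauge, should act as the adjoint-type operator with $\varphi_{p-\ell,q-\ell}\mapsto d_\ell^{-1}\varphi_{P-\ell,Q-\ell}$ for all $\ell$, where $d_\ell$ is defined for $\ell\leq0$ by the same factorial formula. Subtracting the projection of $W$ onto the modes $\ell\geq1$ reached by $WK_{11}$ then leaves exactly the tail $-\sum_{\ell\leq0}$.

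Concretely, I would verify that the gauged $W$ satisfies $\int \widehat W(x,z)\varphi_{P-\ell,Q-\ell}(z)\,dz$-type identities. This amounts to a Laguerre convolution identity: repeated integration $\int_x^\infty$ against the weights maps $\ee^{-z}z^{r}L_{n}^{(r)}$ to lower-parameter Laguerre functions. The needed facts are $\int_x^\infty \ee^{-z}z^{\alpha}L_n^{(\alpha)}(z)\,dz=\ee^{-x}x^{\alpha}L_{n}^{(\alpha)}$-type relations, specifically $\frac{d}{dx}[\ee^{-x}x^{\alpha+1}L_n^{(\alpha+1)}]\propto \ee^{-x}x^\alpha L_{n+1}^{(\alpha)}$. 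These are used once per step along the canonical path, one segment of same-size steps and one of birth steps, with the turning factor $u^{-(r+A)}$ or $u^{-B}$.

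The alternative check is biorthogonality: use the Eynard--Mehta structure, under which $K_{01}$ composed with the level-$1$ data reproduces $K_{00}$, and show that the expansion coefficients of $K_{01}$ in the complete system $\{\varphi_{p+v,q+v}\otimes\varphi_{P+v,Q+v}\}$ vanish for the finite modes. Both routes need the orientation-reversing case handled separately, because the parameter passes through $0$ at the square.

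Convergence is then elementary. For $\ell=-v\leq0$, $d_{-v}^{-1}=\prod_{h=1}^{v}\sqrt{\frac{(p+h)(q+h)}{(P+h)(Q+h)}}$ decays geometrically once $A+B\geq1$, with ratio below $1$ uniformly. This is because $(p+h)(q+h)/((P+h)(Q+h))\leq\max(\cdot)<1$ and tends to $1$ only like $1-(A+B)/h$. The resulting decay $d_{-v}^{-2}\lesssim v^{-(A+B)}$ is summable exactly when $A+B\geq2$, which explains that hypothesis. Since $\{\varphi_{n,n+\alpha}\}$ is orthonormal in $L^2(0,\infty)$ for fixed $\alpha$, the series is orthogonal in $L^2((0,\infty)^2)$ and its squared norm is $\sum_v d_{-v}^{-2}<\infty$. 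This gives convergence and the Hilbert--Schmidt property.
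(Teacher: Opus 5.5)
Your route is the paper's: a levelwise scalar gauge, the diagonal blocks by substitution, $\widehat K_{10}$ from Proposition~\ref{prop:shifted-laguerre-block} and \eqref{eq:normalized-shifted-sum}, then $\widehat K_{01}=-\widehat W(I-\widehat K_{11})$ via a free-mode mapping proved by Laguerre convolution identities, and Hilbert--Schmidt convergence from orthonormality. Several ingredients are misstated, however, and one case is left open.

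\textbf{The level-one gauge.} With your convention $g_u(x)K_{uv}(x,y)g_v(y)^{-1}$, the level-one factor must be $c'\ee^{-y/2}y^{r/2}$, not $c'\ee^{y/2}y^{-r/2}$. In $K_{11}(x,y)=\ee^{-y}y^r\mathcal L_{P,r}(x,y)$ the weight sits on the second variable, unlike $K_{00}$ in the birth gauge. With your choice, neither $\widehat K_{11}$ nor $\widehat K_{10}$ comes out as claimed.

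\textbf{The free-mode mapping.} The identity you need is $\int_0^\infty\widehat W(x,z)\varphi_{k,k+r}(z)\,\dd z=d_{P-k}^{-1}\varphi_{k-A,k-A+r_0}(x)$. It holds only for $k\geq A$, not ``for all $\ell$''. For $k<A$ the iterated integrals produce incomplete-gamma functions rather than a multiple of some $\varphi_{j,j+r_0}$, and $d_{P-k}$ is undefined there. This is harmless, because only $k\geq P$ enters once you know that $I-\widehat K_{11}$ is the projection onto $\overline{\operatorname{span}}\{\varphi_{k,k+r}:k\geq P\}$. That step needs completeness of $\{\varphi_{k,k+r}\}_{k\geq0}$ in $L^2(0,\infty)$, which you should prove; the paper uses a Laplace-transform argument.

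\textbf{The convolution identities.} For the same-size steps, the correct identity lowers the parameter with no power weight: $\int_x^\infty\ee^{-z}L_n^{(\alpha+1)}(z)\,\dd z=\ee^{-x}L_n^{(\alpha)}(x)$, iterated with kernel $(z-x)^{h-1}/(h-1)!$. The weighted relation you display, with unchanged parameter, is false (try $n=1$, $\alpha=0$). Your derivative identity does correctly give the birth-step formula $\int_x^\infty\ee^{-z}z^\alpha\frac{(z-x)^{h-1}}{(h-1)!}L_n^{(\alpha)}(z)\,\dd z=(-1)^h\frac{(n-h)!}{n!}\ee^{-x}x^{\alpha+h}L_{n-h}^{(\alpha+h)}(x)$ for $n\geq h$.

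\textbf{Orientation reversal.} This is the genuinely missing step: you flag it but do not resolve it.
\begin{enumerate}
\item Apply the birth identity to the final $C$ births with $h=C$, $\alpha=r$. This gives $u^{r+C}L_{k-C}^{(r+C)}(u)=u^BL_{k-C}^{(B)}(u)$, and the power $u^B$ cancels the turning factor $u^{-B}$.
\item What remains is a weight-free segment of length $r_0+B$. Lowering the parameter $r_0+B$ times drives it from $B$ to $-r_0$.
\item The paper closes this with $L_n^{(-m)}(x)=(-x)^m\frac{(n-m)!}{n!}L_{n-m}^{(m)}(x)$ for $n\geq m$. This applies because $k\geq P$ gives $k-C\geq r_0$.
\item Equivalently, stop after $B$ lowering steps at parameter $0$ (the square) and treat the last $r_0$ weight-free steps as births with $\alpha=0$.
\end{enumerate}
Either way the sign is $(-1)^{C+r_0}=(-1)^A$, which cancels the gauge sign. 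The coefficient $(k-A)!/k!$ combines with the wavefunction normalizations to give exactly $d_{P-k}^{-1}$. The case $C=0$ is the same computation without the first birth segment.

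\textbf{Convergence.} Your convergence paragraph contradicts itself. The ratios $\sqrt{(p+h)(q+h)/((P+h)(Q+h))}$ tend to $1$, so there is no uniform geometric decay. What makes $A+B\geq2$ exactly the summability condition is the polynomial rate $d_{-v}^{-2}\asymp v^{-(A+B)}$, which you then use correctly.
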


\begin{proof}
For every nonnegative integer $\alpha$, the functions
\[
 \{\varphi_{k,k+\alpha}:k\geq0\}
\]
form an orthonormal basis of $L^2(0,\infty)$. Orthonormality follows from \eqref{eq:laguerre-orthogonality}. To prove completeness, suppose $f$ is orthogonal to every such function. Then it is orthogonal to
\[
 \ee^{-x/2}x^{\alpha/2}
\]
times every polynomial. The Laplace transform
\[
 \int_0^\infty
 f(x)\ee^{-x/2}x^{\alpha/2}\ee^{zx}\,\dd x
\]
is analytic for $\operatorname{Re}z$ in a neighborhood of zero by Cauchy--Schwarz, and every derivative at zero vanishes. It therefore vanishes on a real interval. Uniqueness of the Laplace transform gives $f=0$.

We first derive \eqref{eq:airy-discrete-00}--\eqref{eq:airy-discrete-10}. Apply the scalar gauge
\[
 \widehat K_{ab}(x,y)
 =
 g_a(x)^{-1}K_{ab}(x,y)g_b(y),
\]
where
\[
 g_1(x)=(-1)^Ac_0\ee^{x/2}x^{-r/2}.
\]
At level zero, take
\[
 g_0(x)=\ee^{-x/2}x^{-r_0/2}
\]
in the fixed-orientation case with $B\geq1$, and
\[
 g_0(x)=\ee^{-x/2}x^{r_0/2}
\]
in the fixed-orientation case with $B=0$ and in the orientation-reversing case. A blockwise scalar gauge preserves every determinantal correlation function. Substitution into the equal-time blocks of Theorems~\ref{thm:fixed-orientation-kernel} and~\ref{thm:orientation-reversing-kernel} gives \eqref{eq:airy-discrete-00} and \eqref{eq:airy-discrete-11}. Proposition~\ref{prop:shifted-laguerre-block}, together with \eqref{eq:normalized-shifted-sum}, gives \eqref{eq:airy-discrete-10}.

It remains to derive \eqref{eq:airy-discrete-01}. We first establish two Laguerre convolution identities. For $h\geq1$ and $n\geq0$,
\begin{equation}\label{eq:laguerre-convolution-one}
 \int_x^\infty
 \ee^{-z}\frac{(z-x)^{h-1}}{(h-1)!}
 L_n^{(\alpha+h)}(z)\,\dd z
 =
 \ee^{-x}L_n^{(\alpha)}(x).
\end{equation}
This follows by multiplying the generating function
\[
 \sum_{n\geq0}L_n^{(\alpha+h)}(z)t^n
 =
 (1-t)^{-\alpha-h-1}
 \exp\left(-\frac{zt}{1-t}\right)
\]
by the convolution weight, integrating for $\abs t<1$, and comparing coefficients.

For $n\geq h$,
\begin{equation}\label{eq:laguerre-convolution-two}
 \int_x^\infty
 \ee^{-z}z^\alpha
 \frac{(z-x)^{h-1}}{(h-1)!}
 L_n^{(\alpha)}(z)\,\dd z
 =
 (-1)^h\frac{(n-h)!}{n!}
 \ee^{-x}x^{\alpha+h}L_{n-h}^{(\alpha+h)}(x).
\end{equation}
For $h=1$, differentiation of the Rodrigues formula gives
\[
 \frac{\dd}{\dd x}
 \left[
 \ee^{-x}x^{\alpha+1}L_{n-1}^{(\alpha+1)}(x)
 \right]
 =
 n\ee^{-x}x^\alpha L_n^{(\alpha)}(x).
\]
The expression in brackets vanishes at positive infinity. Integration from $x$ to infinity gives the $h=1$ case of \eqref{eq:laguerre-convolution-two}, including its negative sign. Iteration proves the general identity. We also use
\begin{equation}\label{eq:negative-laguerre-parameter}
 L_n^{(-m)}(x)
 =
 (-x)^m\frac{(n-m)!}{n!}L_{n-m}^{(m)}(x),
 \qquad n\geq m\geq0,
\end{equation}
which follows directly from the finite series.

Let $W$ be the applicable real free propagator and put
\[
 \widehat W=g_0^{-1}Wg_1.
\]
We claim that, for every $k\geq P$,
\begin{equation}\label{eq:free-mode-mapping}
 \widehat W\varphi_{k,k+r}
 =
 d_{P-k}^{-1}
 \varphi_{k-A,k-A+r_0}.
\end{equation}

For fixed orientation with $A=0$, use \eqref{eq:fixed-W-A-zero} and \eqref{eq:laguerre-convolution-one} with $h=B$ to lower the terminal parameter from $r=r_0+B$ to $r_0$. The wavefunction normalizations and endpoint gauges give the coefficient $d_{P-k}^{-1}$.

For fixed orientation with $B=0$, use \eqref{eq:fixed-W-B-zero} and \eqref{eq:laguerre-convolution-two} with $h=A$ and $\alpha=r$. This lowers the degree from $k$ to $k-A$ and raises the parameter from $r$ to $r+A=r_0$. The sign $(-1)^A$ cancels the sign in $g_1$, and normalization gives \eqref{eq:free-mode-mapping}.

For fixed orientation with $A,B\geq1$, use \eqref{eq:fixed-W-positive}. First apply \eqref{eq:laguerre-convolution-two} in the terminal variable with $h=A$ and $\alpha=r$. This produces
\[
 (-1)^A\frac{(k-A)!}{k!}
 \ee^{-u}u^{r+A}L_{k-A}^{(r+A)}(u).
\]
The factor $u^{r+A}$ cancels the turning factor $u^{-(r+A)}$. Identity \eqref{eq:laguerre-convolution-one}, with $h=B$, lowers the parameter from $r+A$ to
\[
 r+A-B=r_0.
\]
Normalization again gives \eqref{eq:free-mode-mapping}.

For orientation reversal, write $C=P-q$, so $A=r_0+C$ and $r=B-C$. If $C\geq1$, use \eqref{eq:reverse-W-positive}. Identity \eqref{eq:laguerre-convolution-two} first performs the $C$ final births and produces
\[
 (-1)^C\frac{(k-C)!}{k!}
 \ee^{-u}u^{r+C}L_{k-C}^{(r+C)}(u).
\]
Since $r+C=B$, the factor $u^B$ cancels the turning factor $u^{-B}$. Identity \eqref{eq:laguerre-convolution-one}, with $h=r_0+B$, changes the remaining parameter from $B$ to $-r_0$. Since $k\geq P$ implies $k-C\geq r_0$, \eqref{eq:negative-laguerre-parameter} changes
\[
 L_{k-C}^{(-r_0)}
\]
into
\[
 (-x)^{r_0}\frac{(k-A)!}{(k-C)!}L_{k-A}^{(r_0)}.
\]
The total sign is $(-1)^{C+r_0}=(-1)^A$, and the total factorial coefficient is $(k-A)!/k!$. The sign cancels the one in $g_1$, and normalization gives \eqref{eq:free-mode-mapping}. If $C=0$, use \eqref{eq:reverse-W-C-zero}, followed by \eqref{eq:laguerre-convolution-one} and \eqref{eq:negative-laguerre-parameter}, to obtain the same result.

The normalization in every case is
\[
 d_{P-k}^2
 =
 \frac{k!(k+r)!}{(k-A)!(k-A+r_0)!}
 \frac{p!q!}{P!Q!}.
\]

The Eynard--Mehta form gives, after the same gauge,
\[
 \widehat K_{01}
 =
 -\widehat W+\widehat W\widehat K_{11}
 =
 -\widehat W(I-\widehat K_{11}).
\]
By \eqref{eq:airy-discrete-11} and completeness, $I-\widehat K_{11}$ is the orthogonal projection onto
\[
 \{\varphi_{k,k+r}:k\geq P\}.
\]
Insert this expansion, apply \eqref{eq:free-mode-mapping}, and put $\ell=P-k$. This gives \eqref{eq:airy-discrete-01}.

Finally, for $L\geq0$,
\[
 d_{-L}^2
 =
 \prod_{i=1}^A\frac{p+L+i}{p+i}
 \prod_{j=1}^B\frac{q+L+j}{q+j}.
\]
Since $A+B\geq2$,
\[
 \sum_{L\geq0}d_{-L}^{-2}<\infty.
\]
Equation \eqref{eq:free-mode-mapping} maps one orthonormal family to another, so the series in \eqref{eq:airy-discrete-01} converges in $L^2((0,\infty)^2)$ and defines a Hilbert--Schmidt operator.
\end{proof}

The four blocks above assign distinct later roles to the same exact two-level law. The equal-level sums are finite Laguerre projections. The positive-index series in $\widehat K_{10}$ and the nonpositive-index series in $\widehat K_{01}$ isolate cross-level dependence, with all row--column time information carried by $d_\ell$ and $d_\ell^{-1}$. This is a discrete extended-Airy spectral representation at finite $N$; the subsequent estimates determine how its modes behave on the moving soft-edge windows required by the probability theorem.

A product-scale joint bound also requires the marginal rarity scale. The next proposition specializes finite-dimensional Laguerre deviation estimates to the logarithmic walls used later. Its right- and left-tail exponents become $\kappa_+=4u^{3/2}/3$ and $\kappa_-=v^3/12$. Those powers determine the vanishing product of the two marginals against which the cross-level correction must eventually be compared.

\begin{prop}[One-time Laguerre log-window tails]\label{prop:laguerre-log-tails}
Fix $M_0>1$. For each $n\geq1$, let $m_n$ be an integer satisfying
\[
 m_n>n-1,
 \qquad
 \frac{m_n}{n}\leq M_0.
\]
Let $\Lambda_n$ be the largest eigenvalue of $X_n^*X_n$, where $X_n$ is an $m_n$-by-$n$ matrix of independent centered circular complex Gaussian variables of second absolute moment one. Define
\[
 a_n=(\sqrt{m_n}+\sqrt n)^2,
 \qquad
 b_n=(m_nn)^{-1/6}(\sqrt{m_n}+\sqrt n)^{4/3}.
\]
For every fixed $u,v>0$,
\begin{align}
 \Prob\left(
 \Lambda_n\geq a_n+ub_n(\log n)^{2/3}
 \right)
 &=
 \exp\left(
 -\frac43u^{3/2}\log n+o(\log n)
 \right),
 \label{eq:laguerre-right-log-tail}\\
 \Prob\left(
 \Lambda_n\leq a_n-vb_n(\log n)^{1/3}
 \right)
 &=
 \exp\left(
 -\frac1{12}v^3\log n+o(\log n)
 \right).
 \label{eq:laguerre-left-log-tail}
\end{align}
\end{prop}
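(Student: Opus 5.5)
The plan is to derive both asymptotics from the determinantal structure at a single level, so that only the Laguerre kernel in soft-edge scaling is needed. Where a uniform moderate-deviation estimate is already available, in particular the complex Laguerre tails of \cite{BBBK24}, I would cite it rather than reprove it. The exponents to match are the Tracy--Widom tails
\[
F_2(s)\approx\ee^{-s^3/12}\quad(s\to-\infty),\qquad
1-F_2(s)\approx\ee^{-\frac43 s^{3/2}}\quad(s\to+\infty).
\]
With the choice $s=u(\log n)^{2/3}$ or $s=-v(\log n)^{1/3}$ these become $n^{-\frac43u^{3/2}}$ and $n^{-v^3/12}$. The whole task is therefore to justify the Tracy--Widom tail shape uniformly in the moderate window $\abs{s}\leq C(\log n)^{2/3}$ at finite $n$.

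\textbf{Scaling and upper tail.} First I would check that $a_n,b_n$ is the standard soft-edge centring and scaling. By Theorem~\ref{thm:discrete-extended-airy} at one level, the equal-time kernel is the orthogonal projection $\sum_{k<p}\varphi_{k,k+r}\otimes\varphi_{k,k+r}$. For the right tail $s=u(\log n)^{2/3}$ I would use the count $N_s$ of particles above $a_n+sb_n$ together with the first clause of Lemma~\ref{lem:two-level-reductions}, applied at one level, which gives
\[
\E N_s-\tfrac12(\E N_s)^2\leq\Prob(N_s>0)\leq\E N_s .
\]
Since $\E N_s$ will be polynomially small, both bounds reduce the tail to the intensity, so it suffices to show
\[
\E N_s=\int_{a_n+sb_n}^\infty K_n(x,x)\,\dd x=\exp\!\bigl(-\tfrac43s^{3/2}(1+o(1))\bigr).
\]
For this I would use Plancherel--Rotach (turning-point) asymptotics of the Laguerre wavefunctions, in the form of an Airy-type envelope:
\[
\abs{\varphi_{p-\ell,q-\ell}(a_n+\xi b_n)}\lesssim b_n^{-1/2}\exp\!\bigl(-\tfrac23\xi^{3/2}(1+o(1))\bigr)
\]
for $1\ll\xi\ll n^{2/3}$ and $\ell\leq n^{1/3+o(1)}$, with summable decay in $\ell$ beyond that range. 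Matching lower bounds would come from the leading modes. Integrating the diagonal then yields the exponent $\tfrac43u^{3/2}\log n$. The logarithmic prefactors are absorbed into the $o(\log n)$ term.

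\textbf{Lower tail.} The left tail $\Prob(\Lambda_n\leq a_n-sb_n)$ with $s=v(\log n)^{1/3}$ is a gap probability $\det(I-K_n)_{L^2(a_n-sb_n,\infty)}$. Here $s$ is only $(\log n)^{1/3}$, i.e.\ deep inside the $n^{2/3}$ range of validity. I would write
\[
\log\det(I-K_n)=\log\det(I-K_{\Ai})_{(-s,\infty)}+\text{error}.
\]
The error is controlled by $\norm{(I-K_{\Ai})^{-1}}_{\op}\,\norm{K_n-K_{\Ai}}_1$ on $(-s,\infty)$, after transporting $K_n$ to Airy coordinates. The trace-norm difference is $O(n^{-1/3}\mathrm{poly}(s))$, while $\norm{(I-K_{\Ai})^{-1}}_{\op}$ is at most of order $\ee^{Cs^{3/2}}$, which is $n^{o(1)}$ here. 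Hence the error is $o(1)$, and therefore $o(\log n)$. The classical left-tail asymptotic
\[
\log F_2(-s)=-\tfrac{s^3}{12}+O(\log s)
\]
then gives \eqref{eq:laguerre-left-log-tail}. For a cruder route that only needs the exponent, one could instead use log-concavity, or the Ledoux--Rider type bounds $F(-s)\leq\ee^{-cs^3}$ with the sharp constant from \cite{BBBK24}.

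\textbf{Main obstacle.} The main obstacle is the uniform trace-norm comparison $K_n\to K_{\Ai}$ on $(-s,\infty)$ and, for the upper tail, on $(s,\infty)$ with $s$ growing like $(\log n)^{2/3}$. This requires uniformity in the aspect ratio $m_n/n\in[1,M_0]$, including the near-square case $m_n-n$ small, where the Laguerre parameter is $O(1)$. I would first try uniform Airy-type asymptotics of $\varphi_{n,m}$ near the turning point, via Olver's method applied to the Laguerre differential equation. The envelope estimates deferred to Appendix~\ref{app:gaussian-airy-envelopes} are presumably of exactly this type. If the near-square case resists, I would fall back on citing the uniform moderate-deviation theorem of \cite{BBBK24} directly for both tails.
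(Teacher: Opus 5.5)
Your stated fallback, citing \cite{BBBK24} directly, is exactly the paper's proof. The paper quotes the uniform finite-$n$ two-sided bounds of \cite{BBBK24} (Theorems~1.1(ii), 1.3(ii), 1.4(ii), 1.5(ii)). These hold uniformly for $m>n-1$ and $m/n\leq M$, in the form $\exp(-\frac43(1\pm\eta)t^{3/2})$ for $T_R\leq t\leq\gamma_Rn^{2/3}$ and $\exp(-\frac1{12}(1\pm\eta)t^{3})$ for $T_L\leq t\leq n^{1/9-\delta}$; the left-tail lower bound sets the binding range. The paper then checks that $t=u(\log n)^{2/3}$ and $t=v(\log n)^{1/3}$ lie in these windows, divides the logarithms by $\log n$, and lets $\eta\downarrow0$.

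Your primary determinantal route is genuinely different and much heavier. It needs four inputs:
\begin{enumerate}
\item exponent-level Plancherel--Rotach asymptotics uniform over $m/n\in[1,M_0]$, including the near-square case you flag;
\item a trace-norm Laguerre-to-Airy comparison on $(-s,\infty)$;
\item the receding-edge gap of Proposition~\ref{prop:airy-kernel-gap};
\item the Deift--Its--Krasovsky asymptotic $\log F_2(-s)=-s^3/12+O(\log s)$.
\end{enumerate}
None of these is needed once \cite{BBBK24} is available. In return, your lower-tail half proves more than is stated, namely that the probability equals $F_2(-s)(1+o(1))$. Its logic is sound: at $s=v(\log n)^{1/3}$ the resolvent factor $\ee^{Cs^{3/2}}$ is $n^{o(1)}$, so any trace-norm error $n^{-c}$ with $c>0$ suffices for every $v>0$.

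If you pursue the primary route, the upper-tail sketch needs two repairs.

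\textbf{Envelope prefactor.} An $L^2$-normalized mode has edge amplitude of order $(\delta/b_n)^{1/2}\asymp n^{-1/3}$, not $b_n^{-1/2}\asymp n^{-1/6}$. This is the amplitude scale $\sqrt{\delta_{n,m}/\sigma_{n,m}}$ in Proposition~\ref{prop:positive-airy-laguerre}. With your envelope as written, integrating the diagonal over $n^{1/3+o(1)}$ modes only gives $\E N_s\leq n^{1/3-\frac43u^{3/2}+o(1)}$, and a fixed power of $n$ is not absorbed into $o(\log n)$. The correct bookkeeping is that the per-mode edge mass $\delta\asymp n^{-1/3}$ cancels the effective number $\asymp\delta^{-1}$ of contributing modes, via $\sum_\ell\delta\,\Ai(\xi+\delta\ell)^2\approx K_{\Ai}(\xi,\xi)$. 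For the same reason, the matching lower bound must use the top $\asymp n^{1/3}$ modes; a single leading mode gives only $n^{-1/3-\frac43u^{3/2}+o(1)}$.

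\textbf{Relative precision.} Contrary to your obstacle paragraph, an additive trace-norm comparison on $(s,\infty)$ cannot give the upper tail for all $u>0$. The available Laguerre-to-Airy errors are additive of size $n^{-2/3}$. An extra $\ee^{-cs}$ weight is only $n^{-o(1)}$ when $s\asymp(\log n)^{2/3}$. So the error swamps $n^{-\frac43u^{3/2}}$ once $\frac43u^{3/2}$ reaches $\frac23$. The upper tail needs the relative, exponent-level precision that your $(1+o(1))$-in-the-exponent envelope encodes.

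Finally, Ledoux--Rider bounds or log-concavity cannot produce the exact constant $\frac1{12}$, so that alternative reduces to citing \cite{BBBK24} anyway.
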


\begin{proof}
The required uniform finite-dimensional Laguerre tail estimates are supplied by \cite[Theorems~1.1(ii), 1.3(ii), 1.4(ii), and 1.5(ii)]{BBBK24}.

Write
\[
 A(m,n)=(\sqrt m+\sqrt n)^2,
 \qquad
 B(m,n)=(mn)^{-1/6}(\sqrt m+\sqrt n)^{4/3}.
\]
For every $M>0$ and sufficiently small $\eta>0$, the cited right-tail estimates give constants $N_R,T_R,\gamma_R>0$ such that, uniformly for
\[
 n\geq N_R,\qquad
 m>n-1,\qquad
 \frac mn\leq M,\qquad
 T_R\leq t\leq\gamma_Rn^{2/3},
\]
one has
\begin{equation}\label{eq:finite-right-tail}
 \begin{aligned}
 \exp\left(-\frac43(1+\eta)t^{3/2}\right)
 &\leq\Prob\left(
 \Lambda_1^{(n,m)}\geq A(m,n)+tB(m,n)\right)\\
 &\leq\exp\left(-\frac43(1-\eta)t^{3/2}\right).
 \end{aligned}
\end{equation}
Here $\Lambda_1^{(n,m)}$ denotes the largest point of the corresponding complex Laguerre ensemble.

Choose $\delta$ in the sufficiently small range permitted by both cited left-tail estimates and with $0<\delta<1/9$. Then, for every $M>0$ and sufficiently small $\eta>0$, there are $N_L,T_L$ such that, uniformly for
\[
 n\geq N_L,\qquad
 m>n-1,\qquad
 \frac mn\leq M,\qquad
 T_L\leq t\leq n^{1/9-\delta},
\]
one has
\begin{equation}\label{eq:finite-left-tail}
 \begin{aligned}
 \exp\left(-\frac1{12}(1+\eta)t^3\right)
 &\leq\Prob\left(
 \Lambda_1^{(n,m)}\leq A(m,n)-tB(m,n)\right)\\
 &\leq\exp\left(-\frac1{12}(1-\eta)t^3\right).
 \end{aligned}
\end{equation}
The upper probability bound is valid through the larger range $t\leq n^{1/6-\delta}$, while the lower bound is valid through $t\leq n^{1/9-\delta}$, so their common range is the one displayed.

Fix $u>0$ and put
\[
 t_n^+=u(\log n)^{2/3}.
\]
Then $t_n^+\to\infty$ and $t_n^+/n^{2/3}\to0$. With $M=M_0$ and $m=m_n$, all hypotheses of \eqref{eq:finite-right-tail} hold for sufficiently large $n$. Since
\[
 (t_n^+)^{3/2}=u^{3/2}\log n,
\]
taking logarithms and dividing by $\log n$ gives
\[
 -\frac43(1+\eta)u^{3/2}
 \leq
 \frac{
 \log\Prob(\Lambda_n\geq a_n+ub_n(\log n)^{2/3})
 }{\log n}
 \leq
 -\frac43(1-\eta)u^{3/2}.
\]
Taking the lower and upper limits and then letting $\eta\downarrow0$ proves \eqref{eq:laguerre-right-log-tail}.

Fix $v>0$ and put
\[
 t_n^-=v(\log n)^{1/3}.
\]
Since $1/9-\delta>0$,
\[
 \log\left(
 \frac{t_n^-}{n^{1/9-\delta}}
 \right)
 =
 \log v+\frac13\log\log n-(1/9-\delta)\log n
 \longrightarrow-\infty.
\]
Thus $t_n^-\to\infty$ and
\[
 \frac{t_n^-}{n^{1/9-\delta}}\to0.
\]
The hypotheses of \eqref{eq:finite-left-tail} hold for sufficiently large $n$. Since
\[
 (t_n^-)^3=v^3\log n,
\]
the same logarithmic squeeze proves \eqref{eq:laguerre-left-log-tail}.
\end{proof}

\section{Airy asymptotics and Gaussian logarithmic tails}\label{sec:airy-tails}

\emph{Goal.} This section supplies the uniform soft-edge estimates needed to turn the exact two-level Laguerre representation into probability bounds at moving logarithmic walls. \emph{Obstacle.} The upper wall recedes to the right on the $(\log N)^{2/3}$ scale, the lower wall recedes to the left on the $(\log N)^{1/3}$ scale, neighboring Laguerre modes enter through a Christoffel--Darboux divided difference, and lower gap probabilities require control of an inverse whose norm grows as the wall moves left. \emph{Mechanism.} Positive-, negative-, and mixed-window Laguerre approximations are combined with Airy envelopes and the Airy-kernel spectral gap; the one-time logarithmic tails supply the marginal powers. \emph{Output.} The resulting bounds are uniform over compact aspect ratios, logarithmic windows, bounded deterministic shifts, and the mode ranges used in the following two-time proof.

The division of labor is deliberate. The positive approximation controls upper-half-line mode norms and the far-right tail. The negative approximation reaches the lower wall with a uniform $C^0$ error. The mixed-window $C^1$ estimate controls neighboring modes and the divided-difference kernel on a box spanning both signs. The Airy gap converts equal-level kernel comparison into a usable inverse bound. Finally, the Gaussian one-time statements transfer the Laguerre exponents to the project normalization and isolate the supercritical marginal regime. The Airy function, Airy kernel, fixed-rectangle soft-edge scale, and underlying one-time asymptotics are classical inputs; the model-specific task is their uniform assembly in the half-integer row--column normalization needed for the finite-$N$ estimate.

\subsection{Airy and Laguerre asymptotics}

\begin{lem}[Airy envelopes]\label{lem:airy-envelopes}
Let \(\Ai\) be the nonzero real solution of
\[
\Ai''(x)=x\Ai(x),\qquad x\in\R,
\]
that tends to zero as \(x\to+\infty\). There are constants
\(0<c_{\Ai}<C_{\Ai}<\infty\) such that, for every \(x\geq1\),
\[
c_{\Ai}x^{-1/4}\exp\left(-\frac23x^{3/2}\right)
\leq \abs{\Ai(x)}
\leq C_{\Ai}x^{-1/4}\exp\left(-\frac23x^{3/2}\right),
\]
and, for every \(x\geq0\),
\[
\abs{\Ai(-x)}\leq C_{\Ai}(1+x)^{-1/4}.
\]
\end{lem}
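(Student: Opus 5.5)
The plan is to derive both envelopes from the Airy integral representation, using Laplace's method on the positive axis and a uniformly bounded oscillatory integral on the negative axis. I will normalize $\Ai$ as the solution decaying at $+\infty$ with the standard scaling. Any other nonzero decaying solution is a constant multiple of this one, because the decaying solutions form a one-dimensional subspace. A constant multiple only rescales $c_{\Ai}$ and $C_{\Ai}$, so nothing is lost. For definiteness take
\[
\Ai(x)=\frac1{2\pi i}\int_{\mathcal C}\exp\left(\frac{s^3}{3}-xs\right)\dd s,
\]
where $\mathcal C$ runs from $\infty\ee^{-i\pi/3}$ to $\infty\ee^{i\pi/3}$. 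This function solves $\Ai''=x\Ai$ by differentiation under the integral sign, integrated against the boundary terms. It decays as $x\to+\infty$ by the estimate below.

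For the positive axis with $x\geq1$, I will substitute $s=\sqrt x\,(1+w)$. The phase then becomes $x^{3/2}\left(-\tfrac23+w^2+\tfrac{w^3}{3}\right)$. I deform the contour to pass vertically through the saddle $w=0$, and write $w=it$ near the saddle. This gives
\[
\Ai(x)=\frac{x^{1/2}\ee^{-\frac23x^{3/2}}}{2\pi}\int_{\R}\exp\left(-x^{3/2}\left(t^2+\tfrac{i t^3}{3}\right)\right)\dd t .
\]
The real part of the exponent is $-x^{3/2}t^2$ exactly, so the integral is bounded in absolute value by $\sqrt{\pi}\,x^{-3/4}$. This yields the upper envelope with constant $1/(2\sqrt\pi)$. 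For the lower envelope, I will compare the integral with its Gaussian main term. Rescaling by $t=x^{-3/4}\tau$ turns the integrand into $\ee^{-\tau^2}\ee^{-i\tau^3x^{-3/4}/3}$. I then write the integral as $\sqrt\pi$ plus an error. Using $\abs{\ee^{i\theta}-1}\leq\abs\theta$, the error is at most $x^{-3/4}\int\abs{\tau}^3\ee^{-\tau^2}\dd\tau/3$. Hence the modulus is at least $\tfrac12\sqrt\pi\, x^{-3/4}$ for $x\geq X_0$. On the compact interval $[1,X_0]$, positivity of $\Ai$ gives the lower bound by continuity. Positivity of $\Ai$ on $[0,\infty)$ is the standard fact $\Ai(x)=\pi^{-1}\int_0^\infty\cos(s^3/3+xs)\,\dd s>0$ there, or alternatively it follows from convexity ($\Ai''=x\Ai$) together with decay. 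This convexity argument excludes a zero on $(0,\infty)$, because a positive solution decaying to zero cannot have a zero there.

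For the negative axis, $\Ai$ is bounded on $[0,X_1]$ by continuity, so only $x\geq X_1$ needs work. Here there are two real saddles at $s=\pm i\sqrt x$. I deform $\mathcal C$ through both of them. Each saddle contribution is estimated exactly as above, and the phase now has zero real part at the saddles. This produces the bound $C x^{1/2}\cdot x^{-3/4}=Cx^{-1/4}$. Equivalently, and perhaps more cleanly, I can use a Liouville--Green energy argument. Define $E(x)=x^{1/2}\Ai(-x)^2+x^{-1/2}\Ai'(-x)^2$. A direct computation from the equation gives $E'=\tfrac12x^{-1/2}\Ai(-x)^2-\tfrac12x^{-3/2}\Ai'(-x)^2$. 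This is bounded by $\tfrac1{2x}E$, which is not integrable, so I will refine the energy by adding a term of order $x^{-3/2}\Ai\Ai'$ to make the derivative $O(x^{-5/2})E$. Gr\"onwall's inequality then gives $E$ bounded, and hence $\abs{\Ai(-x)}\leq Cx^{-1/4}$.

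The main obstacle is the negative-axis bound. The contour must be handled carefully because the two saddle contributions oscillate and must not be estimated with the wrong sign; alternatively, the corrected energy functional must be set up so that its derivative becomes integrable. I would try the saddle-point route first, bounding each saddle's contribution separately in absolute value. Only an upper bound is needed there, so cancellation between the two saddles is irrelevant. Finally, I will combine the small-$x$ continuity bounds with the large-$x$ asymptotic bounds and absorb the changes of constants into $c_{\Ai}$ and $C_{\Ai}$.
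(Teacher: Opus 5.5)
Your proof is correct, but on the positive half-line it follows a different route from the paper. The paper uses only the differential equation. It shows that a decaying solution has a fixed sign and exponential decay on $[1,\infty)$. It then passes to $\xi=\frac23x^{3/2}$ and $h_+=x^{1/4}\Ai$, which solves $h_+''=(1-\frac5{36}\xi^{-2})h_+$, and proves by comparison that $\ee^{\xi}h_+$ is nondecreasing with a finite positive limit.

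Your steepest-descent computation along $s=\sqrt x(1+it)$, where the real part of the phase is exactly Gaussian, is shorter and gives explicit constants. Since $\int_{\R}\abs{\tau}^3\ee^{-\tau^2}\dd\tau=1$ and $x^{-3/4}\leq1$, the $t$-integral is at least $(\sqrt\pi-\frac13)x^{-3/4}$ for every $x\geq1$. So the lower bound, and positivity of $\Ai$ on $[1,\infty)$, come out directly. You can drop the continuity step on $[1,X_0]$; your cosine-integral ``positivity'' remark is not an argument in any case.

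What you pay is importing the contour representation, justifying the deformation to the vertical line, and identifying the lemma's solution with it up to a scalar. That identification needs the decaying solutions to form a one-dimensional space; for example, the Wronskian of two decaying solutions vanishes once you know their derivatives tend to zero. The paper's argument applies verbatim to any nonzero decaying solution and needs no special-function input.

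On the negative half-line, your energy route is the paper's argument written in the variable $x$. With $g(x)=\Ai(-x)$ and $E=x^{1/2}g^2+x^{-1/2}g'^2$, the corrected functional $\widetilde E=E+\frac12x^{-3/2}gg'$ satisfies $\widetilde E'=-\frac34x^{-5/2}gg'$. Hence $\abs{\widetilde E'}\leq\frac38x^{-5/2}E$, and $\abs{\widetilde E-E}\leq\frac14E$ for $x\geq1$. Up to the term $\frac1{16}x^{-5/2}g^2$, $\widetilde E$ is the paper's $h_-'(\xi)^2+h_-(\xi)^2$ with $h_-=x^{1/4}g$. Use that route.

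The saddle-point route is not ``exactly as above''. With $\phi(s)=s^3/3+xs$, the straight line $s=i\sqrt x+\ee^{i\pi/4}r$ through the saddle has $\operatorname{Re}\phi=-\sqrt x\,r^2-r^3/(3\sqrt2)$. This is positive for $r<-3\sqrt2\,\sqrt x$, because that end runs into a sector where $\operatorname{Re}s^3>0$. So the contour must be bent into the left valley, for instance by switching to a horizontal ray after a distance $c\sqrt x$, on which the integrand is $O(\ee^{-c'x^{3/2}})$. This is routine, but it is precisely the step you left open.
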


\begin{proof}
The proof is recorded in Appendix~\ref{app:proof-airy-envelopes}.
\end{proof}

The positive Airy envelope is used on an unbounded half-line, so the next approximation must retain exponential decay uniformly for every nonnegative soft-edge coordinate. This global form controls both the upper-event wall and the tail beyond the logarithmic cutoff introduced in the equal-level kernel comparison. Its half-integer center and scale are the normalization used throughout the operator estimates below.

\begin{prop}[Uniform positive-edge Laguerre approximation]\label{prop:positive-airy-laguerre}
Fix constants \(0<c_0<C_0<\infty\). For positive integers \(n\leq m\), define
\[
\phi_{n,m}(x)
=
\sqrt{\frac{n!}{m!}}\,
\exp(-x/2)x^{(m-n)/2}L_n^{(m-n)}(x),
\qquad x>0,
\]
\[
\mu_{n,m}
=
\left(\sqrt{m+\frac12}+\sqrt{n+\frac12}\right)^2,
\]
\[
\sigma_{n,m}
=
\left(\sqrt{m+\frac12}+\sqrt{n+\frac12}\right)
\left[
\left(m+\frac12\right)^{-1/2}
+
\left(n+\frac12\right)^{-1/2}
\right]^{1/3},
\]
and
\[
\delta_{n,m}
=
\left[
\left(m+\frac12\right)^{-1/2}
+
\left(n+\frac12\right)^{-1/2}
\right]^{2/3}.
\]
There are constants \(C_1<\infty\) and \(N_0\), depending only on \(c_0,C_0\), such that, for every integer \(N\geq N_0\), every pair \(n,m\) satisfying
\[
c_0N\leq n\leq m\leq C_0N,
\]
and every \(s\geq0\),
\[
\begin{aligned}
&\abs{(-1)^n\sqrt{\sigma_{n,m}}\,
\phi_{n,m}(\mu_{n,m}+\sigma_{n,m}s)
-\sqrt{\delta_{n,m}}\,\Ai(s)}\\
&\qquad\leq
C_1\sqrt{\delta_{n,m}}\,N^{-2/3}\exp(-s/4).
\end{aligned}
\]
\end{prop}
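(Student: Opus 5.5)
The plan is to derive the estimate from the second-order ODE satisfied by the Laguerre wavefunction, using Olver's uniform turning-point theory with explicit error control. This is the route of \cite{EK06} and the Johnstone-type soft-edge analyses, so the main work is to check uniformity over the box \(c_0N\leq n\leq m\leq C_0N\). Put \(\alpha=m-n\) and \(\kappa=n+(\alpha+1)/2\). Then \(w(x)=\phi_{n,m}(x)\) satisfies
\[
w''(x)=\Bigl(\frac14-\frac{\kappa}{x}+\frac{\alpha^2-1}{4x^2}\Bigr)w(x).
\]
After the substitution \(x=\kappa\xi\) this becomes \(w''=\{\kappa^2 f(\xi)+g(\xi)\}w\), where \(f(\xi)=(\xi-\xi_-)(\xi-\xi_+)/(4\xi^2)\) has a simple zero at the right turning point \(\xi_+\). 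The small remainder \(g=-1/(4\kappa^2\xi^2)\) comes from the \(-1\) in \(\alpha^2-1\). Because \(\alpha/\kappa\) stays in a compact set, the turning points stay uniformly separated from each other and from \(0\), and \(f\) is analytic with uniformly controlled derivatives near \(\xi_+\).

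\textbf{Step 1.} Apply Olver's theorem (Olver, \emph{Asymptotics and Special Functions}, Ch.~11, Thm.~3.1) on \([\xi_-+\eta,\infty)\), with the Liouville--Green variable \(\zeta\) defined by \(\tfrac23\zeta^{3/2}=\int_{\xi_+}^\xi f^{1/2}\). This gives
\[
w(x)=\lambda_{n,m}\bigl(\dot\xi/\dot\zeta\bigr)^{-1/2}\bigl[\Ai(\kappa^{2/3}\zeta)+\varepsilon(\xi)\bigr],
\]
with
\[
\abs{\varepsilon}\leq \mathrm{M}(\kappa^{2/3}\zeta)\,\mathrm{E}^{-1}(\kappa^{2/3}\zeta)\bigl(\exp(\mathcal V/\kappa)-1\bigr),
\]
where \(\mathcal V\) is the total variation of the error-control function. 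One checks that \(\mathcal V\) is bounded uniformly on the box. Since \(\mathrm{M}\,\mathrm{E}^{-1}\) reproduces the Airy envelope of Lemma~\ref{lem:airy-envelopes}, this already yields a relative error \(O(N^{-1})\) times the Airy decay on the right of the turning point.

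\textbf{Step 2.} Identify \(\lambda_{n,m}\) by matching against the known normalization, for example via the \(L^2\) norm being one or via the Stirling/Plancherel--Rotach constant. Then Taylor-expand \(\kappa^{2/3}\zeta\) around \(\xi_+\) in the coordinate \(x=\mu_{n,m}+\sigma_{n,m}s\). The half-integer choices \(n+\tfrac12\), \(m+\tfrac12\) are exactly the ones that make \(\kappa\xi_+=\mu_{n,m}+O(N^{-1/3})\) and \(\kappa^{2/3}\zeta'(\xi_+)\kappa^{-1}\sigma_{n,m}=1+O(N^{-1})\). The quadratic term of the expansion contributes \(O(s^2N^{-2/3})\). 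Consequently \(\kappa^{2/3}\zeta=s+O((1+s^2)N^{-2/3})\) for \(0\leq s\leq c N^{2/3}\), and likewise the Jacobian prefactor equals \(\sqrt{\delta_{n,m}/\sigma_{n,m}}\,(1+O(sN^{-2/3}))\). The sign \((-1)^n\) comes from the leading coefficient of \(L_n^{(\alpha)}\).

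\textbf{Step 3.} Use the mean value theorem together with the envelope bounds for \(\Ai\) and \(\Ai'\). Replacing the argument \(\kappa^{2/3}\zeta\) by \(s\) then costs at most
\[
C(1+s^2)N^{-2/3}(1+s)^{1/4}\exp\bigl(-\tfrac23 s^{3/2}(1-o(1))\bigr),
\]
which is \(\leq C N^{-2/3}\exp(-s/4)\) on \(0\leq s\leq cN^{2/3}\).

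\textbf{Step 4.} For \(s\geq cN^{2/3}\), bound both terms separately, without using their difference. The Airy term is smaller than \(\exp(-c'N)\). For the Laguerre term, the Liouville--Green bound from Step 1 is still valid, because \(\xi\) only moves further right, and it gives \(\abs{w}\lesssim\exp(-\kappa\int_{\xi_+}^\xi f^{1/2})\). In the unscaled variable this is eventually like \(\exp(-x/2)\times\) polynomial, so it is again \(\leq \exp(-c''N)\exp(-s/4)\) since \(\sigma_{n,m}\asymp N^{1/3}\).

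The main obstacle is Step 2, namely verifying the \(N^{-2/3}\) rather than \(N^{-1/3}\) rate uniformly. That rate relies on the half-integer centering cancelling the first-order mismatch, and I would carry out the expansion of \(\zeta\) at \(\xi_+\) to second order explicitly, following the computation of \cite{EK06} but tracking constants only through \(c_0,C_0\).
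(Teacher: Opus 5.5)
Your route is viable but differs from the paper's. The paper does not redo the turning-point analysis. It imports El Karoui's estimate \(\abs{\theta_p(\mu_p+\sigma_ps)-r_p\Ai(s)}\le C(s_0)n_p^{-2/3}\exp(-s/2)\) for \(s\geq s_0\), whose constants are asserted to be independent of the aspect ratio \(\gamma\geq1\). It upgrades this to uniformity on the box \(c_0N\le n\le m\le C_0N\) by a subsequence/compactness argument. The rest is a normalization conversion, \((-1)^n\sqrt{\sigma}\,\phi_{n,m}(x)=\sqrt{\delta}\sqrt{1+as}\,\theta_{m,n}(x)\) with \(a=\sigma/\mu\asymp N^{-2/3}\). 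The leftover term \(\abs{\sqrt{1+as}\,r_{m,n}-1}\abs{\Ai(s)}\) is then split into the cases \(as\le1\) and \(as>1\), using \(r_{m,n}=1+O(N^{-1})\) and the superexponential Airy decay. Since the imported bound already holds for all \(s\geq0\), the paper needs no separate large-\(s\) region like your Step~4.

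Your approach instead reproves the imported estimate through Olver's Chapter~11, Theorem~3.1. This makes the uniformity transparent: everything depends on the single parameter \(\beta=\alpha/\kappa\), which ranges over a compact subset of \([0,2)\). It also removes the reliance on the cited \(\gamma\)-independence. The cost is doing the error-control bookkeeping and the connection constant yourself.

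The obstacle you fear in Step~2 is not there. With \(u=n+\tfrac12\) and \(v=m+\tfrac12\), one has \(\kappa=(u+v)/2\) and \(\alpha=v-u\). Hence \(\kappa\xi_+=(\sqrt u+\sqrt v)^2=\mu_{n,m}\) and \(\kappa/f'(\xi_+)=\mu^2/\sqrt{uv}=\sigma_{n,m}^3\) exactly. So \(\kappa^{2/3}\zeta=s\) to first order exactly. The \(N^{-2/3}\) rate then comes only from the quadratic Taylor term, the prefactor variation, the constant \(\lambda_{n,m}\), and the \(O(\kappa^{-1})\) Olver error.

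Fix a few slips before writing this up.
\begin{itemize}
\item The normal-form equation you state holds for \(\sqrt{x}\,\phi_{n,m}(x)\), not for \(\phi_{n,m}\), which satisfies an equation with an extra \(x^{-1}\phi'\) term. The resulting factor \(x^{-1/2}=\mu^{-1/2}(1+as)^{-1/2}\) is harmless; it is exactly the \(\sqrt{1+as}\) factor the paper handles.
\item In the \(\xi\)-variable the remainder is \(g=-1/(4\xi^2)\), not \(-1/(4\kappa^2\xi^2)\).
\item The lower turning point \(\xi_-=2-\sqrt{4-\beta^2}\) is not separated from \(0\); it equals \(0\) when \(m=n\). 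Apply Olver's theorem on an interval such as \([2,\infty)\), which stays uniformly away from \(\xi_-\) and from the pole.
\item Determine \(\lambda_{n,m}\) by matching the recessive solution as \(x\to\infty\) against the leading coefficient \((-1)^n/n!\) of \(L_n^{(\alpha)}\), followed by Stirling. The \(L^2\)-norm alternative would require control of the oscillatory region and does not cheaply give the relative accuracy \(O(N^{-2/3})\) you need.
\end{itemize}
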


\begin{proof}
We first record the auxiliary function and estimate from
\cite[Section~2.2.2, equation~(1), Section~3.2.1, Section~3.3, estimate~(I1), and Appendix~A.6]{EK06}.
That source takes a larger dimension \(m_p\) and a polynomial degree \(n_p\), with \(m_p\geq n_p\), and defines
\begin{equation}\label{eq:ek-F}
F_p(x)
=
(-1)^{n_p}\sigma_p^{-1/2}
\sqrt{\frac{n_p!}{m_p!}}\,
x^{(m_p-n_p+1)/2}
\exp(-x/2)L_{n_p}^{(m_p-n_p)}(x).
\end{equation}
Its half-integer center and scale are
\[
\mu_p
=
\left(\sqrt{m_p+\frac12}+\sqrt{n_p+\frac12}\right)^2
\]
and
\[
\sigma_p
=
\left(\sqrt{m_p+\frac12}+\sqrt{n_p+\frac12}\right)
\left[
\left(m_p+\frac12\right)^{-1/2}
+
\left(n_p+\frac12\right)^{-1/2}
\right]^{1/3}.
\]
The auxiliary function is
\[
\theta_p(x)=F_p(x)\frac{\mu_p}{x}.
\]
The positive normalization \(r_p\) satisfies
\begin{equation}\label{eq:ek-r}
r_p^2
=
2\pi\exp[-(m_p+n_p+1)]
\frac{
(m_p+\frac12)^{m_p+1/2}
(n_p+\frac12)^{n_p+1/2}
}{
m_p!n_p!
}.
\end{equation}
Whenever \(m_p/n_p\to\gamma\in[1,\infty)\), for every fixed \(s_0\) and all sufficiently large \(n_p\), uniformly for \(s\geq s_0\),
\begin{equation}\label{eq:ek-estimate}
\abs{
\theta_p(\mu_p+\sigma_ps)-r_p\Ai(s)
}
\leq
C(s_0)n_p^{-2/3}\exp(-s/2).
\end{equation}
The constants may be chosen independently of \(\gamma\geq1\). The positive-coordinate threshold used in the proof is likewise uniform in \(\gamma\). The same source gives
\begin{equation}\label{eq:ek-r-asymptotic}
r_p
=
1+\frac1{48}\left(\frac1{m_p}+\frac1{n_p}\right)
+
o(m_p^{-1},n_p^{-1}),
\end{equation}
with a remainder independent of how the two dimensions tend to infinity.

Apply these statements with the exact substitution
\[
(m_p,n_p)=(m,n).
\]
Equations~\eqref{eq:ek-F} and the definition of \(\theta_p\) give
\begin{equation}\label{eq:ek-conversion}
\begin{aligned}
F_{m,n}(x)
&=(-1)^n\sigma_{n,m}^{-1/2}x^{1/2}\phi_{n,m}(x),\\
\theta_{m,n}(x)
&=(-1)^n\mu_{n,m}
[\sigma_{n,m}x]^{-1/2}\phi_{n,m}(x).
\end{aligned}
\end{equation}
Thus the factorial normalization, polynomial parameter, center, and scale coincide exactly.

We next make the uniformity quantifier explicit. Suppose that no compact-aspect threshold existed for \eqref{eq:ek-estimate} with \(s_0=0\) after replacing \(n_p^{-2/3}\) by \(N^{-2/3}\). There would then be a failing sequence with \(N\to\infty\) and
\[
c_0N\leq n\leq m\leq C_0N.
\]
Along a subsequence, \(m/n\) would tend to some
\(\gamma\in[1,C_0/c_0]\). Estimate~\eqref{eq:ek-estimate}, with its \(\gamma\)-independent constant, would apply eventually on that subsequence. Since
\[
n^{-2/3}\leq c_0^{-2/3}N^{-2/3},
\]
this is a contradiction. Hence there are constants \(C_{\mathrm{EK}}\) and \(N_{\mathrm{EK}}\), depending only on \(c_0,C_0\), such that, throughout the displayed family and for every \(s\geq0\),
\begin{equation}\label{eq:ek-uniform}
\abs{
\theta_{m,n}(\mu_{n,m}+\sigma_{n,m}s)
-
r_{m,n}\Ai(s)
}
\leq
C_{\mathrm{EK}}N^{-2/3}\exp(-s/2).
\end{equation}
The same compactness argument applied to \eqref{eq:ek-r-asymptotic}, or directly to its aspect-independent remainder, gives
\begin{equation}\label{eq:ek-r-uniform}
\abs{r_{m,n}-1}\leq C_rN^{-1},
\qquad
\abs{r_{m,n}}\leq2
\end{equation}
after increasing \(N_{\mathrm{EK}}\).

We also use the global Airy estimate
\cite[Lemma~2.1]{TU21}. There is \(C_A\) such that, for every complex \(z\),
\[
\abs{\Ai(z)}
\leq
C_A\frac{g_A(z)}{1+\abs{z}^{1/4}},
\qquad
\abs{\Ai'(z)}
\leq
C_A(1+\abs{z}^{1/4})g_A(z),
\]
where
\[
g_A(z)
=
\exp\left[-\frac23\operatorname{Re}(z^{3/2})\right]
\]
with the branch convention specified in the cited result. For real \(s\geq0\),
\begin{equation}\label{eq:global-positive-airy}
\abs{\Ai(s)}
\leq
C_A(1+s^{1/4})^{-1}
\exp\left[-\frac23s^{3/2}\right].
\end{equation}
This implies
\begin{equation}\label{eq:airy-weighted-suprema}
\sup_{s\geq0}\exp(s/4)\abs{\Ai(s)}<\infty,
\qquad
\sup_{s\geq0}s\exp(s/4)\abs{\Ai(s)}<\infty.
\end{equation}
On \(0\leq s\leq1\), this follows from \eqref{eq:global-positive-airy}. For \(s\geq1\),
\[
-\frac23s^{3/2}+\frac{s}{4}
\leq
-\frac5{12}s,
\]
and the remaining polynomial factors are bounded by the exponential.

Abbreviate
\[
\mu=\mu_{n,m},
\qquad
\sigma=\sigma_{n,m},
\qquad
\delta=\delta_{n,m},
\qquad
a=\frac{\sigma}{\mu}.
\]
The definitions give the exact identity
\begin{equation}\label{eq:sigma-delta-identity}
\frac{\sigma}{\sqrt{\mu}}=\sqrt{\delta},
\end{equation}
and compact-aspect comparability gives
\begin{equation}\label{eq:a-comparability}
cN^{-2/3}\leq a\leq CN^{-2/3}.
\end{equation}
For
\[
x=\mu+\sigma s=\mu(1+as),
\]
equations~\eqref{eq:ek-conversion} and~\eqref{eq:sigma-delta-identity} give
\begin{equation}\label{eq:positive-conversion}
(-1)^n\sqrt{\sigma}\,\phi_{n,m}(x)
=
\sqrt{\delta}\sqrt{1+as}\,\theta_{m,n}(x).
\end{equation}

By \eqref{eq:ek-uniform}, the contribution in
\eqref{eq:positive-conversion} of
\(\theta_{m,n}(x)-r_{m,n}\Ai(s)\) is at most
\begin{align}
C\sqrt{\delta}N^{-2/3}\sqrt{1+as}\exp(-s/2)
&\leq
C\sqrt{\delta}N^{-2/3}\exp(-s/4).
\label{eq:positive-error-one}
\end{align}
For the last inequality, \(a\) is uniformly bounded and
\[
\sqrt{1+as}\exp(-s/4)
\leq
C\sqrt{1+s}\exp(-s/4)
\]
is uniformly bounded.

It remains to estimate
\begin{equation}\label{eq:positive-prefactor-error}
\abs{\sqrt{1+as}\,r_{m,n}-1}\abs{\Ai(s)}.
\end{equation}
If \(as\leq1\), then \eqref{eq:ek-r-uniform} and
\(\sqrt{1+u}-1\leq u\) for \(u\geq0\) give
\[
\abs{\sqrt{1+as}\,r_{m,n}-1}
\leq
2as+C_rN^{-1}.
\]
Using \eqref{eq:airy-weighted-suprema},
\eqref{eq:a-comparability}, and \(N^{-1}\leq N^{-2/3}\), we obtain
\begin{equation}\label{eq:positive-small-as}
\abs{\sqrt{1+as}\,r_{m,n}-1}\abs{\Ai(s)}
\leq
CN^{-2/3}\exp(-s/4).
\end{equation}

If \(as>1\), equation~\eqref{eq:ek-r-uniform} gives
\[
\abs{\sqrt{1+as}\,r_{m,n}-1}
\leq
C\sqrt{as}.
\]
By \eqref{eq:global-positive-airy}, and using
\[
\frac{s^{1/2}}{1+s^{1/4}}\leq s^{1/4},
\]
we have
\begin{equation}\label{eq:positive-large-as-first}
\abs{\sqrt{1+as}\,r_{m,n}-1}\abs{\Ai(s)}
\leq
C\sqrt{a}s^{1/4}
\exp\left[-\frac23s^{3/2}\right].
\end{equation}
Since \(as>1\), one has \(a^{-1/2}<s^{1/2}\). Consequently,
\begin{align*}
\sqrt{a}s^{1/4}\exp\left[-\frac23s^{3/2}\right]
&=
a\left[
a^{-1/2}s^{1/4}
\exp\left(-\frac23s^{3/2}+\frac{s}{4}\right)
\right]\exp(-s/4)
\\
&\leq
a\left[
s^{3/4}
\exp\left(-\frac23s^{3/2}+\frac{s}{4}\right)
\right]\exp(-s/4).
\end{align*}
The bracketed function is bounded on \([0,\infty)\). It is continuous on bounded intervals, and for \(s\geq1\) its exponential is at most \(\exp(-5s/12)\). By \eqref{eq:a-comparability}, \eqref{eq:positive-large-as-first} is therefore at most
\begin{equation}\label{eq:positive-large-as}
CN^{-2/3}\exp(-s/4).
\end{equation}

Multiplying \eqref{eq:positive-small-as} or
\eqref{eq:positive-large-as} by \(\sqrt{\delta}\), and combining the result with
\eqref{eq:positive-conversion} and \eqref{eq:positive-error-one}, proves the claimed bound uniformly for every \(s\geq0\) and every
\(c_0N\leq n\leq m\leq C_0N\).
\end{proof}

The preceding estimate does not reach a lower wall that moves into the oscillatory side. The next proposition extends the comparison to $-R\leq s\leq0$ by matching the exact Laguerre differential equation to the Airy equation at the turning point. When $R$ is at most a constant multiple of $(\log N)^{1/3}$, the factor $\exp(C(1+R)^2)$ is $N^{o(1)}$; the error therefore remains $N^{-2/3+o(1)}$ uniformly on precisely the negative window used by the lower gap event.

\begin{prop}[Laguerre approximation on a logarithmic negative window]\label{prop:negative-window-airy}
Fix constants \(0<c_0<C_0<\infty\) and \(D<\infty\). For positive integers \(n\leq m\), let
\(\phi_{n,m}\), \(\mu_{n,m}\), \(\sigma_{n,m}\), and \(\delta_{n,m}\) be as in Proposition~\ref{prop:positive-airy-laguerre}. There are constants \(C<\infty\) and \(N_0\) such that, whenever
\[
N\geq N_0,\qquad
c_0N\leq n\leq m\leq C_0N,\qquad
0\leq R\leq D(\log N)^{1/3},
\]
and \(-R\leq s\leq0\),
\[
\abs{
\frac{
(-1)^n\sqrt{\sigma_{n,m}}\,
\phi_{n,m}(\mu_{n,m}+\sigma_{n,m}s)
}{
\sqrt{\delta_{n,m}}
}
-\Ai(s)
}
\leq
CN^{-2/3}(1+R)^3\exp\bigl(C(1+R)^2\bigr).
\]
In particular, the right side is \(N^{-2/3+o(1)}\) uniformly over the stated family.
\end{prop}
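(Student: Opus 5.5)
The idea is to compare the scaled wavefunction with the Airy function through their second-order ODEs, using Proposition~\ref{prop:positive-airy-laguerre} as initial data at \(s=0\) and a Gronwall-type argument going leftwards over \([-R,0]\).

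\textbf{Scaled equation.} Set \(f(s)=(-1)^n\sqrt{\sigma/\delta}\,\phi_{n,m}(\mu+\sigma s)\). The Laguerre wavefunction \(\phi_{n,m}\) satisfies the exact confluent equation
\[
x\phi''+\phi'+\Bigl(n+\tfrac{\alpha+1}{2}-\tfrac x4-\tfrac{\alpha^2}{4x}\Bigr)\phi=0,
\qquad \alpha=m-n.
\]
After the substitution \(x=\mu+\sigma s\) this becomes \(f''(s)=V_N(s)f(s)+\beta_N(s)f'(s)\). Here \(\beta_N=O(N^{-2/3})\), and the half-integer center \(\mu\) is precisely the turning point, so \(V_N(0)=O(N^{-2/3})\). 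Using \(\sigma^3\) times the second derivative of the potential at the turning point, the scale \(\sigma\) is chosen so that \(V_N'(0)=1+O(N^{-2/3})\). Taylor expansion on \(\abs{s}\leq R\le D(\log N)^{1/3}\) then gives
\[
\abs{V_N(s)-s}\le CN^{-2/3}(1+\abs s)^2,
\]
uniformly over the compact aspect family. This step is a routine but careful computation.

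\textbf{Initial data.} Proposition~\ref{prop:positive-airy-laguerre} gives \(\abs{f(0)-\Ai(0)}\le CN^{-2/3}\). For the derivative I need \(\abs{f'(0)-\Ai'(0)}\le CN^{-2/3}\) as well. To get it, I will integrate the ODE against the exponentially decaying positive-side estimate. Equivalently, since \(f\) is small on \([1,\infty)\) uniformly, the positive-side \(C^0\) bound on \([0,2]\) together with the ODE and interpolation (Landau-type inequality \(\norm{g'}_\infty\lesssim\norm g_\infty^{1/2}\norm{g''}_\infty^{1/2}+\norm g_\infty\)) yields only \(N^{-1/3}\). So I would instead use a Wronskian/variation-of-parameters representation on \([0,\infty)\): write \(e=f-\Ai\), note that \(e''-se=(V_N-s)f+\beta_Nf'\) with decaying right side, and solve with the decaying boundary condition at \(+\infty\). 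The Green's function built from \(\Ai,\mathrm{Bi}\) gives \(\abs{e'(0)}\le CN^{-2/3}\). This derivative bound is the step I expect to be the main obstacle.

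\textbf{Leftward propagation.} On \([-R,0]\) the error \(e=f-\Ai\) satisfies the linear system
\[
e''=se+(V_N-s)(\Ai+e)+\beta_N(\Ai'+e').
\]
With \(Y=(e,e')\), this reads \(Y'=\mathcal A(s)Y+G(s)\), where \(\norm{\mathcal A(s)}\le C(1+\abs s)\) and \(\abs{G(s)}\le CN^{-2/3}(1+\abs s)^2\) because \(\Ai,\Ai'\) are bounded by powers on the negative axis (Lemma~\ref{lem:airy-envelopes}). Gronwall from \(s=0\) to \(s=-R\) gives
\[
\abs{Y(s)}\le\Bigl(\abs{Y(0)}+\int_s^0\abs{G}\Bigr)\exp\Bigl(\int_s^0C(1+\abs u)\,du\Bigr)\le CN^{-2/3}(1+R)^3e^{C(1+R)^2}.
\]
This is the stated bound, where the crude growth factor is harmless because \(R^2=O((\log N)^{2/3})=o(\log N)\).
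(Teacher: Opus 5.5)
Your proposal is correct and follows the paper's skeleton: exact Laguerre ODE in the soft-edge variable, Taylor expansion of the potential at the half-integer turning point, initial data at $s=0$ from Proposition~\ref{prop:positive-airy-laguerre}, and backward Gronwall on $[-R,0]$ giving $CN^{-2/3}(1+R)^3\exp(C(1+R)^2)$.

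\textbf{First-order term.} The paper first passes to $Y=\sqrt{x_s/\mu}\,G$, which satisfies $Y''=qY$ with no first-order term, and removes the factor $\sqrt{x_s/\mu}$ at the end. You keep $\beta_N f'$ with $\beta_N=-\sigma/x_s=O(N^{-2/3})$. This is harmless and saves the final adjustment. In your normalization, in fact, $V_N(0)=0$ and $V_N'(0)=\sigma^3\sqrt{uv}/\mu^2=1$ exactly; it is the first derivative of the potential, not the second, that calibrates $\sigma$. Your forcing then also contains $\beta_N\Ai'$. Lemma~\ref{lem:airy-envelopes} bounds only $\Ai$, so you need a separate polynomial bound for $\Ai'$ on $[-R,0]$, for example from $\Ai'(s)=\Ai'(0)+\int_0^s t\Ai(t)\,dt$.

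\textbf{Derivative at $s=0$.} The step you flag as the main obstacle is not one. Your interpolation estimate gives only $N^{-1/3}$ because you implicitly used $\norm{e''}_\infty=O(1)$. On $[0,2]$, however, the equation gives $e''=se+(V_N-s)f+\beta_Nf'$, and every term is $O(N^{-2/3})$. Boundedness of $f'$ there follows from the same ODE. So the local argument already yields $N^{-2/3}$. The paper does exactly this through the identity $e'(0)=e(1)-e(0)-\int_0^1(1-t)e''(t)\,dt$, using only the positive-edge estimate on $[0,1]$.

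\textbf{Your global route.} It also works. Multiplying $e''-se=g$ by $\Ai$ and integrating over $[0,\infty)$ gives $\Ai(0)e'(0)=\Ai'(0)e(0)-\int_0^\infty\Ai(t)g(t)\,dt$, so no $\mathrm{Bi}$-weighted integral survives. But it requires bounding $\int_0^\infty\Ai\,\abs{g}$. That means controlling $V_N-s$ outside the logarithmic Taylor window: out to $s\asymp N^{2/3}$, then crudely against the superexponential decay of $\Ai$. It also needs an integration by parts to handle the $\beta_Nf'$ term. The local identity needs none of this.
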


\begin{proof}
We use Proposition~\ref{prop:positive-airy-laguerre} only for its definitions and its positive-edge Airy approximation. The exact differential equation below is derived directly from the finite defining series of the generalized Laguerre polynomial. We also use
\cite[Lemma~2.1]{TU21}, which gives
\begin{equation}\label{eq:negative-global-airy}
\abs{\Ai(z)}
\leq
C_A
\frac{
\exp[-\frac23\operatorname{Re}(z^{3/2})]
}{
1+\abs{z}^{1/4}
},
\qquad z\in\C.
\end{equation}
For real \(z\leq0\), this yields
\[
\abs{\Ai(z)}\leq C_A(1+\abs z)^{-1/4}.
\]

Fix \(N,n,m,R\) satisfying the hypotheses, and abbreviate
\[
\phi=\phi_{n,m},
\qquad
\mu=\mu_{n,m},
\qquad
\sigma=\sigma_{n,m},
\qquad
\delta=\delta_{n,m}.
\]
Put \(\alpha=m-n\). The generalized Laguerre polynomial has the finite defining series
\begin{equation}\label{eq:laguerre-finite-series}
L_n^{(\alpha)}(x)
=
\sum_{k=0}^n
(-1)^k
\binom{n+\alpha}{n-k}
\frac{x^k}{k!}.
\end{equation}
If \(a_k\) is the coefficient of \(x^k\) in
\eqref{eq:laguerre-finite-series}, then
\begin{equation}\label{eq:laguerre-coefficient-recurrence}
(k+1)(k+\alpha+1)a_{k+1}+(n-k)a_k=0,
\qquad 0\leq k<n.
\end{equation}
The coefficient of \(x^k\) in
\[
x(L_n^{(\alpha)})''(x)
+
(\alpha+1-x)(L_n^{(\alpha)})'(x)
+
nL_n^{(\alpha)}(x)
\]
is the left side of \eqref{eq:laguerre-coefficient-recurrence} for \(0\leq k<n\), and is zero also for \(k=n\). Hence
\begin{equation}\label{eq:laguerre-equation}
x(L_n^{(\alpha)})''
+
(\alpha+1-x)(L_n^{(\alpha)})'
+
nL_n^{(\alpha)}
=
0.
\end{equation}

Define
\[
U(x)=\sqrt{x}\phi(x).
\]
The constant \(\sqrt{n!/m!}\) does not affect a homogeneous differential equation, and
\begin{equation}\label{eq:laguerre-U}
U(x)
=
\sqrt{\frac{n!}{m!}}\,
\exp(-x/2)x^{(\alpha+1)/2}L_n^{(\alpha)}(x).
\end{equation}
For
\[
h(x)=\frac{\alpha+1}{2x}-\frac12,
\]
differentiating \eqref{eq:laguerre-U} twice and using
\eqref{eq:laguerre-equation} cancels the coefficient of
\((L_n^{(\alpha)})'\). The remaining coefficient is
\[
h'(x)+h(x)^2-\frac{n}{x}
=
\frac14-\frac{2n+\alpha+1}{2x}
+\frac{\alpha^2-1}{4x^2}.
\]
Since \(2n+\alpha+1=n+m+1\), this proves the exact equation
\begin{equation}\label{eq:laguerre-schrodinger}
U''(x)=V(x)U(x),
\qquad
V(x)
=
\frac{x^2-2(n+m+1)x+(m-n)^2-1}{4x^2}.
\end{equation}

For \(x_s=\mu+\sigma s\), define
\begin{equation}\label{eq:negative-Y}
Y(s)
=
(-1)^n\sqrt{\frac{\sigma}{\delta\mu}}\,U(x_s)
=
(-1)^n\sqrt{\frac{\sigma}{\delta}}
\sqrt{\frac{x_s}{\mu}}\,
\phi(x_s).
\end{equation}
On the entire displayed \(s\)-window, \(x_s>0\) for all sufficiently large \(N\). The chain rule and \eqref{eq:laguerre-schrodinger} give
\begin{equation}\label{eq:negative-Y-ode}
Y''(s)=q(s)Y(s),
\qquad
q(s)=\sigma^2V(\mu+\sigma s).
\end{equation}

Put
\[
u=n+\frac12,
\qquad
v=m+\frac12.
\]
Uniformly over the compact aspect-ratio family, \(u,v,\mu\) are comparable to \(N\), \(\sigma\) is comparable to \(N^{1/3}\), and
\begin{equation}\label{eq:half-integer-parameters}
\mu=(\sqrt u+\sqrt v)^2,
\qquad
\sigma=(\sqrt u+\sqrt v)(u^{-1/2}+v^{-1/2})^{1/3}.
\end{equation}
Let
\[
f(x)=x^2-2(n+m+1)x+(m-n)^2-1.
\]
Since \(n+m+1=u+v\) and \(m-n=v-u\), direct substitution in
\eqref{eq:half-integer-parameters} gives
\begin{equation}\label{eq:turning-identities}
f(\mu)=-1,
\qquad
f'(\mu)=4\sqrt{uv},
\qquad
\sigma^3=\frac{\mu^2}{\sqrt{uv}}.
\end{equation}
Equations~\eqref{eq:laguerre-schrodinger},
\eqref{eq:negative-Y-ode}, and \eqref{eq:turning-identities} imply
\begin{equation}\label{eq:q-zero}
q(0)
=
-\frac{\sigma^2}{4\mu^2}
=
O(N^{-4/3}).
\end{equation}
Moreover,
\[
V'(\mu)=\frac{\sqrt{uv}}{\mu^2}+\frac1{2\mu^3},
\]
and hence
\begin{equation}\label{eq:q-prime-zero}
q'(0)
=
\sigma^3V'(\mu)
=
1+\frac1{2\mu\sqrt{uv}}
=
1+O(N^{-2}).
\end{equation}
The identity
\begin{equation}\label{eq:V-half-integer}
V(x)
=
\frac14-\frac{u+v}{2x}
+
\frac{(v-u)^2-1}{4x^2}
\end{equation}
shows by two differentiations that
\[
V''(x)=O(N^{-2})
\]
whenever \(x\) is comparable to \(N\). Since
\(\abs{s}\leq D(\log N)^{1/3}\), \(x_s\) is comparable to \(N\) uniformly for all sufficiently large \(N\). Thus
\begin{equation}\label{eq:q-second}
q''(s)=\sigma^4V''(x_s)=O(N^{-2/3}).
\end{equation}
Taylor's theorem applied between \(0\) and \(s\), together with
\eqref{eq:q-zero}, \eqref{eq:q-prime-zero}, and \eqref{eq:q-second}, yields
\begin{equation}\label{eq:q-airy-comparison}
\abs{q(s)-s}
\leq
CN^{-2/3}(1+s^2)
\end{equation}
uniformly throughout the displayed window.

Let
\[
G(s)=(-1)^n\sqrt{\frac{\sigma}{\delta}}\,
\phi(\mu+\sigma s),
\]
so that
\[
Y(s)=\sqrt{\frac{x_s}{\mu}}\,G(s),
\]
and put \(e(s)=Y(s)-\Ai(s)\).

We reconcile the scale in Proposition~\ref{prop:positive-airy-laguerre}. Put
\[
N_*=n,
\qquad
c_*=\frac12,
\qquad
C_*=1+\frac{C_0}{c_0}.
\]
Then \(N_*\) is a positive integer and
\[
c_*N_*\leq n\leq m\leq C_*N_*,
\]
because \(m/n\leq C_0/c_0\). Also \(n\geq c_0N\), so
\[
N_*^{-2/3}\leq c_0^{-2/3}N^{-2/3}.
\]
After increasing \(N_0\), Proposition~\ref{prop:positive-airy-laguerre} applied with scale \(N_*\) gives, for \(0\leq s\leq1\),
\begin{equation}\label{eq:negative-initial-c0}
\abs{G(s)-\Ai(s)}
\leq
CN^{-2/3}\exp(-s/4)
\leq
CN^{-2/3}.
\end{equation}
On this interval,
\[
\sqrt{x_s/\mu}=1+O(N^{-2/3}),
\]
and \eqref{eq:negative-global-airy} bounds \(\Ai\). Hence
\begin{equation}\label{eq:negative-e-c0}
\sup_{0\leq s\leq1}\abs{e(s)}
\leq
CN^{-2/3},
\end{equation}
and \(Y\) and \(\Ai\) are bounded uniformly on \([0,1]\).

Integrating \eqref{eq:negative-Y-ode} twice from zero to one gives
\begin{equation}\label{eq:Y-prime-zero}
Y'(0)
=
Y(1)-Y(0)
-
\int_0^1(1-t)q(t)Y(t)\dd t.
\end{equation}
The Airy equation gives
\begin{equation}\label{eq:Ai-prime-zero}
\Ai'(0)
=
\Ai(1)-\Ai(0)
-
\int_0^1(1-t)t\Ai(t)\dd t.
\end{equation}
Subtracting \eqref{eq:Ai-prime-zero} from
\eqref{eq:Y-prime-zero}, and using
\eqref{eq:q-airy-comparison}, \eqref{eq:negative-e-c0}, and the uniform boundedness just noted, gives
\begin{equation}\label{eq:negative-initial-c1}
\abs{e(0)}+\abs{e'(0)}
\leq
CN^{-2/3}.
\end{equation}

Subtracting the Airy equation from \eqref{eq:negative-Y-ode} yields
\begin{equation}\label{eq:negative-error-ode}
e''(s)
=
q(s)e(s)+[q(s)-s]\Ai(s).
\end{equation}
For \(-R\leq s\leq0\), put
\[
F(s)=\abs{e(s)}+\abs{e'(s)}.
\]
Integrating \eqref{eq:negative-error-ode} backward from zero and using
\eqref{eq:negative-initial-c1} gives
\begin{align}
F(s)
&\leq
CN^{-2/3}
+
\int_s^0[1+\abs{q(t)}]F(t)\dd t
+
\int_s^0\abs{q(t)-t}\abs{\Ai(t)}\dd t.
\label{eq:negative-gronwall-input}
\end{align}
By \eqref{eq:negative-global-airy} and
\eqref{eq:q-airy-comparison}, the final integral is at most
\begin{align}
CN^{-2/3}
\int_0^R(1+r^2)(1+r)^{-1/4}\dd r
&\leq
CN^{-2/3}(1+R)^3.
\label{eq:negative-forcing}
\end{align}
Equation~\eqref{eq:q-airy-comparison} also gives
\[
\abs{q(t)}\leq C(1+\abs t)
\]
on this window for all sufficiently large \(N\), and therefore
\begin{equation}\label{eq:negative-q-integral}
\int_{-R}^0[1+\abs{q(t)}]\dd t
\leq
C(1+R)^2.
\end{equation}
Applying the integral Gronwall inequality to
\eqref{eq:negative-gronwall-input}, with
\eqref{eq:negative-forcing} and
\eqref{eq:negative-q-integral}, proves
\begin{equation}\label{eq:negative-error-bound}
\abs{e(s)}+\abs{e'(s)}
\leq
CN^{-2/3}(1+R)^3\exp\bigl(C(1+R)^2\bigr),
\qquad -R\leq s\leq0.
\end{equation}

Finally, \eqref{eq:negative-Y} gives
\[
G(s)=\sqrt{\frac{\mu}{x_s}}\,Y(s).
\]
Uniformly on the negative window, \(\sqrt{\mu/x_s}\) is bounded and
\begin{equation}\label{eq:square-root-adjustment}
\abs{\sqrt{\mu/x_s}-1}
\leq
C\frac{\sigma\abs s}{\mu}
\leq
CN^{-2/3}\abs s.
\end{equation}
Using \(Y=\Ai+e\), the Airy estimate
\eqref{eq:negative-global-airy}, and
\eqref{eq:negative-error-bound}--\eqref{eq:square-root-adjustment}, we obtain
\[
\begin{aligned}
\abs{G(s)-\Ai(s)}
&\leq C\abs{e(s)}+CN^{-2/3}\abs s\abs{\Ai(s)}\\
&\leq CN^{-2/3}(1+R)^3\exp\bigl(C(1+R)^2\bigr).
\end{aligned}
\]
Since \(R\leq D(\log N)^{1/3}\), the logarithm of the factor multiplying \(N^{-2/3}\) is
\[
O((\log N)^{2/3})=o(\log N)
\]
uniformly over the displayed family. This proves the final assertion.
\end{proof}

Equal-level kernel comparison requires more than a one-mode $C^0$ estimate. The Christoffel--Darboux formula couples neighboring Laguerre modes through a divided difference, and the later cross-block estimates integrate from the negative wall through a positive logarithmic cutoff. The mixed-window result therefore combines the two one-sided approximations and upgrades them to uniform $C^1$ control across the entire interval.

\begin{prop}[\(C^1\) Laguerre approximation on a mixed logarithmic window]\label{prop:mixed-window-c1}
Fix constants \(0<c_0<C_0<\infty\), \(D<\infty\), and \(E<\infty\). Whenever
\[
c_0N\leq n\leq m\leq C_0N,
\]
define
\[
G_{n,m}(s)
=
(-1)^n
\sqrt{\frac{\sigma_{n,m}}{\delta_{n,m}}}\,
\phi_{n,m}(\mu_{n,m}+\sigma_{n,m}s),
\]
using the notation of Proposition~\ref{prop:positive-airy-laguerre}, and let the prime denote differentiation with respect to \(s\). Uniformly for
\[
0\leq R\leq D(\log N)^{1/3},
\qquad
-R\leq s\leq E\log N,
\]
one has
\[
\abs{G_{n,m}(s)-\Ai(s)}
+
\abs{G_{n,m}'(s)-\Ai'(s)}
=
N^{-2/3+o(1)},
\]
where \(o(1)\to0\) uniformly over all displayed parameters.
\end{prop}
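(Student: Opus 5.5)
The plan is to split the window at \(s=0\). On \([-R,0]\) I will reuse the Gronwall argument of Proposition~\ref{prop:negative-window-airy}, which already controls the derivative. On \([0,E\log N]\) I will upgrade the \(C^0\) bound of Proposition~\ref{prop:positive-airy-laguerre} to \(C^1\) by a local interpolation driven by the Laguerre Schr\"odinger equation. Throughout, I keep the notation \(x_s=\mu+\sigma s\), \(Y(s)=\sqrt{x_s/\mu}\,G_{n,m}(s)\) and \(e=Y-\Ai\) from the proof of Proposition~\ref{prop:negative-window-airy}. The first step is to check that, for \(\abs{s}\leq \max(D(\log N)^{1/3},E\log N+1)\), the point \(x_s\) stays comparable to \(N\), since \(\sigma s=O(N^{1/3}\log N)\). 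The Taylor comparison \eqref{eq:q-airy-comparison}, namely \(\abs{q(s)-s}\leq CN^{-2/3}(1+s^2)\), then holds on the whole enlarged mixed window, and hence \(\abs{q(s)}\leq C(1+\abs s)\leq C\log N\) there.

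\emph{Negative side.} Estimate \eqref{eq:negative-error-bound} bounds \(\abs{e(s)}+\abs{e'(s)}\) on \([-R,0]\) by \(N^{-2/3+o(1)}\). To pass back to \(G=\sqrt{\mu/x_s}\,Y\), I write \(G'=\sqrt{\mu/x_s}\,Y'+(\sqrt{\mu/x_s})'Y\). The derivative of the prefactor is \(O(\sigma/\mu)=O(N^{-2/3})\), and the prefactor minus one is \(O(N^{-2/3}\abs s)\) by \eqref{eq:square-root-adjustment}. The function \(Y=\Ai+e\) is bounded there, and \(\abs{Y'}\leq\abs{\Ai'}+\abs{e'}\) with \(\abs{\Ai'(s)}\leq C(1+\abs s)^{1/4}=N^{o(1)}\) by the global estimate of \cite[Lemma~2.1]{TU21}. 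These bounds give \(\abs{G-\Ai}+\abs{G'-\Ai'}\leq N^{-2/3+o(1)}\) on \([-R,0]\).

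\emph{Positive side.} Forward Gronwall from \(0\) is not usable here, because \(\int_0^{E\log N}\abs{q}\) is of order \((\log N)^2\), and \(\exp(C(\log N)^2)\) is not \(N^{o(1)}\). This is the main obstacle, and I will avoid it with a local second-difference argument. Proposition~\ref{prop:positive-airy-laguerre}, applied with scale \(N_*=n\) as in the reconciliation step of Proposition~\ref{prop:negative-window-airy}, gives \(\abs{G-\Ai}\leq CN^{-2/3}\) for all \(s\geq0\). The prefactor \(\sqrt{x_s/\mu}\) equals \(1+O(N^{-2/3}\log N)\) on \([0,E\log N+1]\), and \(\Ai\) is bounded there, so \(\sup_{[0,E\log N+1]}\abs{e}\leq CN^{-2/3}\log N\). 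For \(s\in[0,E\log N]\), Taylor's formula on \([s,s+1]\) gives
\[
e'(s)=e(s+1)-e(s)-\int_s^{s+1}(s+1-t)e''(t)\,\dd t .
\]
Using \(e''=qe+(q-t)\Ai\) from \eqref{eq:negative-error-ode}, the integrand is at most \(C\log N\cdot N^{-2/3}\log N+CN^{-2/3}(\log N)^2\). Hence \(\abs{e'(s)}\leq CN^{-2/3}(\log N)^2\). I convert back to \(G'\) exactly as on the negative side; there \(\Ai'\) is bounded on \(s\geq0\), and the prefactor derivative is \(O(N^{-2/3})\).

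Combining the two sides gives the claimed bound \(N^{-2/3+o(1)}\). The only loss factors are \((1+R)^3\exp(C(1+R)^2)\) and powers of \(\log N\), all of which are \(N^{o(1)}\) uniformly in \(c_0N\leq n\leq m\leq C_0N\) and \(R\leq D(\log N)^{1/3}\).
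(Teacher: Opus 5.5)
Your proof is correct and follows essentially the paper's route. The key step is the same: you convert a $C^0$ bound on $e=Y-\Ai$ into a $C^1$ bound through the unit-interval Taylor identity and $e''=qe+(q-s)\Ai$, and you correctly explain why forward Gronwall fails on the positive side. The only difference is that on $[-R,0]$ you reuse the $C^1$ Gronwall bound \eqref{eq:negative-error-bound} from inside the proof of Proposition~\ref{prop:negative-window-airy}, whereas the paper applies the unit-interval identity across the whole window $[-R,E\log N]$ and uses only that proposition's stated $C^0$ conclusion.
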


\begin{proof}
We use Proposition~\ref{prop:positive-airy-laguerre} through its definitions and positive-edge \(C^0\) approximation, and Proposition~\ref{prop:negative-window-airy} through its negative-window \(C^0\) approximation. The exact Laguerre differential equation is derived coefficientwise below. The only external Airy envelope used is
\cite[Lemma~2.1]{TU21}, namely
\begin{equation}\label{eq:mixed-global-airy}
\abs{\Ai(z)}
\leq
C_A
\frac{
\exp[-\frac23\operatorname{Re}(z^{3/2})]
}{
1+\abs{z}^{1/4}
},
\qquad z\in\C.
\end{equation}
For real \(z\), the right side is at most \(C_A\), so \(\Ai\) is bounded on the real axis. Increasing \(D\) and \(E\) to nonnegative finite constants, if necessary, only enlarges the interval to be proved. We may therefore assume \(D,E\geq0\).

Fix \(N,n,m,R\) satisfying the hypotheses and take \(N\) sufficiently large. Abbreviate
\[
\phi=\phi_{n,m},
\qquad
\mu=\mu_{n,m},
\qquad
\sigma=\sigma_{n,m},
\qquad
\delta=\delta_{n,m},
\qquad
\alpha=m-n.
\]
The finite defining series is
\[
L_n^{(\alpha)}(x)
=
\sum_{k=0}^n
(-1)^k
\binom{n+\alpha}{n-k}\frac{x^k}{k!}.
\]
If \(a_k\) is its coefficient of \(x^k\), direct cancellation of factorials gives
\[
(k+1)(k+\alpha+1)a_{k+1}+(n-k)a_k=0,
\qquad 0\leq k<n.
\]
Coefficient comparison therefore yields
\[
x(L_n^{(\alpha)})''
+
(\alpha+1-x)(L_n^{(\alpha)})'
+
nL_n^{(\alpha)}
=
0.
\]

Define
\[
U(x)=\sqrt{x}\phi(x)
=
\sqrt{\frac{n!}{m!}}\,
\exp(-x/2)x^{(\alpha+1)/2}L_n^{(\alpha)}(x).
\]
For
\[
h(x)=\frac{\alpha+1}{2x}-\frac12,
\]
two differentiations cancel the coefficient of
\((L_n^{(\alpha)})'\). The remaining coefficient is
\[
h'(x)+h(x)^2-\frac nx
=
\frac14-\frac{2n+\alpha+1}{2x}
+
\frac{\alpha^2-1}{4x^2}.
\]
Since \(2n+\alpha+1=n+m+1\),
\begin{equation}\label{eq:mixed-U-ode}
U''(x)=V(x)U(x),
\qquad
V(x)
=
\frac{x^2-2(n+m+1)x+(m-n)^2-1}{4x^2}.
\end{equation}

For \(x_s=\mu+\sigma s\), define
\[
G(s)
=
(-1)^n\sqrt{\frac{\sigma}{\delta}}\phi(x_s)
\]
and
\begin{equation}\label{eq:mixed-Y}
Y(s)
=
(-1)^n
\sqrt{\frac{\sigma}{\delta\mu}}U(x_s)
=
\sqrt{\frac{x_s}{\mu}}\,G(s).
\end{equation}
Let
\[
u=n+\frac12,
\qquad
v=m+\frac12.
\]
The definitions give
\[
\mu=(\sqrt u+\sqrt v)^2,
\qquad
\sigma=(\sqrt u+\sqrt v)(u^{-1/2}+v^{-1/2})^{1/3}.
\]
Uniformly over the compact aspect-ratio family, \(u,v,\mu\) are comparable to \(N\), \(\sigma\) is comparable to \(N^{1/3}\), and
\begin{equation}\label{eq:mixed-a}
a:=\frac{\sigma}{\mu}=O(N^{-2/3}).
\end{equation}

On the enlarged interval
\[
I_N=[-R,E\log N+1],
\]
one has \(\abs{as}\leq1/2\) for all sufficiently large \(N\). Hence \(x_s>0\), and \eqref{eq:mixed-Y} is defined. The chain rule and
\eqref{eq:mixed-U-ode} give
\begin{equation}\label{eq:mixed-Y-ode}
Y''(s)=q(s)Y(s),
\qquad
q(s)=\sigma^2V(\mu+\sigma s).
\end{equation}

Let
\[
f(x)=x^2-2(n+m+1)x+(m-n)^2-1.
\]
Since \(n+m+1=u+v\) and \(m-n=v-u\), direct substitution gives
\[
f(\mu)=-1,
\qquad
f'(\mu)=4\sqrt{uv},
\qquad
\sigma^3=\frac{\mu^2}{\sqrt{uv}}.
\]
Equations~\eqref{eq:mixed-U-ode} and \eqref{eq:mixed-Y-ode} imply
\[
q(0)
=
-\frac{\sigma^2}{4\mu^2}
=
O(N^{-4/3}),
\]
while
\[
V'(\mu)
=
\frac{\sqrt{uv}}{\mu^2}
+
\frac1{2\mu^3}
\]
gives
\[
q'(0)
=
\sigma^3V'(\mu)
=
1+\frac1{2\mu\sqrt{uv}}
=
1+O(N^{-2}).
\]
The identity
\[
V(x)
=
\frac14-\frac{u+v}{2x}
+
\frac{(v-u)^2-1}{4x^2}
\]
shows by two differentiations that \(V''(x)=O(N^{-2})\) whenever \(x\) is comparable to \(N\). This applies to \(x=x_s\) throughout \(I_N\). Therefore
\[
q''(s)=\sigma^4V''(x_s)=O(N^{-2/3})
\]
uniformly on \(I_N\). Taylor's theorem between \(0\) and \(s\) yields
\begin{equation}\label{eq:mixed-q-error}
\abs{q(s)-s}
\leq
CN^{-2/3}(1+s^2),
\qquad s\in I_N.
\end{equation}
In particular,
\[
\abs{q(s)}\leq C(1+\abs s)
\]
on \(I_N\) for all sufficiently large \(N\).

On \(-R\leq s\leq0\), Proposition~\ref{prop:negative-window-airy} gives
\begin{equation}\label{eq:mixed-negative-c0}
\abs{G(s)-\Ai(s)}
\leq
CN^{-2/3}(1+R)^3\exp\bigl(C(1+R)^2\bigr)
=
N^{-2/3+o(1)}.
\end{equation}
The last equality is uniform for \(R\leq D(\log N)^{1/3}\).

For the positive interval, put
\[
N_*=n,\qquad c_*=\frac12,\qquad C_*=1+\frac{C_0}{c_0}.
\]
Then
\[
c_*N_*\leq n\leq m\leq C_*N_*,
\qquad
N_*^{-2/3}\leq c_0^{-2/3}N^{-2/3}.
\]
Applying Proposition~\ref{prop:positive-airy-laguerre} with scale \(N_*\) and dividing its estimate by \(\sqrt{\delta}\) gives
\begin{equation}\label{eq:mixed-positive-c0}
\abs{G(s)-\Ai(s)}
\leq
CN^{-2/3}\exp(-s/4),
\qquad
0\leq s\leq E\log N+1.
\end{equation}

Put \(e=Y-\Ai\). Since \(Y=\sqrt{1+as}\,G\), equations
\eqref{eq:mixed-global-airy}, \eqref{eq:mixed-a},
\eqref{eq:mixed-negative-c0}, and \eqref{eq:mixed-positive-c0} imply
\begin{equation}\label{eq:mixed-e-c0}
\sup_{s\in I_N}\abs{e(s)}
=
N^{-2/3+o(1)}.
\end{equation}
Indeed,
\[
\abs{e(s)}
\leq
\sqrt{1+as}\abs{G(s)-\Ai(s)}
+
\abs{\sqrt{1+as}-1}\abs{\Ai(s)}.
\]
The first term is controlled by \eqref{eq:mixed-negative-c0} or
\eqref{eq:mixed-positive-c0}, while the second is at most
\[
CN^{-2/3}\abs s
\]
by \eqref{eq:mixed-global-airy} and \eqref{eq:mixed-a}. Every power of \(1+\abs s\) on \(I_N\) is \(N^{o(1)}\).

Subtracting the Airy equation from \eqref{eq:mixed-Y-ode} gives
\begin{equation}\label{eq:mixed-e-ode}
e''(s)=q(s)e(s)+[q(s)-s]\Ai(s).
\end{equation}
Equations~\eqref{eq:mixed-global-airy},
\eqref{eq:mixed-q-error}, \eqref{eq:mixed-e-c0}, and
\(\abs{q(s)}\leq C(1+\abs s)\) now yield
\begin{equation}\label{eq:mixed-e-second}
\sup_{s\in I_N}\abs{e''(s)}
=
N^{-2/3+o(1)}.
\end{equation}

For every \(-R\leq s\leq E\log N\), the interval \([s,s+1]\) lies in \(I_N\). Twice integrating \(e''\) on this unit interval gives the exact identity
\begin{equation}\label{eq:mixed-unit-identity}
e'(s)
=
e(s+1)-e(s)
-
\int_s^{s+1}(s+1-t)e''(t)\dd t.
\end{equation}
Equations~\eqref{eq:mixed-e-c0},
\eqref{eq:mixed-e-second}, and
\eqref{eq:mixed-unit-identity} prove
\begin{equation}\label{eq:mixed-e-prime}
\sup_{-R\leq s\leq E\log N}\abs{e'(s)}
=
N^{-2/3+o(1)}.
\end{equation}

It remains to remove the square-root adjustment. The same unit-interval identity applied to the Airy equation gives
\[
\Ai'(s)
=
\Ai(s+1)-\Ai(s)
-
\int_s^{s+1}(s+1-t)t\Ai(t)\dd t.
\]
By \eqref{eq:mixed-global-airy},
\begin{equation}\label{eq:mixed-Ai-prime}
\abs{\Ai'(s)}
\leq
C(1+\abs s),
\qquad s\in I_N.
\end{equation}
Differentiating \eqref{eq:mixed-Y} gives
\begin{equation}\label{eq:mixed-G-prime}
G'(s)
=
\frac{Y'(s)}{\sqrt{1+as}}
-
\frac{aY(s)}{2(1+as)^{3/2}}.
\end{equation}
On \(I_N\),
\[
\abs{\frac1{\sqrt{1+as}}-1}
\leq
Ca\abs s,
\]
while \eqref{eq:mixed-global-airy} and
\eqref{eq:mixed-e-c0} give \(\abs{Y(s)}\leq C\) for all sufficiently large \(N\). Combining
\eqref{eq:mixed-a}, \eqref{eq:mixed-e-prime},
\eqref{eq:mixed-Ai-prime}, and \eqref{eq:mixed-G-prime} yields
\[
\sup_{-R\leq s\leq E\log N}
\abs{G'(s)-\Ai'(s)}
=
N^{-2/3+o(1)}.
\]
Equations~\eqref{eq:mixed-negative-c0} and
\eqref{eq:mixed-positive-c0} give the same estimate for
\(\abs{G(s)-\Ai(s)}\). This proves the proposition, with every \(o(1)\) uniform over \(n,m,R\), and \(s\) in the displayed family.
\end{proof}

Uniform kernel convergence alone is insufficient for a lower gap probability: as the wall recedes, the top eigenvalue of the compressed Airy kernel approaches one, and the relevant diagonal block must be inverted. The next proposition quantifies the remaining spectral gap. On a wall of size $R=O((\log N)^{1/3})$, its inverse bound is $\exp(O(R^{3/2}))=N^{o(1)}$, which is the scale that can be combined with the later cross-block decay.

\begin{prop}[Airy-kernel spectral gap at a receding edge]\label{prop:airy-kernel-gap}
For \(s\in\R\), let \(K_{\Ai,s}\) be the integral operator on
\(L^2((s,\infty))\) with kernel
\[
K_{\Ai}(x,y)
=
\int_0^\infty\Ai(x+u)\Ai(y+u)\dd u,
\]
and let \(\lambda_0(s)\) be its largest eigenvalue. As \(s\to-\infty\),
\[
1-\lambda_0(s)
=
\sqrt{\pi}\,2^{9/4}(-s)^{3/4}
\exp\left[-\frac23\sqrt2\,(-s)^{3/2}\right](1+o(1)).
\]
In particular, \(K_{\Ai,s}\) is a positive self-adjoint contraction with
\(\lambda_0(s)<1\). There are finite constants \(C\) and \(R_0\) such that, for every \(R\geq R_0\),
\[
\norm{(I-K_{\Ai,-R})^{-1}}_{\op}
\leq
\exp(CR^{3/2}).
\]
\end{prop}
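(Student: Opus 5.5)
The plan separates the qualitative assertions, which follow from a projection structure, from the quantitative eigenvalue asymptotic, which I would import from the literature on commuting differential operators. The inverse bound then follows from the asymptotic as a corollary.

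\emph{Step 1 (structure).} Write \(K_{\Ai}\) on \(L^2(\R)\) as \(\mathcal A^*\ind_{(0,\infty)}\mathcal A\). Here \((\mathcal A f)(u)=\int_\R\Ai(x+u)f(x)\dd x\) is the Airy transform, which is unitary on \(L^2(\R)\) and self-inverse. Hence \(K_{\Ai}\) is an orthogonal projection on \(L^2(\R)\). Its compression \(K_{\Ai,s}=\ind_{(s,\infty)}K_{\Ai}\ind_{(s,\infty)}\) is then positive, self-adjoint and satisfies \(0\leq K_{\Ai,s}\leq I\). The envelopes in Lemma~\ref{lem:airy-envelopes} give \(\int_s^\infty\int_0^\infty\Ai(x+u)^2\dd u\dd x<\infty\). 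Since the operator is positive with finite trace, it is trace class and \(\lambda_0(s)\) is an attained eigenvalue.

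\emph{Step 2 (strictness \(\lambda_0(s)<1\)).} Suppose \(K_{\Ai,s}f=f\) with \(\norm f=1\) and \(f\) supported in \((s,\infty)\). Because \(K_{\Ai}\) is a projection and \(\langle K_{\Ai}f,f\rangle=\norm f^2\), we get \(K_{\Ai}f=f\) on all of \(\R\). So \(f(x)=\int_0^\infty\Ai(x+u)g(u)\dd u\) with \(g=\mathcal Af\in L^2(0,\infty)\). By the entire-function bounds on \(\Ai\) (\cite[Lemma~2.1]{TU21}), this integral extends to an entire function of \(x\), since the \(u\)-integral converges locally uniformly thanks to the super-exponential decay of \(\Ai\) on the positive axis. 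An entire function vanishing on \((-\infty,s)\) is identically zero, which is a contradiction.

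\emph{Step 3 (asymptotic and inverse bound).} For the precise expansion of \(1-\lambda_0(s)\) I would invoke the Tracy--Widom commuting second-order differential operator for \(K_{\Ai,s}\). Its eigenfunctions coincide with those of \(K_{\Ai,s}\), and the top eigenvalue of a truncated Airy projection as \(s\to-\infty\) is treated in the literature through a WKB or Riemann--Hilbert analysis of this operator. I would cite that result rather than rederive it, and my plan is to check carefully that the normalization of \(K_{\Ai}\) there matches the one here, so that the constants \(\sqrt\pi\,2^{9/4}\) and \(\frac23\sqrt2\) come out as stated. Granting the expansion, self-adjointness gives \(\norm{(I-K_{\Ai,-R})^{-1}}_{\op}=(1-\lambda_0(-R))^{-1}\). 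This equals \(\exp[\frac23\sqrt2R^{3/2}](1+o(1))/(\sqrt\pi\,2^{9/4}R^{3/4})\), which is at most \(\exp(CR^{3/2})\) for any \(C>\frac23\sqrt2\) once \(R\geq R_0\).

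\emph{Main obstacle.} The main obstacle is the exact asymptotic, including the constant, since Steps~1--2 are soft. As a fallback for the inverse bound alone, which is all that is used later, I would prove the weaker bound directly. By Step~1, \(1-\lambda_0(-R)=\inf\norm{(I-K_{\Ai})f}^2\) over unit \(f\) supported in \((-R,\infty)\). I would bound this from below by a quantitative uniqueness estimate: the analytic continuation of Step~2 combined with a Hadamard three-lines or Remez-type inequality should yield a lower bound of the form \(\exp(-CR^{3/2})\). The growth rate \(R^{3/2}\) is dictated by the growth \(\exp(\frac23\abs{z}^{3/2})\) of \(\Ai\) in the complex plane.
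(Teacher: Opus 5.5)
Your proposal is correct. On the one genuinely quantitative input it agrees with the paper: the expansion of $1-\lambda_0(s)$ with its explicit constants is imported in both. The paper cites \cite[Corollary~1.2]{Bothner2016}, after using the Airy Wronskian to rewrite $\int_0^\infty\Ai(x+u)\Ai(y+u)\dd u$ as $(\Ai(x)\Ai'(y)-\Ai'(x)\Ai(y))/(x-y)$; this is exactly the normalization check you anticipate. It then reads off $\norm{(I-K_{\Ai,-R})^{-1}}_{\op}=(1-\lambda_0(-R))^{-1}$ as you do.

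The routes differ on the qualitative claims. The paper obtains positivity from the factorization $\langle f,K_{\Ai,s}f\rangle=\int_0^\infty\abs{\int_s^\infty\Ai(x+u)f(x)\dd x}^2\dd u$, and takes trace class from the citation. It then derives both the contraction property and $\lambda_0(s)<1$ from the asymptotic itself. The expansion gives $\lambda_0(r)<1$ for $r\le S_0$, and compression monotonicity $\lambda_0(s)\le\lambda_0(s_*)$ for $s_*<\min(s,S_0)$ extends this to every real $s$.

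You instead use that the Airy transform is a unitary involution. This makes $K_{\Ai}$ an orthogonal projection on $L^2(\R)$, so $0\le K_{\Ai,s}\le I$ is immediate, and your Hilbert--Schmidt integral also gives trace class. You then prove $\lambda_0(s)<1$ by unique continuation. An eigenvector for the eigenvalue one must lie in the range of the projection. It therefore coincides with the entire function $z\mapsto\int_0^\infty\Ai(z+u)g(u)\dd u$, which vanishes on $(-\infty,s)$.

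That argument is sound and makes the structural part independent of the cited spectral asymptotics. It is also a legitimate substitute for the false general claim, explicitly avoided in the paper, that every proper compression of a projection has norm below one. The paper's route is simply shorter, since the citation is needed anyway.

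The price of your soft argument is that it carries no rate, so the bound $\exp(CR^{3/2})$ still rests entirely on the cited expansion. Your Remez or three-lines fallback is only a heuristic as stated. It would need an actual quantitative unique-continuation estimate to stand on its own.
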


\begin{proof}
Write \(K_s=K_{\Ai,s}\). The Airy-kernel spectral expansion
\cite[Corollary~1.2]{Bothner2016} states that, for every fixed nonnegative integer \(i\), as \(s\to-\infty\),
\begin{equation}\label{eq:bothner-airy-spectrum}
1-\lambda_i(s)
=
\frac{\sqrt{\pi}}{i!}
2^{(7/2)i+9/4}
t^{i+1/2}
\exp\left[-\frac23\sqrt2\,t\right]
(1+o(1)),
\qquad
t=(-s)^{3/2}.
\end{equation}

We first check that the operator in the cited result is exactly \(K_s\). For \(x\neq y\), the Airy differential equation gives
\begin{align*}
\frac{\dd}{\dd u}
\left[
\begin{aligned}
&\Ai'(x+u)\Ai(y+u)\\
&\quad{}-\Ai(x+u)\Ai'(y+u)
\end{aligned}
\right]
&=
(x-y)\Ai(x+u)\Ai(y+u).
\end{align*}
Integrating from \(u=0\) to infinity and using the decay of \(\Ai\) and \(\Ai'\) at positive infinity gives
\[
\int_0^\infty\Ai(x+u)\Ai(y+u)\dd u
=
\frac{
\Ai(x)\Ai'(y)-\Ai'(x)\Ai(y)
}{
x-y
}.
\]
The equality at \(x=y\) follows by continuity. This is the same kernel on the same space \(L^2((s,\infty))\). Taking \(i=0\) in \eqref{eq:bothner-airy-spectrum} and substituting \(t=(-s)^{3/2}\) proves the stated asymptotic, including its prefactor and exponential constant.

For completeness, positivity follows directly from the integral-kernel factorization. For every compactly supported \(f\in L^2((s,\infty))\), Fubini's theorem gives
\[
\langle f,K_sf\rangle
=
\int_0^\infty
\abs{
\int_s^\infty\Ai(x+u)f(x)\dd x
}^2
\dd u
\geq0.
\]
The cited result states that \(K_s\) is trace class, hence bounded and compact. Density extends the displayed nonnegative quadratic-form identity to every \(f\in L^2((s,\infty))\). The kernel is real and symmetric, so \(K_s\) is positive and self-adjoint. Consequently,
\[
\norm{K_s}_{\op}=\lambda_0(s).
\]

Set
\[
A=\sqrt{\pi}\,2^{9/4}>0,
\qquad
c=\frac23\sqrt2>0.
\]
The asymptotic implies that there is \(S_0<0\) such that
\(\lambda_0(r)<1\) whenever \(r\leq S_0\). Fix \(s\in\R\), and choose
\[
s_*<\min(s,S_0).
\]
Let \(E\) be extension by zero from \(L^2((s,\infty))\) into
\(L^2((s_*,\infty))\). Restriction of the same kernel to the smaller interval gives the exact compression identity
\[
K_s=E^*K_{\Ai,s_*}E.
\]
For every unit vector \(f\in L^2((s,\infty))\), positivity of \(K_{\Ai,s_*}\) and the fact that \(E\) is an isometry give
\[
\langle f,K_sf\rangle
=
\langle Ef,K_{\Ai,s_*}Ef\rangle
\leq
\norm{K_{\Ai,s_*}}_{\op}
=
\lambda_0(s_*)
<
1.
\]
Taking the supremum over unit \(f\) yields
\[
0\leq
\lambda_0(s)
=
\norm{K_s}_{\op}
\leq
\lambda_0(s_*)
<
1.
\]
Thus \(K_s\) is a positive self-adjoint contraction with a strict spectral gap for every real \(s\). This strictness follows from the asymptotic and compression monotonicity; it does not use the invalid inference that every proper compression of a projection has norm below one.

Because the spectrum of \(K_s\) is contained in \([0,\lambda_0(s)]\), the spectrum of \(I-K_s\) is contained in \([1-\lambda_0(s),1]\). Hence \(I-K_s\) is invertible and
\begin{equation}\label{eq:airy-resolvent-norm}
\norm{(I-K_s)^{-1}}_{\op}
=
[1-\lambda_0(s)]^{-1}.
\end{equation}
Choose \(R_0\geq1\) so large that the factor \(1+o(1)\) in the spectral asymptotic, with \(s=-R\), is at least \(1/2\) for every \(R\geq R_0\). Then
\[
1-\lambda_0(-R)
\geq
\frac A2R^{3/4}\exp(-cR^{3/2}),
\]
and \eqref{eq:airy-resolvent-norm} gives
\[
\norm{(I-K_{\Ai,-R})^{-1}}_{\op}
\leq
\frac2A R^{-3/4}\exp(cR^{3/2}).
\]
Let
\[
C
=
c+\frac{\max(0,\log(2/A))}{R_0^{3/2}}.
\]
Since \(R\geq R_0\geq1\), the last display is at most
\(\exp(CR^{3/2})\).
\end{proof}

\subsection{Gaussian logarithmic tails}

The remaining marginal input is a normalization and indexing transfer. The Laguerre statements are formulated using the ordered pair of positive dimensions, whereas the common-array observable is indexed by the physical rectangle and divided by $N$. The following proposition identifies the same nonzero spectrum after transposition when necessary and carries the two logarithmic exponents to the project soft-edge normalization.

\begin{prop}[One-time complex Gaussian log-window tails]\label{prop:complex-gaussian-one-time}
Let \(\gamma>0\), and let \((M_N)_{N\geq1}\) be positive integers such that
\[
\frac{M_N}{N}\longrightarrow\gamma.
\]
For each \(N\), let \(X^{(N)}\) be an \(M_N\times N\) matrix with independent centered circular complex Gaussian entries of second absolute moment one. Define
\[
Q_N=N^{-1}X^{(N)}(X^{(N)})^*,
\qquad
\lambda_1^{(N)}=\lambda_{\max}(Q_N),
\]
\[
\mu_{+,N}
=
\frac{(\sqrt{M_N}+\sqrt N)^2}{N},
\qquad
\sigma_{+,N}
=
\frac{\sqrt{M_N}+\sqrt N}{N}
(M_N^{-1/2}+N^{-1/2})^{1/3},
\]
and
\[
\chi_N^+
=
\frac{\lambda_1^{(N)}-\mu_{+,N}}{\sigma_{+,N}}.
\]
Then, for every fixed \(u>0\) and \(v>0\),
\[
\Prob\left(
\chi_N^+\geq u(\log N)^{2/3}
\right)
=
\exp\left(
-\frac43u^{3/2}\log N+o(\log N)
\right),
\]
and
\[
\Prob\left(
\chi_N^+\leq-v(\log N)^{1/3}
\right)
=
\exp\left(
-\frac1{12}v^3\log N+o(\log N)
\right)
\]
as \(N\to\infty\).
\end{prop}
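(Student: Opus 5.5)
The plan is to reduce the statement to Proposition~\ref{prop:laguerre-log-tails} by a change of normalization. The nonzero spectrum of \(X^{(N)}(X^{(N)})^*\) coincides with that of \((X^{(N)})^*X^{(N)}\). So \(N\lambda_1^{(N)}=\Lambda_{\max}\), where \(\Lambda_{\max}\) is the largest eigenvalue of the Gram matrix of the \(M_N\times N\) array. If \(M_N\geq N\) this is exactly the setting of Proposition~\ref{prop:laguerre-log-tails} with \((m_n,n)=(M_N,N)\). If \(M_N<N\), I transpose and apply the proposition to the \(N\times M_N\) matrix, indexed by \(n=M_N\). The hypothesis \(m_n>n-1\) then holds, and the aspect ratio is bounded by \(M_0=2\max(\gamma,\gamma^{-1})+2\). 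Since \(M_N/N\to\gamma\), only one of the two cases can fail to occur eventually unless \(\gamma=1\). I will therefore handle the statement along the two subsequences separately; the proposition is uniform in the aspect ratio, so each subsequence is covered.

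\textbf{Normalization check.} Multiplying by \(N\) gives \(N\mu_{+,N}=(\sqrt{M_N}+\sqrt N)^2=a_n\), which is symmetric in the two dimensions. Likewise \(N\sigma_{+,N}=(\sqrt{M_N}+\sqrt N)(M_N^{-1/2}+N^{-1/2})^{1/3}\). Using \((M^{-1/2}+N^{-1/2})^{1/3}=(MN)^{-1/6}(\sqrt M+\sqrt N)^{1/3}\), this equals \((MN)^{-1/6}(\sqrt M+\sqrt N)^{4/3}=b_n\), again symmetric. Hence \(\{\chi_N^+\geq u(\log N)^{2/3}\}=\{\Lambda_{\max}\geq a_n+ub_n(\log N)^{2/3}\}\), and similarly for the lower event.

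\textbf{Log index.} In the transposed case the proposition is stated with \(\log n\), \(n=M_N\), rather than \(\log N\). Since \(\log M_N=\log N+O(1)\), we have \(u(\log N)^{2/3}=u(\log n)^{2/3}(1+o(1))\). I will absorb this by a sandwich. For fixed \(\eta>0\), eventually \((u-\eta)(\log n)^{2/3}\leq u(\log N)^{2/3}\leq(u+\eta)(\log n)^{2/3}\). Monotonicity of the events and the proposition at \(u\pm\eta\) then bound \(\log\Prob/\log N\) between \(-\frac43(u\pm\eta)^{3/2}+o(1)\); letting \(\eta\downarrow0\) gives the result. The lower tail is handled identically, using \(v\pm\eta\) with the \((\log\cdot)^{1/3}\) scale.

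\textbf{Main obstacle.} The only delicate point is the uniformity bookkeeping. The thresholds must stay inside the admissible \(t\)-ranges of Proposition~\ref{prop:laguerre-log-tails}, which they do since \(u(\log n)^{2/3}=o(n^{2/3})\) and \(v(\log n)^{1/3}=o(n^{1/9-\delta})\). I also need the \(\gamma=1\) interleaving to be harmless. To handle it, I apply the proposition separately along the subsequence \(\{N:M_N\geq N\}\) and along its complement, and combine the two limits.
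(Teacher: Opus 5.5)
Your proposal is correct and follows essentially the same route as the paper: you identify $N\lambda_1^{(N)}$ with the top Laguerre point for the pair $(\min(M_N,N),\max(M_N,N))$, check that the center and scale are symmetric in the two dimensions, and absorb the change from $\log N$ to $\log\min(M_N,N)$ by an amplitude sandwich at $u\pm\eta$ and $v\pm\eta$.

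One point is left implicit. The map $N\mapsto M_N$ can repeat values, so ``indexed by $n=M_N$'' does not define a single sequence $(m_n)$. The paper settles this by arguing by contradiction and passing to a subsequence on which $\min(M_N,N)$ is strictly increasing, and that argument is exactly what your appeal to uniformity in the aspect ratio needs. The paper also avoids your case split by working with $\min$ and $\max$ directly.
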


\begin{proof}
Put
\[
d_N=\min(M_N,N),
\qquad
e_N=\max(M_N,N).
\]
Since \(M_N/N\to\gamma\in(0,\infty)\),
\[
d_N\to\infty,
\qquad
\frac{e_N}{d_N}=O(1),
\qquad
\frac{\log d_N}{\log N}\to1.
\]

Let
\[
L_N
=
\lambda_{\max}\left(
X^{(N)}(X^{(N)})^*
\right).
\]
The nonzero eigenvalues of \(X^{(N)}(X^{(N)})^*\) and
\((X^{(N)})^*X^{(N)}\) coincide. If \(M_N\geq N\), then \(L_N\) has the law of the largest eigenvalue of \(Y_N^*Y_N\) for an \(e_N\times d_N\) standard complex Gaussian matrix \(Y_N\). If \(M_N<N\), the same conclusion holds by taking \(Y_N\) to have the law of the conjugate transpose of \(X^{(N)}\). Thus \(L_N\) is distributed as the largest point of the \(\beta=2\) Laguerre ensemble with parameters \((d_N,e_N)\).

The unnormalised Laguerre center and scale are
\[
(\sqrt{e_N}+\sqrt{d_N})^2
=
(\sqrt{M_N}+\sqrt N)^2
\]
and
\begin{align*}
(\sqrt{e_Nd_N})^{-1/3}
(\sqrt{e_N}+\sqrt{d_N})^{4/3}
&=
(\sqrt{M_N}+\sqrt N)
(M_N^{-1/2}+N^{-1/2})^{1/3}.
\end{align*}
Therefore the latter scale is
\[
b_N=N\sigma_{+,N},
\]
and the center is
\[
a_N=N\mu_{+,N}.
\]
Since \(\lambda_1^{(N)}=L_N/N\), the event
\(\{\chi_N^+\geq x\}\) is exactly
\[
\{L_N\geq a_N+xb_N\},
\]
while \(\{\chi_N^+\leq-x\}\) is exactly
\[
\{L_N\leq a_N-xb_N\}.
\]

We next transfer the \(\beta=2\) conclusion of Proposition~\ref{prop:laguerre-log-tails} to the possibly nonconsecutive pair sequence \((d_N,e_N)\). Suppose that one of the required logarithmic tail conclusions failed along an infinite sequence of indices \(N_j\). Since \(d_{N_j}\to\infty\), pass to a subsequence on which \(d_{N_j}\) is strictly increasing. Choose \(M_0>1\) such that
\[
\frac{e_{N_j}}{d_{N_j}}\leq M_0
\]
for every retained \(j\), after discarding finitely many initial indices. Define an integer sequence \((m_n)\) by
\[
m_{d_{N_j}}=e_{N_j}
\]
at the retained indices and \(m_n=n\) at every other index. Then
\[
m_n>n-1,
\qquad
\frac{m_n}{n}\leq M_0
\]
for every \(n\). Proposition~\ref{prop:laguerre-log-tails} applies to \((m_n)\) and gives the required asymptotic along the retained subsequence, contradicting the assumed failure. Hence both \(\beta=2\) Laguerre tail asymptotics hold along the full pair sequence \((d_N,e_N)\), with \(\log d_N\) as logarithmic variable.

For the right tail, set
\[
u_N
=
u\left(\frac{\log N}{\log d_N}\right)^{2/3}.
\]
Then \(u_N\to u\) and
\[
u(\log N)^{2/3}
=
u_N(\log d_N)^{2/3}.
\]
For any \(\eta\in(0,u)\), all sufficiently large \(N\) satisfy
\[
u-\eta\leq u_N\leq u+\eta.
\]
Monotonicity gives
\begin{align*}
&\Prob\left(
L_N\geq a_N+(u+\eta)b_N(\log d_N)^{2/3}
\right)
\\
&\qquad\leq
\Prob\left(
\chi_N^+\geq u(\log N)^{2/3}
\right)
\\
&\qquad\leq
\Prob\left(
L_N\geq a_N+(u-\eta)b_N(\log d_N)^{2/3}
\right).
\end{align*}
Taking logarithms, dividing by \(\log N\), using
\(\log d_N/\log N\to1\), and applying
Proposition~\ref{prop:laguerre-log-tails} at amplitudes \(u+\eta\) and \(u-\eta\) yields lower and upper bounds with exponents
\[
-\frac43(u+\eta)^{3/2}
\quad\text{and}\quad
-\frac43(u-\eta)^{3/2}.
\]
Letting \(\eta\downarrow0\) proves the right-tail exponent.

For the left tail, set
\[
v_N
=
v\left(\frac{\log N}{\log d_N}\right)^{1/3}.
\]
Then \(v_N\to v\) and
\[
v(\log N)^{1/3}
=
v_N(\log d_N)^{1/3}.
\]
For any \(\eta\in(0,v)\) and all sufficiently large \(N\),
\begin{align*}
&\Prob\left(
L_N\leq a_N-(v+\eta)b_N(\log d_N)^{1/3}
\right)
\\
&\qquad\leq
\Prob\left(
\chi_N^+\leq-v(\log N)^{1/3}
\right)
\\
&\qquad\leq
\Prob\left(
L_N\leq a_N-(v-\eta)b_N(\log d_N)^{1/3}
\right).
\end{align*}
After taking logarithms, dividing by \(\log N\), applying
Proposition~\ref{prop:laguerre-log-tails} at amplitudes \(v+\eta\) and \(v-\eta\), and letting \(\eta\downarrow0\), we obtain the stated left-tail exponent.

The two logarithmic exponent limits are equivalent to the displayed exponential forms with \(o(\log N)\) in the exponent.
\end{proof}

The logarithmic limits also identify a one-time regime in which the marginal probability is smaller than $N^{-1/3}$ by a fixed power. The thresholds $A_2$ and $B_2$ are exactly the amplitudes at which the right- and left-tail exponents cross $1/3$. The resulting corollary is a supercritical marginal estimate only; it does not assert a two-time conclusion.

\begin{cor}[Complex Gaussian supercritical margins]\label{cor:gaussian-supercritical-margins}
Let \(\gamma>0\), let \(M_N/N\to\gamma\), and let \(G_N\) be an \(M_N\times N\) matrix with independent centered circular complex Gaussian entries of second absolute moment one. Define
\[
Q_N^G=N^{-1}G_NG_N^*,
\qquad
\lambda_{1,G}^{(N)}=\lambda_{\max}(Q_N^G),
\]
\[
\mu_{+,N}
=
\frac{(\sqrt{M_N}+\sqrt N)^2}{N},
\qquad
\sigma_{+,N}
=
\frac{\sqrt{M_N}+\sqrt N}{N}
(M_N^{-1/2}+N^{-1/2})^{1/3},
\]
\[
\chi_{N,G}^+
=
\frac{\lambda_{1,G}^{(N)}-\mu_{+,N}}{\sigma_{+,N}},
\qquad
r_N=(\log N)^{2/3},
\qquad
s_N=(\log N)^{1/3},
\]
and
\[
A_2=\left(\frac14\right)^{2/3},
\qquad
B_2=4^{1/3}.
\]
Then:

\begin{enumerate}
\item For every \(q>A_2\), there are
\(\eta_q^{G,+}>1/3\) and \(N_q^{G,+}\) such that
\[
\Prob(\chi_{N,G}^+\geq qr_N)
\leq
N^{-\eta_q^{G,+}},
\qquad
N\geq N_q^{G,+}.
\]

\item For every \(q>B_2\), there are
\(\eta_q^{G,-}>1/3\) and \(N_q^{G,-}\) such that
\[
\Prob(\chi_{N,G}^+\leq-qs_N)
\leq
N^{-\eta_q^{G,-}},
\qquad
N\geq N_q^{G,-}.
\]
\end{enumerate}
\end{cor}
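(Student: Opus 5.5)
The plan is to apply Proposition~\ref{prop:complex-gaussian-one-time} directly and then choose the exponent by strict monotonicity in the amplitude. That proposition gives, for every fixed amplitude, exact logarithmic asymptotics for both tails of \(\chi_{N,G}^+\), which is the same statistic as \(\chi_N^+\) there. The corollary is a one-sided, eventual version of these asymptotics with a fixed margin above \(1/3\).

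For part (1), fix \(q>A_2\). Since \(\frac43 A_2^{3/2}=\frac43\cdot\frac14=\frac13\) and \(u\mapsto\frac43u^{3/2}\) is strictly increasing, we have \(\kappa(q):=\frac43q^{3/2}>1/3\). Set
\[
\eta_q^{G,+}=\tfrac12\left(\tfrac13+\kappa(q)\right)\in\left(\tfrac13,\kappa(q)\right).
\]
Proposition~\ref{prop:complex-gaussian-one-time} with \(u=q\) gives
\[
\log\Prob(\chi_{N,G}^+\geq qr_N)=-\kappa(q)\log N+o(\log N).
\]
Hence for all large \(N\) the right side is at most \(-\eta_q^{G,+}\log N\), because the \(o(\log N)\) term is eventually smaller than \((\kappa(q)-\eta_q^{G,+})\log N\). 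This defines \(N_q^{G,+}\).

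Part (2) is identical with \(v=q\) and \(\kappa(q)=q^3/12\). Here \(B_2^3/12=4/12=1/3\), so \(q>B_2\) gives \(q^3/12>1/3\), and the same midpoint choice of \(\eta_q^{G,-}\) works.

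There is no real obstacle. The only care needed is to use only the upper-bound half of the asymptotic and to check that the normalization of \(\chi_{N,G}^+\) coincides with that of Proposition~\ref{prop:complex-gaussian-one-time}; both involve division by \(N\), with the same center \(\mu_{+,N}\) and scale \(\sigma_{+,N}\). The hypotheses \(\gamma>0\) and \(M_N/N\to\gamma\) are exactly those of the proposition.
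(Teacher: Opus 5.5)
Your proposal is correct and follows essentially the same route as the paper: you apply Proposition~\ref{prop:complex-gaussian-one-time} at amplitude \(q\), check that \(\frac43A_2^{3/2}=\frac13\) and \(B_2^3/12=\frac13\), and choose \(\eta_q^{G,\pm}\) strictly between \(\frac13\) and the tail exponent. The paper allows any \(\eta\) in that open interval, of which your midpoint is one admissible choice.
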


\begin{proof}
We first prove the left-tail assertion. Let \(q>B_2\). Proposition~\ref{prop:complex-gaussian-one-time} gives
\[
\lim_{N\to\infty}
\frac1{\log N}
\log\Prob(\chi_{N,G}^+\leq-qs_N)
=
-\frac1{12}q^3.
\]
Since
\[
B_2^3=4
\]
and \(q>B_2\), the exponent \(q^3/12\) is strictly larger than \(1/3\). Choose
\[
\frac13<\eta_q^{G,-}<\frac1{12}q^3.
\]
The logarithmic limit implies
\[
\Prob(\chi_{N,G}^+\leq-qs_N)
\leq
N^{-\eta_q^{G,-}}
\]
for all sufficiently large \(N\).

For the right tail, Proposition~\ref{prop:complex-gaussian-one-time} gives
\[
\lim_{N\to\infty}
\frac1{\log N}
\log\Prob(\chi_{N,G}^+\geq qr_N)
=
-\frac43q^{3/2}.
\]
Since \(q>A_2\) and
\[
\frac43A_2^{3/2}
=
\frac13,
\]
choose \(\eta_q^{G,+}\) strictly between \(1/3\) and
\((4/3)q^{3/2}\). The logarithmic limit gives the desired polynomial bound for all sufficiently large \(N\).
\end{proof}

The module now supplies the two distinct scales used next. The one-time tail propositions identify the polynomially small marginals, while the uniform logarithmic-window estimates control the Laguerre modes and equal-level inverses at the corresponding moving walls. In the headline theorem's regime $\max(\kappa_+,\kappa_-)<1/3$, these uniform errors can be summed over the relevant modes and compared with the vanishing product of the marginals. Corollary~\ref{cor:gaussian-supercritical-margins} remains a separate one-time exclusion margin.

\subsection{Two-time decorrelation}

The headline statement is Theorem~\ref{thm:nested-wishart-decorrelation} in the front matter; it is not repeated here. Its pair-probability conclusions are one-sided product-scale upper bounds. No complementary lower joint bound is asserted. This distinction is essential because each marginal probability decays polynomially. An additive $o(1)$ estimate for the difference between the joint probability and the product can therefore be much larger than the product itself. The proof must instead make every cross-level correction negligible relative to that vanishing product, uniformly over the short macroscopic window, the supercritical separation $m-n\geq N^{2/3+\epsilon}$, and all bounded deterministic shifts in the theorem. The restriction $0<\epsilon<1/3-\max(\kappa_+,\kappa_-)$ is the exponent budget: it leaves a positive power between the finite-$N$ Airy-approximation scale and the combined rarity-and-separation costs used in the cross-block estimates.

\begin{proof}
We use Proposition~\ref{prop:complex-gaussian-one-time} for the one-time logarithmic tail exponents. The exact two-level Laguerre kernel is Theorem~\ref{thm:discrete-extended-airy}, and the finite-intensity determinantal reductions are Lemma~\ref{lem:two-level-reductions}. Propositions~\ref{prop:positive-airy-laguerre} and
\ref{prop:mixed-window-c1} supply the positive-edge and mixed-window Laguerre-to-Airy estimates. Proposition~\ref{prop:airy-kernel-gap} supplies the Airy-kernel contraction and receding-edge spectral gap. Lemma~\ref{lem:airy-envelopes} supplies the positive and negative Airy envelopes.

The proof separates three tasks. It begins by fixing the marginal rarity scale and converting path separation into damping of the exact mode coefficients. It then obtains equal-level inverse bounds and cross-block nuclear estimates uniformly on the moving walls. Finally, it applies different determinantal reductions to upper occupancies and lower gaps. The upper argument is one-way because an event indicator is bounded by an occupancy count; the lower argument is one-way because the Schur-complement determinant is bounded from above. The wall-regularity clause and the final change of normalization are then proved as separate uniformity statements.

\noindent\textbf{Step 1. Geometry and marginal probabilities.}
The short macroscopic window and the level separation play different roles. The window keeps all four sorted dimensions in one compact aspect-ratio set and makes the total shell size a controlled fraction of $N$. The separation forces the coefficient-damping parameter $\rho$ to be at least a positive multiple of $N^{-1/3+\epsilon}$. Bounded shifts remain $O(1)$ perturbations of the logarithmic walls throughout these comparisons.

Choose constants \(0<c_0<C_0\) such that, for all sufficiently large \(N\) and every \(j\in[N,2N]\),
\[
c_0N\leq p_j\leq q_j\leq C_0N.
\]
Choose \(\eta_0>0\) so small that
\[
(K+1)\eta_0N\leq\frac{c_0N}{4}.
\]
For \(n<m\) in the statement, coordinatewise monotonicity of the physical dimensions implies \(p\leq P\) and \(q\leq Q\) after sorting. Moreover,
\[
H=A+B=(m-n)+(M_m-M_n).
\]
Therefore
\begin{equation}\label{eq:two-time-rho}
N^{2/3+\epsilon}
\leq H\leq\frac{c_0N}{4},
\qquad
\rho
=
\frac12\left(\frac Ap+\frac Bq\right)
\geq
\frac{H}{2C_0N}
\geq
cN^{-1/3+\epsilon}.
\end{equation}

If the same physical dimension is the larger at both endpoints, the pair has fixed orientation in Theorem~\ref{thm:discrete-extended-airy}. If the physical dimensions cross, monotonicity gives
\[
p<q\leq P<Q,
\]
which is the orientation-reversal geometry. Equality at a square endpoint is obtained from either adjacent orientation because all finite sums and factorial coefficients agree at equality. Since \(H\geq2\) for large \(N\), Theorem~\ref{thm:discrete-extended-airy} applies and gives precisely the four blocks stated in the theorem.

Put
\[
e_j=(\sqrt{M_j}+\sqrt j)^2
\]
and
\[
d_j
=
(\sqrt{M_j}+\sqrt j)
(M_j^{-1/2}+j^{-1/2})^{1/3}.
\]
Direct Taylor expansion on the compact aspect-ratio set gives
\begin{equation}\label{eq:two-time-normalization-comparison}
\mu_j-e_j=O(1),
\qquad
\frac{\sigma_j}{d_j}=1+O(N^{-1})
\end{equation}
uniformly on \([N,2N]\). The statistic in Proposition~\ref{prop:complex-gaussian-one-time} is exactly
\[
\frac{\Lambda_j-e_j}{d_j}.
\]
For any fixed \(\delta>0\), every bounded additive shift at either logarithmic scale, together with the \(o(1)\) coordinate change in
\eqref{eq:two-time-normalization-comparison}, is squeezed for large \(N\) between thresholds with amplitudes \(a-\delta\) and \(a+\delta\), or \(b-\delta\) and \(b+\delta\). Applying Proposition~\ref{prop:complex-gaussian-one-time} and then letting \(\delta\downarrow0\) yields
\begin{equation}\label{eq:two-time-marginals}
u_j^+=N^{-\kappa_++o(1)},
\qquad
u_j^-=N^{-\kappa_-+o(1)}.
\end{equation}
The errors are uniform for
\(j\in[N,(1+\eta_0)N]\) and bounded shifts. Otherwise, a violating sequence would have \(j/N\) bounded and \(\log j/\log N\to1\), and the same squeeze would contradict the one-time asymptotics at index \(j\). This proves the marginal assertions for both normalizations.

These marginal estimates set the reference scale for the rest of the proof. Their products vanish polynomially, so none of the forthcoming trace or determinant corrections may be assessed only in absolute terms. Each must carry an additional negative power beyond the corresponding marginal product, or be bounded above by $1+o(1)$ as a multiplicative correction.

\noindent\textbf{Step 2. Orthonormality and coefficient damping.}
In the spectral gauge of Theorem~\ref{thm:discrete-extended-airy}, cross-level dependence is diagonalized by matched Laguerre modes. The coefficients $d_\ell$ are the normalized mode-survival factors produced in Proposition~\ref{prop:airy-time-coefficient}. Positive indices describe modes present at both levels, while nonpositive indices describe the free-transfer complement. The product formulas below turn the lower bound on $\rho$ into exponential damping in the mode index.

For a nonnegative integer \(r\), put
\[
\mathsf L_k=\phi_{k,k+r}.
\]
The finite series
\[
L_k^{(r)}(x)
=
\sum_{j=0}^k
(-1)^j
\frac{(k+r)!}{(k-j)!(r+j)!j!}x^j
\]
implies the Rodrigues identity
\begin{equation}\label{eq:airy-laguerre-rodrigues}
\exp(-x)x^rL_k^{(r)}(x)
=
\frac1{k!}
\left(\frac{\dd}{\dd x}\right)^k
\left[
\exp(-x)x^{k+r}
\right].
\end{equation}
Integrating \eqref{eq:airy-laguerre-rodrigues} by parts \(k\) times against \(L_\ell^{(r)}\) proves orthogonality when \(\ell<k\). Every boundary term vanishes because \(r\) is a nonnegative integer and the remaining factors are polynomials times \(\exp(-x)\). Taking \(\ell=k\) and using the leading coefficient \((-1)^k/k!\) gives
\[
\int_0^\infty
\exp(-x)x^rL_k^{(r)}(x)^2\dd x
=
\frac{(k+r)!}{k!}.
\]
Consequently,
\begin{equation}\label{eq:laguerre-orthonormality}
\int_0^\infty
\mathsf L_k(x)\mathsf L_\ell(x)\dd x
=
\ind_{\{k=\ell\}}.
\end{equation}
Thus the equal-level finite sums are orthogonal projections. Their restrictions are positive self-adjoint contractions.

Direct cancellation in the coefficient gives, for \(1\leq\ell\leq p\),
\begin{equation}\label{eq:d-positive-product}
d_\ell^2
=
\prod_{h=0}^{\ell-1}
\frac{(p-h)(q-h)}{(P-h)(Q-h)}.
\end{equation}
For \(1\leq\ell\leq p/2\), compact comparability and
\[
\log(1+x)\geq\frac{x}{1+x}
\]
imply
\[
-\log d_\ell
\geq
c\frac HN\ell
\geq
c\rho\ell.
\]
Monotonicity of \(d_\ell\) and the same bound at \(\lfloor p/2\rfloor\) give
\[
d_\ell\leq\exp(-cH),
\qquad
p/2<\ell\leq p.
\]

For \(v\geq0\), cancellation gives
\[
d_{-v}^{-2}
=
\prod_{h=1}^v
\frac{(p+h)(q+h)}{(P+h)(Q+h)},
\]
and hence
\[
d_{-v}^{-1}
\leq
\exp\left[
-cH\log\left(1+\frac vN\right)
\right].
\]
Integral comparison, after decreasing \(c\), yields
\begin{equation}\label{eq:d-negative-tail}
\sum_{v=v_0}^\infty d_{-v}^{-1}
\leq
C\left(1+\frac NH\right)
\left(1+\frac{v_0}{N}\right)^{-cH}.
\end{equation}
For \(v\leq N\), this also gives
\[
d_{-v}^{-1}\leq\exp(-c\rho v).
\]

Thus both cross-block series have an exponentially damped central mode range and a summable remote tail. The scale $N^{2/3+\epsilon}$ enters the proof through this damping: it makes $\rho$ larger than the reciprocal Airy mode scale by the factor $N^\epsilon$ needed below.

\noindent\textbf{Step 3. Equal-level control on logarithmic windows.}
The lower event is a gap event, so convergence of the equal-level kernel is not enough. The proof needs a quantitative inverse for $I$ minus the compressed diagonal block. The positive- and mixed-window Laguerre approximations give the operator comparison, the Airy spectral gap supplies the inverse scale, and the diagonal estimate later controls the derivative of the finite-$N$ gap probability when its wall is moved.

We prove, uniformly for
\[
c_0N\leq r\leq s\leq C_0N,
\qquad
0\leq R\leq D(\log N)^{1/3},
\]
that the rescaled finite projection
\(\mathcal L_{r,s,R}\) on \(L^2((-R,\infty))\) satisfies
\begin{equation}\label{eq:equal-level-op-comparison}
\norm{\mathcal L_{r,s,R}-K_{\Ai,-R}}_{\op}
\leq
N^{-1/3+o(1)},
\end{equation}
\begin{equation}\label{eq:equal-level-resolvent}
\norm{(I-\mathcal L_{r,s,R})^{-1}}_{\op}
\leq
N^{o(1)}
\end{equation}
whenever \(R\) is at least the fixed lower constant \(R_0\) from Proposition~\ref{prop:airy-kernel-gap}, and, uniformly for all displayed \(R\) and \(-R\leq t\leq0\),
\begin{equation}\label{eq:equal-level-diagonal}
\mathcal L_{r,s,R}(t,t)\leq N^{o(1)}.
\end{equation}

Put \(r_0=s-r\) and use
\[
\mathsf L_k=\phi_{k,k+r_0}.
\]
The finite series also gives the three-term recurrence
\begin{align*}
x\mathsf L_k(x)
={}&
-\sqrt{(k+1)(k+r_0+1)}\,\mathsf L_{k+1}(x)
\\
&+
(2k+r_0+1)\mathsf L_k(x)
-
\sqrt{k(k+r_0)}\,\mathsf L_{k-1}(x).
\end{align*}
Multiply this identity by \(\mathsf L_k\) at the other variable, subtract the transposed identity, and sum. The interior terms telescope to
\begin{equation}\label{eq:airy-christoffel-darboux}
\sum_{k=0}^{r-1}\mathsf L_k(x)\mathsf L_k(y)
=
\sqrt{rs}\,
\frac{
\mathsf L_{r-1}(x)\mathsf L_r(y)
-
\mathsf L_r(x)\mathsf L_{r-1}(y)
}{
x-y
}.
\end{equation}

Let \((\mu_0,\sigma_0,\delta_0)\) be the half-integer parameters at \((r,s)\), and let
\((\mu_1,\sigma_1,\delta_1)\) be those at \((r-1,s-1)\). Define
\[
g(\xi)=G_{r,s}(\xi),
\]
\[
h(\xi)
=
G_{r-1,s-1}
\bigl(
\mathfrak r_N\xi+\mathfrak s_N
\bigr),
\]
where
\[
\mathfrak r_N=\frac{\sigma_0}{\sigma_1},
\qquad
\mathfrak s_N=\frac{\mu_0-\mu_1}{\sigma_1},
\]
and set
\[
\mathfrak p_N
=
\sqrt{rs}
\sqrt{\frac{\delta_0\delta_1}{\sigma_0\sigma_1}}.
\]
Substitution in \eqref{eq:airy-christoffel-darboux}, including the alternating signs in \(G\), gives
\begin{equation}\label{eq:rescaled-cd}
\mathcal L_{r,s,R}(\xi,\eta)
=
\mathfrak p_N
\frac{
g(\xi)h(\eta)-h(\xi)g(\eta)
}{
\xi-\eta
}.
\end{equation}

Along the path \((u-t,v-t)\), where
\[
u=r+\frac12,
\qquad
v=s+\frac12,
\]
direct differentiation gives
\[
-\frac{\dd}{\dd t}
(\sqrt{u-t}+\sqrt{v-t})^2
=
\sigma_{r-t,s-t}\delta_{r-t,s-t}.
\]
Integration for \(0\leq t\leq1\) and the explicit parameter formulas yield
\begin{equation}\label{eq:parameter-neighbor-comparison}
\mathfrak r_N=1+O(N^{-1}),
\qquad
\mathfrak s_N=\delta_0(1+O(N^{-1})),
\qquad
\mathfrak p_N=\delta_0^{-1}(1+O(N^{-1})).
\end{equation}
For the last relation, use
\[
\sqrt{\left(r+\frac12\right)\left(s+\frac12\right)}
\frac{\delta_0}{\sigma_0}
=
\delta_0^{-1}
\]
and
\[
\frac{\sqrt{rs}}{
\sqrt{(r+\frac12)(s+\frac12)}
}
=
1+O(N^{-1}).
\]

Put
\[
\mathfrak j=\frac{h-g}{\mathfrak s_N},
\qquad
\mathfrak w_N=\mathfrak p_N\mathfrak s_N.
\]
Then
\[
\mathfrak w_N=1+O(N^{-1}),
\]
and \eqref{eq:rescaled-cd} is \(\mathfrak w_N\) times
\begin{equation}\label{eq:divided-difference}
\mathcal T(g,\mathfrak j)(\xi,\eta)
=
\frac{
g(\xi)\mathfrak j(\eta)-\mathfrak j(\xi)g(\eta)
}{
\xi-\eta
},
\end{equation}
with the continuous diagonal value.

Choose a fixed large \(L\), and set
\[
T=L\log N.
\]
On
\[
I_N=[-R,T],
\]
Proposition~\ref{prop:mixed-window-c1}, Taylor's formula,
\eqref{eq:parameter-neighbor-comparison}, and the Airy equation give
\begin{equation}\label{eq:cd-mode-approximations}
\norm{g-\Ai}_{C^1(I_N)}
\leq
N^{-2/3+o(1)},
\qquad
\norm{\mathfrak j-\Ai'}_{C^1(I_N)}
\leq
N^{-1/3+o(1)}.
\end{equation}
Lemma~\ref{lem:airy-envelopes} and
\[
\Ai'(x)
=
\Ai(x+1)-\Ai(x)
-
\int_x^{x+1}(x+1-t)t\Ai(t)\dd t
\]
show that \(\Ai\) and its first three derivatives are \(N^{o(1)}\) on every such logarithmic box. The divided-difference map \(\mathcal T\) is locally Lipschitz in the \(C^1\) norm. Integrating the Airy Wronskian and using its positive decay gives
\[
K_{\Ai}=\mathcal T(\Ai,\Ai').
\]
Hence
\begin{equation}\label{eq:boxed-kernel-comparison}
\sup_{\xi,\eta\in I_N}
\abs{
\mathcal L_{r,s,R}(\xi,\eta)-K_{\Ai}(\xi,\eta)
}
\leq
N^{-1/3+o(1)}.
\end{equation}
The boxed Hilbert--Schmidt norm has the same bound.

It remains to remove \(T\). We derive the needed Laguerre differential equation directly. Write
\[
L_n^{(r)}(x)=\sum_{k=0}^nc_kx^k,
\qquad
c_k
=
(-1)^k
\frac{(n+r)!}{(n-k)!(r+k)!k!}.
\]
For \(0\leq k\leq n\), with \(c_{n+1}=0\), the coefficient of \(x^k\) in
\[
x(L_n^{(r)})''
+
(r+1-x)(L_n^{(r)})'
+
nL_n^{(r)}
\]
is
\[
(k+1)(k+r+1)c_{k+1}+(n-k)c_k=0,
\]
because
\[
\frac{c_{k+1}}{c_k}
=
-\frac{n-k}{(k+1)(r+k+1)}.
\]
Thus
\begin{equation}\label{eq:laguerre-series-ode}
x(L_n^{(r)})''
+
(r+1-x)(L_n^{(r)})'
+
nL_n^{(r)}
=
0.
\end{equation}
Put
\[
\phi(x)
=
\sqrt{\frac{n!}{(n+r)!}}\,
\exp(-x/2)x^{r/2}L_n^{(r)}(x).
\]
Differentiating twice and using \eqref{eq:laguerre-series-ode} cancels the derivative terms and gives
\begin{equation}\label{eq:laguerre-phi-ode}
\phi''(x)+x^{-1}\phi'(x)
+
\left[
-\frac14
+
\frac{n+(r+1)/2}{x}
-
\frac{r^2}{4x^2}
\right]\phi(x)
=
0.
\end{equation}

Let
\[
u=n+\frac12,
\qquad
v=n+r+\frac12,
\qquad
\mu=(\sqrt u+\sqrt v)^2,
\]
and let \(\sigma\) be the corresponding half-integer soft-edge scale. Since
\[
\mu-(\sqrt v-\sqrt u)^2=4\sqrt{uv},
\qquad
\sigma^3=\frac{\mu^2}{\sqrt{uv}},
\]
the change \(x=\mu+\sigma z\), followed by the conjugation
\[
Y(z)=\sqrt{1+(\sigma/\mu)z}\,G_{n,n+r}(z),
\]
transforms \eqref{eq:laguerre-phi-ode} into
\begin{equation}\label{eq:positive-rescaled-ode}
Y''(z)=q(z)Y(z),
\end{equation}
where
\[
q(z)
=
\frac{z}{(1+(\sigma/\mu)z)^2}
+
\frac{
\sigma^4z^2-\sigma^2
}{
4\mu^2(1+(\sigma/\mu)z)^2
}.
\]
For comparable \(n\) and \(n+r\),
\[
\abs{q(z)}\leq CN^{2/3},
\qquad z\geq0.
\]
Proposition~\ref{prop:positive-airy-laguerre} and
Lemma~\ref{lem:airy-envelopes} give
\[
\abs{G_{n,n+r}(z)}
\leq
C\exp(-z/4).
\]
Taylor's integral formula on \([z,z+1]\), applied to
\eqref{eq:positive-rescaled-ode}, gives
\begin{equation}\label{eq:positive-G-derivative}
\abs{G_{n,n+r}'(z)}
\leq
CN^{2/3}\exp(-z/8).
\end{equation}
The same estimates hold for \(h\) at its shifted coordinate. Taking the diagonal limit in \eqref{eq:rescaled-cd} yields
\begin{equation}\label{eq:laguerre-positive-diagonal}
0\leq
\mathcal L_{r,s,R}(z,z)
\leq
CN\exp(-z/16),
\qquad z\geq0.
\end{equation}

Since the uncompressed finite kernel is a projection by
\eqref{eq:laguerre-orthonormality}, its off-box Hilbert--Schmidt norm squared is at most the tail trace. By
\eqref{eq:laguerre-positive-diagonal}, this is at most
\[
CN\exp(-T/16).
\]
The Airy contraction has the same projection-type bound, while its tail trace equals
\begin{equation}\label{eq:airy-tail-trace}
\int_T^\infty(x-T)\Ai(x)^2\dd x
\leq
C\exp(-cT^{3/2}).
\end{equation}
Choosing \(L\) large proves
\eqref{eq:equal-level-op-comparison}.

Proposition~\ref{prop:airy-kernel-gap} gives
\[
1-\norm{K_{\Ai,-R}}_{\op}
\geq
\exp(-CR^{3/2})
=
N^{-o(1)}.
\]
Combining this with \eqref{eq:equal-level-op-comparison} proves
\eqref{eq:equal-level-resolvent} by the resolvent identity. Finally,
\eqref{eq:boxed-kernel-comparison} on the diagonal and
Lemma~\ref{lem:airy-envelopes} give
\[
K_{\Ai}(t,t)\leq N^{o(1)},
\qquad
-R\leq t\leq0,
\]
which proves \eqref{eq:equal-level-diagonal}.

The three outputs of this step have separate uses: operator comparison transfers the classical Airy kernel to finite $N$, the inverse bound controls the diagonal gap blocks in the lower Schur complement, and the diagonal bound supplies relative lower-wall regularity in Step~6.

\noindent\textbf{Step 4. Restricted modes and cross-block norms.}
The cross blocks are now restricted to the actual event sets. For central modes, the uniform positive and mixed-window approximations estimate the two restricted vector norms. For remote modes, orthonormality and coefficient damping are sufficient. The cutoff is chosen so that the separation gain survives the sum over the Airy-scale mode window.

Put
\[
L=\left\lfloor N^{1/3-\epsilon/2}\right\rfloor.
\]
For \(i\in\{n,m\}\) and \(\abs\ell\leq L\), smoothness of the explicit edge parameters gives
\begin{equation}\label{eq:shifted-mode-parameters}
\frac{\sigma_{p_i-\ell,q_i-\ell}}{\sigma_i}
=
1+O(\abs\ell/N),
\end{equation}
and
\begin{equation}\label{eq:shifted-mode-centers}
\mu_i-\mu_{p_i-\ell,q_i-\ell}
=
\sigma_i\delta_i\ell
+
O(\sigma_i\delta_i\ell^2/N),
\end{equation}
where \(\delta_i\) is comparable to \(N^{-1/3}\). Thus the upper-event wall in the mode's natural coordinate is
\[
z_{i,\ell}^+
=
a(\log N)^{2/3}
+
O(1)
+
\delta_i\ell
+
o(1),
\]
and the lower forbidden wall is
\[
z_{i,\ell}^-
=
-b(\log N)^{1/3}
+
O(1)
+
\delta_i\ell
+
o(1).
\]

Proposition~\ref{prop:positive-airy-laguerre} and the positive Airy envelope imply
\begin{equation}\label{eq:upper-mode-norm}
R_{i,\ell}^+
:=
\norm{
\ind_{I_i^+}
\phi_{p_i-\ell,q_i-\ell}
}_2
\leq
N^{-1/6-\kappa_+/2+o(1)}.
\end{equation}
Its squared norm is \(\delta_i\) times the Airy-square tail above the first wall, up to a smaller approximation error. Proposition~\ref{prop:mixed-window-c1} and both Airy envelopes similarly give
\begin{equation}\label{eq:lower-mode-norm}
R_{i,\ell}^-
:=
\norm{
\ind_{B_i^-}
\phi_{p_i-\ell,q_i-\ell}
}_2
\leq
N^{-1/6+o(1)}.
\end{equation}
For \eqref{eq:lower-mode-norm}, integrate the squared \(C^0\) part of the mixed-window estimate from the negative wall to \(L\log N\), and use the positive-edge estimate beyond that cutoff. The Airy-square integral over a logarithmic negative window is \(N^{o(1)}\).

Restrict the exact \(\widehat K_{10}\) and \(\widehat K_{01}\) series. The nuclear norm of each rank-one term is the product of its two restricted vector norms. For \(\abs\ell\leq L\), use
\eqref{eq:upper-mode-norm} or \eqref{eq:lower-mode-norm} together with the exponential parts of
\eqref{eq:d-positive-product}--\eqref{eq:d-negative-tail}. Beyond \(L\), use full \(L^2\)-normalization
\eqref{eq:laguerre-orthonormality} and the summable coefficient tails. Since
\[
\rho L\geq cN^{\epsilon/2},
\]
this gives
\begin{equation}\label{eq:upper-cross-nuclear}
\norm{T_{10}^+}_1+\norm{T_{01}^+}_1
\leq
N^{-\kappa_+-\epsilon+o(1)}
\end{equation}
and
\begin{equation}\label{eq:lower-cross-nuclear}
\norm{T_{10}^-}_1+\norm{T_{01}^-}_1
\leq
N^{-\epsilon+o(1)}.
\end{equation}
Operator and Hilbert--Schmidt norms do not exceed nuclear norm, so all six asserted bounds on each side follow.

The two signs carry different norm budgets. On the upper half-line, each restricted mode already contains half of the marginal rarity exponent, so the product of two mode norms and the coefficient damping yields a cross correction smaller than the upper marginal product. On the lower forbidden half-line, the restricted mode norms have no comparable rarity power; the required smallness instead comes from separation damping, while the marginal gap probabilities remain in the diagonal determinants.

\noindent\textbf{Step 5. Upper mixed counts and lower gap events.}
For the upper event, the useful observable is the occupancy count of the upper half-line. The mixed-count identity compares its second mixed moment with the product of the two intensities and a cross-block trace. Because occurrence of the event only implies positive occupancy, the final comparison is intrinsically an upper bound.

Let \(Z_i^+\) be the number of points in \(I_i^+\), and put
\[
\nu_i^+=\E Z_i^+.
\]
The equal-level restrictions are finite-rank Hermitian operators and therefore have finite intensities. The cross restrictions in
\eqref{eq:upper-cross-nuclear} are nuclear and hence Hilbert--Schmidt. Cauchy--Schwarz for their \(L^2\) kernels gives absolute integrability of the mixed-kernel product. Lemma~\ref{lem:two-level-reductions} therefore gives
\begin{equation}\label{eq:mixed-count-identity}
\E[Z_n^+Z_m^+]
=
\nu_n^+\nu_m^+
-
\Tr(T_{01}^+T_{10}^+).
\end{equation}

We next verify that \(\nu_i^+\to0\). Let
\[
t_i=a(\log i)^{2/3}+h_{i,+}.
\]
On \([t_i,T]\), the boxed diagonal estimate
\eqref{eq:boxed-kernel-comparison} gives
\[
\int_{t_i}^{T}
\mathcal L_{p_i,q_i,0}(z,z)\dd z
\leq
\int_{t_i}^{T}K_{\Ai}(z,z)\dd z
+
N^{-1/3+o(1)}.
\]
The Airy integral tends to zero by Lemma~\ref{lem:airy-envelopes}. The finite-Laguerre contribution beyond \(T\) tends to zero by
\eqref{eq:laguerre-positive-diagonal} after the constant \(L\) in
\(T=L\log N\) is chosen large. Therefore
\[
\nu_i^+=o(1)
\]
uniformly. The one-level finite-intensity inequality gives
\[
\nu_i^+-\frac{(\nu_i^+)^2}{2}
\leq
u_i^+
\leq
\nu_i^+.
\]
Using \eqref{eq:two-time-marginals},
\[
\nu_i^+=(1+o(1))u_i^+.
\]
The ideal property and \eqref{eq:upper-cross-nuclear} make the absolute trace correction in
\eqref{eq:mixed-count-identity} at most
\[
N^{-2\kappa_+-2\epsilon+o(1)}
=
o(u_n^+u_m^+).
\]
Since
\[
\ind_{E_n^+}\ind_{E_m^+}
\leq
Z_n^+Z_m^+,
\]
equation~\eqref{eq:mixed-count-identity} proves
\[
\Prob(E_n^+\cap E_m^+)
\leq
(1+o(1))u_n^+u_m^+.
\]
The argument supplies no reverse comparison: the inequality between the event indicator and the occupancy product is used only in the displayed direction.

For the lower sets, the relevant event is instead the absence of every point from a forbidden upper half-line. Its probability is a Fredholm gap determinant. Factoring the two-level block determinant by a Schur complement leaves the product of the two marginal gap determinants multiplied by a correction involving both diagonal gap inverses and the two restricted cross blocks. The inverse bounds from Step~3 and the nuclear estimates from Step~4 bound that correction above by $\exp(N^{-2\epsilon+o(1)})=1+o(1)$.

For the lower sets, apply the logarithmic-window estimate with
\[
R_i=b(\log i)^{1/3}-h_{i,-}.
\]
For large \(N\), \eqref{eq:equal-level-resolvent} gives
\begin{equation}\label{eq:lower-block-resolvents}
\norm{(A_n^-)^{-1}}_{\op}
+
\norm{(D_m^-)^{-1}}_{\op}
\leq
N^{o(1)}.
\end{equation}
The diagonal blocks are finite rank, and the cross blocks are nuclear by
\eqref{eq:lower-cross-nuclear}. Thus the full restricted two-level block is trace class. Its Schur complement is trace class and, by
\eqref{eq:lower-cross-nuclear},
\eqref{eq:lower-block-resolvents}, and the ideal property,
\begin{equation}\label{eq:lower-schur-norm}
\norm{
(D_m^-)^{-1}T_{10}^-
(A_n^-)^{-1}T_{01}^-
}_1
\leq
N^{-2\epsilon+o(1)}.
\end{equation}
The gap-event formula in Lemma~\ref{lem:two-level-reductions} now yields
\[
\Prob(G_n^-\cap G_m^-)
\leq
\exp\bigl(N^{-2\epsilon+o(1)}\bigr)
u_n^-u_m^-
=
(1+o(1))u_n^-u_m^-.
\]
This is again only a joint-probability upper estimate. The determinant inequality used in the Schur-complement step does not produce a complementary lower bound.

\noindent\textbf{Step 6. Lower-wall deletion.}
This step proves the theorem's separate wall-regularity clause. Since the lower gap probability is itself polynomially small, an absolute continuity estimate for a moved wall would not be sufficient. Jacobi's formula identifies the logarithmic derivative of the finite-$N$ gap determinant with a resolvent diagonal. The equal-level inverse and diagonal bounds make this derivative $N^{o(1)}$, so deleting a polynomially short wall interval changes the gap probability only by a relative $N^{-\alpha_{\mathrm{wall}}+o(1)}$ amount.

Rescale the equal-time Laguerre projection at level \(j\) to the half-integer soft-edge coordinate. By \eqref{eq:laguerre-orthonormality}, its full kernel has a finite orthonormal expansion
\[
K_j(x,y)
=
\sum_{r=1}^{p_j}f_r(x)f_r(y).
\]
Let \(K_{j,t}\) be its compression to \((t,\infty)\), and define the \(p_j\times p_j\) Gram matrix
\begin{equation}\label{eq:wall-Gram}
G_t(r,s)
=
\int_t^\infty f_r(x)f_s(x)\dd x.
\end{equation}

Let \(N_t\) count the level-\(j\) points in \((t,\infty)\). Since there are exactly \(p_j\) points at this level, the finite inclusion-exclusion identity is
\[
\ind_{\{N_t=0\}}
=
\sum_{k=0}^{p_j}
\frac{(-1)^k}{k!}(N_t)_k,
\]
where \((N_t)_k\) is the falling factorial. Taking expectations and using the determinantal correlation functions gives
\begin{align}
\Prob(N_t=0)
&=
\sum_{k=0}^{p_j}
\frac{(-1)^k}{k!}
\int_{(t,\infty)^k}
\det[K_j(x_a,x_b)]_{a,b=1}^k
\dd x_1\cdots\dd x_k
\nonumber\\
&=
\det(I-G_t).
\label{eq:wall-gap-determinant}
\end{align}
For the last equality, Cauchy--Binet expands the integrand as the sum over \(k\)-element index sets of squares of determinants
\[
\det[f_r(x_a)]^2.
\]
Integration gives \(k!\) times the sum of the principal \(k\times k\) minors of \(G_t\). Summation with coefficient \((-1)^k/k!\) is the principal-minor expansion of \(\det(I-G_t)\).

The nonzero eigenvalues of \(G_t\) and \(K_{j,t}\) agree, so
\[
\det(I-G_t)=\det(I-K_{j,t}).
\]
Since \(N_t=0\) is precisely the event
\[
\Lambda_j\leq\mu_j+\sigma_jt,
\]
the left side of \eqref{eq:wall-gap-determinant} is \(F_j(t)\).

On the wall window, \eqref{eq:equal-level-resolvent} makes \(I-K_{j,t}\), and equivalently \(I-G_t\), invertible. If
\[
f(t)=(f_1(t),\ldots,f_{p_j}(t))^T,
\]
then
\[
G_t'=-f(t)f(t)^T.
\]
Jacobi's determinant formula applied to
\eqref{eq:wall-gap-determinant} gives
\begin{equation}\label{eq:wall-jacobi}
F_j'(t)
=
F_j(t)
f(t)^T(I-G_t)^{-1}f(t).
\end{equation}
Define
\[
V_tc
=
\sum_r c_r\ind_{(t,\infty)}f_r.
\]
Then
\[
K_{j,t}=V_tV_t^*,
\qquad
G_t=V_t^*V_t.
\]
The identity
\[
V_t^*(I-V_tV_t^*)^{-1}
=
(I-V_t^*V_t)^{-1}V_t^*
\]
identifies the scalar in \eqref{eq:wall-jacobi} with the diagonal of
\[
R_{j,t}
=
K_{j,t}(I-K_{j,t})^{-1}.
\]
Consequently,
\begin{equation}\label{eq:wall-derivative}
F_j'(t)=F_j(t)R_{j,t}(t,t).
\end{equation}

Since the full \(K_j\) is an orthogonal projection, its compressed resolvent satisfies
\[
R_{j,t}
=
K_{j,t}
+
K_{j,t}(I-K_{j,t})^{-1}K_{j,t}.
\]
The squared \(L^2\)-norm of the restricted kernel row at \(t\) is at most \(K_j(t,t)\). Equations~\eqref{eq:equal-level-resolvent} and
\eqref{eq:equal-level-diagonal} therefore give
\begin{equation}\label{eq:wall-resolvent-diagonal}
0\leq
R_{j,t}(t,t)
\leq
K_j(t,t)
\left[
1+\norm{(I-K_{j,t})^{-1}}_{\op}
\right]
\leq
N^{o(1)}
\end{equation}
uniformly for
\[
-D_{\mathrm{wall}}(\log N)^{1/3}
\leq
t
\leq
-R_*
\]
after increasing \(R_*\).

Integrating \eqref{eq:wall-derivative} over
\([s,s+\ell_{\mathrm{wall}}]\), and using the monotonicity of \(F_j\) together with \eqref{eq:wall-resolvent-diagonal}, gives
\[
F_j(s+\ell_{\mathrm{wall}})-F_j(s)
\leq
\ell_{\mathrm{wall}}N^{o(1)}
F_j(s+\ell_{\mathrm{wall}}).
\]
The finite Laguerre density is strictly positive on its ordered chamber, so
\(F_j(s)>0\) for every finite wall in this range. If
\[
\ell_{\mathrm{wall}}
\leq
N^{-\alpha_{\mathrm{wall}}},
\]
rearrangement gives
\[
\frac{F_j(s+\ell_{\mathrm{wall}})}{F_j(s)}
\leq
1+N^{-\alpha_{\mathrm{wall}}+o(1)}.
\]
Monotonicity gives the lower bound \(1\). All estimates are uniform. Taking a maximum over at most
\(C_{\mathrm{lad}}\log N\) deterministic intervals preserves the \(N^{o(1)}\) factor.

The logarithmic-size deterministic family does not alter the exponent budget, because its cardinality is absorbed into the existing $N^{o(1)}$ term. This wall statement is independent of the upper occupancy reduction and is not needed to manufacture a missing lower joint bound.

\noindent\textbf{Step 7. Change of normalization.}
The operator analysis uses the half-integer Laguerre center and scale because they match the uniform Airy approximation. The project statistic uses the unshifted rectangular center and scale. Their center difference is bounded and their scale ratio is $1+O(N^{-1})$, so the induced change in either logarithmic threshold is $o(1)$ throughout the short macroscopic window. The already established uniformity over bounded deterministic shifts absorbs this perturbation; the transfer does not strengthen the one-sided conclusion.

Equation~\eqref{eq:two-time-normalization-comparison} shows that replacing \(\overline\chi_j\) by \(\chi_j^{\mathrm{proj}}\) changes each displayed logarithmic threshold by \(o(1)\), uniformly on the macroscopic window. The bounded-shift conclusions already proved therefore transfer all marginal and pair estimates.
\end{proof}

The one-sided pair theorem has an intrinsic counting consequence. For a sign \(\mathfrak s\), the exponent \(\kappa_{\mathfrak s}\) is the marginal rarity cost: a single active event has probability \(N^{-\kappa_{\mathfrak s}+o(1)}\). The exponent \(\theta_{\mathfrak s}\) prescribes the desired count growth. Choosing \(\lfloor N^{\kappa_{\mathfrak s}+\theta_{\mathfrak s}}\rfloor\) test levels compensates for the rarity and makes the first moment of the count of order \(N^{\theta_{\mathfrak s}}\). The exponent \(\epsilon_{\mathfrak s}\) is the supercritical-spacing cost: the constructed arithmetic grid has step \(\lceil N^{2/3+\epsilon_{\mathfrak s}}\rceil\), so every distinct pair lies in the separation range of Theorem~\ref{thm:nested-wishart-decorrelation}. Its span is at most
\[
N^{\kappa_{\mathfrak s}+\theta_{\mathfrak s}+2/3+\epsilon_{\mathfrak s}+o(1)},
\]
and the strict inequality
\(\kappa_{\mathfrak s}+\theta_{\mathfrak s}+\epsilon_{\mathfrak s}<1/3\)
is exactly the budget that places the entire grid inside the theorem's short macroscopic window.

The second moment separates into diagonal and off-diagonal terms. The diagonal contribution equals the first moment because the summands are indicators. For distinct grid points, the one-sided product-scale upper estimate bounds each joint probability by the corresponding marginal product up to the single uniform \(1+o(1)\) factor, which controls the full off-diagonal sum without requiring a reverse pair inequality. The bounded deterministic shifts are retained uniformly at every grid point. The conditioning clause records a separate exact stability property: when \(\mathcal B_N\) is independent of the entire auxiliary complex Gaussian array, the conditional first and second moments agree almost surely with their unconditional counterparts.

\begin{cor}[Separated Gaussian grid moments]\label{cor:gaussian-grid-moments}

Assume the nested complex Gaussian covariance path and the normalization
\(\chi_j^{\mathrm{proj}}\) of Theorem~\ref{thm:nested-wishart-decorrelation}. Fix \(a,b>0\), put
\[
\kappa_+=\frac43a^{3/2},
\qquad
\kappa_-=\frac{b^3}{12},
\]
and assume
\[
\max(\kappa_+,\kappa_-)<\frac13.
\]
For each sign \(\mathfrak s\in\{+,-\}\), choose
\(\theta_{\mathfrak s}>0\) and \(\epsilon_{\mathfrak s}>0\) such that
\[
\kappa_{\mathfrak s}
+
\theta_{\mathfrak s}
+
\epsilon_{\mathfrak s}
<
\frac13.
\]
Fix \(H_{\mathrm{shift}}<\infty\). For every sufficiently large integer \(N\), there are grids
\[
\mathcal T_N^+,\mathcal T_N^-
\subseteq[N,2N)
\]
such that distinct points of \(\mathcal T_N^{\mathfrak s}\) are separated by at least
\[
N^{2/3+\epsilon_{\mathfrak s}},
\]
\[
\abs{\mathcal T_N^{\mathfrak s}}
=
\left\lfloor
N^{\kappa_{\mathfrak s}+\theta_{\mathfrak s}}
\right\rfloor,
\]
and the following holds uniformly for every deterministic family of shifts
\(h_{j,\mathfrak s}\), \(j\in\mathcal T_N^{\mathfrak s}\), with
\(\abs{h_{j,\mathfrak s}}\leq H_{\mathrm{shift}}\). Put
\[
S_N^+
=
\sum_{j\in\mathcal T_N^+}
\ind_{\{
\chi_j^{\mathrm{proj}}
\geq
a(\log j)^{2/3}+h_{j,+}
\}},
\]
\[
S_N^-
=
\sum_{j\in\mathcal T_N^-}
\ind_{\{
\chi_j^{\mathrm{proj}}
\leq
-b(\log j)^{1/3}+h_{j,-}
\}}.
\]
Then
\[
\E S_N^{\mathfrak s}
=
N^{\theta_{\mathfrak s}+o(1)}
\]
and
\[
\E[(S_N^{\mathfrak s})^2]
\leq
(1+o(1))(\E S_N^{\mathfrak s})^2
+
\E S_N^{\mathfrak s}.
\]
In particular, for all sufficiently large \(N\),
\[
\E S_N^{\mathfrak s}
\geq
N^{\theta_{\mathfrak s}/2}
\]
and
\[
\frac{
\E[(S_N^{\mathfrak s})^2]
}{
(\E S_N^{\mathfrak s})^2
}
\leq
1+o(1).
\]

If \(\mathcal B_N\) is any \(\sigma\)-algebra independent of the entire auxiliary complex Gaussian array, then the same identities and inequalities hold almost surely after conditioning on \(\mathcal B_N\). In particular, for any \(\eta>0\) satisfying
\[
\kappa_+ + 2\eta + \epsilon_+ < \frac13,
\qquad
\kappa_- + \eta + \epsilon_- < \frac13,
\]
the choices
\[
\theta_+=2\eta,
\qquad
\theta_-=\eta
\]
are admissible.
\end{cor}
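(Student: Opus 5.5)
The plan is to build an explicit arithmetic grid and read both moment bounds off Theorem~\ref{thm:nested-wishart-decorrelation}, stated in the \(\chi_j^{\mathrm{proj}}\) normalization.

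\textbf{Construction of the grid.} Fix a sign \(\mathfrak s\). Let \(\eta_0\) be the window constant of the theorem, and put \(k_N=\lfloor N^{\kappa_{\mathfrak s}+\theta_{\mathfrak s}}\rfloor\) and \(\Delta_N=\lceil N^{2/3+\epsilon_{\mathfrak s}}\rceil\). Define
\[
\mathcal T_N^{\mathfrak s}=\{N+i\Delta_N:\ 0\leq i<k_N\}.
\]
Its span is at most \(k_N\Delta_N\leq 2N^{\kappa_{\mathfrak s}+\theta_{\mathfrak s}+2/3+\epsilon_{\mathfrak s}}\). Since \(\kappa_{\mathfrak s}+\theta_{\mathfrak s}+\epsilon_{\mathfrak s}<1/3\), this span is \(o(N)\). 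Hence for large \(N\) the grid lies in \([N,(1+\eta_0)N]\subseteq[N,2N)\), it has the stated cardinality, and distinct points are at least \(\Delta_N\geq N^{2/3+\epsilon_{\mathfrak s}}\) apart.

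The theorem requires its separation exponent \(\epsilon\) to satisfy \(\epsilon<1/3-\max(\kappa_+,\kappa_-)\). I apply it with \(\epsilon=\epsilon_{\mathfrak s}\). This is only legitimate if \(\epsilon_{\mathfrak s}<1/3-\max(\kappa_+,\kappa_-)\), which is not implied by \(\kappa_{\mathfrak s}+\epsilon_{\mathfrak s}<1/3\) when \(\kappa_{-\mathfrak s}>\kappa_{\mathfrak s}\). This is the step I expect to be the main obstacle. To resolve it, I note that in the proof of Theorem~\ref{thm:nested-wishart-decorrelation} the upper and lower pair bounds are obtained by separate arguments (Steps~4--5). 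The upper-tail argument only uses \(\epsilon<1/3-\kappa_+\), and the lower-tail argument only uses \(\epsilon<1/3-\kappa_-\). I therefore invoke each sign's conclusion under its own budget \(\kappa_{\mathfrak s}+\epsilon_{\mathfrak s}<1/3\). An alternative is to shrink the separation exponent to \(\epsilon'=\min(\epsilon_{\mathfrak s},\tfrac12(1/3-\max\kappa))\), since separation at least \(N^{2/3+\epsilon_{\mathfrak s}}\) implies separation at least \(N^{2/3+\epsilon'}\). The theorem then applies with \(\epsilon'\) at every pair, and this version needs no splitting of the proof.

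\textbf{First moment.} By linearity, \(\E S_N^{\mathfrak s}=\sum_j u_j^{\mathfrak s}\). The marginal assertion of the theorem in the proj normalization, uniform over the window and over shifts bounded by \(H_{\mathrm{shift}}\), gives \(u_j^{\mathfrak s}=N^{-\kappa_{\mathfrak s}+o(1)}\) with a single uniform \(o(1)\). Therefore
\[
\E S_N^{\mathfrak s}=k_N N^{-\kappa_{\mathfrak s}+o(1)}=N^{\theta_{\mathfrak s}+o(1)},
\]
and in particular \(\E S_N^{\mathfrak s}\geq N^{\theta_{\mathfrak s}/2}\) eventually.

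\textbf{Second moment.} Expand
\[
\E[(S_N^{\mathfrak s})^2]=\sum_j u_j^{\mathfrak s}+\sum_{j\neq k}\Prob(H_j\cap H_k).
\]
The diagonal sum is \(\E S_N^{\mathfrak s}\), since the summands are indicators. For \(j<k\), the pair lies in the theorem's range (window and separation as above). The one-sided pair bound, uniform over endpoints and shifts, gives \(\Prob(H_j\cap H_k)\leq(1+o(1))u_j^{\mathfrak s}u_k^{\mathfrak s}\) with one uniform \(o(1)\). Summing over pairs bounds the off-diagonal sum by \((1+o(1))(\E S_N^{\mathfrak s})^2\). Dividing by \((\E S_N^{\mathfrak s})^2\), the diagonal contributes \(1/\E S_N^{\mathfrak s}\leq N^{-\theta_{\mathfrak s}/2}\to0\), so the ratio is at most \(1+o(1)\).

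\textbf{Conditioning and parameter choice.} If \(\mathcal B_N\) is independent of the whole array, then each indicator, which is a function of the array, satisfies \(\E[\,\cdot\mid\mathcal B_N]=\E[\,\cdot\,]\) almost surely. The same holds for products of two indicators. Consequently every identity and inequality above holds conditionally, almost surely. Finally, the choices \(\theta_+=2\eta\) and \(\theta_-=\eta\) satisfy the corollary's hypothesis exactly when the two displayed inequalities on \(\eta\) hold, so they are admissible.
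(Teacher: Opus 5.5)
Your proposal is correct and is essentially the paper's proof. It uses the same arithmetic grid, with step $\lceil N^{2/3+\epsilon_{\mathfrak s}}\rceil$ and $\lfloor N^{\kappa_{\mathfrak s}+\theta_{\mathfrak s}}\rfloor$ points. It makes the same diagonal/off-diagonal split of the second moment using the uniform one-sided pair bound, and the same independence argument for conditioning.

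The only difference is how you meet the theorem's constraint $\epsilon<\frac13-\max(\kappa_+,\kappa_-)$. The paper replaces the unused opposite-sign amplitude by a smaller one, so that the maximum becomes $\kappa_{\mathfrak s}$. Your second option applies the theorem with $\epsilon'=\min\bigl(\epsilon_{\mathfrak s},\tfrac12(\tfrac13-\max(\kappa_+,\kappa_-))\bigr)$, whose separation hypothesis is weaker. It works equally well from the theorem's statement alone, and your first option, which appeals to the internal structure of the theorem's proof, is unnecessary.
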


\begin{proof}
Fix one sign \(\mathfrak s\). Keep the amplitude belonging to this sign equal to the amplitude in the statement. Choose the unused opposite-sign amplitude positive and small enough that its rarity exponent is at most
\(\kappa_{\mathfrak s}\). This is possible because
\[
\frac43a^{3/2}
\quad\text{and}\quad
\frac{b^3}{12}
\]
both tend to zero with their respective positive amplitudes. Thus, in this application of Theorem~\ref{thm:nested-wishart-decorrelation}, the maximum of the two rarity exponents is \(\kappa_{\mathfrak s}\). The hypothesis
\[
\kappa_{\mathfrak s}
+
\theta_{\mathfrak s}
+
\epsilon_{\mathfrak s}
<
\frac13
\]
implies
\[
\epsilon_{\mathfrak s}
<
\frac13-\kappa_{\mathfrak s},
\]
so the theorem applies at the active sign.

Apply this argument separately to the two signs, and let \(\eta_0>0\) be the smaller of the two resulting short-window constants. The project-normalization clause of Theorem~\ref{thm:nested-wishart-decorrelation} gives, uniformly for the active event at every integer
\(j\in[N,(1+\eta_0)N]\),
\[
\Prob(H_{N,j}^{\mathfrak s})
=
N^{-\kappa_{\mathfrak s}+o(1)},
\]
and, uniformly for distinct \(j,k\) in this window satisfying
\[
\abs{j-k}
\geq
N^{2/3+\epsilon_{\mathfrak s}},
\]
\[
\Prob(H_{N,j}^{\mathfrak s}\cap H_{N,k}^{\mathfrak s})
\leq
(1+o(1))
\Prob(H_{N,j}^{\mathfrak s})
\Prob(H_{N,k}^{\mathfrak s}).
\]
Here \(H_{N,j}^+\) is the upper event with threshold
\[
a(\log j)^{2/3}+h_{j,+},
\]
and \(H_{N,j}^-\) is the lower event with threshold
\[
-b(\log j)^{1/3}+h_{j,-}.
\]
Both error terms are deterministic and uniform over the endpoint labels and every allowed deterministic bounded shift family.

For this sign, define
\[
h_N^{\mathfrak s}
=
\left\lceil
N^{2/3+\epsilon_{\mathfrak s}}
\right\rceil,
\]
\[
L_N^{\mathfrak s}
=
\left\lfloor
N^{\kappa_{\mathfrak s}+\theta_{\mathfrak s}}
\right\rfloor,
\]
and
\[
\mathcal T_N^{\mathfrak s}
=
\left\{
N+rh_N^{\mathfrak s}
\mid
0\leq r<L_N^{\mathfrak s}
\right\}.
\]
The grid spacing is at least
\(N^{2/3+\epsilon_{\mathfrak s}}\), and
\[
\abs{\mathcal T_N^{\mathfrak s}}
=
L_N^{\mathfrak s}
=
N^{\kappa_{\mathfrak s}+\theta_{\mathfrak s}+o(1)}.
\]
Its largest displacement from \(N\) is at most
\begin{align*}
L_N^{\mathfrak s}h_N^{\mathfrak s}
&=
N^{
\kappa_{\mathfrak s}
+
\theta_{\mathfrak s}
+
2/3
+
\epsilon_{\mathfrak s}
+
o(1)
}
\\
&=
o(N),
\end{align*}
where the final equality is exactly the strict exponent hypothesis. Hence, for every sufficiently large \(N\),
\[
\mathcal T_N^{\mathfrak s}
\subseteq
[N,(1+\eta_0)N]
\subseteq
[N,2N).
\]

Put
\[
u_{N,j}^{\mathfrak s}
=
\Prob(H_{N,j}^{\mathfrak s})
\]
and
\[
m_N^{\mathfrak s}
=
\sum_{j\in\mathcal T_N^{\mathfrak s}}
u_{N,j}^{\mathfrak s}
=
\E S_N^{\mathfrak s}.
\]
The uniform one-point estimate and the grid cardinality give
\[
m_N^{\mathfrak s}
=
N^{\theta_{\mathfrak s}+o(1)}.
\]

For the second moment, every summand of \(S_N^{\mathfrak s}\) is an indicator. Using the single uniform pair-error term gives
\begin{align*}
\E[(S_N^{\mathfrak s})^2]
&=
m_N^{\mathfrak s}
+
\sum_{\substack{
j,k\in\mathcal T_N^{\mathfrak s}\\
j\neq k
}}
\Prob(H_{N,j}^{\mathfrak s}\cap H_{N,k}^{\mathfrak s})
\\
&\leq
m_N^{\mathfrak s}
+
(1+o(1))
\sum_{\substack{
j,k\in\mathcal T_N^{\mathfrak s}\\
j\neq k
}}
u_{N,j}^{\mathfrak s}u_{N,k}^{\mathfrak s}
\\
&\leq
m_N^{\mathfrak s}
+
(1+o(1))(m_N^{\mathfrak s})^2.
\end{align*}
Because \(\theta_{\mathfrak s}>0\) and
\[
m_N^{\mathfrak s}
=
N^{\theta_{\mathfrak s}+o(1)},
\]
one has
\[
m_N^{\mathfrak s}
\geq
N^{\theta_{\mathfrak s}/2}
\]
for all sufficiently large \(N\), and
\[
\frac1{m_N^{\mathfrak s}}=o(1).
\]
Dividing the preceding second-moment bound by
\((m_N^{\mathfrak s})^2\) proves
\[
\frac{
\E[(S_N^{\mathfrak s})^2]
}{
(\E S_N^{\mathfrak s})^2
}
\leq
1+o(1).
\]
The construction applies separately to both signs.

Now let \(\mathcal B_N\) be independent of the entire auxiliary complex Gaussian array. Each \(S_N^{\mathfrak s}\) is a measurable function of that array and the deterministic grid and shift family. Independence gives, almost surely,
\[
\E[S_N^{\mathfrak s}\mid\mathcal B_N]
=
\E S_N^{\mathfrak s}
\]
and
\[
\E[(S_N^{\mathfrak s})^2\mid\mathcal B_N]
=
\E[(S_N^{\mathfrak s})^2].
\]
Thus every first- and second-moment identity and inequality remains valid after conditioning. Finally, the construction allowed every positive
\(\theta_{\mathfrak s}\) satisfying the strict exponent inequality. Substituting
\[
\theta_+=2\eta,
\qquad
\theta_-=\eta
\]
gives the stated recurrence choices whenever their corresponding inequalities hold.
\end{proof}

Thus Corollary~\ref{cor:gaussian-grid-moments} is an intrinsic sparse-grid consequence of the one-sided pair theorem: the rarity cost fixes the number of tests, the supercritical spacing fixes their separation, and the strict exponent budget keeps the construction within the available macroscopic window. The companion paper uses the final conditioning clause after exposing an external sigma-algebra; that application does not enlarge the ranges or strengthen the probability conclusion proved here.

\appendix

\section{Finite-particle determinant algebra}\label{app:gaussian-finite-particle-algebra}

\subsection{Two-level determinantal reductions}\label{app:proof-two-level-reductions}

\begin{proof}[Proof of Lemma~\ref{lem:two-level-reductions}]
For $z=(s,x)$ and $w=(t,y)$, the determinantal correlation identity gives
\[
 \rho_2(z,w)
 =
 K_{ss}(x,x)K_{tt}(y,y)
 -
 K_{st}(x,y)K_{ts}(y,x).
\]
Both terms are integrable under the hypotheses. The two-point Campbell identity therefore yields
\[
 \E[N_sN_t]=\mu_s\mu_t-C_{st}.
\]

At one level, Hermitianity gives
\[
 \rho_2(x,y)
 =
 K_{uu}(x,x)K_{uu}(y,y)-\abs{K_{uu}(x,y)}^2
 \leq
 K_{uu}(x,x)K_{uu}(y,y).
\]
Hence
\[
 \E[N_u(N_u-1)]\leq\mu_u^2.
\]
For every nonnegative integer $n$,
\[
 \ind_{\{n>0\}}\leq n,
 \qquad
 \ind_{\{n>0\}}\geq n-\frac{n(n-1)}2.
\]
The second inequality is an equality for $n=0,1,2$ and has nonpositive right side for $n\geq3$. Taking expectations gives
\[
 \mu_u-\frac{\mu_u^2}{2}
 \leq
 \Prob(N_u>0)
 \leq
 \mu_u.
\]
If $\mu_u\leq1$, then $\mu_u\leq2\Prob(N_u>0)$. Moreover,
\[
 \ind_{\{N_s>0\}}\ind_{\{N_t>0\}}\leq N_sN_t.
\]
Therefore
\[
 \begin{split}
 \Prob(N_s>0,N_t>0)
 &\leq
 \E[N_sN_t]\\
 &\leq
 \mu_s\mu_t+\abs{C_{st}}\\
 &\leq
 (1+c)\mu_s\mu_t\\
 &\leq
 4(1+c)\Prob(N_s>0)\Prob(N_t>0).
 \end{split}
\]

For the gap formula, the trace-class hypothesis permits the Fredholm inclusion-exclusion expansion:
\[
 \Prob(G_s)=\det A,
 \qquad
 \Prob(G_t)=\det D,
\]
and
\[
 \Prob(G_s\cap G_t)
 =
 \det
 \begin{bmatrix}
 A&-K_{st}\\
 -K_{ts}&D
 \end{bmatrix}.
\]
Every block of a trace-class operator on a direct sum is trace class. Block Gaussian elimination and multiplicativity of Fredholm determinants give
\[
 \begin{split}
 \det
 \begin{bmatrix}
 A&-K_{st}\\
 -K_{ts}&D
 \end{bmatrix}
 &=
 \det(A)\det(D-K_{ts}A^{-1}K_{st})\\
 &=
 \det(A)\det(D)
 \det(I-D^{-1}K_{ts}A^{-1}K_{st}).
 \end{split}
\]
The bounded inverses and the trace-ideal property show that $R$ is trace class. This proves the exact gap identity. The exterior-power Fredholm series gives
\[
 \abs{\det(I-R)}
 \leq
 \sum_{k=0}^\infty\frac{\norm{R}_1^k}{k!}
 =
 \ee^{\norm{R}_1}.
\]
Since the joint gap probability is nonnegative, the stated upper bound follows.

For a regular conditional determinantal law, the same argument is a deterministic calculation with its conditional kernel outside the null set where the description fails. The last assertion follows from Theorem~\ref{thm:conditional-growth-kernel} by pushing forward under the map from the past matrix to its positive spectrum.
\end{proof}

\subsection{Fixed-particle Eynard--Mehta lemma}\label{app:proof-fixed-particle-em}

\begin{proof}[Proof of Lemma~\ref{lem:fixed-particle-em}]
We first prove the Andr\'eief identity in the form needed here. If $f_1,\ldots,f_P$ and $g_1,\ldots,g_P$ are integrable functions on one measure space and the products below are absolutely integrable, expansion of both determinants into permutations, followed by Fubini's theorem and a relabeling of the integration variables, gives
\begin{equation}\label{eq:andreief}
 \int
 \det[f_i(x_j)]\det[g_i(x_j)]
 \prod_{j=1}^P\mu(\dd x_j)
 =
 P!\det\left[\int f_i(x)g_j(x)\,\mu(\dd x)\right].
\end{equation}
Repeatedly apply \eqref{eq:andreief}, first at level zero and then successively through level $T$. Integrating a transfer determinant against the determinant immediately to its left convolves their entries. Hence
\begin{equation}\label{eq:em-normalization}
 Z=(P!)^{T+1}\det M.
\end{equation}
Performing the same integrations on the two sides of any fixed intermediate level $t$ gives
\begin{equation}\label{eq:em-moment}
 M_{ij}
 =
 \int_{E_t}F_i^{(t)}(x)G_j^{(t)}(x)\,\mu_t(\dd x),
\end{equation}
so $M$ is independent of $t$. Since $Z>0$, \eqref{eq:em-normalization} gives $\det M\neq0$.

Let $X$ be the disjoint union of the $E_t$, equipped with the direct-sum measure. Let $h$ be a function on $X$ whose restriction to $E_t$ is bounded and equal to $h_t$, and assume that the marked integrals converge absolutely. Insert
\[
 \prod_{t=0}^T\prod_{a=1}^P
 \bigl(1+h_t(x_a^{(t)})\bigr)
\]
into the joint density. At each level, place the factor $1+h_t$ in the columns of one adjacent determinant and repeat \eqref{eq:andreief}. The result is
\begin{equation}\label{eq:em-marked-boundary}
 \E\left[
 \prod_{t=0}^T\prod_{a=1}^P
 \bigl(1+h_t(x_a^{(t)})\bigr)
 \right]
 =
 \frac{\det M(h)}{\det M},
\end{equation}
where $M(h)_{ij}$ is the one-particle path integral from $\phi_i$ to $\psi_j$ with a factor $1+h_t$ at every level.

We next prove the coefficient identity used to rewrite the finite matrix on the right of \eqref{eq:em-marked-boundary}. This argument does not use an operator Fredholm determinant. Let $\mathcal T$ be the kernel on $X$ whose block from $E_s$ to $E_t$ is
\[
 \ind_{\{s<t\}}W_{s,t}.
\]
Let $H$ denote insertion of $h$ in an integral. Define
\[
 U((s,x),j)=G_j^{(s)}(x),
 \qquad
 V(i,(t,y))=F_i^{(t)}(y),
\]
where $V$ is followed by integration against $\mu_t$. Put
\begin{equation}\label{eq:em-calK}
 \mathcal K=-\mathcal T+UM^{-1}V.
\end{equation}
For any kernel $L$ on $X$, define its coefficientwise determinant series by
\begin{equation}\label{eq:em-coefficient-series}
 \mathfrak D_L(h)
 =
 \sum_{n\geq0}\frac1{n!}
 \int_{X^n}
 \det[L(q_a,q_b)]_{a,b=1}^n
 \prod_{a=1}^nh(q_a)
 \prod_{a=1}^n\mu(\dd q_a),
\end{equation}
where the term with $n=0$ is one. Formula \eqref{eq:em-coefficient-series} denotes only the explicitly displayed multiple-integral coefficients.

We claim that
\begin{equation}\label{eq:em-algebraic-identity}
 \mathfrak D_{\mathcal K}(h)
 =
 \det\left[
 I_P+M^{-1}VH(I-\mathcal TH)^{-1}U
 \right],
\end{equation}
where
\begin{equation}\label{eq:em-nilpotent-series}
 (I-\mathcal TH)^{-1}
 =
 \sum_{r=0}^T(\mathcal TH)^r.
\end{equation}
The sum is finite because every $\mathcal T$ factor strictly increases the time level.

To prove \eqref{eq:em-algebraic-identity} coefficientwise, first replace each $E_t$ by an arbitrary finite set, every integral by a finite weighted sum, and the kernels by the corresponding finite arrays. Ordinary finite-dimensional determinant algebra gives
\begin{align}
 \det[I+(-\mathcal T+UM^{-1}V)H]
 &=
 \det[I-\mathcal TH]\,
 \det[I+(I-\mathcal TH)^{-1}UM^{-1}VH] \notag\\
 &=
 \det[I_P+M^{-1}VH(I-\mathcal TH)^{-1}U].
 \label{eq:em-finite-algebra}
\end{align}
The first determinant in the middle expression is one because $I-\mathcal TH$ is block upper triangular with identity diagonal. The last equality is the rectangular identity $\det(I+AB)=\det(I+BA)$, which follows by expanding both finite determinants into minors. On the left of \eqref{eq:em-finite-algebra}, the coefficient of total degree $n$ in the point weights is the sum of all $n$-by-$n$ principal minors. A determinant with a repeated point index vanishes, so this coefficient is exactly the finite-sum version of the $n$th term in \eqref{eq:em-coefficient-series}.

For the measure spaces in the statement, expand both sides of \eqref{eq:em-finite-algebra} by the Leibniz formula, expand \eqref{eq:em-nilpotent-series}, and compare one fixed coefficient of total degree $n$. Each side is a finite sum indexed by permutations, boundary labels in $\{1,\ldots,P\}$, and strictly increasing time chains of length at most $T+1$. The finite-set proof matches these summands by the same permutation and chain reindexing. Replacing each finite sum over a point variable by its integral against the corresponding $\mu_t$ preserves the identity. Absolute convergence permits Fubini's theorem and all finite rearrangements. This proves \eqref{eq:em-algebraic-identity} coefficientwise. Its right side is a finite polynomial, so all coefficients in \eqref{eq:em-coefficient-series} beyond its degree vanish.

Expanding the marked one-particle path integral by choosing the nonempty ordered set $t_1<\cdots<t_k$ of levels at which $h$ is selected gives
\begin{equation}\label{eq:em-marked-matrix}
 M(h)
 =
 M+\sum_{k=1}^{T+1}VH(\mathcal TH)^{k-1}U
 =
 M+VH(I-\mathcal TH)^{-1}U.
\end{equation}
For example, the summand corresponding to $t_1<\cdots<t_k$ is the integral of
\[
 \begin{split}
 &F_i^{(t_1)}(x_1)h_{t_1}(x_1)
 W_{t_1,t_2}(x_1,x_2)h_{t_2}(x_2)\cdots\\
 &\qquad\cdots
 W_{t_{k-1},t_k}(x_{k-1},x_k)h_{t_k}(x_k)
 G_j^{(t_k)}(x_k)
 \end{split}
\]
over $x_1,\ldots,x_k$. Therefore
\begin{equation}\label{eq:em-generating-equality}
 \frac{\det M(h)}{\det M}
 =
 \det[I_P+M^{-1}VH(I-\mathcal TH)^{-1}U]
 =
 \mathfrak D_{\mathcal K}(h).
\end{equation}
Combining \eqref{eq:em-marked-boundary} and \eqref{eq:em-generating-equality} identifies the marked generating functional with the explicit coefficient series \eqref{eq:em-coefficient-series}.

It remains to identify the correlation densities. The process has exactly $P(T+1)$ points on $X$, so its marked generating functional has the finite factorial-moment expansion
\begin{equation}\label{eq:em-factorial-moments}
 \begin{aligned}
 \E\left[\prod_{q\text{ in the process}}(1+h(q))\right]
 &=\sum_{n=0}^{P(T+1)}\frac1{n!}\int_{X^n}
 \rho_n(q_1,\ldots,q_n)\\
 &\qquad{}\times\prod_{a=1}^nh(q_a)
 \prod_{a=1}^n\mu(\dd q_a).
 \end{aligned}
\end{equation}
Compare \eqref{eq:em-factorial-moments} with \eqref{eq:em-coefficient-series} and \eqref{eq:em-generating-equality}. For bounded measurable $f_1,\ldots,f_n$ supported on prescribed levels and independent scalars $z_1,\ldots,z_n$, substitute
\[
 h=\sum_{r=1}^nz_rf_r
\]
and compare the coefficient of $z_1\cdots z_n$. Both $\rho_n$ and $\det[\mathcal K(q_a,q_b)]$ are symmetric in the variables $q_a$. Consequently,
\[
 \begin{split}
 &\int_{X^n}\rho_n(q_1,\ldots,q_n)
 \prod_{r=1}^nf_r(q_r)\prod_r\mu(\dd q_r)\\
 &\qquad=
 \int_{X^n}\det[\mathcal K(q_a,q_b)]_{a,b=1}^n
 \prod_{r=1}^nf_r(q_r)\prod_r\mu(\dd q_r).
 \end{split}
\]
Taking the $f_r$ to be indicators of measurable finite-measure sets on chosen levels and applying a monotone-class argument on the sigma-finite product space yields
\[
 \rho_n(q_1,\ldots,q_n)
 =
 \det[\mathcal K(q_a,q_b)]_{a,b=1}^n
\]
for almost every $n$-tuple and every $n$. By \eqref{eq:em-calK}, $\mathcal K$ is the kernel stated in the lemma.
\end{proof}

\subsection{Boundary moment evaluation}\label{app:proof-boundary-moment}

\begin{proof}[Proof of Lemma~\ref{lem:boundary-moment}]
A nonzero one-particle path from boundary row $i$ is real from time $\tau_i$ onward. If $i=1$, it begins at the unique real initial state. If $i>1$, it remains at $v_i$ until the step at which $p$ increases from $i-1$ to $i$. Every subsequent real-real transfer has entry $\ind_{\{y>x\}}$. Consequently,
\begin{equation}\label{eq:ordered-moment-integral}
 M_{ij}
 =
 \int_{0<x_{\tau_i}<\cdots<x_T}
 \left(\prod_{t=\tau_i}^{T-1}x_t^{\kappa_t}\right)
 \ee^{-x_T}x_T^{\kappa_T+P-j}
 \,\dd x_{\tau_i}\cdots\dd x_T.
\end{equation}

For $\tau_i\leq k\leq T$, define
\[
 A_{i,k}
 =
 \sum_{t=\tau_i}^k\kappa_t+k-\tau_i+1.
\]
For $\tau_i\leq k<T$, put
\[
 b=\sum_{u=\tau_i}^{k-1}\epsilon_u,
 \qquad
 L=k-\tau_i.
\]
At a birth step $p$ increases by one and $r$ decreases by one. At a same-size step $p$ is unchanged and $r$ increases by one. Therefore
\begin{equation}\label{eq:r-evolution}
 r_k=\rho_i+L-2b.
\end{equation}
If $\tau_i>0$, the step immediately before $\tau_i$ is a birth step, so
\[
 \epsilon_{\tau_i-1}=1,
 \qquad
 r_{\tau_i-1}-1=r_{\tau_i}=\rho_i.
\]
If $\tau_i=0$, then $\rho_i=r_0=0$. Telescoping the definition of $\kappa_t$ gives
\begin{equation}\label{eq:kappa-prefix}
 \sum_{t=\tau_i}^k\kappa_t
 =
 \rho_i-b-\epsilon_kr_k.
\end{equation}
If $\epsilon_k=0$, then
\begin{equation}\label{eq:A-same}
 A_{i,k}
 =
 \rho_i-b+L+1
 =
 q_k-i+1.
\end{equation}
If $\epsilon_k=1$, then
\begin{equation}\label{eq:A-birth}
 A_{i,k}=b+1=p_k-i+1.
\end{equation}
Both values are positive integers.

At the terminal time,
\[
 \sum_{t=\tau_i}^T\kappa_t
 =
 \rho_i-\sum_{u=\tau_i}^{T-1}\epsilon_u
 =
 \rho_i-(P-i).
\]
There are $P-i$ birth steps and $Q-a_i$ same-size steps after $\tau_i$, so
\[
 T-\tau_i=(P-i)+(Q-a_i).
\]
Since $a_i=i+\rho_i$,
\begin{equation}\label{eq:A-terminal}
 A_{i,T}=Q-i+1.
\end{equation}

Successive integration in \eqref{eq:ordered-moment-integral} gives
\begin{equation}\label{eq:moment-gamma-product}
 M_{ij}
 =
 \frac{\Gamma(A_{i,T}+P-j)}
      {\prod_{k=\tau_i}^{T-1}A_{i,k}}.
\end{equation}
At the birth steps after $\tau_i$, the values of $p_k$ immediately before the increments are $i,\ldots,P-1$. Hence
\[
 \prod_{\substack{\tau_i\leq k<T\\\epsilon_k=1}}A_{i,k}
 =
 (P-i)!.
\]
At the same-size steps, the values of $q_k$ immediately before the increments are $a_i,\ldots,Q-1$. Hence
\[
 \prod_{\substack{\tau_i\leq k<T\\\epsilon_k=0}}A_{i,k}
 =
 \frac{(Q-i)!}{(a_i-i)!}
 =
 \frac{(Q-i)!}{\rho_i!}.
\]
Finally,
\[
 \Gamma(A_{i,T}+P-j)
 =
 \Gamma(P+Q-i-j+1)
 =
 (P+Q-i-j)!.
\]
Substitution into \eqref{eq:moment-gamma-product} proves \eqref{eq:closed-moment-matrix}.
\end{proof}

\subsection{Laguerre Hankel inversion}\label{app:proof-laguerre-hankel}

\begin{proof}[Proof of Lemma~\ref{lem:laguerre-hankel}]
Let $J$ reverse the coordinates of $\R^P$, let $G=JHJ$, and number the coordinates of $G$ from zero to $P-1$. Then
\[
 G_{ab}
 =
 (r+a+b)!
 =
 \int_0^\infty u^{a+b+r}\ee^{-u}\,\dd u.
\]
Thus $G$ is the Gram matrix of
\[
 m(u)=(1,u,\ldots,u^{P-1})^{\mathsf T}
\]
for the inner product
\[
 \langle f,g\rangle_r
 =
 \int_0^\infty f(u)g(u)\ee^{-u}u^r\,\dd u.
\]
The weight is positive on $(0,\infty)$, so every nonzero coefficient vector has positive quadratic form. Hence $G$, and therefore $H$, is invertible.

The finite definition of $L_k^{(r)}$ and the Leibniz formula give
\begin{equation}\label{eq:laguerre-rodrigues}
 \ee^{-u}u^rL_k^{(r)}(u)
 =
 \frac1{k!}
 \left(\frac{\dd}{\dd u}\right)^k
 [\ee^{-u}u^{k+r}].
\end{equation}
Indeed, the coefficient of $\ee^{-u}u^{r+\ell}$ on the right is
\[
 (-1)^\ell
 \frac{(k+r)!}{(k-\ell)!(r+\ell)!\ell!}.
\]
If $f$ has degree less than $k$, insert \eqref{eq:laguerre-rodrigues} into the inner product and integrate by parts $k$ times. All boundary terms vanish at infinity because of exponential decay. At zero, before the final integration the differentiated power has exponent at least $r+1$, so those boundary terms vanish as well. Thus $L_k^{(r)}$ is orthogonal to every polynomial of degree less than $k$. Since its leading coefficient is $(-1)^k/k!$, the same calculation gives
\begin{equation}\label{eq:laguerre-orthogonality}
 \langle L_k^{(r)},L_l^{(r)}\rangle_r
 =
 \ind_{\{k=l\}}\frac{(k+r)!}{k!}.
\end{equation}

The reproducing kernel of polynomials of degree less than $P$ is therefore both
\[
 R_P(x,y)=m(x)^{\mathsf T}G^{-1}m(y)
\]
and
\[
 R_P(x,y)
 =
 \sum_{k=0}^{P-1}
 \frac{k!}{(k+r)!}
 L_k^{(r)}(x)L_k^{(r)}(y).
\]
Since $H^{-1}=JG^{-1}J$,
\[
 m(x)^{\mathsf T}G^{-1}m(y)
 =
 \sum_{i,j=1}^P
 x^{P-j}(H^{-1})_{ji}y^{P-i}.
\]
Multiplying by $y^r$ proves \eqref{eq:hankel-laguerre-inversion}.

Coefficient comparison in the defining finite sum gives, with $L_{-1}^{(r)}=0$,
\begin{equation}\label{eq:laguerre-recurrence}
 (k+1)L_{k+1}^{(r)}(u)
 =
 (2k+r+1-u)L_k^{(r)}(u)
 -(k+r)L_{k-1}^{(r)}(u).
\end{equation}
Set
\[
 C_k
 =
 L_k^{(r)}(x)L_{k+1}^{(r)}(y)
 -
 L_{k+1}^{(r)}(x)L_k^{(r)}(y).
\]
Applying \eqref{eq:laguerre-recurrence} at $x$ and $y$ gives
\[
 (x-y)L_k^{(r)}(x)L_k^{(r)}(y)
 =
 (k+1)C_k-(k+r)C_{k-1}.
\]
Multiply by $k!/(k+r)!$ and sum over $0\leq k<P$. The adjacent terms cancel, leaving
\[
 (x-y)R_P(x,y)
 =
 \frac{P!}{(P+r-1)!}C_{P-1},
\]
which proves \eqref{eq:christoffel-darboux} off the diagonal and gives its continuous diagonal extension.

To identify the terminal block, use $\tau_i$ and $\rho_i$ from Lemma~\ref{lem:boundary-moment}. A terminal forward path is real after $\tau_i$, so
\[
 F_i^{(T)}(y)
 =
 \ee^{-y}
 \int_{0<x_{\tau_i}<\cdots<x_{T-1}<y}
 \prod_{u=\tau_i}^T x_u^{\kappa_u}
 \,\dd x_{\tau_i}\cdots\dd x_{T-1}.
\]
The accumulated-exponent calculation in the proof of Lemma~\ref{lem:boundary-moment} yields
\[
 F_i^{(T)}(y)
 =
 \frac{\rho_i!}{(P-i)!(Q-i)!}
 \ee^{-y}y^{Q-i}.
\]
That lemma gives $M=DH$, where
\[
 D_{ii}=\frac{\rho_i!}{(P-i)!(Q-i)!}.
\]
Thus $M^{-1}=H^{-1}D^{-1}$. Since
\[
 G_j^{(T)}(x)=x^{P-j},
\]
the terminal finite-rank block is
\[
 \sum_{i,j=1}^P
 x^{P-j}(M^{-1})_{ji}F_i^{(T)}(y)
 =
 \ee^{-y}
 \sum_{i,j=1}^P
 x^{P-j}(H^{-1})_{ji}y^{Q-i}.
\]
This proves the last assertion.
\end{proof}

\subsection{Forward gauge cancellation}\label{app:proof-forward-gauge}

\begin{proof}[Proof of Lemma~\ref{lem:forward-gauge-cancellation}]
Fix $t$ and $1\leq i\leq p_t$, and write $\tau=\tau_i$. A nonzero path from boundary label $i$ is real from time $\tau$ onward. With $x_t=x$,
\begin{equation}\label{eq:forward-ordered-integral}
 F_i^{(t)}(x)
 =
 h_t(x)
 \int_{0<x_\tau<\cdots<x_{t-1}<x_t}
 \prod_{u=\tau}^t x_u^{\kappa_u}
 \,\dd x_\tau\cdots\dd x_{t-1}.
\end{equation}
Define
\[
 A_{i,k}
 =
 \sum_{u=\tau}^k\kappa_u+k-\tau+1.
\]
The telescoping calculation in Lemma~\ref{lem:boundary-moment} gives
\[
 A_{i,k}=
 \begin{cases}
 q_k-i+1,&k<T,\ \epsilon_k=0,\\
 p_k-i+1,&k<T,\ \epsilon_k=1,\\
 Q-i+1,&k=T.
 \end{cases}
\]
Successive integration in \eqref{eq:forward-ordered-integral} gives
\[
 F_i^{(t)}(x)
 =
 \frac{h_t(x)x^{A_{i,t}-1}}
      {\prod_{k=\tau}^{t-1}A_{i,k}}.
\]
At the birth steps before $t$, the successive values of $p_k$ are
\[
 i,i+1,\ldots,p_t-1,
\]
so
\[
 \prod_{\substack{\tau\leq k<t\\\epsilon_k=1}}A_{i,k}
 =
 (p_t-i)!.
\]
At the same-size steps, the successive values of $q_k$ are
\[
 a_i,a_i+1,\ldots,q_t-1,
\]
so
\[
 \prod_{\substack{\tau\leq k<t\\\epsilon_k=0}}A_{i,k}
 =
 \frac{(q_t-i)!}{\rho_i!}.
\]
This proves \eqref{eq:explicit-forward}.

If $i>p_t$, the boundary label begins at $v_i$ and has not yet reached its birth step. Every preceding augmented transfer preserves $v_i$, sends it to no other virtual state, and has zero real-virtual cross terms. Hence
\[
 F_i^{(t)}(x)=0\quad(x>0),
 \qquad
 F_i^{(t)}(v_k)=\ind_{\{i=k\}}.
\]

Lemma~\ref{lem:boundary-moment} gives
\[
 M=DH,
 \qquad
 D_{ii}=\frac{\rho_i!}{(P-i)!(Q-i)!}.
\]
Since $M$ is invertible, $D$ and $H$ are invertible and
\[
 M^{-1}=H^{-1}D^{-1}.
\]
Multiplying \eqref{eq:explicit-forward} by this identity proves \eqref{eq:gauge-cancellation}.
\end{proof}

\section{Standard Airy envelope estimates}\label{app:gaussian-airy-envelopes}

\subsection{Airy envelope estimates}\label{app:proof-airy-envelopes}

\begin{proof}[Proof of Lemma~\ref{lem:airy-envelopes}]
Write \(f=\Ai\). We first record the zero-data uniqueness argument used below. Let \(y''(x)=xy(x)\), and suppose that
\[
y(x_0)=y'(x_0)=0.
\]
On any compact interval \([r,R]\) containing \(x_0\), put
\[
E_0(x)=y(x)^2+y'(x)^2,
\qquad
L_0=\max_{r\leq x\leq R}\abs{1+x}.
\]
Then
\[
\abs{E_0'(x)}
=\abs{2(1+x)y(x)y'(x)}
\leq L_0E_0(x).
\]
For \(x\geq x_0\), the derivative of
\[
\exp\bigl(-L_0(x-x_0)\bigr)E_0(x)
\]
is nonpositive. Its initial value is zero, while \(E_0\) is nonnegative, so \(E_0(x)=0\). After the change of variables \(x=x_0-s\), the same calculation gives \(E_0(x)=0\) for \(x\leq x_0\). Since \([r,R]\) was arbitrary, zero initial data force \(y\) to vanish everywhere.

We next consider \(x\geq1\). A nonzero solution \(f\) of \(f''=xf\) that tends to zero cannot have two zeros in \([1,\infty)\). Indeed, if \(1\leq a<b\) and \(f(a)=f(b)=0\), then multiplication of the equation by \(f\) and integration give
\[
-\int_a^b f'(x)^2\dd x
=
\int_a^b xf(x)^2\dd x.
\]
Equality is possible only if \(f\) and \(f'\) vanish throughout \([a,b]\). The zero-data argument would then force \(f\) to vanish everywhere, which is a contradiction.

If \(f\) had one zero \(a\geq1\), its derivative there would be nonzero by zero-data uniqueness. Beyond \(a\), the function would have a fixed sign. If \(f\) were positive just after \(a\), then \(f'(a)>0\) and \(f''=xf>0\) while \(f\) remains positive. Thus \(f'\) would stay positive, and \(f\) could not tend to zero. If \(f\) were negative just after \(a\), then \(f'(a)<0\) and \(f''<0\), which gives the same contradiction after reversing signs. Therefore \(f\) has a fixed sign on \([1,\infty)\). Multiplying by \(-1\) if necessary, assume that \(f>0\) there.

It follows that \(f''>0\), so \(f'\) is increasing. Convergence of \(f\) to zero forces \(f'<0\) and \(f'(x)\to0\). For \(x\geq1\),
\[
\frac{\dd}{\dd x}
\left[\exp(-x)\bigl(f'(x)+f(x)\bigr)\right]
=
\exp(-x)(x-1)f(x)\geq0.
\]
The expression on the left tends to zero at infinity. Hence
\[
f'(x)+f(x)\leq0.
\]
Therefore
\[
f(x)\leq f(1)\exp(-(x-1)).
\]
Since
\[
f'(x)=-\int_x^\infty tf(t)\dd t,
\]
the same bound gives
\[
\abs{f'(x)}\leq C(1+x)\exp(-x).
\]

Put
\[
\xi=\frac23x^{3/2},\qquad a_{\Ai}=\frac5{36},
\qquad h_+(\xi)=x^{1/4}f(x).
\]
Direct differentiation, using \(\dd\xi/\dd x=\sqrt{x}\), gives
\begin{equation}\label{eq:airy-positive-ode}
h_+''(\xi)=\left(1-a_{\Ai}\xi^{-2}\right)h_+(\xi).
\end{equation}
The preceding exponential bounds show that \(h_+\) and \(h_+'\) tend to zero. For \(x\geq1\), one has \(\xi\geq2/3\) and
\[
1-a_{\Ai}\xi^{-2}\geq\frac{11}{16}=m_{\Ai}^2,
\qquad
m_{\Ai}=\frac{\sqrt{11}}4.
\]
Consequently,
\[
\begin{aligned}
&\frac{\dd}{\dd\xi}
\left[\exp(-m_{\Ai}\xi)
\bigl(h_+'+m_{\Ai}h_+\bigr)\right]\\
&\qquad=\exp(-m_{\Ai}\xi)
\left[\left(1-a_{\Ai}\xi^{-2}\right)-m_{\Ai}^2\right]h_+\geq0.
\end{aligned}
\]
Its limit at infinity is zero, so
\[
h_+'+m_{\Ai}h_+\leq0.
\]
Hence, for \(t\geq\xi\),
\begin{equation}\label{eq:airy-positive-decay}
h_+(t)\leq h_+(\xi)\exp\bigl(-m_{\Ai}(t-\xi)\bigr).
\end{equation}

Equation~\eqref{eq:airy-positive-ode} also gives
\[
\frac{\dd}{\dd\xi}
\left[
\exp(-\xi)(h_+'+h_+)
\right]
=
-a_{\Ai}\xi^{-2}\exp(-\xi)h_+.
\]
Integrating from \(\xi\) to infinity yields
\begin{equation}\label{eq:airy-positive-integral}
\exp(-\xi)\bigl(h_+'(\xi)+h_+(\xi)\bigr)
=
a_{\Ai}\int_\xi^\infty
t^{-2}\exp(-t)h_+(t)\dd t.
\end{equation}
Let
\[
q_{\Ai}(\xi)=\exp(\xi)h_+(\xi).
\]
Equations~\eqref{eq:airy-positive-decay} and~\eqref{eq:airy-positive-integral} imply
\begin{equation}\label{eq:airy-q-bound}
0\leq\frac{q_{\Ai}'(\xi)}{q_{\Ai}(\xi)}
\leq
a_{\Ai}\exp((1+m_{\Ai})\xi)
\int_\xi^\infty
t^{-2}\exp(-(1+m_{\Ai})t)\dd t
\leq
\frac{a_{\Ai}}{1+m_{\Ai}}\xi^{-2}.
\end{equation}
The right side is integrable at infinity. Thus \(q_{\Ai}\) is positive and nondecreasing, and \eqref{eq:airy-q-bound} bounds \(\log q_{\Ai}\) uniformly from above. It follows that \(q_{\Ai}\) has a finite positive limit. Integrating \eqref{eq:airy-q-bound} from \(\xi\) to infinity also shows that the ratio of this limit to \(q_{\Ai}(\xi)\) is at most
\[
\exp\left(\frac{a_{\Ai}}{(1+m_{\Ai})\xi}\right).
\]
Therefore \(q_{\Ai}\) is bounded above and below by positive constants on \([2/3,\infty)\). Since
\[
f(x)=x^{-1/4}\exp(-\xi)q_{\Ai}(\xi),
\]
this proves the two-sided positive-half-line estimate.

For the negative half-line, put \(g(x)=f(-x)\), so that
\[
g''(x)+xg(x)=0.
\]
For \(x\geq1\), use the same \(\xi\) and put
\[
h_-(\xi)=x^{1/4}g(x).
\]
Direct differentiation gives
\begin{equation}\label{eq:airy-negative-ode}
h_-''(\xi)+\left(1+a_{\Ai}\xi^{-2}\right)h_-(\xi)=0.
\end{equation}
For
\[
E_{\Ai}(\xi)=h_-'(\xi)^2+h_-(\xi)^2,
\]
equation~\eqref{eq:airy-negative-ode} gives
\[
\abs{E_{\Ai}'(\xi)}
=
\abs{2a_{\Ai}\xi^{-2}h_-(\xi)h_-'(\xi)}
\leq a_{\Ai}\xi^{-2}E_{\Ai}(\xi).
\]
Put \(\xi_0=2/3\). Since
\[
E_{\Ai}'(\xi)\leq a_{\Ai}\xi^{-2}E_{\Ai}(\xi),
\]
direct differentiation gives
\[
\frac{\dd}{\dd\xi}
\left\{
\exp\left[-\int_{\xi_0}^{\xi}a_{\Ai}t^{-2}\dd t\right]
E_{\Ai}(\xi)
\right\}
\leq0.
\]
Therefore
\[
E_{\Ai}(\xi)
\leq
E_{\Ai}(\xi_0)
\exp\left[\int_{\xi_0}^{\xi}a_{\Ai}t^{-2}\dd t\right]
\leq
E_{\Ai}(\xi_0)\exp(a_{\Ai}/\xi_0).
\]
Thus \(E_{\Ai}\) is bounded on \([2/3,\infty)\). Hence \(\abs{h_-(\xi)}\) is bounded and
\[
\abs{f(-x)}\leq Cx^{-1/4},
\qquad x\geq1.
\]
Continuity of \(f\) on \([-1,0]\) enlarges the constant to give
\[
\abs{f(-x)}\leq C(1+x)^{-1/4},
\qquad x\geq0.
\]
Restoring the original sign of \(f\) does not change any absolute-value estimate.
\end{proof}

\end{document}